\documentclass[reqno,10pt]{amsart}
\usepackage{amsmath,amssymb,mathrsfs,amsthm,amsfonts}
\usepackage[inline]{enumitem}
\usepackage[usenames,dvipsnames]{xcolor}
\usepackage{hyperref}
\usepackage[percent]{overpic}
\usepackage{comment}
\usepackage{algorithm}
\usepackage{algorithmic}
\usepackage{mathtools}
\usepackage{subcaption}
\usepackage{tikz}
\usepackage{bm}
\usetikzlibrary{calc}
\usetikzlibrary{arrows.meta,backgrounds}
\usepackage{multirow,array,longtable,booktabs}
\usetikzlibrary{arrows}

\hypersetup{
	colorlinks=true, linkcolor=blue,
	citecolor=ForestGreen
}
\usepackage[paper=letterpaper,margin=1in]{geometry}

\newtheorem{theorem}{Theorem}[section]
\newtheorem{lemma}[theorem]{Lemma}
\newtheorem{proposition}[theorem]{Proposition}

\newtheorem{prop}[theorem]{Proposition}
\newtheorem{corollary}[theorem]{Corollary}
\theoremstyle{definition}
\newtheorem{definition}[theorem]{Definition}

\newtheorem{assumption}[theorem]{Assumption}
\newtheorem{remark}[theorem]{Remark}
\numberwithin{equation}{section}
\allowdisplaybreaks
\usepackage{acronym}

\acrodef{LDP}{Large Deviation Principle}

\newcommand{\E}{\mathbf{E}}	

\newcommand{\red}{\textcolor{red}}

\newcommand{\blue}{\textcolor{blue}}

\newcommand{\be}{\begin{equation}}
\newcommand{\ee}{\end{equation}}
\newcommand{\bea} {\begin{array}{rl}}
\newcommand{\eea} {\end{array}}
\newcommand{\bepa}{\left\{ \begin{array}{l}}
\newcommand{\eepa} {\end{array}\right.}
\makeatletter

\newcommand\norm[1]{\left\Vert#1\right\Vert}

\newcommand{\R}{\mathbb{R}} 
\newcommand{\N}{\mathbb{N}}

\newcommand{\op}{\mathrm{op}}

\newcommand{\ds}{\displaystyle}

\newcommand{\cB}{\mathcal{B}}
\newcommand{\cC}{\mathcal{C}}
\newcommand{\scrL}{\mathscr{L}}

\renewcommand{\hat}{\widehat}

\usepackage{graphicx}

\newcommand{\bd}{{\bf d}}

\newcommand{\ov}{\overline}
\newcommand{\bx}{\bm{x}}
\newcommand{\bxi}{\bm{\xi}}
\newcommand{\bX}{\bm{X}}

\newcommand{\cP}{\mathcal{P}}
\newcommand{\cT}{\mathcal{T}}

\newcommand{\eps}{\epsilon}

\newcommand{\linf}{L^{\infty}}
\newcommand{\id}{\text{id}}
\newcommand{\com}{\text{com}}
\newcommand{\tr}{\text{tr}}
\newcommand{\cA}{\mathcal{A}}

\newcommand{\dist}{\text{dist}}
\newcommand{\dis}{\text{disp}}

\newcommand{\bv}{\bm{v}}

\newcommand{\cF}{\mathcal{F}}

\newcommand{\cL}{\mathcal{L}}
\newcommand{\bbF}{\mathbb{F}}
\newcommand{\bu}{\bm{u}}
\newcommand{\wt}{\widetilde}

\newcommand{\by}{\bm{y}}
\newcommand{\bz}{\bm{z}}

\newcommand{\balpha}{\bm{\alpha}}
\newcommand{\ba}{\bm{a}}
\newcommand{\bp}{\bm{p}}
\newcommand{\diag}{D_{\text{diag}}}

\newcommand{\cM}{\mathcal{M}}
\newcommand{\cD}{\mathcal{D}}
\newcommand{\cO}{\mathcal{O}}

\newcommand{\spt}{\text{spt}}

\title[Master equation for MFGC]{Unconditional well-posedness of the master equation for monotone mean field games of controls}

\author[J.\ Jackson]{Joe Jackson}
\address{J.\ Jackson,
	Department of Mathematics, University of Chicago,
	\newline\hphantom{\quad \ \ J. Jackson}
	5734 S.~University Avenue, Chicago, Illinois 60637 USA
}
\email{jsjackson@uchicago.edu}

\author[A.R. M\'esz\'aros]{Alp\'ar R. M\'esz\'aros}  
\address{A.R. M\'esz\'aros, Department of Mathematical Sciences, University of Durham, Durham DH1 3LE, United Kingdom}
\email{alpar.r.meszaros@durham.ac.uk} 

\date{\today}

\begin{document}
	\begin{abstract}
     We establish the first {\it unconditional} well-posedness result for the master equation associated with a general class of {\it mean field games of controls}. Our analysis covers games with displacement monotone or Lasry--Lions monotone data, as well as those with a small time horizon. By {\it unconditional}, we mean that all assumptions are imposed solely at the level of the Lagrangian and the terminal cost. In particular, we do not require {\it any} a priori regularity or structural assumptions on the additional fixed-point mappings arising from the control interactions; instead we show that these fixed-point mappings are well-behaved as a consequence of the regularity and the monotonicity of the data. Our approach is {\it bottom-up} in nature, unlike most previous results which rely on a generalized method of characteristics. In particular, we build a classical solution of the master equation by showing that the solutions of the corresponding $N$-player Nash systems are compact, in an appropriate sense, and that their subsequential limit points must be solutions to the master equation. Compactness is obtained via uniform-in-$N$ decay estimates for derivatives of the $N$-player value functions. The underlying games are driven by non-degenerate idiosyncratic Brownian noise, and our results allow for the presence of common noise with constant intensity.
     
	\end{abstract}
	
	
	\maketitle
	{
		}
    \setcounter{tocdepth}{1}
  \tableofcontents

\section{Introduction}

\subsection*{The master equation for MFGC}

The {\it master equation} in Mean Field Games (MFG) theory was introduced by P.-L. Lions in his lectures at Coll\`ege de France. This is a PDE of hyperbolic type written for a scalar function whose variables belong to the infinite dimensional space $(0,T)\times\R^d\times\cP_2(\R^d)$.  Here $T>0$ stands for the time horizon of the game, $\R^d$ represents the state space of a typical representative agent, while the variable in $\cP_2(\R^d)$ (the set of Borel probability measures on $\R^d$ with finite second moment) encodes the distribution of the continuum of agents. The well-posedness of this PDE has emerged as one of the most important issues in MFG theory, in large part because classical solutions of the master equation can be used to obtain sharp quantitative convergence results for Nash equilibria of $N$-player differential games when $N\to+\infty$ (cf. \cite{CarDelLasLio}), as well as related concentration inequalities and large deviations results \cite{Delarue2018FromTM, DelLacRam19}. 

This paper concerns the master equations associated to {\it Mean Field Games of Controls} (MFGC); a terminology borrowed from \cite{CarLeh}. Compared to `classical' MFG where agents interact only through the distributions of their {\it states}, MFGC are more intricate, allowing interactions via both {\it states and controls}. In addition to being a mathematically interesting extension of standard MFG, MFGC are natural in economics and finance, where agents often interaction through an aggregate quantity like a price or interest rate, which is determined by the \textit{actions} of the population of agents, rather than their states. We refer to \cite{Car:21, CarLeh, GraSir, GraIgnNeu, ShrFirJai, FuGraHorPop, AidDumTan, GomGutRib} for some examples of such models.

The data for our MFGC will consist of a Hamiltonian $H : \R^d \times \R^d \times \cP_2(\R^d \times \R^d) \to \R$, which derives from a Lagrangian $ L : \R^d \times \R^d \times \cP_2(\R^d \times \R^d) \to \R$ via the classical formula
\begin{align*}
    H(x,p,\mu) := \sup_{a \in \R^d} \left\{- a \cdot p - L(x,a,\mu) \right\},
\end{align*}
together with a terminal cost $G : \R^d \times \cP_2(\R^d) \to \R$, and a constant $\sigma_0 \geq 0$ which represents the intensity of the common noise. Compared with classical MFG, the main challenge in treating MFGC is the presence of a fixed point equation set on $\cP_2(\R^d \times \R^d)$, given by
\begin{align} \label{def.Phi}
 \mu = \Big( (x,p) \mapsto \big(x, - D_p H(x, p, \mu)\big) \Big)_{\#} \nu. 
\end{align}
In order to make sense of the MFGC master equation, we will need to know that \eqref{def.Phi} has a unique solution $\mu$ for each fixed $\nu$. The resulting solution mapping $\nu \mapsto \Phi(\nu) = \mu$ plays a key role in our story. Indeed, as we will discuss below, the solvability of \eqref{def.Phi} is the analogue in MFG theory of the \textit{generalized Isaacs condition} for $N$-player games. If $\Phi$ is well-defined, the master equation can be written as
\begin{align} \label{ME} \tag{ME}
    \begin{cases}
        \ds - \partial_t U - \Delta_{\id} U - \sigma_0 \Delta_{\com } U + \hat{H}\Big(x, D_x U(t,x,m), \big(\text{Id}, D_xU(t,\cdot,m) \big)_{\#} m \Big) \vspace{.2cm}
        \\
        \ds \qquad + \int_{\R^d} D_p \hat{H}\Big(y,D_xU(t,y,m), \big(\text{Id}, D_xU(t,\cdot,m) \big)_{\#} m \Big) \cdot D_m U(t,x,m,y) m(dy) = 0 
        \vspace{.2cm} \\ 
        \qquad \qquad \qquad (t,x,m) \in (0,T) \times \R^d \times \cP_2(\R^d), 
        \vspace{.2cm} \\ \ds 
        U(T,x,m) = G(x,m), \quad (x,m) \in \R^d \times \cP_2(\R^d),
    \end{cases}
\end{align}
where throughout the paper we use the notation $\text{Id}:\R^d\to\R^d$ to denote the identity map and we define $\hat{H} : \R^d \times \R^d \times \cP_2(\R^d \times \R^d) \to \R$ by
\begin{align} \label{HatHdef}
    \hat{H}(x,p,\nu) := H\big( x,p, \Phi(\nu) \big).
\end{align}
In \eqref{ME}, $\Delta_{\id}$ and $\Delta_{\com}$ are the idiosyncratic and common noise operators, defined by 
\begin{align*}
    &\Delta_{\id} U (t,x,m) = \Delta_x U(t,x,m) + \int_{\R^d} \tr\big( D_{ym} U(t,x,m,y) \big)m(dy), 
    \\
    &\Delta_{\com}U(t,x,m) = \Delta_{\id}U(t,x,m) + \int_{\R^d} \int_{\R^d} \tr\big(D_{mm} U(t,x,m,y,y') \big) m(dy) m(dy')
    \\
    &\qquad \qquad \qquad \qquad + 2 \int_{\R^d} \tr\big(D_{xm} U(t,x,m,y) \big) m(dy).
\end{align*}
In this paper, we will avoid giving a rigorous discussion the corresponding mean field and $N$-player stochastic differential games for the sake of brevity, and refer to \cite{JacMes} for details. Informally, a representative player in this game aims to minimize a cost functional of the form
\begin{align*}
   J_{\mu}(\alpha) =  \E\bigg[ \int_{t_0}^T L\big(X_t, \alpha_t, \mu_t \big) dt + G(X_T, \mu_T^x) \bigg],
\end{align*}
subject to the dynamics
\begin{align*}
    dX_t = \alpha_t dt + \sqrt{2} dW_t + \sqrt{2\sigma_0} dW_t^0, \quad X_{t_0} \sim m_0.
\end{align*}
Here $W$ and $W^0$ are independent Brownian motions, and the parameter $(\mu_{t})_{t\in[t_{0},T]}$ is a $W^0$-adapted $\cP_2(\R^d \times \R^d)$-valued process representing the joint distribution of the states and actions of the population of agents, conditionally on the common signal $W^0$. We also used the notation $\mu^x_{T}$ for the first marginal of $\mu_{T}$. An equilibrium is a flow of measures $(\mu)_{t\in [t_{0},T]}$ such that the for some optimizer $(\alpha_{t})_{t\in [t_{0},T]}$ of this optimization problem, $\mu_t = \cL((X_t,\alpha_t) | W^0)$, $t\in[t_{0},T]$. We refer to e.g. \cite{CarDel:vol2} for a thorough discussion of the role played by the master equation in MFG theory, and emphasize only that $U(t,x,m)$ can be interpreted as the value of a representative player at position $x$ and at time $t$, given that the continuum of agents is initially distributed according to $m$.

Compared to the usual MFG master equation, the main new challenge for MFGC is that the non-linearity appearing in the master equation depends implicitly on the fixed-point relation \eqref{def.Phi}, through the map $\Phi$. In order to obtain regularity estimates for $U$, it thus becomes crucial to understand the equation \eqref{def.Phi}, and in particular to understand under what conditions the new, implicitly defined Hamiltonian $\hat{H}$ is smooth. Showing the regularity of $\Phi$ which gives sufficient smoothness and monotonicity for $\hat{H}$ is one of the main results of the present work. To the best of our knowledge, existing results either address only specific classes of models (see, e.g., \cite{GraSir}) or rely on assumptions that are {\it conditional} on the regularity of the fixed-point map $\Phi$ or the implicitly defined $\hat{H}$ (see, e.g., \cite{MouZha:2022, LiaMou, LiuLiuMouSun}). To the best of the authors' knowledge, such conditions on $\Phi$ or $\hat{H}$ have only been rigorously verified when the interactions are in some sense finite-dimensional or under the structural condition $H(x,p,\mu) = H_0(x,\mu) + H_1(x,p,\mu^x)$, with $\mu^x$ the first marginal of $\mu$ (in which case the fixed-point equation becomes trivial). A main contribution of this work is to show that the smoothness of $\Phi$ and $\hat{H}$ is a \textit{consequence} of the regularity and monotonicity of $L$, rather than an additional assumption which must be imposed.

\subsection*{The $N$-player Nash system}

In addition to the master equation \eqref{ME}, we will study the systems of PDEs which describe equilibria of the corresponding $N$-player stochastic differential games. Unlike when the interactions are only through the states, in order to write down the $N$-player Nash system in this setting, we must first verify that the so-called ``generalized Isaacs condition" is satisfied. This means that for each $\bx = (x^1,\dots,x^N), \bp = (p^1,\dots,p^N) \in (\R^d)^N$, we can find a unique point $\ba = (a^1,\dots,a^N) \in (\R^d)^N$ such that 
\begin{align} \label{a.fixedpoint.1}
    a^i \in \text{argmax} \left\{ a \mapsto - a \cdot p^i - L(x^i,a,m_{\bx,\ba}^{N,-i}) \right\}
\end{align}
for all $i = 1,\dots, N$. Here, for $n\in\N$ and $\bz=(z^1,\dots,z^N)\in(\R^n)^N$, we use the notation $m_{\bz}^{N,-i}:=\frac{1}{N-1}\sum_{j\neq i}\delta_{z^j}$. See e.g. Chapter 5 of \cite{carmonabsde} for an explanation of the role of this generalized Isaacs condition in $N$-player stochastic differential game theory.  Under mild convexity and regularity conditions on $L$, this property is equivalent to 
\begin{align} \label{fixedpointintro}
    a^i = - D_pH \left( x^i, p^i, m_{\bx,\ba}^{N,-i}\right).
\end{align}
for each $i=1,\dots,N$ and $\bx,\bp \in (\R^d)^N$. If the equation \eqref{fixedpointintro} has a unique solution for each $\bx, \bp \in (\R^d)^N$, we obtain a well-defined solution mapping $\ba^N : (\R^d)^N \times (\R^d)^N \to (\R^d)^N$, and then the $N$-player Nash system can be written as follows:
\begin{align} \label{nashsystem}
    \begin{cases}
       \ds  - \partial_t u^{N,i} - \sum_{j = 1}^N \Delta_{j} u^{N,i} - \sigma_0 \sum_{j,k = 1}^N \tr\big(D_{jk} u^{N,i} \big) + H\big(x^i, D_{i} u^{N,i}, m_{\bx, \ba^N(\bx, \diag \bu^N)}^{N,-i} \big)
       \\
       \ds \qquad + \sum_{j \neq i} D_p H\big(x^j, D_j u^{N,j}, m_{\bx, \ba^N(\bx, \diag \bu^N)}^{N,-j} \big) \cdot D_j u^{N,i} = 0, 
      \quad  (t,\bx) \in (0,T) \times (\R^d)^N, 
       \\
       \ds u^{N,i}(T,\bx) = G(x^i,m_{\bx}^{N,-i}), \quad \bx \in (\R^d)^N, 
    \end{cases}
\end{align}
where we use the notation 
\begin{align*}
    \diag \bu^N = (D_1u^{N,1},\dots,D_N u^{N,N}).
\end{align*}
The solution is a tuple $(u^{N,i})_{i = 1,\dots,N}$, with $u^{N,i} : [0,T] \times (\R^d)^N \to \R$, and $u^i(t_0,\bx_0)$ represents the value in equilibrium of a stochastic differential game in which player $i$ chooses a feedback $\alpha^i : [t_0,T] \times (\R^d)^N \to \R^d$, and aims to minimize a cost function of the form
\begin{align*}
    J^{N,i}(\alpha^1,\dots,\alpha^N) = \E\bigg[ \int_{t_0}^T L\Big(X_t^i,\alpha^i(t,\bX_t), m_{\bX_t, \bm{\alpha}(t,\bX_t)}^{N,-i} \Big) dt + G(X_T^i,m_{\bX_T}^{N,-i}) \bigg], 
\end{align*}
subject to the dynamics 
\begin{align*}
    dX_t^i = \alpha^i(t,\bX_t) dt + \sqrt{2} dW_t^i + \sqrt{2\sigma_0} dW_t^0,
\end{align*}
with $W^{0}$ and $(W^i)_{i \in \N}$ being independent Brownian motions. Given a smooth enough solution to \eqref{nashsystem}, we can produce an equilibrium for the relevant game via the formula
\begin{align*}
    \alpha^i(t,\bx) = - D_p H\big(x^i,D_i u^{N,i}(t,\bx), m_{\bx,\ba^N(\bx,\diag \bu^N(t,\bx))}^{N,-i}\big) = a^{N,i}\big(\bx, \diag \bu^N(t,\bx)\big).
\end{align*}
In particular, the equilibrium trajectories $\bX = (X^1,\dots,X^N)$ for the corresponding $N$-player game evolve according to the dynamics 
\begin{align*}
    dX_t^i = a^{N,i}\big(\bx, \diag \bu^N(t,\bX_t)\big) dt + dW_t^i + \sqrt{2\sigma_0} dW_t^0, 
\end{align*}
where $W$ and $(W^i)_{i=  1,\dots,N}$ are independent Brownian motions which drive the game; we again refer to \cite{JacMes} for details on the $N$-player games corresponding to \eqref{nashsystem}. 

At first glance, it may not be obvious in what sense the map $\ba^N$ relates to the map $\Phi$. The heuristic connection is that formally $m^N_{\bx,\ba^N(\bx,\bp)}$ satisfies 
\begin{align*}
    m_{\bx,\ba^N(\bx,\bp)}^N \approx \Big( (x,p) \mapsto \big(x, - D_p H(x, p, m^N_{\bx,\ba^N(\bx,\bp)})\big) \Big)_{\#} m_{\bx,\bp}^N, 
\end{align*}
and so we would expect that $m_{\bx,\ba^N(\bx,\bp)}^N \approx \Phi\big( m_{\bx,\bp}^N\big)$ for $N$ large. In this sense, the solvability of \eqref{def.Phi} is the mean-field analogue of the solvability of \eqref{fixedpointintro}, and so the solvability of \eqref{def.Phi} is a sort of mean field analogue of the generalized Isaacs condition. Meanwhile, the connection between the master equation and the $N$-player Nash system is that we expect 
\begin{align*}
    u^{N,i}(t,\bx) \approx U(t,x^i,m_{\bx}^{N,-i}), \quad \text{for } N \gg 1.
\end{align*}

\subsection*{Related literature} We now provide a brief survey of existing results on the master equation, first for ``standard" MFG and then for MFGC.
\newline \newline 
\textit{Standard Mean Field Games.} The existence and uniqueness of classical solutions to master equations associated with MFGs without interactions through controls are by now relatively well understood. Assuming sufficient regularity of the data $H$ and $G$, well-posedness can be established under suitable smallness conditions -- typically on the time horizon $T$ -- even in the absence of non-degenerate idiosyncratic noise. We refer to the non-exhaustive list of works \cite{GanSwi, CarDel:vol2, May, CarCirPor, AmbMes} for results obtained under such assumptions.

To ensure global well-posedness of the master equation, additional structural conditions -- typically {\it monotonicity conditions} on $H,G$ -- are needed. Historically, the first such global well-posedness results were established in the breakthrough works \cite{ChaCriDel22, CarDelLasLio} in the presence of non-degenerate idiosyncratic noise and under the so-called {\it Lasry--Lions monotonicity (LL-monotonicity) conditions} on $G$ and on {\it separable} $H$. We refer also to the recent work \cite{JakRut} for some clarifications and extensions to nonlocal diffusion settings. The so-called {\it displacement monotonicity} (D-monotonicity; -- \cite{Ahu, MesMou} -- stemming from displacement convexity widely used in optimal transport, cf. \cite{McC,Par}) framework is another influential setup leading to the global well-posedness of classical master equations. D-monotonicity leads also to quantified long time behavior and quantitative convergence of equilibria with finitely many agents when the number of agents tends to infinity (\cite{CirMes, JacTan,JacMes}). This allows in general non-separable Hamiltonians and possibly degenerate idiosyncratic noise, see \cite{GanMes, GanMesMouZha, BanMesMou, BanMes:25-FMS}. We mention also the recent work \cite{GraMes:23} that proposes further two monotonicity conditions (distinct from the LL- and D-monotonicity settings) and \cite{GraMes:24} which presents a global well-posedness theory for a toy deterministic master equation within one of these additional frameworks.
\newline \newline 
\textit{Mean Field Games of Controls.} MFGC were first introduced in \cite{GomPatVos, GomVos:13, GomVos:16} under the terminology of {\it extended MFGs} (the terminology of {\it MFG of controls} became widely used since \cite{CarLeh}) and ever since they continue to receive great attention both in applications (in macroeconomics, mathematical finance, modeling of energy markets, etc.) and in mathematical investigations. By now significant advancement has been made in this theory both from the PDE and probabilistic viewpoints when it comes to existence, uniqueness (cf. \cite{Kob:22-cpde, Kob:22-nodea, AchKob, GraMulPfe, GraMat, GraMatRui, GraRos, CD1, CarLeh}) and convergence of Nash equilibria for games with a finite number of player, when the number of agents tends to infinity (cf. \cite{Dje:22, Dje:23-esaim, Dje:23-aap, LauTan, PosTan, JacMes}).

In contrast, classical approaches do not extend in a straightforward manner to the study of master equations arising in MFGC, even when smallness conditions are imposed. Therefore, the literature on the master equation for MFGCs is much more limited. The underlying difficulty is structural: the relevant data are $(\hat H,G)$ -- or equivalently $(H,G,\Phi)$ -- rather than $(H,G)$ alone, and obtaining the required smoothness properties for $\Phi$ is itself an intrinsic part of the problem. As a consequence, existing techniques fail to yield well-posedness results for MFGC master equations in general. In \cite{GraSir}, the authors successfully adapt techniques from \cite{CarDelLasLio} to address a particular model with interactions through a finite-dimensional quantity. Beyond this particular framework, there are only a couple of works addressing the solvability of the master equation in MFGC, \cite{MouZha:2022,LiaMou, LiuLiuMouSun}. The main well-posedness results in these papers are {\it conditional on the smoothness of $\Phi$}: assuming the smoothness of the fixed point maps (or the implicitly defined Hamiltonian $\hat{H}$) these works study the propagation of the LL- and D-monotonicities, which lead to the global solvability of the master equation. We emphasize again that we are not aware of any sufficient conditions for these smoothess conditions on $\Phi$ or $\hat{H}$, except in the separable case $H(x,p,\mu) = H_0(x,\mu) + H_1(x,p,\mu^x)$, with $\mu^x$ being the first marginal of $\mu$. The well-posedness of the master equation for MFGC has thus remained an open and challenging problem, even under smallness assumptions, until the present work.

\subsection*{Main results and the novelties of our approach}

The main results of this work cover both regimes of LL- and D-monotone data, and can be informally summarized as follows (for the specific statements we refer to Theorems \ref{thm.main.disp} and \ref{thm.main.LL} below).

\begin{theorem}[Informal summary of main results] \label{thm:intro}
    Suppose that the Lagrangian $L$ and terminal cost function $G$ satisfy suitable regularity and growth properties at infinity.
    \begin{enumerate}
    \item Suppose furthermore that $L$ and $G$ satisfy a D-semi-monotonicity assumption. Then the master equation \eqref{ME} has a unique classical solution on a time interval that depends only on the D-semi-monotonicity constant. In particular, if the data are D-monotone, the time horizon can be taken arbitrary long.
    \item If instead $L$ and $G$ satisfy the LL-monotonicity assumption, then the master equation \eqref{ME} has a unique classical solution, for arbitrary long time horizons.
    \end{enumerate}
\end{theorem}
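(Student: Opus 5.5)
The plan follows the bottom-up strategy announced in the abstract. A classical solution of \eqref{ME} will be built as a limit of the $N$-player value functions $u^{N,i}$ solving \eqref{nashsystem}. There are four steps, carried out in order.

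\textbf{Step 1: the fixed-point maps.} First I would treat the generalized Isaacs condition and its mean field analogue.
\begin{enumerate}
\item[(i)] Under strong convexity of $L$ in $a$ together with a D-semi-monotonicity (respectively LL-monotonicity) condition on $L$ in its measure argument, I would show that \eqref{fixedpointintro} has a unique solution $\ba^N(\bx,\bp)$.
\item[(ii)] I would show that \eqref{def.Phi} has a unique solution $\Phi(\nu)$.
\end{enumerate}
The mechanism is to recast each fixed point as the zero of a strongly monotone operator. Testing the difference of two candidate solutions against itself, the monotonicity of $L$ supplies a good sign. In the semi-monotone case, the negative part is absorbed by the strong convexity in $a$. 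Differentiating the fixed-point identity and running the same monotonicity argument on the linearized system then gives bounds on $D_{\bx}\ba^N$ and $D_{\bp}\ba^N$. These bounds are uniform in $N$ and decay appropriately: derivatives in $x^j$ or $p^j$ for $j\neq i$ should be $O(1/N)$. Higher derivatives follow by induction. The same procedure yields smoothness of $\Phi$, and hence of $\hat H$ in the Wasserstein sense. It also shows that $\hat H$ inherits the corresponding D- or LL-monotonicity.

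\textbf{Step 2: uniform-in-$N$ estimates for \eqref{nashsystem}.} Next I would prove estimates of the form
\begin{align*}
|D_j u^{N,i}| \le C/N, \qquad |D_{jk}u^{N,i}|\le C/N \ \ (j\neq i),
\end{align*}
together with higher-order analogues at orders $1/N$ and $1/N^2$, on the relevant horizon, so in the D-semi-monotone case only up to the time $T_0$ set by the semi-monotonicity constant. In the D-monotone case the tool is the propagation of displacement monotonicity along the Nash system. One linearizes \eqref{nashsystem}, uses the bounds on $\ba^N$ from Step~1, and closes the estimates through a Lipschitz bound on $\diag \bu^N$ obtained from a forward--backward monotonicity argument. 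In the LL case one would instead adapt the Cardaliaguet--Delarue--Lasry--Lions type estimates, using LL-monotonicity of the $N$-player data uniformly in $N$. Non-degenerate idiosyncratic noise provides the parabolic smoothing needed for the higher derivatives.

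\textbf{Step 3: compactness and passage to the limit.} From the decay estimates I would define $U^N(t,x,m)$ by evaluating $u^{N,i}$ at empirical measures. I would then show the family is equicontinuous in $(t,x,m)$, with derivatives up to second order in $m$ also equicontinuous. Arzel\`a--Ascoli produces subsequential limits $U$. Passing to the limit in \eqref{nashsystem} shows that $U$ solves \eqref{ME}. The key point here is that $m^N_{\bx,\ba^N(\bx,\bp)}$ is close to $\Phi(m^N_{\bx,\bp})$ with a quantitative error, which follows from the stability estimates of Step~1. Uniqueness of classical solutions would come from the standard argument: compare any two classical solutions along the associated McKean--Vlasov characteristics, using the monotonicity of $\hat H$ established in Step~1. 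Uniqueness in turn upgrades subsequential convergence to convergence of the full sequence.

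\textbf{Main obstacle.} I expect the hardest part to be the Step~2 derivative estimates for the Nash system. Since $\ba^N$ couples all players' gradients, the nonlinearity is implicit and nonlocal, and the decay structure of $D\ba^N$ must be compatible with the propagation argument. My first attempt would be to prove a discrete displacement-monotonicity inequality for the map $\bp\mapsto \ba^N(\bx,\bp)$, namely that $-\ba^N$ is monotone in $\bp$ up to an $O(1/N)$ error. Plugging this directly into the standard D-monotone propagation computation would then bypass any explicit formula for $\ba^N$.
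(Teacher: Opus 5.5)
Your plan for part (1) has the same architecture as the paper: control of $\ba^N$ through the monotone linearized operator, uniform-in-$N$ decay estimates for the Nash system, Arzel\`a--Ascoli on empirical measures, and identification of the limit through $\bd_2\big(m^N_{\bx,\ba^N(\bx,\bp)},\Phi(m^N_{\bx,\bp})\big)\le CN^{-1/2}M_2^{1/2}(m^N_{\bx,\bp})$. Part (2), however, has a genuine gap.

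You propose to get uniform-in-$N$ Nash-system estimates on an arbitrary horizon directly from LL-monotonicity, by ``adapting Cardaliaguet--Delarue--Lasry--Lions''. Those estimates concern the MFG system and the master equation, and they are obtained by characteristics, not on the $N$-player system. LL-monotonicity of $L$ and $G$, evaluated at the player-dependent empirical measures $m^{N,-i}$, gives the closed-loop Nash system no monotone structure known to propagate. That system carries the coupling terms $\sum_{j\neq i}a^{N,j}\cdot D_ju^{N,i}$ and the implicit map $\ba^N$. The paper never proves such estimates.

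What is missing is a continuation argument carried out at the level of \eqref{ME}:
\begin{enumerate}
\item[(a)] LL-monotone data with bounded second derivatives are D-semi-monotone (Lemma \ref{lem.lagrangianLL} for $L$, Lemma \ref{lem.llmonotone.comp} for $G$). So part (1) yields a classical solution on an interval whose length depends only on the semi-monotonicity constants.
\item[(b)] Any classical solution on $[T_0,T]$ remains LL-monotone (Proposition \ref{prop.LLpropagation}). It also satisfies $\|D_{xx}U\|_\infty\le C$ with $C$ independent of $T_0$ (Proposition \ref{prop.dxx}). That bound comes from the representation $D_xU(t,X^{x_0}_t,m_t)=Y^{x_0}_t$ given by the stochastic maximum principle, plus heat-kernel smoothing. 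This is where the Lipschitz Assumption \ref{assump.regularity.LL} is used; your plan never invokes it.
\item[(c)] By Lemma \ref{lem.llmonotone.comp} again, $U(T_0,\cdot,\cdot)$ is D-semi-monotone with a $T_0$-independent constant. Hence part (1) can be restarted from the terminal datum $U(T_0,\cdot,\cdot)$, extending the solution backward by a fixed step $\eps$, and finitely many steps reach $t=0$.
\end{enumerate}

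Two smaller points. First, in part (1) the monotonicity argument does not give an $L^\infty$ Lipschitz bound on $\diag\bu^N$. It only gives the $L^2$-along-equilibrium-trajectories bounds of Proposition \ref{prop.nash.preliminaries}. The $L^\infty$ bounds $\|D_{kj}u^{N,i}\|_\infty\le C\omega^N_{i,j,k}$, and their third-order analogues, are obtained from these by an ordered bootstrap of BSDE energy estimates on short time intervals. The order is: $D_ju^{N,i}$ for $j\neq i$, then $D_{ij}u^{N,i}$, then $D_{ii}u^{N,i}$ and $D_{jj}u^{N,i}$, then distinct indices, then third order. This bootstrap is the real content of Step 2, and your sketch does not supply it.

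Second, your uniqueness argument relies on smoothness of $\Phi$ and monotonicity of $\hat H$. The discrete analysis establishes neither, and neither is needed. A verification argument shows that every classical solution produces a mean field equilibrium $\mu_t=\Phi\big((\mathrm{Id},D_xU(t,\cdot,m_t))_{\#}m_t\big)$ and is the corresponding value function. So uniqueness of $U$ reduces to uniqueness of equilibria, which follows from the monotonicity of $L$ and $G$ themselves.
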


\medskip

Many previous results on the existence of classical solutions to the master equation rely on a generalized method of characteristics. For example, the seminal work \cite{CarDelLasLio} (in the setting of standard MFGs) proceeds by defining a candidate solution $U$ as $U(t_0,x_0,m_0) := u^{t_0,m_0}_{t_0}(x_0)$, where $(u^{t_0,m_0}, m^{t_0,m_0})$ is the solution of the (stochastic) MFG system, a forward-backward system of PDEs which characterizes the equilibria of the game starting from initial conditions $(t_0,m_0)$. Then it is shown that (i) if this candidate solution $U$ is smooth enough, then it solves the master equation and (ii) smoothness can be rigorously obtained via stability estimates for the MFG system and its ``linearizations". It is also possible to use the same general approach, but replace the MFG system with a forward-backward system of  McKean--Vlasov SDEs, see e.g. \cite{ChaCriDel22}. The works \cite{CarDelLasLio, ChaCriDel22, CarDel:vol2, May, AmbMes} all use some variation of this generalized method of characteristics to build classical solutions to the master equation. Alternatively, one can first use the method of characteristics to build a short-time solution, then patch together local solutions via suitable a-priori estimates (cf. \cite{CarDel:vol2, GanMesMouZha}). Apart from the method of characteristics, another approach has been developed in \cite{GanMes, LiaMesMouZho} in the special case of potential mean field games, in which regularity for $U$ is obtained via estimates on discretizations of the corresponding mean field control problem. This last approach is the closest in spirit to the one pursued in this paper. Finally, \cite{CarCirPor} introduces a novel ``splitting method" to produce short-time solutions for several version of the master equation.

Our strategy for producing classical solutions to \eqref{ME} is based on discretization and compactness, rather than the method of characteristics. In particular, we first obtain uniform derivative decay estimates both on $u^{N,i}$ and $a^{N,i}$. We then argue that these estimates imply that $(u^{N,i})_{i = 1,\dots,N}$ converges along a subsequence, in an appropriate sense, to a classical solution $U$ of the master equation \eqref{ME}. The estimates we obtain on the $N$-player Nash system are a continuation of the ones obtained in our earlier paper \cite{JacMes}, and similar to the ones appearing (without interaction through the controls) in \cite{CirRed, CirJacRed}, but we need finer estimates here in order to execute our compactness argument. We will now describe the roadmap towards the proof of our main theorem.
\newline \newline 
\noindent 
\textit{Step 1 - uniform in $N$ derivative decay estimates on $\ba^{N}.$} The first main result of this work addresses the regularity of the fixed point maps. Since $\Phi:\cP_{2}(\R^{d}\times\R^{d})\to\cP_{2}(\R^{d}\times\R^{d})$ is a nonlinear map, mapping an infinite dimensional space to itself, the study of its regularity is highly nontrivial. As $\cP_{2}(\R^{d}\times\R^{d})$ is not a linear space, and the regularity of $\Phi$ needs to be established in the intrinsic sense, a natural way would be to consider $\cP_{2}(\R^{d}\times\R^{d})$ as an infinite dimensional manifold which is positively curved in the sense of Alexandrov. We bypass such potentially intricate and involved arguments by studying instead the regularity of the {\it finite dimensional fixed point maps} $\ba^{N}:(\R^{d})^{N}\times(\R^{d})^{N}\to (\R^{d})^{N}\times(\R^{d})^{N}$, defined in \eqref{fixedpointintro}, uniformly in $N$, and showing that these maps converge to $\Phi$ in some sense.

Relying on the analysis performed in \cite{JacMes}, we have that the (D- or LL-) monotonicity assumption on $L$ implies that for $N$ large enough the fixed point relation \eqref{fixedpointintro} has a unique solution which is Lipschitz continuous, uniformly in $N$. Furthermore, the fixed point equation \eqref{def.Phi} has a unique Lipschitz continuous solution, and $\ba^{N}$ converges $\Phi$ in a suitable sense. A careful rewriting of the monotonicity condition on $L$ for empirical probability measures yields that $\ba^{N}$ can be defined via {\it monotone operators} on $(\R^{d})^{N}\times (\R^{d})^{N}$. This crucial observation leads to apply a global implicit function theorem. In particular this in addition implies that the maps $\ba^{N}$ are as smooth as the data. Performing an implicit differentiation and a delicate bootstrapping procedure, we establish that derivatives of $\ba^{N}$ up to order three decay uniformly as $N^{-1}$, $N^{-2}$, etc. These results are collected in Proposition \ref{prop.aderivscaling}.
\newline \newline 
\noindent {\it Step 2 - uniform in $N$ derivative decay estimates on $u^{N,i}$ under D-semi-monotonicity}. Armed with the uniform in $N$ derivative estimates on $\ba^{N}$, we can turn to the Nash system \eqref{nashsystem}. We show that the composition of the Hamiltonian and the fixed point maps $\ba^{N}$ have similar derivative decay estimates up to order three. This then lets us obtain the desired derivative estimates of order up to three on $u^{N,i}$. These results are summarized in Theorem \ref{thm.uniform.nash}. Since the master equation is of second order, it is necessary to obtain derivatives up to order three on the $u^{N,i}$ functions. This analysis is very delicate and it requires carefully considering derivatives and their representation formulas via FBSDEs in a well designed precise {\it order}, and in appropriate norms. For instance, we first obtain appropriate $L^{\infty}$ bounds on derivatives of the form $D_j u^{N,i}$, $i \neq j$, together with $L^2$-type bounds \textit{along equilibrium trajectories} for $(D_{kj} u^{N,i})_{k = 1,\dots,N}$, then these estimates together are used to obtain a uniform bound on derivatives of the form $D_{ji} u^{N,i}$ for $i \neq j$, and these in turn play a key role in estimates of other second order derivatives.

\medskip

\noindent {\it Step 3 - existence in the D-semi-monotone case via compactness arguments.} We show that a solution to the master equation \eqref{ME} can be obtained as the limit of $u^{N,i}$; roughly speaking, this comes from applying an Arzel\`a--Ascoli type argument simultaneously to the functions $u^{N,i}$ and their derivatives up to order $2$. We rely on the philosophy that $\cP_{2}(\R^{d})$ can be seen as a suitable limit of the spaces $(\R^{d})^{N}$ quotiented with respect to the equivalence relation that any two vectors $(x_{1},\dots,x_{N}), (x_{\sigma(1)},\dots,x_{\sigma(N)})\in (\R^{d})^{N}$ are equivalent for any permutation $\sigma:\{1,\dots,N\}\to\{1,\dots,N\}.$
We show that the previously established uniform decay estimates on the derivatives of $u^{N,i}$ provide the sufficient compactness arguments to perform this limiting process and as a result we obtain the first part of our main Theorem \ref{thm:intro}.

\medskip

\noindent {\it Step 4 - Existence in the LL-monotone case.} This case is in fact a straightforward consequence of the D-semi-monotone case. First, if $L$ and $G$ and LL-monotone, with uniformly bounded second order derivatives, then they are also D-semi-monotone, and so we obtain a local in time classical solution. Our strategy then is to use propagation of LL-monotonicity to show that any local in time classical solution to master equation in fact remains uniformly D-semi-monotone. Then, any such local solution can be extended to a global in time solution, which is LL-monotone and D-semi-monotone.
\medskip

\subsection*{The structure of the rest of the paper.} In Section \ref{sec:2} we recall some classical notation, list all of our standing assumptions on $L$ and $G$ (separately for the D-semi-monotone and LL-monotone cases) and state the main theorems of the paper. Section \ref{sec:3} is dedicated to the study of the quantitative derivative estimates on the finite dimensional fixed point maps $\ba^{N}$. Sections \ref{sec.uniform} and \ref{sec:5} can be seen as the main parts of the paper. In Section \ref{sec.uniform} we establish the uniform in $N$ derivative decay estimates for the solutions $u^{N,i}$ of the $N$-player Nash system, while in Section \ref{sec:5} we prove the first part of Theorem \ref{thm:intro}, showing the existence and uniqueness of a solution to the master equation in the case of D-semi-monotone data. Section \ref{sec:6} ends the main part of the paper, and establishes the proof of the second part of Theorem \ref{thm:intro} in the case of LL-monotone data. The paper ends with two appendix sections. First, in Appendix \ref{app:A} we establish the existence of a solution for the $N$-player Nash system, under our standing assumptions, while Appendix \ref{app:B} contains the proof of two technical lemmas from the main part.
\medskip

\noindent {\bf Acknowledgments.} J.J. is supported by the NSF under Grant No. DMS2302703. A.R.M. has been supported by the EPSRC New Investigator Award ``Mean Field Games and Master equations'' under award no. EP/X020320/1.

\section{Notation, assumptions and main results}\label{sec:2}

\subsection{Notation} We denote by $\cP(\R^d)$ the space of Borel probability measures supported on $\R^d$, and by $\cP_2(\R^d)$ the subset of  $\cP(\R^d)$ such that 
\begin{align*}
    M_2(m) \coloneqq \int_{\R^d} |x|^2 m(dx) < \infty,\ \ \forall m\in \cP_{2}(\R^{d}). 
\end{align*}
We denote by $\bd_1$ and $\bd_2$ the standard 1-Wasserstein and 2-Wasserstein distances on $\cP_2(\R^d)$. We use analogous notation for $\cP(\R^d \times \R^d)$ and $\cP_2(\R^d \times \R^d)$. With a slight abuse of notation we denote by $\bd_1$ and $\bd_2$ the 1-Wasserstein and 2-Wasserstein distances also on $\cP_2(\R^d \times \R^d)$. Given a function $\Psi : \cP_2(\R^d) \to \R$, we denote by $D_m \Psi$ the Wasserstein or intrinsic derivative, see e.g. \cite[Section 5]{CD1}. In particular, if $\Psi$ is $C^1$, then $D_m \Psi$ is a map
\begin{align*}
  \cP_2(\R^d) \times \R^d \ni  (m,y) \mapsto D_m \Psi(m,y) \in \R^d.
\end{align*}
We also use similar notation for higher derivatives, e.g. if $m \mapsto D_m \Psi$ is differentiable, then $D_{mm} \Psi$ is a map of the form
\begin{align*}
    \cP_2(\R^d) \times \R^d \times \R^d \ni (m,y,z) \mapsto D_{mm} \Psi(m,y,z) \coloneqq D_m \Big[ D_m \Psi(\cdot ,y) \Big](m,z) \in \R^{d \times d}.
\end{align*}
For functions which depend on a Euclidean parameter and a measure, e.g. for functions $\Psi : \R^d \times \cP_2(\R^d) \to \R$, smoothness is understood with respect to both the position variable $x$ and the measure variable $m$, e.g. in this case $\Psi$ is $C^2$ if the first derivatives $D_x \Psi$ and $D_m \Psi$, as well as the second derivatives
\begin{align*}
   D_{xx} \Psi : \R^d \times \cP_2(\R^d) \to \R^{d \times d}, \quad  D_{xm} \Psi : \R^d \times \cP_2(\R^d) \times \R^{d \times d} \to \R, \quad D_{mm} \Psi : \R^d \times \cP_2(\R^d) \to \R^{d \times d}.
\end{align*}
The notation $C^k$ is used similarly. Again, very similar notation will be used for functions defined on $\cP_2(\R^d \times \R^d)$ instead of $\cP_2(\R^d)$. We note that if $\Psi : \cP_2(\R^d \times \R^d) \to \R$, $D_{\mu} \Psi$ takes values in $\R^d \times \R^d$. At times, it will be convenient to write it as $(D_{\mu}^x \Psi, D_{\mu}^a \Psi)$, where $D_{\mu}^x \Psi$ and $D_{\mu}^a \Psi$ each take values in $\R^d$.

We will also be working extensively with functions defined on $(\R^d)^N$ or $(\R^d \times \R^d)^N$. We use bold for elements of these spaces, and superscripts for their $\R^d$-valued components, e.g. we write $\bx = (x^1,\dots,x^N) \in (\R^d)^N$, and each $x^i$ can be further expanded as $x^i = (x^i_1,\dots,x^i_d) \in \R^d$. Given $\phi : (\R^d)^N \to \R$, we write $D_i \phi$ or $D_{x^i} \phi$ for the gradient in the direction $x^i$, i.e. $D_i \phi(\bx) = (D_{x^i_1} \phi(\bx),\dots,D_{x^i_d} \phi(\bx)) \in \R^d$. A similar notation is used for the second derivatives $D_{ij} \phi = D_{x^ix^j} \phi \in \R^{d \times d}$, i.e. 
\begin{align*}
    \left[ D_{ij} \phi(\bx) \right]_{p,q} = D_{x^i_q x^j_p} \phi(\bx), \quad i,j = 1,\dots,N, \quad p,q = 1,\dots,d.
\end{align*}

\subsection{Monotonicity and regularity assumptions}

We start with some regularity assumptions imposed on the data $L$, $H$, and $G$.

\begin{assumption}[Regularity] \label{assump.regularity.disp}
The terminal condition $G$ is of class $C^5$ and bounded from below, with all derivatives of order $2,3$, $4$, and $5$ being uniformly bounded, and in addition the first derivative $D_m G$ is uniformly bounded. The functions $L$ and $H$ are of class $C^5$, with derivatives of order $2$, $3$, $4$, and $5$ uniformly bounded, and in addition $D_{\mu} L$ is bounded. Moreover, $L$ is uniformly strictly convex in the control variable $a$, i.e. there is a constant $C> 0$ such that 
\begin{align} \label{strictconvexity}
    D_{aa} L(x,a,\mu) \geq C I_{d \times d}, \,\, \text{for all } (x,a,\mu) \in \R^d \times \R^d \times \cP_2(\R^d \times \R^d),
\end{align}
and we also have the coercivity/growth condition 
\begin{align} \label{coercive}
  \frac{1}{C} |a|^2 - C \leq  L(x,a,\mu) \leq C\Big( 1 + |x|^2 + |a|^2 +  M_2^{1/2}(\mu) \Big).
\end{align}
Finally, the map
\begin{align*}
 \R^d \times \R^d \times \cP_2(\R^d \times \R^d) \times \R^d \times \R^d \ni  (x,a,\mu,x',a') \mapsto \frac{\delta}{\delta m} D_x L \big(x,a,\mu,x',a')
\end{align*}
is uniformly Lipschitz continuous, and likewise $\frac{\delta}{\delta m} D_a L$ and $\frac{\delta}{\delta m} D_x G$ are uniformly Lipschitz continuous, with Lipschitz continuity being understood with respect to $\bd_1$.
\end{assumption}

Notice that Assumption \ref{assump.regularity.disp} allows $G$ and $L$ to have quadratic growth in $x$, since $D_x G$ and $D_x L$ are not supposed to be bounded. In the displacement monotone case, we will see that it is possible to produce a classical solution under these conditions, but in the Lasry--Lions monotone case we will need Lipschitz bounds, which we will be obtained under the following strengthening of Assumption \ref{assump.regularity.disp}. 

\begin{assumption}[Regularity + Lipschitz] \label{assump.regularity.LL}
    Assumption \ref{assump.regularity.disp} holds, and in addition there is a constant $C>0$ such that we have the bounds
    \begin{align*}
        |D_xH(x,p,\mu)| \leq C(1 + |p|), \quad |D_x G(x,\nu)| \leq C,\ \ \forall x,p\in\R^{d},\ \forall \mu\in\cP_{2}(\R^{d}\times\R^{d}),\ \forall \nu\in\cP_{2}(\R^{d}).
    \end{align*}
\end{assumption}

The next assumption will allow us to prove that the maps $\Phi$ and $\ba^N$ discussed above are well-defined (at least for large enough $N$, in the latter case) and satisfy some important regularity estimates.

\begin{assumption} \label{assump.fixedpoint}
   There are constants $C_{L,a} > 0$, $C_{L,x} \geq 0$ such that $L$ satisfies
    \begin{align} \label{dispmonotone.L}
    &\E\bigg[ \Big( D_aL\big(X,\alpha, \cL(X,\alpha)\big) - D_aL\big(X',\alpha', \cL(X',\ov{\alpha})\big)  \Big) \cdot \Delta \alpha
   \nonumber  \\
    &\qquad + \Big(  D_x L\big(X,\alpha, \cL(X,\alpha)\big) - D_x L\big(X',\alpha', \cL(X',\alpha')\big) \Big) \cdot \Delta X \bigg] \geq C_{L,a} \E\big[ |\Delta \alpha|^2 \big] - C_{L,x} \E\big[ |\Delta X|^2 \big], 
    \nonumber \\
    &\text{ where } \Delta X \coloneqq X - X', \quad \Delta \alpha \coloneqq \alpha - \alpha',
\end{align}
for all square-integrable random vectors $X, X',\alpha, \alpha'$.
\end{assumption}

We next state two monotonicity conditions, which are both stronger than Assumption \ref{assump.fixedpoint}. The first is a displacement semi-monotonicity condition, and the second is a Lasry--Lions monotonicity condition. We will construct classical solutions under either of the two conditions.

\begin{assumption}[Displacement semi-monotonicity] \label{assump.disp}
There are constants $C_{L,a} > 0, C_{L,x}, C_G \geq 0$ such that $L$ satisfies \eqref{dispmonotone.L} and $G$ satisfies
\begin{align} \label{dispmonotone.G}
    \E\bigg[ \Big( D_x G\big(X, \cL(X) \big) - D_x G \big(X', \cL(X') \big) \Big) \cdot  \Delta X \bigg] \geq -C_G \E\big[ |\Delta X|^2 \big], \quad \text{where } \Delta X \coloneqq X - X'
\end{align}
for all square-integrable $\R^d$-valued random vectors $X,X', \alpha, \alpha'$. Moreover, the constants $C_{L,a}$, $C_{L,x}$, and $C_G$ satisfy
\begin{align*}
    C_{\dis} \coloneqq C_{L,a} - T C_G - \frac{T^2}{2} C_{L,x} > 0.
\end{align*}
\end{assumption}

\begin{remark} \label{rmk.dmonotone.secondorder}
    Following computations in \cite{MesMou} (see Remark 2.1 therein), one can show that under the regularity Assumption \ref{assump.regularity.disp}, \eqref{dispmonotone.L} is equivalent to the following: given square-integrable random vectors $X, \alpha, Y, \beta, \hat{X}, \hat{\alpha},  \hat{Y}, \hat{\beta}$ such that $(X,\alpha, Y, \beta)$ and $(\hat{X}, \hat{\alpha}, \hat{Y}, \hat{\beta})$ are i.i.d., we have 
    \begin{align*}
        &\E\bigg[ \begin{pmatrix} Y \\ \beta \end{pmatrix}^\top D_{x,a}^2 L\big(X,\alpha, \cL(X,\alpha) \big) \begin{pmatrix} Y \\ \beta \end{pmatrix} + \begin{pmatrix} Y \\ \beta \end{pmatrix}^\top D_{\mu} D_{x,a} L\big(X,\alpha, \cL(X,\alpha), \hat{X},\hat{\alpha}\big) \begin{pmatrix} \hat{Y} \\ \hat{\beta} \end{pmatrix} \bigg]
        \\
        &\qquad \qquad \qquad \geq C_{L,a} \E\big[ |\beta|^2 \big] - C_{L,x} \E\big[ |Y|^2 \big],
    \end{align*}
    where 
    \begin{align*}
        D_{x,a}^2 L = \begin{pmatrix}
            D_{xx} L & D_{ax} L 
            \\
            D_{xa} L & D_{aa} L
        \end{pmatrix}, \quad
        D_{\mu} D_{x,a} L = \begin{pmatrix}
            D_{\mu}^x D_x L & D_{\mu}^a D_x L 
            \\
            D_{\mu}^x D_a L & D_{\mu}^a D_a L.
        \end{pmatrix}
    \end{align*}
    Similarly, \eqref{dispmonotone.G} is equivalent to 
    \begin{align*}
        \E\big[ Y^\top D_{xx} G\big(X, \cL(X) \big) Y + Y^\top D_{\mu} D_x G\big(X,\cL(X), \hat{X}\big) \hat{Y} \big] \geq C_G \E[|Y|^2]
    \end{align*}
    for all square-integrable random vectors $X,Y,\hat{X},\hat{Y}$ such that $(X,Y)$ and $(\hat{X}, \hat{Y})$ are i.i.d. 
\end{remark}

Finally, we will also consider the Lasry--Lions monotonicity conditions.

\begin{assumption}[Lasry--Lions monotonicity] \label{assump.LL}
The Lagrangian $L$ satisfies 
\begin{align} \label{llmonotone.L}
    \int_{\R^d} \int_{\R^d} \left[ L(x,a,\mu) - L(x,a,\mu') \right] d(\mu - \mu')(x,a) \geq 0
\end{align}
for all $\mu,\mu' \in \cP_2(\R^d \times \R^d)$. In addition, $G$ satisfies
\begin{align} \label{llmonotone.g}
    \int_{\R^d} \left[ G(x,m) - G(x,m') \right] d(m - m')(x) \geq 0, 
\end{align}
for each $m,m' \in \cP_2(\R^d)$.
\end{assumption}

\begin{remark} \label{rmk.llmonotone.secondorder}
   Following computations in \cite{MesMou} (see Remark 2.1 therein) one can show that under Assumption \ref{assump.regularity.disp}, \eqref{llmonotone.L} is equivalent to 
    \begin{align*}
        \E\bigg[ \begin{pmatrix} Y \\ \beta \end{pmatrix}^\top D_{\mu} D_{x,a} L\big(X,\alpha, \cL(X,\alpha), \hat{X},\hat{\alpha}\big) \begin{pmatrix} \hat{Y}  \\ \hat{\beta} \end{pmatrix} \Big] \geq 0
    \end{align*}
   for any square-integrable random vectors $X, \alpha, Y, \beta, \hat{X}, \hat{\alpha},  \hat{Y}, \hat{\beta}$ such that $(X,\alpha, Y, \beta)$ and $(\hat{X}, \hat{\alpha}, \hat{Y}, \hat{\beta})$ are i.i.d. Similarly, \eqref{llmonotone.g} is equivalent to  \begin{align*}
        \E\big[ Y^\top D_{\mu} D_x G\big(X,\alpha, \cL(X,\alpha), \hat{X}, \hat{\alpha}\big) \hat{Y} \big] \geq 0
    \end{align*}
    for all square-integrable random vectors $X,Y,\hat{X},\hat{Y}$ such that $(X,Y)$ and $(\hat{X}, \hat{Y})$ are i.i.d. 
\end{remark}

We emphasize that both Assumption \ref{assump.disp} and Assumption \ref{assump.LL}, when combined with Assumption \ref{assump.regularity.disp}, imply Assumption \ref{assump.fixedpoint}. This is recorded in the following Lemma. The proof is transparent in view of the second order characterizations in Remarks \ref{rmk.dmonotone.secondorder} and \ref{rmk.llmonotone.secondorder}, and so is omitted. 

\begin{lemma} \label{lem.lagrangianLL}
Suppose that $L:\R^d\times\R^d\times\cP_2(\R^d\times\R^d)\to\R$ is Lasry--Lions monotone, of class $C^2$ and that there are constants $C_a>0$ and $C>0$ such that $D_{aa}L(x,a,\mu)\ge C_a I_{d\times d}$ for all $(x,a,\mu)\in\R^d\times\R^d\times\cP_2(\R^d\times\R^d)$, $\max\{\|D_{aa}L\|_{L^\infty}, \|D_{xx}L\|_{L^\infty}, \|D_{ax}L\|_{L^\infty}\}\le C$. Then, there exist $C_{L,a}>0$ and $C_{L,x}\ge 0$ (depending on $C_a$ and $C$) such that \eqref{dispmonotone.L} takes place.
\end{lemma}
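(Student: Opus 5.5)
The plan is to use the second-order characterization in Remark \ref{rmk.dmonotone.secondorder}: it suffices to show that for i.i.d. pairs $(X,\alpha,Y,\beta)$, $(\hat X,\hat\alpha,\hat Y,\hat\beta)$,
\begin{align*}
    \E\bigg[ \begin{pmatrix} Y \\ \beta \end{pmatrix}^\top D_{x,a}^2 L\big(X,\alpha,\cL(X,\alpha)\big) \begin{pmatrix} Y \\ \beta \end{pmatrix} + \begin{pmatrix} Y \\ \beta \end{pmatrix}^\top D_{\mu} D_{x,a} L\big(X,\alpha,\cL(X,\alpha),\hat X,\hat\alpha\big) \begin{pmatrix} \hat Y \\ \hat \beta \end{pmatrix} \bigg] \geq C_{L,a}\E|\beta|^2 - C_{L,x}\E|Y|^2 .
\end{align*}
By Remark \ref{rmk.llmonotone.secondorder}, Lasry--Lions monotonicity of $L$ makes the second (nonlocal) term nonnegative, so it can be dropped. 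The remaining term is pointwise: I need a lower bound for the quadratic form $Y^\top D_{xx}L\, Y + 2 Y^\top D_{ax}L\,\beta + \beta^\top D_{aa}L\,\beta$ (the off-diagonal blocks being transposes of each other).

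For this, I use $D_{aa}L\ge C_a I$, $|D_{xx}L|\le C$, $|D_{ax}L|\le C$, and Young's inequality $2|Y^\top D_{ax}L\,\beta|\le 2C|Y||\beta|\le \tfrac{C_a}{2}|\beta|^2+\tfrac{2C^2}{C_a}|Y|^2$. This gives the pointwise bound
\begin{align*}
    \begin{pmatrix} Y \\ \beta \end{pmatrix}^\top D_{x,a}^2 L \begin{pmatrix} Y \\ \beta \end{pmatrix} \geq \frac{C_a}{2}|\beta|^2 - \Big(C + \frac{2C^2}{C_a}\Big)|Y|^2 .
\end{align*}
Taking expectations yields the claim with $C_{L,a}=C_a/2$ and $C_{L,x}=C+2C^2/C_a$.

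The only delicate point is the equivalence between \eqref{dispmonotone.L} and its second-order form. Remark \ref{rmk.dmonotone.secondorder} states it under Assumption \ref{assump.regularity.disp}, whereas here only $C^2$ regularity is assumed. If that equivalence is not available, I would argue directly instead. Set $X_s=X'+s\Delta X$ and $\alpha_s=\alpha'+s\Delta\alpha$, and differentiate $s\mapsto \E[D_aL(X_s,\alpha_s,\cL(X_s,\alpha_s))\cdot\Delta\alpha + D_xL(\cdots)\cdot\Delta X]$. Its derivative is exactly the quadratic expression above with $(Y,\beta)=(\Delta X,\Delta\alpha)$, evaluated at $(X_s,\alpha_s)$, with the $D_\mu$ term taken against an independent copy. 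Integrating over $s\in[0,1]$ gives \eqref{dispmonotone.L}, since the integrand is bounded below for each $s$ by the same pointwise estimate. Making this chain rule rigorous along $\cL(X_s,\alpha_s)$ is standard for $C^2$ functions in the Lions sense, possibly after approximating by bounded or smooth random variables. I expect this justification to be the main technical step; the algebra itself is routine.
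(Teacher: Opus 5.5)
Your proposal is correct and is the argument the paper has in mind. The paper omits the proof as transparent from Remarks \ref{rmk.dmonotone.secondorder} and \ref{rmk.llmonotone.secondorder}: drop the nonlocal $D_\mu D_{x,a}L$ term (nonnegative by Lasry--Lions monotonicity), then bound the local Hessian form from below using $D_{aa}L\ge C_a I$, the bounds on $D_{xx}L$ and $D_{ax}L$, and Young's inequality, which is exactly what you do; your extra care about justifying the second-order equivalence under mere $C^2$ regularity, via the interpolation $s\mapsto (X_s,\alpha_s)$, goes beyond what the paper addresses.
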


\subsection{Main results} 

By a classical solution to \eqref{ME}, we mean a function $U \in C^{1,2,2}([0,T] \times \R^d \times \cP_2(\R^d))$ with $D_{xx} U$, $D_{xm} U$, $D_{ym} U$ and $D_{mm} U$ all uniformly bounded, which satisfies $|U(t,x,m)| \leq C \big(1 + |x|^2 + M_1(m) \big)$, for all $(t,x,m)\in [0,T] \times \R^d \times \cP_2(\R^d)$, and satisfies the equation \eqref{ME} in a pointwise sense. 

The main results of this paper are as follows.
\begin{theorem}[Classical solutions under displacement semi-monotonicity] \label{thm.main.disp}
    Suppose that Assumptions \ref{assump.regularity.disp} and \ref{assump.disp} hold. Then there exists a unique classical solution to \eqref{ME}. 
\end{theorem}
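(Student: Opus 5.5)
The proof will follow the bottom-up roadmap from the introduction: construct $U$ as a limit of the $N$-player value functions, then prove uniqueness separately.

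\emph{Step 1: the fixed-point maps.} Assumption \ref{assump.disp} contains \eqref{dispmonotone.L}. For large $N$ we restate it for empirical measures. This shows that solving \eqref{fixedpointintro} amounts to inverting a strongly monotone, smooth operator on $(\R^d)^N\times(\R^d)^N$, with monotonicity constant of order $C_{L,a}$ up to $O(1/N)$ errors. A global implicit function theorem then gives a unique $C^4$ map $\ba^N$. Differentiating $a^i=-D_pH(x^i,p^i,m^{N,-i}_{\bx,\ba})$ implicitly and bootstrapping, I would show that $D_{x^j}a^{N,i}$ and $D_{p^j}a^{N,i}$ are $O(1/N)$ for $j\neq i$. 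Mixed higher derivatives involving distinct off-diagonal indices should decay at correspondingly faster rates. I would also show that $m^N_{\bx,\ba^N(\bx,\bp)}$ is $\bd_2$-close to $\Phi(m^N_{\bx,\bp})$, where $\Phi$ is well defined and Lipschitz by the same monotonicity at the continuum level. This also makes $\hat H$ regular: its derivatives are limits of symmetrized derivatives of $H(x^i,p^i,m^{N,-i}_{\bx,\ba^N})$.

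\emph{Step 2: uniform estimates for the Nash system.} Take $u^N$ solving \eqref{nashsystem}; existence for fixed $N$ is standard given the regularity of $\ba^N$. The goal is bounds uniform in $N$: $|D_iu^{N,i}|\le C(1+|x^i|+\cdots)$, $|D_{ii}u^{N,i}|\le C$, $|D_ju^{N,i}|\le C/N$, and $|D_{ij}u^{N,i}|,|D_{jj}u^{N,i}|\le C/N$, with analogous $1/N$ and $1/N^2$ decay at third order. The argument uses the FBSDE/Pontryagin representation along equilibrium trajectories. The displacement semi-monotonicity condition $C_{\dis}>0$ controls $\sum_i|D_ju^{N,i}|$-type quantities via the usual energy estimate on $\E[\Delta Y\cdot\Delta X]$ for perturbed systems. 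After that, the derivatives are estimated in a careful order: first $L^\infty$ bounds on $D_ju^{N,i}$, then $L^2$ bounds along trajectories on $(D_{kj}u^{N,i})_k$, then $D_{ji}u^{N,i}$, and then the remaining second and third derivatives, using the Step 1 bounds on compositions $H\circ\ba^N$. \textbf{This is the main obstacle.} The coupling through $\ba^N$ introduces extra cross terms at every differentiation, and closing the bootstrap without losing powers of $N$ requires the precise ordering above.

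\emph{Step 3: compactness, limit, uniqueness.} Define $U^N(t,x,m)$ by suitably symmetrizing/extending $u^{N,1}(t,x,\cdot)$, for example through evaluation at empirical measures and interpolation. The bounds from Step 2 give equicontinuity of $U^N$ together with $D_xU^N$, $D_{xx}U^N$, $D_mU^N$, $D_{ym}U^N$, $D_{xm}U^N$, $D_{mm}U^N$ on $\bd_1$-compacta, while the third-order bounds give Lipschitz moduli. An Arzel\`a--Ascoli argument then yields a subsequence converging, with these derivatives, to some $U\in C^{1,2,2}$; the time derivative is read off from the equation. Passing to the limit in \eqref{nashsystem} requires two facts:
\begin{itemize}
\item $\sum_j\Delta_ju^{N,i}$ and $\sigma_0\sum_{j,k}\tr(D_{jk}u^{N,i})$ converge to $\Delta_{\id}U$ and $\sigma_0\Delta_{\com}U$, by the standard empirical-measure derivative identities.
\item The Hamiltonian terms converge to those in \eqref{ME}, by Step 1's convergence $\ba^N\to\Phi$.
\end{itemize}
The growth bound on $U$ follows from the uniform bounds.

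For uniqueness, let $V$ be another classical solution. I would use it to build $v^{N,i}=V(t,x^i,m^{N,-i}_\bx)$, which solve \eqref{nashsystem} up to $O(1/N)$ errors. Stability of the Nash system under D-semi-monotonicity, proved by the same FBSDE estimates as in Step 2, then gives $|u^{N,i}-v^{N,i}|\to0$. Hence every limit point equals $V$, which gives both uniqueness and convergence of the whole sequence.
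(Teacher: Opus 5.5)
Your Steps 1--3 follow the paper's existence argument, but your uniqueness step has a genuine gap. You need $|u^{N,i}-v^{N,i}|\to0$ for $v^{N,i}=V(t,x^i,m^{N,-i}_{\bx})$, and you attribute this to ``the same FBSDE estimates as in Step 2.'' Those estimates bound derivatives of a \emph{single} admissible solution, obtained by differentiating the system; they give no comparison between two (approximate) solutions. A comparison has to overcome three obstacles:
\begin{itemize}
\item It must control $H(x^i,D_iu^{N,i},\cdot)-H(x^i,D_iv^{N,i},\cdot)$. This is quadratic in the difference, and both gradients grow linearly in $x^i$.
\item It has cross terms with coefficients $D_jv^{N,i}=\frac{1}{N-1}D_mV(t,x^i,m^{N,-i}_{\bx},x^j)$. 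Boundedness of $D_mV$ is not part of the definition of a classical solution, so for an arbitrary $V$ these coefficients can grow linearly in the state variables.
\item The consistency error of $v^{N,i}$ is only $O\big(N^{-1/2}(1+|x^i|+M_2^{1/2}(m^N_{\bx}))\big)$, which is the rate in $\bd_2(m^N_{\bx,\ba^N},\Phi(m^N_{\bx,\bp}))$. It is not $O(1/N)$.
\end{itemize}
The paper explicitly leaves even uniqueness for \eqref{nashsystem2} open because of the quadratic growth. So stability of this kind cannot be taken as a routine by-product of the a priori bounds.

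None of this is needed; the paper argues as follows. Given any classical solution $V$, solve the conditional McKean--Vlasov SDE with drift $-D_p\hat H\big(X_t,D_xV(t,X_t,m_t),(\mathrm{Id},D_xV(t,\cdot,m_t))_{\#}m_t\big)$ and $m_t=\cL^0(X_t)$. It\^o's formula and a verification argument show that $\mu_t=\Phi\big((\mathrm{Id},D_xV(t,\cdot,m_t))_{\#}m_t\big)$ is a mean field equilibrium. They also show that $V(t_0,x_0,m_0)$ is the optimal cost of a representative player facing $\mu$. Uniqueness of equilibria under displacement semi-monotonicity (Theorem A.5 of \cite{JacMes}) then forces any two classical solutions to coincide.

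Two smaller points in your existence part are glossed rather than wrong.
\begin{itemize}
\item A symmetric admissible solution of the Nash system for fixed $N$ is not off-the-shelf with quadratic growth. The paper truncates, solves on short intervals, and globalizes using the uniform bounds of Step 2.
\item Arzel\`a--Ascoli, applied to objects defined only on empirical measures, yields limits $U_m,U_{xm},U_{ym},U_{mm}$. These must afterwards be identified with $D_mU$, etc.; the paper does this via the $L^2$-lift and the fundamental theorem of calculus along segments of empirical measures.
\end{itemize}
Finally, your claimed regularity of $\hat H$ in the measure variable is never needed; Lipschitz continuity of $\Phi$ suffices.
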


\begin{theorem}[Classical solutions under Lasry--Lions monotonicity] \label{thm.main.LL}
    Suppose that Assumptions \ref{assump.regularity.LL} and \ref{assump.LL} hold.
    Then there exists a unique classical solution to \eqref{ME}. 
\end{theorem}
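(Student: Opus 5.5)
The statement immediately above is Theorem \ref{thm.main.LL}. Following the roadmap of Step 4 in the introduction, I would reduce it to Theorem \ref{thm.main.disp} via a continuation argument.

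\textbf{Local solution.} Under Assumption \ref{assump.regularity.LL} the second derivatives of $L$ and $G$ are bounded. Combined with Lasry--Lions monotonicity (through Remark \ref{rmk.llmonotone.secondorder} and Lemma \ref{lem.lagrangianLL}), this shows that $L$ satisfies \eqref{dispmonotone.L} with some $C_{L,a}>0$ and $C_{L,x}\ge 0$. Likewise $G$ satisfies \eqref{dispmonotone.G} with $C_G=\|D_{xx}G\|_\infty$. The cross term $\E[Y^\top D_\mu D_x G\,\hat Y]$ is nonnegative by Remark \ref{rmk.llmonotone.secondorder}, so it can be dropped.
\begin{itemize}
\item Assumption \ref{assump.disp} therefore holds on any horizon $\tau$ with $C_{L,a}-\tau C_G-\tfrac{\tau^2}{2}C_{L,x}>0$.
\item Theorem \ref{thm.main.disp}, applied on $[T-\tau,T]$, then gives a classical solution there.
\item More generally, given any terminal datum $\tilde G$ that is LL-monotone, satisfies the regularity in Assumption \ref{assump.regularity.LL}, and has $\|D_{xx}\tilde G\|_\infty\le K$, we get a solution on a time interval of length $\tau(K)>0$. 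Here $\tau(K)$ depends only on $K$ and the data of $L$.
\end{itemize}

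\textbf{A priori estimates on classical solutions.} To iterate, I would show that any classical solution $U$ on $[t_0,T]$ satisfies three properties uniformly in $t_0$.
\begin{itemize}
\item \emph{Propagation of LL-monotonicity.} For each $t$, $U(t,\cdot,\cdot)$ is LL-monotone. I would take $m,m'$, consider the (conditional) MFG systems generated by $U$ along characteristics, $\alpha=a$-feedback from $-D_p\hat H(x,D_xU,\cdot)$, and run the classical Lasry--Lions duality computation. The computation pairs $U(t,\cdot,m)-U(t,\cdot,m')$ against $m-m'$ and uses the convexity of $L$ in $a$ together with \eqref{llmonotone.L} applied to the joint state-action laws, where $\mu=\Phi(\cdot)$. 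Since the common noise has constant intensity, it cancels in the standard way. Because $U$ is classical, this can also be done directly on the linearized equation for $\frac{\delta U}{\delta m}$, yielding $\int\!\!\int \frac{\delta U}{\delta m}(t,x,m,y)\,\xi(dx)\xi(dy)\ge 0$.
\item \emph{Lipschitz bound.} $\|D_xU\|_\infty\le C$, obtained from Assumption \ref{assump.regularity.LL} by differentiating in $x$ along optimal trajectories and using $|D_xH|\le C(1+|p|)$ with Gronwall.
\item \emph{Semiconcavity-type bound.} $\|D_{xx}U\|_\infty\le K^*$, with $K^*$ independent of $t_0$. This follows from the non-degenerate idiosyncratic noise and standard second-order estimates for the HJB satisfied by $x\mapsto U(t,x,m)$ along the fixed equilibrium flow. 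That flow is a well-defined HJB-FP system thanks to the Lipschitz bounds on $\Phi$ from Section \ref{sec:3}. Here the Lipschitz bound on $D_xU$ keeps the relevant coefficients of the HJB bounded.
\end{itemize}
These three properties together say that $U(t,\cdot,\cdot)$ is D-semi-monotone with constant $K^*$: LL-monotonicity makes the $D_\mu D_x$ cross term nonnegative, and $D_{xx}U\ge -K^*$.

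\textbf{Continuation and uniqueness.} Let $t^*$ be the infimum of the times $s$ for which a classical solution exists on $[s,T]$. If $t^*>0$, choose $s\in(t^*,t^*+\tau(K^*)/2)$ and restart from the terminal datum $U(s,\cdot,\cdot)$. This datum is LL-monotone with $D_{xx}$ bounded by $K^*$, so the local solution extends the existing one below $t^*$, which is a contradiction.

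One technical point is that the restart needs $U(s)$ to have the full $C^5$-type regularity of Assumption \ref{assump.regularity.disp}. I would either check that the construction propagates higher regularity through the uniform bounds of Section \ref{sec.uniform}, or avoid the issue altogether: run the $N$-player estimates directly on $[0,T]$, using the fact that LL-monotonicity prevents blow-up of the second-order quantities. Uniqueness follows from Theorem \ref{thm.main.disp} on each short subinterval.

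The main obstacle I expect is the uniform-in-time bound on $D_{xx}U$, and making sure the restart regularity matches Assumption \ref{assump.regularity.disp}. For the $D_{xx}U$ bound I would try maximum-principle/Bernstein estimates for the frozen-measure HJB first.
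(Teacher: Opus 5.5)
Your proposal follows the paper's proof. The paper obtains local solvability from Theorem \ref{thm.main.disp} (via Lemma \ref{lem.lagrangianLL}), and gets a $T_0$-independent D-semi-monotonicity constant for every classical solution by combining propagation of LL-monotonicity (Proposition \ref{prop.LLpropagation}) with the bounds on $D_xU$ and $D_{xx}U$ (Proposition \ref{prop.dxx}) and Lemma \ref{lem.llmonotone.comp}; it then continues backward with a step depending only on that constant. For $D_{xx}U$ the paper uses the stochastic maximum principle and a Duhamel/heat-kernel smoothing argument that integrates out only the idiosyncratic noise, which is more convenient than a Bernstein estimate on the frozen-measure HJB when $\sigma_0>0$. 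The restart-regularity issue you flag (that $U(s,\cdot,\cdot)$ must satisfy Assumption \ref{assump.regularity.disp}) is not addressed in the paper either. For uniqueness, the paper argues directly on $[0,T]$ through uniqueness of mean field equilibria under LL-monotonicity of $(L,G)$; this avoids your subinterval argument, which must re-invoke Theorem \ref{thm.main.disp} with datum $U(s,\cdot,\cdot)$ and so inherits the same regularity caveat.
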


\begin{remark} \label{rmk.uniqueness}
    Uniqueness of classical solutions in the setting of Theorems \ref{thm.main.disp} and \ref{thm.main.LL} can be reduced (by a verification-type argument) to the uniqueness of mean field equilibria for the corresponding MFGC. Uniqueness of equilibria under monotonicity conditions, in turn, can be proved by standard arguments. Thus the main issue addressed in this paper is the \textit{existence} of classical solutions.
\end{remark}

\section{Analysis of the discrete fixed point equation}\label{sec:3}

This section is about the properties of the maps
\begin{align*}
    \ba^N : (\R^d)^N \times (\R^d)^N \to (\R^d)^N
\end{align*}
defined formally by the fixed point relation \eqref{fixedpointintro}, as well their connection to the limiting object $\Phi$, defined by the fixed point problem \eqref{def.Phi}. We start by recalling some preliminary results which were obtained in \cite[Proposition 1.5]{JacMes}:
\begin{proposition} \label{prop.fixedpoint.prelims}
    Suppose that Assumptions \ref{assump.regularity.disp} and \ref{assump.fixedpoint} hold. Then there is a constant $C>0$ such that
    \begin{itemize}
        \item For large enough $N$, the fixed-point equation \eqref{fixedpointintro} uniquely defines a map $\ba^N : (\R^d)^N \times (\R^d)^N \to (\R^d)^N$, and this map is $C$-Lipschitz, in the sense that 
        \begin{align*}
            \sum_{i = 1}^N |a^{N,i}(\bx,\bp) - a^{N,i}(\ov{\bx},\ov{\bp})|^2 \leq C^2 \sum_{i = 1}^N \Big(|x^i - \ov{x}^i|^2 + |p^i - \ov{p}^i|^2\Big).
        \end{align*}
        \item The fixed-point equation \eqref{def.Phi} uniquely defines a map $\Phi : \cP_2(\R^d \times \R^d) \to \cP_2(\R^d \times \R^d)$, and this map satisfies 
        \begin{align*}
            \bd_2 \big( \Phi(\nu), \Phi(\nu') \big) \leq C \bd_2 \big( \nu,\nu' \big), 
        \end{align*}
        for each $\nu,\nu' \in \cP_2(\R^d \times \R^d)$.
        \item For large enough $N$,
            and all $\bx, \bp \in (\R^d)^N$, we have 
    \begin{align*}
        \bd_2\big( m_{\bx,\ba^N(\bx,\bp)}^N, \Phi(m_{\bx,\bp}^N) \big) \leq \frac{C}{\sqrt{N}} M_2^{1/2}\big(m_{\bx,\bp}^N\big). 
    \end{align*}
    \end{itemize}
\end{proposition}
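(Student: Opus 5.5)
The plan is to recast both fixed-point problems as zeros of strongly monotone operators, built directly from the first-order condition, and to obtain existence, uniqueness and Lipschitz bounds from monotone operator theory. Strict convexity \eqref{strictconvexity} makes $a\mapsto L(x,a,\mu)$ uniformly convex, so $a=-D_pH(x,p,\mu)$ is equivalent to $D_aL(x,a,\mu)+p=0$. Hence \eqref{fixedpointintro} is equivalent to $F^N_{\bx}(\ba)=-\bp$, where
\begin{align*}
F^{N}_{\bx}(\ba)^i:=D_aL\big(x^i,a^i,m^{N,-i}_{\bx,\ba}\big),\qquad i=1,\dots,N.
\end{align*}
Likewise, $\mu=\Phi(\nu)$ means $\mu=\cL(X,\alpha)$ with $D_aL(X,\alpha,\cL(X,\alpha))=-P$ for $(X,P)\sim\nu$.

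\textbf{Limiting map $\Phi$.} First I apply \eqref{dispmonotone.L} with $X=X'$, so $\Delta X=0$. This shows that $\alpha\mapsto D_aL(X,\alpha,\cL(X,\alpha))$ is $C_{L,a}$-strongly monotone on $L^2$, and it is Lipschitz by Assumption \ref{assump.regularity.disp}. The Browder--Minty theorem (or Zarantonello's contraction argument) then gives a unique $\alpha\in L^2(\sigma(X,P))$ solving the equation. This $\alpha$ is a function of $(X,P)$, so its law depends only on $\nu$ and $\Phi$ is well defined.

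For the Lipschitz bound, take an optimal coupling $(X,P),(X',P')$ of $\nu,\nu'$ and the corresponding solutions $\alpha,\alpha'$. Subtract the two equations and test against $\Delta\alpha$. I split the difference as $D_aL(X,\alpha,\cdot)-D_aL(X',\alpha,\cdot)$ plus the part at fixed $X'$. The first piece is bounded via the Lipschitz continuity of $D_aL$ in $x$ and in $\mu$ (the latter because $\bd_2(\cL(X,\alpha),\cL(X',\alpha))\le\|\Delta X\|_2$). The second piece is controlled below by strong monotonicity. Together these give $C_{L,a}\|\Delta\alpha\|^2\le C(\|\Delta X\|+\|\Delta P\|)\|\Delta\alpha\|$, hence $\bd_2(\Phi(\nu),\Phi(\nu'))\le\|(\Delta X,\Delta\alpha)\|_2\le C\bd_2(\nu,\nu')$.

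\textbf{Finite-$N$ map $\ba^N$.} I will do the same thing for $F^N_{\bx}$ on $(\R^d)^N$ with the normalized inner product $\frac1N\sum_i$. Monotonicity of \eqref{dispmonotone.L} is stated for laws $\cL(X,\alpha)$. With $X$ uniform on the $N$ atoms this corresponds to $m^N_{\bx,\ba}$, whereas $F^N_{\bx}$ uses $m^{N,-i}_{\bx,\ba}$. The main technical step is therefore to control the discrepancy. I would use
\begin{align*}
\bd_2\big(m^{N,-i}_{\bx,\ba},m^{N}_{\bx,\ba}\big)\lesssim N^{-1/2}\big(|x^i|+|a^i|+M_2^{1/2}\big),
\end{align*}
or, better, a $\bd_1$ bound of order $N^{-1}$ paired with the Lipschitz continuity of $\frac{\delta}{\delta m}D_aL$ assumed in Assumption \ref{assump.regularity.disp}. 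This shows that differences of the perturbation terms are $O(1/N)$ times $|\Delta \ba|$ in the normalized norm. Applied to differences (derivative in $\ba$), the perturbation contributes at most $\frac{C}{N}\|\Delta\ba\|^2$. Thus $F^N_{\bx}$ is $(C_{L,a}-C/N)$-strongly monotone, and for large $N$ it is a Lipschitz, strongly monotone bijection. Its inverse in $\bp$ is Lipschitz, and the joint Lipschitz bound in $(\bx,\bp)$ follows exactly as in the continuum case. This $O(1/N)$ control of the leave-one-out perturbation is where I expect the real work to lie.

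\textbf{Comparison of $\ba^N$ with $\Phi$.} Set $\bm a=\ba^N(\bx,\bp)$ and let $\alpha$ solve the continuum problem for $\nu=m^N_{\bx,\bp}$, realized on the uniform space $\{1,\dots,N\}$; write $\tilde a^i=\alpha(i)$. Then $\tilde\ba$ solves $D_aL(x^i,\tilde a^i,m^N_{\bx,\tilde\ba})=-p^i$, which is the full-empirical version of the equation. Plugging $\tilde\ba$ into $F^N_{\bx}$ leaves a residual of size $\bd_2(m^{N,-i},m^N)$, whose normalized $\ell^2$ norm is $\lesssim N^{-1/2}M_2^{1/2}(m^N_{\bx,\tilde\ba})$. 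Strong monotonicity of $F^N_{\bx}$ turns this into $\frac1N\sum|a^i-\tilde a^i|^2\le \frac CN M_2(m^N_{\bx,\tilde\ba})$. Since $M_2(m^N_{\bx,\tilde\ba})\le C M_2(m^N_{\bx,\bp})+C$ by the Lipschitz bound on $\Phi$, and the constant can be absorbed after normalizing with $D_aL(0,0,\delta_0)$ or assumed to vanish, this yields the third bullet with the same coupling.
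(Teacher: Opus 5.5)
Your proposal follows essentially the route the paper relies on. The statement itself is quoted from \cite[Proposition 1.5]{JacMes}, and the proof of Lemma \ref{lem.discretemonotone} shows the same mechanism. You get strong monotonicity of $\alpha\mapsto D_aL(X,\alpha,\cL(X,\alpha))$ from \eqref{dispmonotone.L} with $X=X'$. You transfer it to the leave-one-out operator $F^N_{\bx}$ through the Lipschitz continuity of $\frac{\delta}{\delta m}D_aL$, and you compare $\ba^N$ with $\Phi$ by a residual argument. The first two bullets go through as you describe. For the monotonicity of $F^N_{\bx}$, only your second option works. The distance bound on $\bd_2(m^{N,-i}_{\bx,\ba},m^N_{\bx,\ba})$ controls the perturbation at a single $\ba$, not its increments in $\ba$. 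You need the expansion $m^N_{\bx,\ba}-m^{N,-i}_{\bx,\ba}=\frac1N\big(\delta_{(x^i,a^i)}-m^{N,-i}_{\bx,\ba}\big)$ combined with the flat derivative.

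The one step that fails as written is the last one. Your estimate gives $\bd_2\big(m^N_{\bx,\ba^N},\Phi(m^N_{\bx,\bp})\big)\le \frac{C}{\sqrt N}\big(1+M_2^{1/2}(m^N_{\bx,\bp})\big)$. The additive term comes from $|a_0|$, where $D_aL(0,a_0,\delta_{(0,a_0)})=0$. You cannot assume this term vanishes. Subtracting $D_aL(0,0,\delta_{(0,0)})$ from $D_aL$ does not remove it either: that is a constant shift of $\bp$, and $M_2(m^N_{\bx,\bp})$ is not invariant under it. There are two correct fixes.
\begin{enumerate}
\item Shift the control instead. Replace $L$ by $\tilde L(x,a,\mu)=L(x,a+a_0,S_{\#}\mu)$ with $S(x,a)=(x,a+a_0)$. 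Every hypothesis you use holds for $\tilde L$ with the same constants. Both $\ba^N$ and the $a$-marginal of $\Phi$ are shifted by $-a_0$, so the left-hand side of the third bullet is unchanged. Now $\tilde\Phi(\delta_{(0,0)})=\delta_{(0,0)}$, so the second bullet gives $M_2(\tilde\Phi(\nu))\le C M_2(\nu)$ with no additive constant.
\item Keep the residual translation invariant. Write $\tilde z^j=(x^j,\tilde a^j)$; then $\bd_2(m^{N,-i}_{\bx,\tilde\ba},m^N_{\bx,\tilde\ba})^2\le \frac{1}{N(N-1)}\sum_{j\neq i}|\tilde z^i-\tilde z^j|^2$. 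Since $\tilde a^j=-D_pH\big(x^j,p^j,\Phi(m^N_{\bx,\bp})\big)$ is Lipschitz in $(x^j,p^j)$, the normalized $\ell^2$ norm of the residual is at most $\frac{C}{\sqrt N}$ times the square root of the variance of $m^N_{\bx,\bp}$. That variance is bounded by $M_2(m^N_{\bx,\bp})$.
\end{enumerate}
With either fix, the strong monotonicity of $F^N_{\bx}$ yields the third bullet exactly as stated.
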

It was shown in \cite{JacMes} that Proposition \ref{prop.fixedpoint.prelims} is enough to treat the convergence problem in the displacement monotone setting, but it is certainly not enough to obtain a classical solution to the master equation, because it only implies that the Hamiltonian $\hat{H}$ is $\bd_{2}$-Lipschitz continuous in $\mu$. Therefore we need to obtain finer regularity estimates on $\ba^N$.

\subsection{Uniform in $N$ estimates on $\ba^N$}
Given a tuple $\bm{n} = (n_1,\dots,n_M) \in \N^M$, we use $\dist(\bm{n})$ to denote the number of distinct integers appearing in the tuple $(n_1,\dots,n_M)$, e.g. $\dist(1,3,7) = 3$ but $\dist(1,3,3) = 2$. Then for $N \in \N$, and $i,j,k,l \in \{1,\dots,N\}$, we introduce the notation
\begin{align} \label{omegadef}  
    &\omega_{i,j}^N = N^{1 - \dist(i,j)}, \quad \omega_{i,j,k}^N = N^{1 - \dist(i,j,k)}, \quad \omega_{i,j,k,l}^N = N^{1 - \dist(i,j,k,l)}
\end{align}
To be clear, $\omega_{i,j}^N = 1$ if $i = j$ and $1/N$ if $i \neq j$, and the definition of $\omega_{i,j,k}^N$, $\omega_{i,j,k,l}^N$ are similar. When $N$ is clear from context, we will often write $\omega_{i,j} = \omega^N_{i,j}$, $\omega_{i,j,k} = \omega_{i,j,k}^N$, and $\omega_{i,j,k,l} = \omega_{i,j,k,l}^N$

\begin{prop} \label{prop.aderivscaling}
    Let Assumptions \ref{assump.regularity.disp} and \ref{assump.fixedpoint} hold. Then $\ba^N$ is of class $C^4$, and there is a constant $C$ such that for all $N$ large enough, and each $i,j,k,l \in \left\{1,\dots,N\right\}$, we have 
    \begin{itemize}
        \item $\|D_{x^j} a^{N,i} \|_{\infty} + \|D_{p^j} a^{N,i}\|_{\infty} \leq C\omega^N_{i,j}$,
        \\
        \item $\|D_{x^kx^j} a^{N,i}\|_{\infty} + \|D_{x^kp^j} a^{N,i}\|_{\infty} + \|D_{p^kp^j} a^{N,i}\|_{\infty} \leq C \omega^N_{i,j,k} $, 
        \\
        \item $\|D_{x^lx^kx^j} a^{N,i}\|_{\infty} + \|D_{x^lx^kp^j} a^{N,i}\|_{\infty} + \|D_{x^lp^kp^j} a^{N,i}\|_{\infty} + \|D_{p^lp^kp^j} a^{N,i}\|_{\infty} \leq C \omega^N_{i,j,k,l} $.
    \end{itemize}
\end{prop}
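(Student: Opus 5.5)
We plan to obtain $\ba^N$ as the zero set of a smooth, uniformly monotone map, and then differentiate this relation implicitly. Since $L$ is uniformly convex in $a$, the relation \eqref{fixedpointintro} is equivalent to
\begin{align*}
F^i(\bx,\bp,\ba) := D_a L\big(x^i,a^i,m^{N,-i}_{\bx,\ba}\big) + p^i = 0, \quad i=1,\dots,N.
\end{align*}
The map $F=(F^i)_i$ is $C^4$ in $(\bx,\bp,\ba)$ because $L\in C^5$. Our first step is to test \eqref{dispmonotone.L} (equivalently, Remark \ref{rmk.dmonotone.secondorder}) against random vectors uniformly distributed over the indices. The mismatch between $m^{N,-i}$ and the full empirical measure is controlled through the Lipschitz bounds on $D_\mu D_a L$, and costs $O(1/N)$. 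This yields, for $N$ large, the lower bound $\xi^\top D_{\ba}F\,\xi \ge \tfrac{C_{L,a}}{2}|\xi|^2$ for all $\xi\in(\R^d)^N$. In particular $D_{\ba}F$ is invertible with $\|(D_{\ba}F)^{-1}\|_{\op}\le C$, uniformly in $N$. Together with the existence and uniqueness from Proposition \ref{prop.fixedpoint.prelims}, the implicit function theorem then gives $\ba^N\in C^4$.

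For the first-order bounds we differentiate $F(\bx,\bp,\ba^N(\bx,\bp))=0$ in $x^j$, which gives $D_{\ba}F\,D_{x^j}\ba^N=-D_{x^j}F$. We analyse the structure of $D_{\ba}F$. It has the form $A+\frac1{N-1}B$, where $A$ is block diagonal with blocks $D_{aa}L\ge C$, and the entries $B_{ik}=D^a_\mu D_aL(\dots,x^k,a^k)$ are bounded, with $B_{ii}=0$. The right-hand side $D_{x^j}F^i$ is $O(1)$ for $i=j$ and $O(1/N)$ for $i\ne j$.

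To show $|D_{x^j}a^{N,i}|\le C/N$ for $i\neq j$, we write $\bm{v}=D_{x^j}\ba^N$. From the monotonicity bound, $\sum_i|v^i|^2\le C$, so $\frac1N\sum_k|v^k|\le C/\sqrt N$. This $\ell^1$ estimate is not yet sharp enough, so we bootstrap using the $i$-th row:
\begin{align*}
A_{ii}v^i = -D_{x^j}F^i - \tfrac{1}{N-1}\sum_{k\neq i}B_{ik}v^k.
\end{align*}
This gives $|v^i|\le C\omega_{ij}+\frac CN\sum_k|v^k|$, hence $\sum_k|v^k|\le C+\frac{C}{N}\cdot N\cdot\frac1N\sum_k|v^k|$. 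Taking $i=j$ controls $v^j$. Summing over $i\ne j$ naively gives only $\sum_{i\ne j}|v^i|\le C+C\cdot\mathrm{avg}$. Instead we apply monotonicity to the vector $(v^i)_{i\neq j}$ with the $j$-th contribution moved to the right-hand side. This vector solves the same system with forcing of size $O(1/N)$ per entry, so its $\ell^2$ norm is $O(1/\sqrt N)$ and $\frac1N\sum_k|v^k|\le C/N$. Substituting back into the row identity yields $|v^i|\le C/N$ for $i\neq j$. The same argument handles $D_{p^j}$, where the forcing is simply $e_j$.

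For higher orders we differentiate again. The system for $D_{x^kx^j}\ba^N$ has the same operator $D_{\ba}F$, and its forcing collects terms such as $D^2_{(\bx,\ba)}F[D_{x^k}(\cdot),D_{x^j}(\cdot)]$. Using the first-order bounds and the fact that derivatives of $F^i$ in the measure variable carry a $1/N$ factor per distinct foreign index, we check that the forcing in row $i$ is $O(\omega_{i,j,k})$. The same two-step argument then closes the estimate: an $\ell^2$ bound via monotonicity applied after isolating the at most two special rows $j,k$, followed by the row bootstrap. Third derivatives follow by one more iteration, which uses the $C^5$ regularity of $L$ (so $D_{\ba}F$ is $C^3$) and the bounded higher derivatives.

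The main obstacle is the combinatorial bookkeeping in the forcing terms at second and third order. Sums like $\frac1N\sum_m D_\mu D_aL(\dots,x^m,a^m)\,D_{x^k}a^m\,D_{x^j}a^m$ must be shown to be of size $\omega_{i,j,k}$. This requires splitting by whether $m\in\{i,j,k\}$ and using products of the $\omega$ weights, for instance $\sum_m\omega_{mj}\omega_{mk}\lesssim\omega_{jk}$. We would organise this through a general lemma: if $D_{\ba}F\,\bm v=\bm f$ with $|f^i|\le C\omega_{i,S}$ for an index set $S$, then $|v^i|\le C\omega_{i,S}$.
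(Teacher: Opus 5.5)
Your proposal is correct and follows essentially the paper's proof: $D_{\ba}F$ is exactly the matrix $\bm{\cM}^N$ of Lemma \ref{lem.discretemonotone}, and at each order the paper makes the same three moves — a global $\ell^2$ bound from monotonicity, a second $\ell^2$ bound after moving the contributions of the special indices $j,k,l$ to the right-hand side, and a row-wise bootstrap — so your general lemma packages the paper's case-by-case computation, and working with $D_aL+p^i$ plus the implicit function theorem is a slightly cleaner route to $C^4$ than the paper's a.e.-differentiability argument. One small correction: the uniform $O(1/N)$ cost of replacing $m^{N}_{\bx,\ba}$ by $m^{N,-i}_{\bx,\ba}$ comes from the $\bd_1$-Lipschitz continuity of $\frac{\delta}{\delta m}D_aL$ in Assumption \ref{assump.regularity.disp}, not from Lipschitz bounds on $D_\mu D_aL$; used directly, the latter only bound the mismatch by $C\,\bd_1(m^N_{\bx,\ba},m^{N,-i}_{\bx,\ba})$, which is not uniform in the positions.
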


In order to prove Proposition \ref{prop.aderivscaling}, we need the following lemma, which will allow us to ``see" the monotonicity condition in Assumption \ref{assump.fixedpoint} at a discrete level.

\begin{lemma}
    \label{lem.discretemonotone}
    Let Assumption \ref{assump.fixedpoint} hold. There is a constant $C > 0$ such that for all $N$ large enough, we have 
    \begin{align} \label{disp.discrete}
        \sum_{i = 1}^N (\xi^i)^\top D_{aa} L\big(x^i,a^i,m_{\bx,\ba}^{N,-i} \big) \xi^i + \frac{1}{N-1} \sum_{i \neq j} (\xi^i)^\top D_{\mu}^a D_a L\big(x^i,a^i,m_{\bx,\ba}^{N,-i}, x^j, a^j \big) \xi^j \geq \frac{1}{C} \sum_{i = 1}^N |\xi^i|^2
    \end{align}
    for each $\bx,\ba, \bxi \in (\R^d)^N$.
\end{lemma}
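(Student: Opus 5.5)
The plan is to discretize the second-order form of Assumption \ref{assump.fixedpoint} from Remark \ref{rmk.dmonotone.secondorder} using empirical measures, and then to replace $m^N_{\bx,\ba}$ by the leave-one-out measures $m^{N,-i}_{\bx,\ba}$ at a cost of $O(1/N)$. I only sketch the steps; the last one is not settled.

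\textbf{Step 1: the inequality at $m^N_{\bx,\ba}$.} Fix $\bx,\ba,\bxi$. Let $I,\hat I$ be independent and uniform on $\{1,\dots,N\}$, and set
\begin{align*}
(X,\alpha,Y,\beta)=(x^I,a^I,0,\xi^I), \qquad (\hat X,\hat\alpha,\hat Y,\hat\beta)=(x^{\hat I},a^{\hat I},0,\xi^{\hat I}).
\end{align*}
These are i.i.d. with $\cL(X,\alpha)=m^N_{\bx,\ba}$. Since $Y=\hat Y=0$, only the $aa$-blocks survive in Remark \ref{rmk.dmonotone.secondorder}. Multiplying by $N$, this gives
\begin{align*}
\sum_{i}(\xi^i)^\top D_{aa}L(x^i,a^i,m^N_{\bx,\ba})\xi^i+\frac1N\sum_{i,j}(\xi^i)^\top D^a_\mu D_aL(x^i,a^i,m^N_{\bx,\ba},x^j,a^j)\xi^j\ \ge\ C_{L,a}\sum_i|\xi^i|^2 .
\end{align*}

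\textbf{Step 2: reduce to two error terms.} The target \eqref{disp.discrete} differs from this inequality in three ways.
\begin{enumerate}[label=(\roman*)]
\item The diagonal terms $j=i$ appear in the double sum. Their contribution is at most $\frac1N\|D^a_\mu D_aL\|_\infty\sum_i|\xi^i|^2$, since this is a second-order derivative and hence bounded.
\item The normalization is $\frac1N$ instead of $\frac1{N-1}$. This changes the double sum by at most $\frac{C}{N}\cdot\frac1N\sum_{i,j}|\xi^i||\xi^j|\le \frac CN\sum_i|\xi^i|^2$, by Cauchy--Schwarz.
\item The measure argument is $m^N_{\bx,\ba}$ instead of $m^{N,-i}_{\bx,\ba}$, both in $D_{aa}L$ and in $D^a_\mu D_aL$.
\end{enumerate}
Once (iii) is shown to cost at most $\frac CN\sum_i|\xi^i|^2$, we choose $N$ large enough that $C_{L,a}-\frac{C}{N}\ge C_{L,a}/2$. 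This gives \eqref{disp.discrete} with constant $\frac{1}{C}=C_{L,a}/2$.

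\textbf{Step 3: the change of measure, which I expect to be the main obstacle.} Write
\begin{align*}
m^{N,-i}_{\bx,\ba}-m^N_{\bx,\ba}=\frac1N\Big(\frac1{N-1}\sum_{j\ne i}\delta_{(x^j,a^j)}-\delta_{(x^i,a^i)}\Big).
\end{align*}
Then express the difference of $F=D_{aa}L$ (respectively $F=D^a_\mu D_aL$) along the segment $m_s$ joining the two measures through $\frac{\delta F}{\delta m}$. The resulting error is
\begin{align*}
\frac1N\int_0^1\Big(\frac1{N-1}\sum_{j\neq i}\frac{\delta F}{\delta m}(m_s,z^j)-\frac{\delta F}{\delta m}(m_s,z^i)\Big)ds,\qquad z^j=(x^j,a^j).
\end{align*}
The naive bound uses only that $D_\mu F$ is bounded (third derivatives are bounded). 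That yields $|\frac{\delta F}{\delta m}(z^j)-\frac{\delta F}{\delta m}(z^i)|\le C|z^j-z^i|$, which is not uniform in $(\bx,\ba)$. This is exactly the difficulty.

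What I would try first is to exploit the explicitly assumed uniform Lipschitz continuity of $\frac{\delta}{\delta m}D_aL$ in all of its arguments $(x,a,\mu,x',a')$, with $\bd_1$ in $\mu$. The aim is to show that the relevant kernels $\frac{\delta}{\delta m}D_{aa}L$ and $\frac{\delta}{\delta m}D^a_\mu D_aL$ are bounded after the usual normalization. For $D_{aa}L$, this would come from writing it as the $a$-derivative of $\frac{\delta}{\delta m}D_aL$. If that works, the error is $\frac CN|\xi^i|^2$ per index $i$ for the diagonal part. For the cross part it is $\frac CN|\xi^i||\xi^j|$ per pair, averaged with weight $\frac1{N-1}$, so again Cauchy--Schwarz gives $\frac CN\sum_i|\xi^i|^2$.

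If the boundedness of these kernels cannot be extracted from the assumptions, I would fall back to applying Assumption \ref{assump.fixedpoint} directly, in its first-order form, with $X=X'$ and a finite-difference perturbation $\alpha'=\alpha-\varepsilon\beta$. This would be done on a product probability space built so that the law seen by index $i$ is the leave-one-out measure, in the hope of producing the $m^{N,-i}$ structure exactly rather than by comparison.
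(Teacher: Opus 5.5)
Your Steps 1 and 2 are correct, but Step 3 is the whole content of the lemma and you leave it open. For the off-diagonal terms, your primary route does not close with the tools you name.

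\textbf{Where your route stops.} The diagonal part is fine: $\frac{\delta}{\delta m}D_{aa}L=D_a\frac{\delta}{\delta m}D_aL$ is bounded by the assumed Lipschitz constant. So $D_{aa}L(x^i,a^i,\cdot)$ changes by $O(1/N)$ uniformly when $m^N_{\bx,\ba}$ is replaced by $m^{N,-i}_{\bx,\ba}$.

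For the cross term you need the same for $\mu\mapsto D^a_\mu D_aL(x^i,a^i,\mu,x^j,a^j)$. That is, you need a uniform bound on the oscillation in $w$ of its linear derivative at $w$. Since $D^a_\mu D_aL=D_{a'}\frac{\delta}{\delta m}D_aL$, this is a bound on the $a'$-derivative of the second linear derivative of $D_aL$. The $\bd_1$-Lipschitz hypothesis on $\frac{\delta}{\delta m}D_aL$ only controls the derivative of that second linear derivative in its last variable $w$. Passing from one to the other needs an extra symmetry-type argument that your proposal does not contain, and the analogue of your $D_{aa}L$ trick does not provide it.

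Your fallback cannot work as phrased. \eqref{dispmonotone.L} involves the single law $\cL(X,\alpha)$, so no construction lets index $i$ see $m^{N,-i}_{\bx,\ba}$ exactly. The leave-one-out measures can only enter through a comparison.

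\textbf{The missing idea.} The paper performs the comparison before linearizing, on the first-order inequality, where only increments of $g:=\frac{\delta}{\delta m}D_aL$ appear.

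Applying \eqref{dispmonotone.L} with $X=X'$ and $\cL(X,X',\alpha,\alpha')=m^N_{\bx,\bx,\ba,\ov{\ba}}$ gives $\sum_i\big(D_aL(x^i,a^i,m^N_{\bx,\ba})-D_aL(x^i,\ov{a}^i,m^N_{\bx,\ov{\ba}})\big)\cdot(a^i-\ov{a}^i)\ge C_{L,a}\sum_i|a^i-\ov{a}^i|^2$.

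Set $m_s=(1-s)m^N_{\bx,\ba}+s\,m^{N,-i}_{\bx,\ba}$ and $E^i(\ba):=D_aL(x^i,a^i,m^{N,-i}_{\bx,\ba})-D_aL(x^i,a^i,m^N_{\bx,\ba})$. Then $E^i(\ba)=\frac1N\int_0^1\big(\frac1{N-1}\sum_{j\neq i}g(x^i,a^i,m_s,x^j,a^j)-g(x^i,a^i,m_s,x^i,a^i)\big)ds$.

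In $E^i(\ba)-E^i(\ov{\ba})$ only differences of $g$ at corresponding points occur. So the uniform Lipschitz bound on $g$ in $(a,\mu,x',a')$ gives $|E^i(\ba)-E^i(\ov{\ba})|\le\frac CN\big(|a^i-\ov{a}^i|+\frac1{N-1}\sum_k|a^k-\ov{a}^k|\big)$. The growth of $g$ in $(x',a')$ that blocks your estimate never enters.

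Hence the inequality survives with $m^{N,-i}$ in place of $m^N$ and constant $C_{L,a}/2$ for $N$ large. Now set $\ov{\ba}=\ba$, replace $\ba$ by $\ba+\varepsilon\bxi$, divide by $\varepsilon^2$ and let $\varepsilon\to0$. This gives \eqref{disp.discrete} directly, with your items (i)--(ii) absorbed automatically.

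Differentiating $E^i$ in $a^j$, $j\neq i$, is also exactly what would supply the uniform $O(1/N)$ bound your cross kernel needs.
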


\begin{proof}
    Applying \eqref{dispmonotone.L} when 
    \begin{align*}
        \cL\big(X,X',\alpha, \alpha'\big) = m_{\bx,\bx, \ba, \ov{\ba}}^N,
    \end{align*}
    for some $\bx, \ba, \ov{\ba} \in (\R^d)^N$, gives
    \begin{align*}
        \sum_{i = 1}^N \Big(D_a L(x^i, a^i, m_{\bx,\ba}^N) - D_a L(x^i, \ov{a}^i, m_{\bx,\ov{\ba}}^N) \Big) \cdot (a^i - \ov{a}^i) \geq C_{L,a} \sum_{i = 1}^N |a^i - \ov{a}^i|^2.
    \end{align*}
    As a consequence of \cite[Lemma 3.4]{JacMes} and using the Lipschitz continuity of $\frac{\delta}{\delta m} D_a L$, we find that for $N$ large enough we have 
     \begin{align*}
        \sum_{i = 1}^N \Big(D_a L(x^i, a^i, m_{\bx,\ba}^{N,-i}) - D_a L(x^i, \ov{a}^i, m_{\bx,\ov{\ba}}^{N,-i}) \Big) \cdot (a^i - \ov{a}^i) \geq \frac{C_{L,a}}{2} \sum_{i = 1}^N |a^i - \ov{a}^i|^2.
    \end{align*}
    Considering  $(\ba,\ov{\ba}):=(\ba + \varepsilon \bxi,\ba)$ \blue{$(\ov{\ba}, \ba)$?}, for $\varepsilon >0$, replacing into the previous inequality, dividing by $\varepsilon^{2}$ and sending $\varepsilon\downarrow 0$, we find the desired inequality, 
    with $\frac{1}{C} = \frac{C_{L,a}}{2}$. 
\end{proof}

In what follows, it will be useful to rephrase the estimate \eqref{disp.discrete} as follows. First, we define the functions 
\begin{align*}
   \bm{\cD}^N, \bm{\cO}^N, \wt{\bm{\cD}}^N, \wt{\bm{\cO}}^N  : (\R^d)^N \times (\R^d)^N \to (\R^{d \times d})^{N \times N} \simeq \R^{dN \times dN}
\end{align*}
by
\begin{align*}
    \bm{\cD}^N = (\cD^N_{i,j})_{i,j = 1,\dots,N},\,\, \bm{\cO}^N  = (\cO^N_{i,j})_{i,j = 1,\dots,N}, \,\, \wt{\bm{\cD}}^N = (\wt{\cD}^N_{i,j})_{i,j = 1,\dots,N}, \,\, \wt{\bm{\cO}}^N = (\wt{\cO}^N_{i,j})_{i,j = 1,\dots,N},
\end{align*}
with
\begin{align*}
    \cD^{N}_{i,j}(\bx,\ba) = D_{aa} L(x^i,a^i,m_{\bx,\ba}^{N,-i}) 1_{i = j}, \quad \cO^N_{i,j}(\bx,\ba) = \frac{1}{N-1} D_{\mu}^a D_a L(x^i,a^i,m_{\bx,\ba}^{N,-i},x^j, a^j) 1_{i \neq j}, 
    \\
    \wt{\cD}^{N}_{i,j}(\bx,\ba) = D_{xa} L(x^i,a^i,m_{\bx,\ba}^{N,-i}) 1_{i = j}, \quad \wt{\cO}^N_{i,j}(\bx,\ba) = \frac{1}{N-1} D_{\mu}^x D_a L(x^i,a^i,m_{\bx,\ba}^{N,-i},x^j, a^j) 1_{i \neq j}
\end{align*}
Finally, we define 
\begin{align*}
    \bm{\cM}^N := \bm{\cD}^N + \bm{\cO}^N, \quad \wt{\bm{\cM}}^N =  \wt{\bm{\cD}}^N + \wt{\bm{\cO}}^N.
\end{align*}
Then we can write \eqref{disp.discrete} as 
\begin{align*}
    \bm{\cM}^N(\bx,\ba) \geq \frac{1}{C} I_{Nd \times Nd}, \quad \forall \,\, \bx,\ba \in (\R^d)^N.
\end{align*}

\begin{proof}[Proof of Proposition \ref{prop.aderivscaling}]
    Our starting point is the equation 
    \begin{align} \label{anirelation}
        a^{N,i}(\bx,\bp) = - D_p H(x^i,p^i,m_{\bx,\ba^N(\bx,\bp)}^{N,-i}).
    \end{align}
    \textbf{Step 1: implicit differentiation and qualitative smoothness.}
    By Proposition \ref{prop.fixedpoint.prelims}, $\ba^N$ is Lipschitz continuous, and so it is differentiable almost everywhere. At any point $(\bx,\bp)$ of differentiability of $\ba^N$, we can implicitly differentiate \eqref{anirelation} to find that 
    \begin{align} \label{implicitdiff}
        D_{p^j} a^{N,i}(\bx,\bp) &= - D_{pp} H(x^i,p^i,m_{\bx,\ba^N(\bx,\bp)}^{N,-i}) 1_{i = j}
        \nonumber \\
        &\qquad - \frac{1}{N-1} \sum_{k \neq i}  D_{\mu}^a D_p H\big(x^i,p^i,m_{\bx,\ba^N(\bx,\bp)}^{N,-i}, x^k, a^{N,k}(\bx,\bp) \big) D_{p^j} a^{N,k}(\bx,\bp).
    \end{align}
    Now, by convex duality arguments and the envelope theorem, we know that 
    \begin{align*}
        D_{\mu} H\big(x,p,\mu,x', a' \big) = - D_{\mu} L\big(x, - D_pH(x,p,\mu), \mu,x', a'), 
    \end{align*}
    and thus 
    \begin{align*}
        D_p D_{\mu}^a H\big(x,p,\mu,x',a' \big) = D_a D_{\mu}^a L\big(x,-D_pH(x,p,\mu),\mu,x',a') D_{pp} H(x, p,\mu).
    \end{align*}
    It follows that 
    \begin{align*}
        D_{\mu}^a D_p H\big(x,p,\mu,x', a' \big) = \Big(D_p D_{\mu}^a H\big(x,p,\mu,a' \big)\Big)^{\top} = D_{pp} H(x,p,\mu) D_{\mu}^a D_a L\big(x,- D_p H(x,p,\mu),\mu,x',a' \big).
    \end{align*}
    Thus \eqref{implicitdiff} rewrites as 
    \begin{align} \label{implicitdiff.equiv}
        D_{p^j} a^{N,i} &= - D_{pp} H(x^i,p^i,m_{\bx,\ba^N}^{N,-i}) 1_{i = j}
        \nonumber \\
        &\qquad - D_{pp} H\big(x^i,p^i,m_{\bx, \ba^N}^{N,-i}\big)\frac{1}{N-1} \sum_{k \neq i} D_{\mu}^a D_a L\big(x^i, a^{N,i},m_{\bx,\ba^N}^{N,-i}, x^k,a^{N,k}\big) D_{p^j} a^{N,k} .
    \end{align}
    Since 
    \begin{align*}
        D_{pp} H\big(x^i,p^i,m_{\bx,\ba^N}^{N,-i}) = \big(D_{aa} L\big(x^i, a^{N,i}, m_{\bx, \ba^N}^{N,-i} \big) \big)^{-1}, 
    \end{align*}
    we can rewrite \eqref{implicitdiff.equiv} as
    \begin{align*}
        D_{\bp} \ba^N + \big({\bm{\cD}}^N(\bx,\ba^N) \big)^{-1} {\bm{\cO}}^N(\bx,\ba^N) D_{\bp} \ba^N = - \big({\bm{\cD}}^N(\bx,\ba^N) \big)^{-1}, 
    \end{align*}
    where $D_{\bp} \ba^N = \big(D_{p^j} a^{N,i}\big)_{i,j = 1,\dots,N} \in (\R^{d \times d})^{N \times N}$.
   We find that the previous equation writes as 
    \begin{align*}
        {\bm{\cD}}^N(\bx,\ba^N) D_{\bp} \ba^N + {\bm{\cO}}^N(\bx,\ba^N) D_{\bp} \ba^N = {\bm{\cM}}^N(\bx,\ba^N) D_{\bp} \ba^N =  - I_{Nd\times Nd}.
    \end{align*}
    By Lemma \ref{lem.discretemonotone} we find that 
    \begin{align} \label{jacobian.p}
        D_{\bp} \ba^N = - \big({\bm{\cM}}^N(\bx,\ba^N) \big)^{-1}.
    \end{align}
    A very similar computation shows that at every point of differentiability of $\ba^N$, we have
    \begin{align} \label{implicitdiff.x}
        D_{x^j} a^{N,i} &= - D_{xp} H(x^i,p^i, m_{\bx,\ba^N}^{N,-i})1_{i = j} - \frac{1}{N-1} D_{\mu}^x D_p H \big(x^i,p^i,m_{\bx,\ba^N}^{N,-i}, x^j,p^j) 1_{i \neq j}
        \nonumber \\
        &\qquad - D_{pp}H(x^i,p^i,m_{\bx,\ba^N}^{N,-i}) \frac{1}{N-1} \sum_{k \neq i} D_{\mu}^a D_a L\big(x^i,\ba^{N,i}, m_{\bx,\ba^N}^{N,-i}, x^k, a^{N,k} \big) D_{x^j} a^{N,k}, 
    \end{align}
    which, using the identity $D_{xp} H(x,p,\mu) = D_{pp} H(x,p,\mu) D_{xa} L(x,-D_pH(x,p,\mu),\mu)$, can be written as 
   \begin{align*}
   \left[{\bm{\cD}}^N(\bx,\ba^N) + {\bm{\cO}}^N(\bx,\ba^N)\right] D_{\bx} \ba^N = {\bm{\cM}}^N(\bx,\ba^N) D_{\bx} \ba^N = - \wt{\bm{\cM}}^N(\bx, \ba^N),
   \end{align*}
   from which we find
    \begin{align} \label{jacobian.x}
        D_{\bx}\ba^N = - \big(\bm{\cM}^N(\bx,\ba^N) \big)^{-1} \wt{\bm{\cM}}^N(\bx, \ba^N), 
    \end{align}
    $D_{\bx} \ba^N = (D_{x^j} a^{N,i})_{i,j = 1,\dots,N}$ is the Jacobian matrix in $\bx$ of the function $\ba^N$. Combining \eqref{jacobian.p} and \eqref{jacobian.x} with Lemma \ref{lem.discretemonotone} and Proposition \ref{prop.fixedpoint.prelims}, we see that the weak derivative of the Lipschitz continuous function $\ba^N$ has a continuous (in fact, Lipschitz continuous) representative, and so $\ba^N$ is $C^1$, and the formulas \eqref{implicitdiff.equiv}, \eqref{implicitdiff.x}, \eqref{jacobian.p}, \eqref{jacobian.x} hold everywhere on $(\R^d)^N \times (\R^d)^N$. Once we have this, we easily bootstrap to infer that $\ba^N$ is $C^4$.
    \newline \newline 
    \textbf{Step 2: bounds on the first derivatives.} First, note that we already have the bound $|D_{p^i} a^{N,i}| \leq C$ for large enough $N$ thanks to Proposition \ref{prop.fixedpoint.prelims}. We now fix $j$, and use \eqref{implicitdiff.equiv} to find (since we consider also $i\neq j$) that 
    \begin{align*}
        \sum_{\substack{i = 1,\dots,N \\ i \neq j}} &\tr\Big( \big(D_{p^j} a^{N,i}\big)^\top D_{aa} L(x^i,a^i,m_{\bx,\ba^N}^{N,-i}) D_{p^j} a^{N,i} \Big) 
        \\
        & = - \frac{1}{N-1} \sum_{\substack{i = 1,\dots,N \\ i \neq j}} \sum_{\substack{k = 1,\dots,N \\ k \neq i}} \tr\Big( \big(D_{p^j} a^{N,i}\big)^\top D_{\mu}^a D_a L \big(x^i,a^{N,i}, m_{\bx,\ba^N}^{N,-i}, x^k, a^{N,k} \big) D_{p^j} a^{N,k} \Big)
        \\
        &= - \frac{1}{N-1} \sum_{\substack{i = 1,\dots,N \\ i \neq j}} \sum_{\substack{k = 1,\dots,N \\ k \neq j}} 1_{i \neq k} \tr\Big( \big(D_{p^j} a^{N,i}\big)^\top D_{\mu}^a D_a L \big(x^i,a^{N,i}, m_{\bx,\ba^N}^{N,-i}, a^{N,k} \big) D_{p^j} a^{N,k} \Big)
        \\
        &\quad - \frac{1}{N-1} \sum_{\substack{i = 1,\dots,N \\ i \neq j}} \tr\Big( \big(D_{p^j} a^{N,i}\big)^\top D_{\mu}^a D_a L \big(x^i,a^{N,i}, m_{\bx,\ba^N}^{N,-i}, a^{N,j} \big) D_{p^j} a^{N,j} \Big), 
    \end{align*}
    and so using Lemma \ref{lem.discretemonotone} as well as the uniform bounds on $D_{\mu}^a D_a L$ and $D_{p^j} a^{N,j}$, we find that for each fixed $j$,
    \begin{align*}
        \sum_{\substack{i = 1,\dots,N \\ i \neq j}} |D_{p^j} a^{N,i}|^2 &\leq C \bigg\{  \sum_{\substack{i = 1,\dots,N \\ i \neq j}} \tr\Big( \big(D_{p^j} a^{N,i}\big)^\top D_{aa} L(x^i,a^i,m_{\bx,\ba^N}^{N,-i}) D_{p^j} a^{N,i} \Big)
        \\
        &\qquad + \frac{1}{N-1} \sum_{\substack{i = 1,\dots,N \\ i \neq j}} \sum_{\substack{k = 1,\dots,N \\ k \neq j}} 1_{i \neq k} \tr\left( \big(D_{p^j} a^{N,i}\big)^\top D_{\mu}^a D_a L \big(x^i,a^{N,i}, m_{\bx,\ba^N}^{N,-i}, a^{N,k} \big) D_{p^j} a^{N,k} \right)\bigg\}
        \\
        &\leq \frac{C}{N} \sum_{\substack{i = 1,\dots,N \\ i \neq j}} |D_{p^j} a^{N,i}| \leq \frac{C}{\sqrt{N}} \left(\sum_{i \neq j} |D_{p^j} a^{N,i}|^2 \right)^{1/2}, 
    \end{align*}
    from which we deduce the bound 
    \begin{align*}
        \sum_{\substack{i = 1,\dots,N \\ i \neq j}} |D_{p^j} a^{N,i}|^2 \leq \frac{C}{N}.
    \end{align*}
    Now we again return to \eqref{implicitdiff}, to find that for $i \neq j$, 
    \begin{align*}
        |D_{p^j} a^{N,i}| \leq \frac{C}{N}\left(1 + \sum_{k \neq j} |D_{p^j} a^{N,k}| \right) \leq \frac{C}{N}.
    \end{align*}
    Thus we have $|D_{p^j} a^{N,i}| \leq C\omega_{i,j}^N$, as desired. The bound on $D_{x^j} a^{N,i}$ is very similar, and is omitted.
    \newline \newline 
    \textbf{Step 3: bounds on higher derivatives.} We next aim to prove the bound 
    \begin{align*}
        |D_{p^kp^j} a^{N,i}| \leq C \omega^N_{i,j,k}.
    \end{align*}
    We are going to further differentiate \eqref{implicitdiff} and \eqref{implicitdiff.equiv}. To avoid notational difficulties when dealing with derivatives like $D_{p^kp^j} a^{N,i}$ (which come with three indices), we argue as if $d = 1$, but it is easy to check that the same argument works for any $d \in \N$. We differentiate again to find that
    \begin{align} \label{implicitdiff.equiv2}
        D_{p^k p^j} a^{N,i} &= - D_{pp} H\big(x^i,p^i,m_{\bx, \ba^N}^{N,-i}\big)\frac{1}{N-1} \sum_{l \neq i} D_{\mu}^a D_a L\big(x^i, a^{N,i},m_{\bx,\ba^N}^{N,-i}, x^l, a^{N,l}\big) D_{p^k p^j} a^{N,l} 
        \nonumber \\
        &\qquad \qquad  + T^{i,j,k}, 
    \end{align}
    where 
    \begin{align*}
        T^{i,j,k} &= - D_{ppp} H(x^i,p^i,m_{\bx,\ba^N}^{-i}) 1_{i = j = k} - \frac{1}{N-1} \sum_{l \neq i} D_{\mu}^a D_{pp} H(x^i,p^i,m_{\bx,\ba^N}^{-i},x^l, a^{N,l}) D_{p^k} a^l 1_{i = j}
        \\
        &\quad - \frac{1}{N-1} \sum_{l \neq i} D_{\mu}^a {D_{pp}} H(x^i,p^i,m_{\bx,\ba}^{N,-i},x^l, a^l) D_{p^j} a^l 1_{i = k}
        \\
        &\quad - \frac{1}{N-1} \sum_{l \neq i} D_{a'} D_{\mu}^a D_p H (x^i,p^i,m_{\bx,\ba^N}^{N,-i},x^l, a^l) D_{p^j} a^{N,l} D_{p^k} a^{N,l}
        \\
        &\quad - \frac{1}{(N-1)^2} \sum_{l \neq i} \sum_{q \neq i} D_{\mu}^a D_{\mu}^a D_p H(x^i,p^i,m_{\bx,\ba^N}^{N,-i},x^l, a^{N,l}, a^{N,q}) D_{p^j} a^{N,l} D_{p^k} a^{N,q}.
    \end{align*}

    Using the bounds from \textbf{Step 2} and the boundedness of the third derivatives of $H$, one can check that
    \begin{align*}
        |T^{i,j,k}| \leq C \omega^N_{i,j,k}.
    \end{align*}
    Using \eqref{implicitdiff.equiv2}, we find that for each fixed $j,k \in \{1,...,N\}$,
    \begin{align*}
        \sum_{i = 1}^N &\left(D_{p^k p^j} a^{N,i}\right)^{\top} D_{aa} L(x^i,a^i,m_{\bx,\ba^N}^{N,-i}) D_{p^k p^j} a^{N,i}
        \\
        &\leq  - \frac{1}{N-1} \sum_{i=1}^N \sum_{l \neq i} \left(D_{p^k p^j} a^{N,i}\right)^{\top} D_{\mu}^a D_a L \big(x^i,a^{N,i}, m_{\bx,\ba^N}^{N,-i}, x^l, a^{N,l} \big) D_{p^k p^j} a^{N,l}
        \\
        &\qquad + C \sum_{i = 1}^N |D_{p^k p^j} a^{N,i}| \omega_{i,j,k}^N, 
    \end{align*}
    and so using Lemma \ref{lem.discretemonotone} again, we deduce that 
    \begin{align*}
        \sum_{i = 1}^N |D_{p^k p^j} a^{N,i}|^2 \leq C \sum_{i = 1}^N |D_{p^k p^j} a^{N,i}| \omega_{i,j,k}^N \leq C \big(\sum_{i = 1}^N |D_{p^k p^j} a^{N,i}|^2 \big)^{1/2} \big(\sum_{i = 1}^N |\omega^N_{i,j,k}|^2 \big)^{1/2} 
        \\
        \leq C \omega_{k,j}^N \big(\sum_{i = 1}^N |D_{p^k p^j} a^{N,i}|^2 \big)^{1/2}, 
    \end{align*}
    where we have used the fact that $\big(\sum_{i = 1}^N |\omega^N_{i,j,k}|^2 \big)^{1/2}\le C \omega_{k,j}^N$.
    We thus obtain the bound 
    \begin{align} \label{repeatedindexbound}
       \sum_{i = 1}^N |D_{p^k p^j} a^{N,i}|^2 \leq C|\omega^N_{j,k}|^2, 
    \end{align}
   for each fixed $j,k \in \{1,...,N\}$, and in particular 
    \begin{align*}
        |D_{p^k p^j} a^{N,j}| \leq C \omega^N_{j,k} = C \omega^N_{j,j,k}, \quad |D_{p^k p^j} a^{N,k}| \leq C \omega^N_{j,k} = C\omega^N_{k,j,k}.
    \end{align*}
    Now we fix $k \neq j$, and use \eqref{implicitdiff.equiv2} to estimate
    \begin{align*}
       \sum_{i \neq j,k} & \left(D_{p^k p^j} a^{N,i}\right)^{\top} D_{aa} L(x^i,a^i,m_{\bx,\ba^N}^{N,-i}) D_{p^k p^j} a^{N,i} 
       \\
       &\leq  - \frac{1}{N-1} \sum_{i \neq j,k} \sum_{l \neq i} \big(D_{p^k p^j} a^{N,i}\big)^\top D_{\mu}^a D_a L \big(x^i,a^{N,i}, m_{\bx,\ba^N}^{N,-i}, x^l, a^{N,l} \big) D_{p^k p^j} a^{N,l} 
      \\ & \qquad + C\sum_{i \neq j,k} |D_{p^k p^j} a^{N,i}| \omega_{i,j,k}^N
      \\
      &= - \frac{1}{N-1} \sum_{i \neq j,k} \sum_{l \neq j,k} 1_{i \neq l} \left(D_{p^k p^j} a^{N,i}\right)^{\top} D_{\mu}^a D_a L \big(x^i,a^{N,i}, m_{\bx,\ba^N}^{N,-i}, x^l, a^{N,l} \big) D_{p^k p^j} a^{N,l} 
      \\ & \quad - \frac{1}{N-1} \sum_{i \neq j,k} \left(D_{p^k p^j} a^{N,i}\right)^{\top} D_{\mu}^a D_a L \big(x^i,a^{N,i}, m_{\bx,\ba^N}^{N,-i}, x^j, a^{N,j} \big) D_{p^k p^j} a^{N,j} 
      \\
      &\quad - \frac{1}{N-1} \sum_{i \neq j,k}\left(D_{p^k p^j} a^{N,i}\right)^{\top} D_{\mu}^a D_a L \big(x^i,a^{N,i}, m_{\bx,\ba^N}^{N,-i}, x^k, a^{N,k} \big) D_{p^k p^j} a^{N,k}  + C\sum_{i \neq j,k} |D_{p^k p^j} a^{N,i}| \omega_{i,j,k}^N,
    \end{align*}
    and so again using Lemma \ref{lem.discretemonotone}, and then using \eqref{repeatedindexbound}, we find 
    \begin{align*}
        &\sum_{i \neq j,k} |D_{p^k p^j} a^{N,i}|^2 \leq \frac{C}{N} \sum_{i \neq j,k} |D_{p^k p^j} a^{N,i}| |D_{p^k p^j} a^{N,j}|
        \\
        &\qquad + \frac{C}{N} \sum_{i \neq j,k} |D_{p^k p^j} a^{N,i}||D_{p^k p^j} a^{N,k}| + C\sum_{i \neq j,k} |D_{p^k p^j} a^{N,i}| \omega^N_{i,j,k}
        \\
        &\leq \frac{C}{N} \sum_{i \neq j,k} |D_{p^k p^j} a^{N,i}| \omega^N_{j,k} + C \sum_{i \neq j,k} |D_{p^k p^j} a^{N,i}| \omega_{i,j,k}^N
        \\
        &\leq C\big(\sum_{i \neq j,k} |D_{p^k p^j} a^{N,i}|^2 \big)^{1/2} \Big(\frac{\omega_{j,k}^N}{\sqrt{N}} + \big(\sum_{i \neq j,k} |\omega_{i,j,k}|^2\big)^{1/2} \Big), 
    \end{align*}
    from which we deduce that
    \begin{align*}
        \sum_{i \neq j,k} |D_{p^k p^j} a^{N,i}|^2 \leq C \Big(\frac{|\omega_{j,k}^N|^2}{N} + \sum_{i \neq j,k} |\omega_{i,j,k}|^2 \Big).
    \end{align*}
    Now we return to \eqref{implicitdiff.equiv2}, and apply the Cauchy--Schwarz inequality to find that  
    \begin{align*}
        |D_{p^kp^j} a^{N,i}| &\leq C \Big(\omega_{i,j,k}^N + \frac{1}{N} |D_{p^k p^j} a^{N,j}| + \frac{1}{N} |D_{p^k p^j} a^{N,k}| + \frac{1}{\sqrt{N}} \big( \sum_{l \neq j,k} |D_{p^kp^j} a^{N,l}|^2 \big)^{1/2} \Big)
        \\
        &\leq C \omega_{i,j,k}^N + \frac{C}{N} \omega_{j,k}^N + \frac{C}{\sqrt{N}} \Big( \sum_{l \neq j,k} |\omega_{l,j,k}^N|^2 \Big)^{1/2} \leq C \omega_{i,j,k}^N,
    \end{align*}
    as desired. 
    
    In a very similar manner, we obtain the same bounds on the other second derivatives $D_{x^kx^j} a^{N,i}$ and $D_{x^kp^j} a^{N,i}$.

    For the bounds on $D_{p^l p^k p^j} a^{N,i}$, another tedious but straightforward computation (using the bounds already obtained on $D_{p^k p^j} a^{N,i}$) shows that 
    \begin{align} \label{Dplpkpj}
        D_{p^l p^k p^j} a^{N,i} &= - D_{pp} H\big(x^i,p^i,m_{\bx, \ba^N}^{N,-i}\big)\frac{1}{N-1} \sum_{q \neq i} D_{\mu}^a D_a L\big(x^i, a^{N,i},m_{\bx,\ba^N}^{N,-i}, x^q, a^{N,q}\big) D_{p^l p^k p^j} a^{N,q} 
        \nonumber \nonumber  \\
        &\qquad \qquad  + T^{i,j,k,l}, 
    \end{align}
    where $T^{i,j,k,l}$ is a function satisfying $|T^{i,j,k,l}| \leq C \omega_{i,j,k,l}^N$. Again, we deduce from Lemma \ref{lem.discretemonotone} that for any fixed $k,j,l \in \{1,\dots,N\}$, 
    \begin{align*}
        \sum_{i = 1}^N &|D_{p^l p^k p^j} a^{N,i}|^2 \leq C \sum_{i = 1}^N |D_{p^l {p^k} p^j} a^{N,i}| \omega_{i,j,k,l}^N \leq C \Big( \sum_{i = 1}^N |D_{p^l p^k p^j} a^{N,i}|^2 \Big)^{1/2} \Big(\sum_{i = 1}^N |\omega_{i,j,k,l}^N|^2 \Big)^{1/2}
        \\
        &\leq C \omega_{k,j,l}^N \Big( \sum_{i = 1}^N |D_{p^l p^k p^j} a^{N,i}|^2 \Big)^{1/2}
    \end{align*}
    from which we deduce the bound 
    \begin{align*}
        \sum_{i = 1}^N |D_{p^l p^k p^j} a^{N,i}|^2 \leq C |\omega_{k,j,l}^N|^2.
    \end{align*}
    In particular, this gives the desired $|D_{p^lp^kp^j} a^{N,i}| \leq C \omega_{i,j,k,l}^N$ whenever $i \in \{j,k,l\}$. To handle the case $i \notin \{l,k,j\}$, we rewrite \eqref{Dplpkpj} as
    \begin{align} \label{ttilde.est}
        D_{p^l p^k p^j} a^{N,i} &= - D_{pp} H(x^i, p^i, m_{\bx, \ba^N}^{N,-i}) \frac{1}{N-1} \sum_{q \neq l,k,j} 1_{q \neq i} D_{\mu}^a D_a L\big(x^i, a^{N,i}, m_{\bx, \ba^N}^{N,-i}, x^l, a^{N,l} \big) D_{p^l p^k p^j} a^{N,q} 
        \nonumber \\
        &\qquad + \wt{T}^{i,j,k,l}, 
    \end{align}
    where 
    \begin{align*}
      \wt{T}^{i,j,k,l} &= 
         - D_{pp} H(x^i,p^i,m_{\bx, \ba^N}^{N,-i}) \frac{1}{N-1}\bigg( D_{\mu}^a D_a L\big(x^i, a^{N,i}, m_{\bx, \ba^N}^{N,-i}, x^j, a^{N,j}\big) D_{p^lp^kp^j} a^{N,j}
        \\
        &\quad + D_{\mu}^a D_a L\big(x^i, a^{N,i}, m_{\bx, \ba^N}^{N,-i}, x^k, a^{N,k}\big) D_{p^lp^kp^j} a^{N,k}
      + D_{\mu}^a D_a L\big(x^i, a^{N,i}, m_{\bx, \ba^N}^{N,-i}, x^l, a^{N,l}\big) D_{p^lp^kp^j} a^{N,l} \bigg)
      \\
      &\qquad + T^{i,j,k,l}. 
    \end{align*}
    Using the bound on $D_{p^lp^kp^j} a^{N,q}$ for $q \in \{j,k,l\}$, we find that $|\wt{T}^{i,j,k,l}| \leq C \omega_{i,j,k,l}^N$. Then again using Lemma \ref{lem.discretemonotone} as above, we find that 
    \begin{align*}
        \sum_{i \neq j,k,l} |D_{p^lp^kp^j} a^{N,i}|^2 \leq C \sum_{i \neq j,k,l} |D_{p^lp^kp^j} a^{N,i}| |\wt{T}^{i,j,k,l}|, 
    \end{align*}
    which yields 
    \begin{align*}
        \sum_{i \neq j,k,l} |D_{p^lp^k p^j} a^{N,i}|^2 \leq C \sum_{i \neq j,k,l} |\omega_{i,j,k,l}^N|^2 \leq \frac{C}{N} |\omega_{j,k,l}|^2. 
    \end{align*}
    Coming back to \eqref{ttilde.est}, we estimate 
    \begin{align*}
        |D_{p^l p^k p^j} a^{N,i}| &\leq \frac{C}{N} \sum_{q \neq l,k,j} |D_{p^lp^kp^j} a^{N,q}| + C\omega_{i,j,k,l}^N
        \\
        &\leq \frac{C}{\sqrt{N}} \Big( \sum_{q \neq l,k,j} |D_{p^lp^kp^j} a^{N,q}|^2 \Big)^{1/2} + C\omega_{i,j,k,l}^N 
        \\
        &\leq \frac{C}{N} \omega_{j,k,l}^N + C\omega_{i,j,k,l}^N \leq C \omega_{i,j,k,l}^N, 
    \end{align*}
    as desired. 

    The bounds on 
    the other third derivatives $D_{x^lx^kx^j}a^{N,i}$, $D_{x^lx^kp^j}$, and $D_{x^lp^kp^j} a^{N,i}$ are very similar, and so we omit the proof. 
    
\end{proof}

\section{Uniform in $N$ estimates under displacement semi-monotonicity}\label{sec.uniform}
We find it useful in this section to introduce notation 
\begin{align*}
    \hat{H}^{N,i} : (\R^d)^N \times (\R^d)^N \to \R, \quad \hat{H}^{N,i}(\bx,\bp) = H\big(x^i,p^i,m_{\bx,\ba^N(\bx,\bp)}^{N,-i} \big),
\end{align*}
and to rewrite the Nash system as
\begin{align} \label{nashsystem2}
    \begin{cases}
       \ds  - \partial_t u^{N,i} - \sum_{j = 1}^N \Delta_{j} u^{N,i} - \sigma_0 \sum_{j,k = 1}^N \tr\big(D_{jk} u^{N,i} \big) + \hat{H}^{N,i}(\bx,\diag \bu^N)
       \\
       \ds \qquad - \sum_{j \neq i} a^{N,j}(\bx, \diag \bu^N) \cdot D_j u^{N,i} = 0, 
      \quad  (t,\bx) \in [0,T] \times (\R^d)^N, 
       \\
       \ds u^{N,i}(T,\bx) = G(x^i,m_{\bx}^{N,-i}), \quad \bx \in (\R^d)^N.
    \end{cases}
\end{align}

\begin{definition} \label{defn.admissible}
    We will say that $\bu^N = (u^{N,1},\dots,u^{N,N})$ is an admissible solution to \eqref{nashsystem2} if it is a classical solution and for each $i,j,k = 1,\dots,N$, we have 
    \begin{align*}
        u^{N,i} \in C^{1,2}_{\text{loc}}\big([0,T] \times (\R^d)^N ; \R \big), \quad D_j u^{N,i} \in C^{1,2}_{\text{loc}}\big([0,T] \times (\R^d)^N ; \R^d \big), \quad D_{kj} u^{N,i} \in C^{1,2}_{\text{loc}}\big([0,T] \times (\R^d)^N ; \R^{d \times d} \big), 
    \end{align*}
   and, furthermore, the spatial derivatives of $u^{N,i}$ of order $2$, $3$, and $4$ are bounded, and we have the growth condition
    \begin{align*}
        |u^{N,i}(t,\bx)| \leq C\big(1 + |\bx|^2\big), 
    \end{align*}
    for some $C > 0$ and all $i = 1,\dots,N$, $(t,\bx) \in [0,T] \times (\R^d)^N$. We say that an admissible solution $(u^{N,i})_{i = 1,\dots,N}$ is symmetric if 
     \begin{align} \label{symmetry.1}
        u^{N,i}(t,\bx) = u^{N,1}\big(t, (x^i, \bx^{-i}) \big), \quad i = 1,\dots,N, \quad \bx \in (\R^d)^N, 
    \end{align} and in addition
    \begin{align} \label{symmetry.2}
        u^{N,1}(t,\bx) = u^{N,1}(t,x^1, x^{\sigma(2)},\dots,x^{\sigma(N)}).
    \end{align}
    Equivalently, $\bu^N$ is symmetric if there is a map $\Psi : [0,T] \times \cP_2(\R^d) \to \R$ such that
    \begin{align} \label{symmetry}
        u^{N,i}(t,\bx) = \Psi(t,x^i,m_{\bx}^{N,-i})
    \end{align}
    for each $i = 1,\dots,N$ and $(t,\bx) \in [0,T] \times (\R^d)^N$.
\end{definition}

The goal of this section is to prove the following uniform in $N$ a-priori estimates on admissible solutions to the Nash system.
\begin{theorem} \label{thm.uniform.nash}
    Suppose that Assumptions \ref{assump.disp} and \ref{assump.regularity.disp} hold. Then there is a constant $C_0>0$ such that for all $N$ large enough and all $i,j,k,l = 1,\dots,N$, any admissible solution $\bu^N$ to the Nash system \eqref{nashsystem2} satisfies
    \begin{align} \label{nash.deriv.scaling}
        \norm{D_{j} u^{N,i}}_{\infty} \leq C_0\left(\frac{1}{N} + \big(1 + |x^i| + M_1(m_{\bx}^N)\big)1_{i = j}\right), \quad \norm{D_{kj} u^{N,i}}_{\infty} \leq C_0\omega_{i,j,k}^N, \quad \norm{D_{lkj} u^{N,i}}_{\infty} \leq C_0 \omega_{i,j,k,l}^N.
    \end{align}
    In addition, we have growth estimate
    \begin{align} \label{nash.quadgrowth}
        |u^{N,i}(t,\bx)| \leq C_0 \Big(1 + |x^i|^2 + M_1\big(m_{\bx}^{N} \big) \Big), 
    \end{align}
    the time regularity estimates
    \begin{align} \label{nash.timereg}
        &|\partial_t u^{N,i}(t,\bx)| \leq C\big(1 + |x^i|^2 + M^2_1(m_{\bx}^N)\big), \quad  |\partial_t D_j u^{N,i}(t,\bx)| \leq C\omega_{i,j}^N \big(1 + |x^i| + |x^j| + M_1(m_{\bx}^N) \big), 
     \end{align}
     as well as 
     \begin{align}
         |D_{kj} u^{N,i}(t,\bx) - D_{kj}u^{N,i}(s,\bx)| \leq C\big(1 + |x^i| + |x^j| + |x^k| + M_1(m_{\bx}^N) \big) \omega^N_{i,j,k} |t-s|^{1/2}, 
    \end{align}
    and finally
    \begin{align}
        |\partial_t u^{N,i}(t,\bx) - \partial_t u^{N,i}(s,\bx)| &\leq C\big(1 + |x^i|^2 + M_1^2(m_{\bx}^N) \big)|t-s|^{1/2}
    \end{align}
\end{theorem}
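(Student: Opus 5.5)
The plan is to obtain all the bounds in Theorem \ref{thm.uniform.nash} from probabilistic (FBSDE) representations of the derivatives of $u^{N,i}$ along the equilibrium dynamics. The displacement semi-monotonicity in Assumption \ref{assump.disp}, transferred to the discrete level as in Lemma \ref{lem.discretemonotone}, gives coercivity of the relevant quadratic forms. Proposition \ref{prop.aderivscaling} supplies the needed decay of the nonlinearities $\hat H^{N,i}$ and $\ba^N$.

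\textbf{Step 1: decay of the composed Hamiltonian.} First I would show that $\hat H^{N,i}(\bx,\bp)$ has derivative decay of the same form as $\ba^N$. Concretely, I want $|D_{x^j}\hat H^{N,i}|, |D_{p^j}\hat H^{N,i}| \lesssim \omega_{i,j}$ up to linear growth in $x^i,p^i$ when $j=i$, and similarly $\omega_{i,j,k}$, $\omega_{i,j,k,l}$ for higher derivatives. The envelope identity $D_{\mu}H=-D_\mu L$ at $a=-D_pH$ gives
\[
D_{p^j}\hat H^{N,i}=D_pH\,1_{i=j}-\tfrac{1}{N-1}\sum_{k\ne i}D^a_\mu L\, D_{p^j}a^{N,k}.
\]
Together with Proposition \ref{prop.aderivscaling} and the boundedness of $D_\mu L$, the chain rule produces the $\omega$-weights.

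\textbf{Step 2: first and second order estimates via displacement monotonicity.} I would differentiate \eqref{nashsystem2} in $x^j$. The pair $Y^{i,j}_t=D_ju^{N,i}(t,\bX_t)$ along the equilibrium flow satisfies a linear BSDE coupled to the forward variations $\delta X$. The displacement-monotonicity argument, in the spirit of \cite{JacMes}, computes $\frac{d}{dt}\E\big[\sum_i \delta X^i\cdot Y^{i}\big]$ and uses Remark \ref{rmk.dmonotone.secondorder} together with $C_{\dis}>0$. This gives $L^2$-along-trajectories control of $(D_{ji}u^{N,i})$ and of the off-diagonal $D_ju^{N,i}$, $i\neq j$. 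From there I would bootstrap: the $L^\infty$ bound $\omega_{i,j}/N$-type on $D_ju^{N,i}$ for $i\ne j$, then $D_{ji}u^{N,i}$, then all $D_{kj}u^{N,i}\lesssim\omega_{i,j,k}$. The bootstrap again uses Cauchy--Schwarz summations of $\sum_i\omega_{i,j,k}^2$ exactly as in the proof of Proposition \ref{prop.aderivscaling}. Nondegenerate idiosyncratic noise and the bounded data derivatives give Gronwall closure on short subintervals, which I iterate to reach $[0,T]$. The linear-growth bound on $D_iu^{N,i}$ and the quadratic growth \eqref{nash.quadgrowth} follow from the comparison/representation $u^{N,i}=\E[\int L+G]$ and moment bounds on $\bX$, using \eqref{coercive}.

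\textbf{Step 3: third derivatives and time regularity.} Third derivatives are handled by differentiating once more. The equation for $D_{lkj}u^{N,i}$ is linear with the same principal structure, and its source terms are products of lower derivatives already bounded by $\omega$-weights. The same monotonicity/Gronwall summation then gives $\omega_{i,j,k,l}$. The time estimates come from the equation itself. $\partial_tu^{N,i}$ is expressed through the spatial derivatives, where the traces $\sum_j\Delta_j u^{N,i}$ and $\sum_{j,k}D_{jk}u^{N,i}$ are $O(1)$ because $\sum_{j,k}\omega_{i,j,k}=O(1)$. This yields the growth bounds on $\partial_t u^{N,i}$ and $\partial_tD_ju^{N,i}$. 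The $|t-s|^{1/2}$ Hölder bounds follow from Itô representations over $[s,t]$ combined with third (respectively fourth, using admissibility) derivative bounds.

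The main obstacle is Step 2: closing the second-order estimates. Off-diagonal and diagonal derivatives must be controlled in the right order, and the $L^2$-along-trajectory bounds must be converted into uniform $L^\infty$ bounds with sharp $N$-decay without losing a factor of $N$ in the sums. I would first try to establish the estimate for fixed $j$ on $\sum_i|D_{kj}u^{N,i}|^2$ via monotonicity, and only then extract individual entries.
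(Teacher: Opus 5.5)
Your architecture matches the paper's. The displacement-monotonicity computation you describe is what the paper imports from \cite{JacMes} as Proposition~\ref{prop.nash.preliminaries}. All remaining derivative bounds then come from It\^o's formula for $|D u^{N,i}(t,\bX_t)|^2$ on time intervals of $N$-independent length, in the order you list, with a weighted maximum of $\|D_{lkj}u^{N,i}\|_{\infty,t}/\omega^N_{i,j,k,l}$ at third order. What closes each step is not boundedness of the data. It is that coefficients such as $D_{jj}u^{N,j}D_{p^j}a^{N,j}$ are controlled in $L^2(\scrL^N)$, and the terms quadratic in $D_{ii}u^{N,i}$ in $L^1(\scrL^N)$. (Also, the off-diagonal target is $C\omega^N_{i,j}=C/N$, not $\omega_{i,j}/N$.)

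\textbf{The growth step fails as written.} You want linear growth of $D_iu^{N,i}$ and \eqref{nash.quadgrowth} from $u^{N,i}=\E[\int L+G]$ plus moment bounds on $\bX$. This is circular:
\begin{itemize}
\item The equilibrium drift is $a^{N,i}(\bX_t,\diag\bu^N(t,\bX_t))$. By Lemma~\ref{lem.ani.growth} it grows like $|D_iu^{N,i}|+\frac{1}{N}\sum_j|D_ju^{N,j}|$.
\item The upper bound in \eqref{coercive} involves $|\alpha^i|^2$ and the second moment of the joint state--action empirical measure.
\item So both the moment bounds and the resulting upper bound on $u^{N,i}$ presuppose the growth of $D_ju^{N,j}$ you are trying to prove. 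Using a suboptimal control for player $i$ does not help, since the other players' actions still involve $D_ju^{N,j}$.
\end{itemize}
The missing idea is the paper's, in four steps:
\begin{enumerate}
\item The upper bound in \eqref{coercive} gives $H(x,p,\mu)\ge\frac{1}{C}|p|^2-C(1+|x|^2+M_2^{1/2}(\mu))$.
\item Using the already established uniform bounds on second derivatives and on $D_ju^{N,i}$ for $j\neq i$, integrate the Nash system in time at a \emph{fixed} point $\bx_0$, not along trajectories. This gives $u^{N,i}(t_0,\bx_0)+\frac{1}{C}\int_{t_0}^T|D_iu^{N,i}(t,\bx_0)|^2dt\le C(1+|x_0^i|^2+M_2^{1/2}(m^N_{\bx_0}))+C\big(\int_{t_0}^T\frac{1}{N}\sum_j|D_ju^{N,j}(t,\bx_0)|^2dt\big)^{1/2}$.
\item Average over $i$, and use $u^{N,j}\ge -C$ (this is the only place the representation is needed) together with Young's inequality. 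This controls the average, hence the upper bound on $u^{N,i}$.
\item The mean value theorem then bounds $D_iu^{N,i}(t,\cdot)$ at a point near the origin. The bounds $|D_{ii}u^{N,i}|\le C$ and $|D_{ji}u^{N,i}|\le C/N$ propagate this to linear growth.
\end{enumerate}
Moment bounds on $\bX$ (Lemma~\ref{lem.moments}) are available only afterwards, and are used for time regularity.

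\textbf{Time regularity of $\partial_tu^{N,i}$.} You cannot use fourth-order derivative bounds from admissibility for the $|t-s|^{1/2}$ bound, since those bounds are not uniform in $N$. The paper reads $\partial_tu^{N,i}$ off the Nash system. It combines two ingredients, neither of which needs fourth derivatives:
\begin{itemize}
\item the H\"older bound on $D_{kj}u^{N,i}$ (It\^o plus BDG with the third-order bounds, Lemma~\ref{lem.ijk.time});
\item the Lipschitz-in-time bound on $D_ju^{N,j}$, which comes from the bound on $\partial_tD_ju^{N,i}$ read off the first-derivative equation.
\end{itemize}

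\textbf{Third order.} Not all the ``sources'' are already-bounded lower-order terms. The equation for $D_{lkj}u^{N,i}$ couples to fourth derivatives $D_rD_{lkj}u^{N,r}$ through coefficients of size $C/N$; these must be absorbed into the quadratic-variation terms. It also contains repeated-index third derivatives $D_{knj}u^{N,n}$. The paper controls these through the $L^2(\scrL^N)$ gradient bounds, with the correct $\omega$-scaling, recorded in each second-order step (Lemma~\ref{lem.third.order.coeff}). Your second-order estimates should therefore carry those gradient bounds as well.
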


\subsection{Preliminaries}

First, we emphasize that we are going to fix throughout this section an admissible solution $\bu^N= (u^{N,i})_{i = 1,\dots,N}$ to the Nash system \eqref{nashsystem2}.
The estimates appearing in Theorem \ref{thm.uniform.nash} will be obtained by repeated differentiation of the Nash system. As such, we introduce the notation
\begin{align*}
    u^{i,j} = D_j u^{N,i}, \quad u^{i,j,k} = D_{kj} u^{N,i}, \quad u^{i,j,k,l} = D_{lkj} u^{N,i}.
\end{align*}
To avoid a proliferation of indices and notational burden, we will argue as if $d = 1$, so that $u^{i,j}, u^{i,j,k}, u^{i,j,k,l}$ are scalar-valued functions, but the reader can check that the same arguments go through in higher dimensions (roughly speaking, when $d \geq 2$, everywhere we write $u^{i,j} = D_j u^i$ below, one can replace this with $u^{i,j_q} = D_{j_q} u^i$ for $q \in \{1,\dots,d\}$, and perform the same computations, eventually taking a maximum over $q \in \{1,\dots,d\}$). 

Throughout the remainder of this section, for any $(t_0,\bx_0) \in [0,T) \times (\R^d)^N$, we denote by $\bX^{t_0,\bx_0}$ the state process associated to the closed-loop Nash equlibria started from $(t_0,\bx_0)$, i.e. the unique solution of the SDE 
\begin{align}\label{proc:closed-loop}
    dX_t^{t_0,\bx_0,i} = a^{N,i}(t,\bX_t^{t_0,\bx_0}) dt + \sqrt{2} dW_t^i + \sqrt{2\sigma_0} dW_t^0, \,\, t_0 \leq t \leq T, \quad X_{t_0}^{t_0,\bx_0,i} = x_0^i.
\end{align}
Given a function $\phi : [0,T] \times (\R^d)^N \to \R^q$, for some $q \in \N$ and $0 \leq s \leq t \leq T$, we find it useful to introduce the notation
\begin{align}\label{inf:norms}
    \| \phi \|_{\infty,t} := \| \phi(t,\cdot)\|_{L^{\infty}((\R^d)^N)} , \quad \|\phi\|_{\infty,s,t} := \sup_{r \in [s,t]} \|\phi\|_{\infty,r}.
\end{align}
We also introduce the notation $\scrL^N$ for the generator of the processes $\bX^{t_0,\bx_0}$, i.e. the differential operator which acts on $C^2$ functions $f : [0,T] \times (\R^d)^N \to \R$ via 
\begin{align}\label{def:L^N}
    \scrL^N f := \sum_{j = 1}^N \Delta_j f + \sum_{j,k = 1}^N \tr\big(D_{jk} f \big) + \sum_{j = 1}^N a^{N,j} (\bx, \diag \bu^N) \cdot D_j f.
\end{align}
Finally, we introduce some notation which indicates integrability against (the law of) $\bX^{t_0,\bx_0}$, uniformly in $(t_0,\bx_0)$. In particular, given $\phi : [0,T] \times (\R^d)^N \to \R^q$, we adopt the notation
\begin{align*}
    \norm{\phi}_{L^1(\scrL^N)} := \sup_{t_0\in[0,T],\bx_0\in(\R^d)^N} \E\bigg[ \int_{t_0}^T |\phi(t,\bX_t^{t_0,\bx_0})| dt \bigg]
\end{align*}
and
\begin{align*}    
     \norm{\phi}_{L^2(\scrL^N)} := \left(\sup_{t_0\in[0,T],\bx_0\in(\R^d)^N} \E\bigg[ \int_{t_0}^T |\phi(t,\bX_t^{t_0,\bx_0})|^2 dt \bigg]\right)^{1/2}.
\end{align*}
We remark here right away that $\norm{\phi}_{L^1(\scrL^N)}\le \sqrt{T} \norm{\phi}_{L^2(\scrL^N)}.$

Using this notation, the following estimate was obtained in \cite[Proposition 6.2, Proposition 6.5]{JacMes}.
\begin{prop} \label{prop.nash.preliminaries}
    Let Assumptions \ref{assump.regularity.disp} and \ref{assump.disp} hold. Then there is a constant $C>0$ such that for all $N$ large enough, we have
    \begin{align*}
        \norm{\sum_{j \neq i} |D_j u^{N,i}|^2}_{\infty} + \norm{ \Big(\sum_{j \neq i} \sum_{k=1}^{N} |D_{kj}u^{N,i}|^2\Big)^{1/2}}^2_{L^2(\scrL^{N})} \leq C/N, 
\end{align*}
for each $i = 1,\dots,N$, as well as 
\begin{align*}
        \norm{ \big | (D_{ji}u^{N,i})_{i,j = 1,\dots,N} \big|_{\op} }_{L^2(\scrL^N)} \leq C.
    \end{align*}
\end{prop}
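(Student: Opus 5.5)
The estimates will be proved by differentiating the Nash system \eqref{nashsystem2} once in space and reading the resulting linear systems along the equilibrium process $\bX^{t_0,\bx_0}$ of \eqref{proc:closed-loop}. The D-semi-monotonicity (Assumption \ref{assump.disp}, through $C_{\dis}>0$) supplies the diagonal block $(D_{ji}u^{N,i})$. An It\^o/energy argument then handles the off-diagonal gradients $D_j u^{N,i}$, $j\neq i$, and produces the second-derivative $L^2$ bound as a by-product. I would therefore prove the operator-norm bound first, since the other estimate uses it.

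\textbf{Step 1 (diagonal block).} Fix $(t_0,\bx_0)$ and a direction $\bxi\in(\R^d)^N$. Let $\delta\bX$ be the first variation of $\bX^{t_0,\bx_0}$ in the direction $\bxi$, and set $\delta Y^i_t=\sum_j D_{j}D_iu^{N,i}(t,\bX_t)\,\delta X^j_t$ and $\delta\alpha^i_t=\sum_j D_{x^j}a^{N,i}\,\delta X^j+\sum_j D_{p^j}a^{N,i}\,\delta Y^j$.

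1. Differentiating the equations for $D_iu^{N,i}$ shows that $(\delta\bX,\delta\bY)$ solves a linear forward--backward system. Its coefficients are the Hessians of $L$ evaluated at the empirical measures $m^{N,-i}_{\bX,\ba^N}$, and the terminal condition involves the Hessian of $G$.
2. Apply It\^o's formula to $\sum_i\delta X^i\cdot\delta Y^i$ and use the empirical form of the second-order characterization in Remark \ref{rmk.dmonotone.secondorder}, as in Lemma \ref{lem.discretemonotone}.
3. The terms coupling player $i$ to $m^{N,-i}$ (rather than to $m^N$) differ by $O(1/N)$, which is absorbed for $N$ large. Combining with the bound $\E|\delta\bX_t|^2\le 2|\bxi|^2+2(t-t_0)\int_{t_0}^t\E|\delta\bm{\alpha}|^2$, this gives $C_{\dis}\,\E\int_{t_0}^T|\delta\bm\alpha|^2\le C|\bxi|^2$. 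Hence $\sup_t\E|\delta\bX_t|^2\le C|\bxi|^2$.
4. The backward equation then yields $|\delta\bY_{t_0}|\le C|\bxi|$, so $|(D_{ji}u^{N,i})(t_0,\bx_0)|_{\op}\le C$ pointwise; in particular the $L^2(\scrL^N)$ bound holds.
5. If the pointwise bound is not reachable because of the quadratic growth of $D_xL$, I would run the same computation with a random $\bxi$ along the flow and settle for the $L^2(\scrL^N)$ statement, which is all that is claimed.

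\textbf{Step 2 (off-diagonal gradients).} For $j\neq i$, $u^{i,j}=D_ju^{N,i}$ satisfies
\[
-\partial_t u^{i,j}-\scrL^N u^{i,j}=-D_{x^j}\hat H^{N,i}-\sum_k D_{p^k}\hat H^{N,i}\,D_ju^{N,k,k}+\sum_{k\neq i}\big(D_{x^j}a^{N,k}+\textstyle\sum_l D_{p^l}a^{N,k}D_ju^{N,l,l}\big)u^{i,k}-\sum_k D_{x^j}a^{N,k}\,u^{i,k}\text{-type corrections}.
\]
1. Apply It\^o to $\sum_{j\neq i}|u^{i,j}|^2$ along $\bX$. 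The diffusion part of $\scrL^N$ produces the dissipative term $\E\int\sum_{j\neq i}\sum_k|D_{k}u^{i,j}|^2$, which is exactly the $L^2(\scrL^N)$ quantity to be controlled.
2. The terminal term is $\sum_{j\neq i}|D_jG(x^i,m^{N,-i}_{\bx})|^2\le C/N$, because $D_mG$ is bounded.
3. For the source terms, Proposition \ref{prop.aderivscaling} together with the envelope identity $D_{p^k}\hat H^{N,i}=-a^{N,i}1_{k=i}$ (the contributions through $\ba^N$ cancel by the optimality of $a^{N,i}$) gives $|D_{x^j}\hat H^{N,i}|\le C/N$. The matrix $(D_{p^l}a^{N,k})$ is bounded in operator norm, by \eqref{jacobian.p} and Lemma \ref{lem.discretemonotone}.
4. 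Consequently every coupling term is bounded by
\[
C\big(1+|(D_{ji}u^{N,j})|_{\op}\big)\Big(\tfrac1{\sqrt N}+\big(\textstyle\sum_{k\neq i}|u^{i,k}|^2\big)^{1/2}\Big)\big(\textstyle\sum_{j\neq i}|u^{i,j}|^2\big)^{1/2}.
\]

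\textbf{Step 3 (closing the estimate).} Write $\Lambda_t=1+|(D_{ji}u^{N,j})(t,\bX_t)|_{\op}$. We only know $\E\int\Lambda_t^2\le C$ from Step 1, not a pointwise bound, so plain Gr\"onwall does not apply directly. This is where I expect the main obstacle. My first attempt is to multiply $\sum_{j\neq i}|u^{i,j}|^2$ by the integrating factor $\exp(-C\int_{t_0}^t\Lambda_s\,ds)$ and take conditional expectations. This requires exponential integrability of $\int\Lambda$ along $\bX$, which I would obtain by a John--Nirenberg/BMO argument: Step 1 holds uniformly over starting points $(t_0,\bx_0)$, so the conditional expectations $\E[\int_t^T\Lambda_s^2\,ds\mid\mathcal F_t]$ are uniformly bounded.

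With this in hand, the result is $\norm{\sum_{j\neq i}|D_ju^{N,i}|^2}_\infty\le C/N$. Feeding this back into the It\^o identity of Step 2 bounds the dissipative term, giving $\norm{(\sum_{j\neq i}\sum_k|D_{kj}u^{N,i}|^2)^{1/2}}^2_{L^2(\scrL^N)}\le C/N$.
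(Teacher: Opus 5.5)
\textbf{Step 1 is circular with Steps 2--3.} Note first that the paper does not prove this statement; it imports it from \cite[Propositions 6.2 and 6.5]{JacMes}. Measured against what such a proof must do, your proposal has a structural gap: the order ``diagonal block first, then off-diagonal gradients'' does not work.

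The backward equation for $D_iu^{N,i}$ is \eqref{nashfirstder2} with $j=i$. Besides $D_{x^i}\hat H^{N,i}$, which is $-D_xL$ up to $O(1/N)$ fixed-point corrections, it contains the closed-loop terms
\[
-\sum_{k\neq i}\Big(D_{x^i}a^{N,k}+\sum_{l=1}^N D_{p^l}a^{N,k}\,D_{il}u^{N,l}\Big)D_ku^{N,i}+\sum_{k\neq i}\hat H^{N,i,k}\,D_{ik}u^{N,k}.
\]
These encode how the other players' feedbacks react to $x^i$. So $(\delta\bX,\delta\bm{Y})$ does not solve a Pontryagin system whose coefficients are just Hessians of $L$ and $G$. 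Linearizing these terms produces two kinds of contribution:
\begin{enumerate}
\item $\sum_{k\neq i}\sum_l D_{p^l}a^{N,k}\big(\sum_m D_{mil}u^{N,l}\,\delta X^m\big)D_ku^{N,i}$, which involves third derivatives. The bracket is, up to $\sqrt2$, the $dW^i$-integrand $\delta Z^{l,i}$ of $\delta Y^l$, so at best it is controlled through the energy estimate of the linearized BSDE.
\item $\sum_{k\neq i}(\cdots)\sum_m D_{mk}u^{N,i}\,\delta X^m$, i.e.\ off-diagonal second derivatives of $u^{N,i}$ multiplied by the diagonal block.
\end{enumerate}
In the It\^o expansion of $\sum_i\delta X^i\cdot\delta Y^i$, the first contributes a term of size
\[
C\max_i\Big(\sum_{k\neq i}|D_ku^{N,i}|^2\Big)^{1/2}|\delta\bX|\,|\delta\bm{Z}|.
\]
The constant $C_{\dis}$ can absorb this only if you already know $\sum_{k\neq i}|D_ku^{N,i}|^2=O(1/N)$. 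The second needs $L^2(\scrL^N)$ control of $D_{mk}u^{N,i}$ for $k\neq i$. Both are exactly the outputs of your Steps 2--3, which themselves use Step 1.

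So the two estimates have to be coupled, for instance by a bootstrap on backward time intervals:
\begin{enumerate}
\item Assume $\sum_{k\neq i}|D_ku^{N,i}|^2\le K/N$ on $[t,T]$.
\item Show that for $N\ge N_0(K)$ the monotonicity argument, with the linearized closed-loop terms included, gives the $L^2(\scrL^N)$ operator bound with a constant independent of $K$.
\item Let Steps 2--3 return $C/N$ with $C<K$.
\end{enumerate}

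\textbf{What is correct.} Conditional on the diagonal bound, your Steps 2--3 are correct. The Garsia--Neveu/John--Nirenberg argument does give exponential integrability of $\int\Lambda$, because the Markov property bounds $\E[\int_t^T\Lambda_s^2\,ds\mid\cF_t]$ by $\|\Lambda\|^2_{L^2(\scrL^N)}$. However, this is not the real obstacle. In Proposition \ref{prop.uij} the paper closes the same estimate on short strips, using
\[
\E\int_{t_0}^{T_0+\eps}|\cA^{i,j}|\,|Y^{i,j}|^2\,dt\le\sqrt{\eps}\,\|\cA^{i,j}\|_{L^2(\scrL^N)}\,\|u^{i,j}\|^2_{\infty,T_0,T_0+\eps},
\]
and then iterates backward in time, so no exponential integrability is needed.

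\textbf{Two smaller corrections.}
\begin{enumerate}
\item The identity $D_{p^k}\hat H^{N,i}=-a^{N,i}1_{k=i}$ is false. Optimality of $a^{N,i}$ removes only the dependence through player $i$'s own action, not through the other actions inside $m^{N,-i}_{\bx,\ba^N}$. The correct formula is $D_{p^k}\hat H^{N,i}=-a^{N,i}1_{k=i}+\hat H^{N,i,k}$, with $\hat H^{N,i,k}$ as in \eqref{def:hatHik}. In Step 2 the extra term is harmless, since it pairs to at most $CN^{-1/2}\Lambda\big(\sum_{j\neq i}|u^{i,j}|^2\big)^{1/2}$. It is, however, part of the closed-loop structure missing from Step 1.
\item To reach exactly the condition $C_{\dis}>0$, you need weights $(1+\eta^{-1})$ and $(1+\eta)$, with $\eta$ small, in the bound on $\E|\delta\bX_t|^2$. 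With the factor $2$ you use, you would need $C_{L,a}-2TC_G-T^2C_{L,x}>0$ instead.
\end{enumerate}
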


\subsection{Estimates on the first derivatives}
We first differentiate \eqref{nashsystem2} with respect to $x_j$, keeping in mind the identity
\begin{align*}
    D_{p^j} \hat{H}^{N,i} &= 1_{i = j} D_p H\big(x^i,p^i,m_{\ba^N(\bx,\bp)}^{N,-i} \big) + \frac{1}{N-1} \sum_{k \neq i} D_{\mu}^a H \big(x^i,p^i,m_{\bx,\ba^N}^{N,-i}, x^k, a^{N,k} \big) D_{p^j} a^k
    \\
    &= - 1_{i = j} a^{N,i}(\bx,\bp) + \frac{1}{N-1} \sum_{k \neq i} D_{\mu}^a H \big(x^i,p^i,m_{\bx,\ba^N}^{N,-i}, x^k, a^{N,k} \big) D_{p^j} a^k
\end{align*}
to find that $u^{i,j}:=D_{x_j}u^{N,i}$ satisfies 
    \begin{align} \label{nashfirstder2}
    \begin{cases} \ds - \partial_t u^{i,j} - \scrL^N u^{i,j}
   - \sum_{k \neq i} \sum_{l = 1}^N D_{jl} u^{l} D_{p^l} a^{N,k} u^{i,k} - \sum_{k \neq i} D_{x^j} a^{N,k} u^{i,k}   \\ \ds
    \quad  + \sum_{k \neq i}  D_{jk} u^{N,k}  D_{p^k} \hat{H}^{N,i} + \frac{1}{N-1} \sum_{k \neq i} D_{\mu}^a H\big(x^i,p^i,m_{\bx, \ba^N}^{N,-i}, x^k, a^{N,k} \big) D_{p^i} a^{N,k} D_{ji} u^{N,i}
    \\
    \ds \quad  
    + D_{x^j} \hat{H}^{N,i} = 0, \quad (t,\bx) \in [0,T] \times (\R^d)^N, \vspace{.2cm} 
    \\ \ds 
    u^{i,j}(T,\bx) = D_{x^j} G^{N,i}(\bx), \quad \bx \in (\R^d)^N, 
    \end{cases}
\end{align}
where we have used 
$$D_{ji} u^{N,i}  D_{p^i} \hat{H}^{N,i} = -  D_{ji} u^{N,i}  a^{N,i} + {D_{ji} u^{N,i}} \frac{1}{N-1} \sum_{k \neq i} D_{\mu}^a H \big(x^i,p^i,m_{\bx,\ba^N}^{N,-i}, x^k, a^{N,k} \big) D_{p^i} a^k,$$
and the term $-  D_{ji} u^{N,i}  a^{N,i}$ has been incorporated into $\scrL^N u^{i,j}$.

The previous system we rewrite as 
 \begin{align} \label{nashfirstder2}
    \begin{cases} \ds - \partial_t u^{i,j} - \scrL^N u^{i,j}
   - \sum_{k \neq i} \sum_{l = 1}^N D_{jl} u^{l} D_{p^l} a^{N,k}(\bx, \diag \bu^N) u^{i,k} - \sum_{k \neq i} D_{x^j} a^{N,k}(\bx, \diag \bu^N) u^{i,k}   \\ \ds
    \quad  + \sum_{k=1}^N  D_{jk}u^{N,k}  \hat{H}^{N,i,k}(\bx, \diag \bu^N)
    + D_{x^j} \hat{H}^{N,i}(\bx, \diag \bu^N) = 0, \quad (t,\bx) \in [0,T] \times (\R^d)^N, \vspace{.2cm} 
    \\ \ds 
    u^{i,j}(T,\bx) = D_{x^j} G^{N,i}(\bx), \quad \bx \in (\R^d)^N, 
    \end{cases}
\end{align}
where we have set $G^{N,i}(\bx) := G(x^i,m_{\bx}^{N,-i})$ and 
\begin{align}\label{def:hatHik}
   \hat{H}^{N,i,k}(\bx,\bp) := \frac{1}{N-1} \sum_{l \neq i} D_{\mu}^a H \big(x^i,p^i,m_{\bx,\ba^N}^{N,-i}, x^l, a^{N,l} \big) D_{p^k} a^{N,l}.
\end{align}

It will also be useful to record the fact that the maps $\hat{H}^{N,i}$ and $\hat{H}^{N,i,j}$ satisfy bounds very similar to $a^{N,i}$:
\begin{lemma} \label{lem.hatH} Suppose that Assumptions \ref{assump.regularity.disp} and \ref{assump.fixedpoint} hold.
    Then there is a constant $C>0$ such that for all large enough $N$, and each $i,j,k = 1,\dots,N$, we have 
    \begin{align*}
        &\|D_{x^j} \hat{H}^{N,i}\|_{\infty} + \|D_{p^j} \hat{H}^{N,i}\|_{\infty} \leq C \left(\frac{1}{N} + 1_{i = j} \big(1 + |x^i| + |p^i| + M_1(m_{\bx}^N) + M_1(m_{\bm{p}}^N) \big) \right), 
         \vspace{.2cm} \\
        &\|D_{x^kx^j} \hat{H}^{N,i}\|_{\infty} + \|D_{x^kp^j} \hat{H}^{N,i}\|_{\infty} + \|D_{p^kp^j} \hat{H}^{N,i} \|_{\infty} \leq C \omega_{i,j,k}^N, 
        \vspace{.2cm} \\
        &\|D_{x^lx^kx^j} \hat{H}^{N,i}\|_{\infty} + \|D_{x^lx^kp^j} \hat{H}^{N,i}\|_{\infty} +  \|D_{x^lp^kp^j} \hat{H}^{N,i}\|_{\infty} + \|D_{p^l p^kp^j} \hat{H}^{N,i} \|_{\infty} \leq C \omega_{i,j,k,l}^N
        \\
        &\|\hat{H}^{N,i,j}\|_{\infty} \leq C/N,
        \\
        &\|D_{x^k} \hat{H}^{N,i,j}\|_{\infty} + \|D_{p^k} \hat{H}^{N,i,j}\|_{\infty} \leq \frac{C}{N} \big(\omega_{i,k}^N + \omega^N_{j,k} \big) \leq C \begin{cases}
            \omega_{i,j,k}^N & i \neq j, 
            \\
            \frac{1}{N} \omega_{i,k}^N & i = j,
        \end{cases} 
        \\
        & \|D_{x^lx^k} \hat{H}^{N,i,j}\|_{\infty} + \|D_{x^l p^k} \hat{H}^{N,i,j}\|_{\infty} + \|D_{p^lp^k} \hat{H}^{N,i,j} \|_{\infty} \leq C\begin{cases}
            \omega_{i,j,k,l}^N & i \neq j, 
            \\
            \frac{1}{N} \omega_{i,k,l}^N & i = j.
        \end{cases}
    \end{align*}
\end{lemma}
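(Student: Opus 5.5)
The plan is to prove Lemma \ref{lem.hatH} by the chain rule, writing $\hat{H}^{N,i}$ and $\hat{H}^{N,i,j}$ as compositions of smooth functions of $(x^i,p^i)$ and of empirical measures with the map $\ba^N$. All decay rates then come from Proposition \ref{prop.aderivscaling} together with a bookkeeping rule for the weights $\omega$. We use throughout that $D_{\mu}H$ and its derivatives of order $1,2,3$ are bounded, which follows from Assumption \ref{assump.regularity.disp} and the envelope identity $D_\mu H(x,p,\mu,x',a') = -D_\mu L(x,-D_pH,\mu,x',a')$. Differentiating a function $F(m^{N,-i}_{\bx,\ba^N})$ with respect to $x^j$ or $p^j$ produces terms of the form $\frac{1}{N-1}\sum_{l\neq i} D_\mu F(\cdot,x^l,a^{N,l})\,(\partial x^l,\partial a^{N,l})$.

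\textbf{First derivatives of $\hat H^{N,i}$.} For the $p$-derivative we already have
\[
D_{p^j}\hat H^{N,i} = -1_{i=j}a^{N,i}+\hat H^{N,i,j}.
\]
Bounding $\hat H^{N,i,j}$ gives $\frac{1}{N-1}\sum_{l\neq i}|D_{p^j}a^{N,l}|\le \frac{C}{N}\sum_l \omega_{l,j}\le C/N$. For $|a^{N,i}|$, we use the fixed-point identity together with the growth of $D_pH$, which is Lipschitz since $D^2H$ is bounded. Comparing with a reference point through the Lipschitz bound of Proposition \ref{prop.fixedpoint.prelims}, or using $\bd_2$-Lipschitz continuity in $\mu$ and the bound on $M_1(m_{\bx,\ba^N})$, gives $|a^{N,i}|\le C(1+|x^i|+|p^i|+M_1(m_{\bx}^N)+M_1(m_{\bp}^N))$. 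The $x$-derivative is handled the same way: $D_{x^j}\hat H^{N,i}$ equals $1_{i=j}D_xH$ plus $\frac{1}{N-1}D_\mu^x H(\cdot,x^j,a^{N,j})1_{j\neq i}$ plus $\frac{1}{N-1}\sum_l D^a_\mu H\, D_{x^j}a^{N,l}$. The term $D_xH$ has linear growth.

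\textbf{Higher derivatives.} Differentiating further, every term of $D_{x^k p^j}\hat H^{N,i}$ and of the other second derivatives is one of the following: a bounded derivative of $H$ times $1_{i=j=k}$; or $\frac{1}{N-1}\sum_l$ of a bounded factor times a product of derivatives of $a^{N,l}$ (and of the coordinate maps); or $\frac{1}{(N-1)^2}\sum_{l,q}$ of bounded factors times derivatives of $a^{N,l}$ and $a^{N,q}$. The key observation is the weight calculus
\[
\frac1N\sum_l \omega_{l,j,k}\le C\omega_{i,j,k}\quad (\text{for } l\neq i),\qquad \frac1{N^2}\sum_{l,q}\omega_{l,j}\omega_{q,k}\le \frac{C}{N^2}\le C\omega_{i,j,k},
\]
and the analogous multi-index inequalities. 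The first holds because summing over $l$ gains a factor $N$ only when $l\notin\{j,k\}$, while the prefactor $\frac1N$ is compensated since $i$ then contributes a new index. The lone diagonal term $D_{pp}H$ (when $j=k\neq i$ nothing of size $1$ survives) needs care: the only $O(1)$ contributions carry $1_{i=j=k}$. A second-derivative term like $\frac1{N}\sum_l D_{a'}D^a_\mu H\,D_{p^j}a^l D_{p^k}a^l$ is bounded by $\frac1N\sum_l\omega_{l,j}\omega_{l,k}\le C\omega_{i,j,k}$ after checking cases. The third derivatives follow by the same expansion, now invoking the third bullet of Proposition \ref{prop.aderivscaling}.

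\textbf{Bounds on $\hat H^{N,i,j}$ and the main obstacle.} For $\hat H^{N,i,j}$ we differentiate the product $\frac1{N-1}\sum_l D^a_\mu H(\dots,x^l,a^{N,l})D_{p^j}a^{N,l}$. When the derivative hits the $D_\mu H$ factor, it produces $\omega_{i,k}$ (through $x^i,p^i$), or $\frac1N\omega_{l,k}$-type terms, or terms with $l=k$. These give $\frac{1}{N}\sum_l(\omega_{i,k}+\omega_{l,k})\omega_{l,j}$. When it hits $D_{p^j}a^{N,l}$, it gives $\frac1N\sum_l\omega_{l,j,k}\le \frac{C}{N}(\omega_{j,k}+\frac1N)$. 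Together this yields $\frac{C}{N}(\omega_{i,k}+\omega_{j,k})$, and the case split in the statement is then elementary. The second derivatives of $\hat H^{N,i,j}$ are obtained identically. The main obstacle is purely combinatorial: verifying, for each term in the expansion, that the sum over the internal index $l$ (or $l,q$) of products of weights matches $\omega_{i,j,k,l}$ in all coincidence cases of the indices. I would organize this by proving a general inequality $\frac1N\sum_{l}\prod_r\omega_{l,J_r}\le C\,\omega_{i,\cup J_r}$ for $l\neq i$, valid when $i\notin\cup J_r$ or with the appropriate extra factor, and apply it uniformly.
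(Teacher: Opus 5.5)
Your route is the paper's: expand $\hat H^{N,i}$ and $\hat H^{N,i,j}$ by the chain rule, use boundedness of the derivatives of $H$ (with $D_\mu H$ bounded via the envelope identity), insert Proposition \ref{prop.aderivscaling}, and sum the weights. Packaging the bookkeeping as one inequality, $\sum_{l}\prod_r \omega^N_{l,J_r}\le C\,\omega^N_{\cup_r J_r}$ followed by $\frac1N\omega^N_{J}\le \omega^N_{i,J}$, is valid (treat $l\notin\cup_r J_r$ and $l\in\cup_r J_r$ separately). It is a tidier form of the paper's term-by-term checks, and your treatment of the second and third derivatives and of $\hat H^{N,i,j}$ is fine.

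The one step that does not go through as written is the linear-growth bound behind the $1_{i=j}$ terms, i.e.\ the bound on $|a^{N,i}|$ and on $D_xH$ evaluated at $m^{N,-i}_{\bx,\ba^N}$.

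\begin{itemize}
\item The Lipschitz estimate of Proposition \ref{prop.fixedpoint.prelims} is an $\ell^2$ bound on the whole vector. Applied to one coordinate against a reference point, it only gives $|a^{N,i}(\bx,\bp)-a^{N,i}(\bm 0,\bm 0)|\le C\sqrt N\,\big(M_2(m^N_{\bx})+M_2(m^N_{\bp})\big)^{1/2}$, which is not uniform in $N$.
\item Fed through the measure argument of $D_pH$, the same estimate gives a uniform bound, but with $M_2^{1/2}$ in place of the $M_1$ required by the statement.
\item Your alternative via Lipschitz continuity of $D_pH$ in $\mu$ needs a bound on $M_1(m^N_{\ba^N})$, which you do not justify.
\item Every route also needs the value at the reference point to be bounded uniformly in $N$.
\end{itemize}

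The paper's fix (cf.\ Lemma \ref{lem.ani.growth}) goes as follows.
\begin{itemize}
\item At $\bx=\bp=\bm 0$ the right-hand side of the third bullet of Proposition \ref{prop.fixedpoint.prelims} vanishes. Hence $m^N_{\bm 0,\ba^N(\bm 0,\bm 0)}=\Phi(\delta_{(0,0)})$, and every $a^{N,i}(\bm 0,\bm 0)$ equals the same $N$-independent constant.
\item The first bullet of Proposition \ref{prop.aderivscaling} and the mean value theorem then give $|a^{N,i}(\bx,\bp)-a^{N,i}(\bm 0,\bm 0)|\le C\sum_j\omega^N_{i,j}(|x^j|+|p^j|)\le C\big(|x^i|+|p^i|+M_1(m^N_{\bx})+M_1(m^N_{\bp})\big)$.
\item Averaging this in $i$ bounds $M_1(m^N_{\ba^N})$. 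Combined with the $\bd_1$-Lipschitz continuity of $D_xH$ in $\mu$ (from boundedness of $D_\mu D_xH$), this handles the $D_xH$ term in $D_{x^i}\hat H^{N,i}$.
\end{itemize}
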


\begin{proof}
    These bounds are obtained in a straightforward way by combining the assumptions on $H$ with the bounds on the derivatives of $\ba^N$ obtained in Proposition \ref{prop.aderivscaling}. We omit the details.
\end{proof}

\begin{remark}
We underline the fact that the first terms in Lemma \ref{lem.hatH} have a linear growth in $p^{i}$, whenever $i=j$. However, $p^{i}$ will stand always for the placeholder of terms (such as $D_{i}u^{N,i}$) growing at most linearly in $x^{i}$, and therefore all these compositions will eventually see a growth of order $|x^{i}|.$ 
\end{remark}

We are now ready to prove an estimate on $D_j u^i$. 
\begin{proposition} \label{prop.uij}
    Let Assumptions \ref{assump.regularity.disp} and \ref{assump.disp} hold. Then there is $C>0$ such that for all $N$ large enough, and all $i \neq j$,
    \begin{align*}
        \norm{ D_{j} u^{N,i} }_{\infty} + \norm{ \Big(\sum_{k = 1}^N |D_{kj} u^{N,i}|^2 \Big)^{1/2}}_{L^2(\scrL^N)} \leq C/N.
    \end{align*}
\end{proposition}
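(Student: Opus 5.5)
We prove Proposition \ref{prop.uij} by applying It\^o's formula to $|u^{i,j}|^2$ along the equilibrium trajectories $\bX^{t_0,\bx_0}$, for fixed $i\neq j$, using the linear equation \eqref{nashfirstder2} satisfied by $u^{i,j}=D_j u^{N,i}$. The identity reads
\begin{align*}
|u^{i,j}(t_0,\bx_0)|^2 + \E\Big[\int_{t_0}^T 2\sum_{k}|D_k u^{i,j}|^2 + 2\sigma_0\Big|\sum_k D_k u^{i,j}\Big|^2\,dt\Big] = \E\big[|D_{x^j}G^{N,i}(\bX_T)|^2\big] + 2\,\E\Big[\int_{t_0}^T u^{i,j}\, F^{i,j}\,dt\Big],
\end{align*}
where $F^{i,j}$ collects the zeroth order terms in \eqref{nashfirstder2}. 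The quantity $\sum_k |D_k u^{i,j}|^2=\sum_k|D_{kj}u^{N,i}|^2$ is exactly the $L^2(\scrL^N)$ quantity in the statement. For the terminal term, $|D_{x^j}G^{N,i}|\le C/N$ for $i\neq j$, because $D_mG$ is bounded.

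Next I bound each term of $F^{i,j}$ with Proposition \ref{prop.aderivscaling}, Lemma \ref{lem.hatH}, and Proposition \ref{prop.nash.preliminaries}.

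\begin{itemize}
\item $|D_{x^j}\hat H^{N,i}|\le C/N$ for $j\neq i$.
\item $\sum_{k\neq i}|D_{x^j}a^{N,k}||u^{i,k}|\le C|u^{i,j}|+\frac CN\sum_{k\neq i,j}|u^{i,k}|\le C\|u^{i,\cdot}\|_{\infty}$. Here $\sup_{k\neq i}\|u^{i,k}\|_\infty$ is an unknown to close by a Gr\"onwall or maximum-type argument over $k$. Alternatively, by Cauchy--Schwarz and Proposition \ref{prop.nash.preliminaries}, $\frac1N\sum_{k\neq i}|u^{i,k}|\le N^{-1/2}(\sum_k|u^{i,k}|^2)^{1/2}\le C/N$.
\item The term $\sum_k D_{jk}u^{N,k}\hat H^{N,i,k}$ splits into two pieces. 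For $k\neq j$ it is at most $\frac CN\sum_{k}|D_{jk}u^{N,k}|$, which I control in $L^2$ along trajectories by $\frac CN\sqrt N(\sum_k|D_{jk}u^{N,k}|^2)^{1/2}$. The $k=j$ term is $\frac CN|D_{jj}u^{N,j}|$. I will use the operator-norm bound $\| |(D_{ji}u^{N,i})|_{\op}\|_{L^2(\scrL^N)}\le C$: the $j$-th row $(D_{jk}u^{N,k})_k$ has Euclidean norm at most the operator norm, hence is $O(1)$ in $L^2(\scrL^N)$. This term therefore contributes $\frac CN\,\E\int|u^{i,j}|\cdot|(D_{jk}u^{N,k})_k|$.
\item The cubic term $\sum_{k\neq i}\sum_l D_{jl}u^l D_{p^l}a^{N,k}u^{i,k}$ is handled the same way. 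Summing over $k$ with $|D_{p^l}a^{N,k}|\le C\omega_{l,k}$ gives a bound by $|(D_{jl}u^{N,l})_l|\cdot(|u^{i,l}|+\frac1N\sum_k|u^{i,k}|)$, again with the row bounded via the operator norm.
\end{itemize}

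Collecting these, with $\theta_j(t):=|(D_{jl}u^{N,l})_l|$ and $\phi(t):=\sup_{k\neq i}\|u^{i,k}\|_{\infty,t}$, I get for every $k\neq i$
\begin{align*}
|u^{i,k}(t_0,\bx_0)|^2 + \E\int_{t_0}^T\sum_l|D_{lk}u^{N,i}|^2 \le \frac{C}{N^2} + C\,\E\int_{t_0}^T\big(\phi+\tfrac1N\big)\big(1+\theta_k\big)\Big(\phi+\tfrac 1N\Big)dt .
\end{align*}
This would close by Gr\"onwall if $\theta_k$ were bounded, but it is only $L^2$ along trajectories. This is the main obstacle.

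To handle it, I would first try a stochastic Gr\"onwall argument. Write the linear BSDE for $Y_t=u^{i,j}(t,\bX_t)$ in the form $-dY=(A_tY+\text{lower order})dt - Z\,dW$, where the coefficient $A_t$ is built from the row $(D_{jl}u^{N,l})_l$. Use the exponential weight $e^{\int A}$, and control $\E[e^{c\int\theta^2}]$ by a John--Nirenberg / BMO estimate, since the $L^2(\scrL^N)$ bound holds uniformly in the starting point $(t_0,\bx_0)$. That uniformity is exactly the BMO condition for the martingale $\int \theta\,dW$.

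If this fails, the fallback is to work with the vector $(u^{i,k})_{k\neq i}$ at once. Its squared norm is $O(1/N)$ by Proposition \ref{prop.nash.preliminaries}, so $\frac1N\sum_k|u^{i,k}|\le C/N$. The only dangerous term is then $\theta_j|u^{i,j}|^2$ on the diagonal $l=j$. I would absorb it by a Young inequality against the weighted BMO structure, and conclude that $\|u^{i,j}\|_\infty\le C/N$ together with the $L^2(\scrL^N)$ bound $C/N$ on $(\sum_k|D_{kj}u^{N,i}|^2)^{1/2}$.
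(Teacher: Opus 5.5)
Your proposal has a genuine gap, and it is not the diagonal term you flagged. It is the off-diagonal entries $D_{jk}u^{N,k}$, $k\neq j$.

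In the $\hat H^{N,i,k}$ term you correctly bound $\frac{C}{N}\sum_{k\neq j}|D_{jk}u^{N,k}|\le \frac{C}{N}\sqrt N\,\theta_j=\frac{C}{\sqrt N}\theta_j$. You then record the contribution as $\frac{C}{N}\E\int|u^{i,j}|\theta_j$, which drops a factor $\sqrt N$. The cubic term loses the same factor:
\begin{itemize}
\item $\sum_{l\neq i,j}|D_{jl}u^{N,l}||u^{i,l}|$ is at best $\theta_j(\sum_{l\neq i}|u^{i,l}|^2)^{1/2}\lesssim N^{-1/2}\theta_j$;
\item the piece $\frac1N\sum_k|u^{i,k}|$ multiplies $\sum_l|D_{jl}u^{N,l}|\le |D_{jj}u^{N,j}|+\sqrt N\theta_j$.
\end{itemize}
The a priori information (the operator norm in Proposition \ref{prop.nash.preliminaries}) only controls the $\ell^2$ norm of the vector $(D_{jk}u^{N,k})_{k\neq j}$ by $O(1)$. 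What you need is its $\ell^1$ norm to be $O(1)$, i.e. entries of size $O(1/N)$, which is essentially what is being proved. With the correct factors your inequality reads $\phi^2\lesssim N^{-2}+N^{-1/2}\phi+\dots$, and that only yields $\|D_ju^{N,i}\|_\infty\lesssim N^{-1/2}$. No Gr\"onwall or BMO treatment of the diagonal coefficient repairs this, so the claim in your fallback that ``the only dangerous term is $\theta_j|u^{i,j}|^2$'' is false.

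The missing idea is that $D_{jk}u^{N,k}=D_k u^{k,j}$ is, up to $\sqrt2$, the $k$-th martingale integrand $Z^{k,j,k}$ of the BSDE for $u^{k,j}$, $k\neq j$. It is therefore the $Z$ of another unknown in the same family. The paper rewrites the equation as $-\partial_t u^{i,j}-\scrL^N u^{i,j}+\cA^{i,j}u^{i,j}+\sum_{k\neq j}D_{jk}u^{N,k}\cB^{i,j,k}+\cC^{i,j}=0$, where:
\begin{itemize}
\item $\cA^{i,j}$ keeps only the diagonal $D_{jj}u^{N,j}D_{p^j}a^{N,j}$, so $\norm{\cA^{i,j}}_{L^2(\scrL^N)}\le C$;
\item all off-diagonal entries go into $\cB^{i,j,k}$ with $\sum_k|\cB^{i,j,k}|^2\le C/N$, using the operator-norm bound on $D_{\bp}\ba^N$ and $\sum_{l\neq i}|u^{i,l}|^2\le C/N$;
\item $\norm{\cC^{i,j}}_{L^1(\scrL^N)}\le C/N$.
\end{itemize}
In the energy estimate, Cauchy--Schwarz and Young turn the $\cB$-term into $C\eps\norm{u^{i,j}}^2_{\infty}+\frac{1}{2N}\sum_{k\neq j}\E\int|Z^{k,j,k}|^2$. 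The last term is at most $\frac12\max_{i'\neq j}\E\int\sum_l|Z^{i',j,l}|^2$, which is absorbed into the left-hand side once you take the maximum over all pairs $i\neq j$. Your $\phi$ maximizes over $k$ for fixed $i$, and that family does not contain these $Z$'s.

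For the diagonal coefficient, your John--Nirenberg idea is plausible: the uniform-in-$(t_0,\bx_0)$ bound plus the Markov property does give exponential moments. The paper uses a simpler device. On a window $[T_0,T_0+\eps]$ one has $\E\int|\cA^{i,j}||Y^{i,j}|^2\le\sqrt{\eps}\,\norm{\cA^{i,j}}_{L^2(\scrL^N)}\norm{u^{i,j}}^2_{\infty,T_0,T_0+\eps}$. With $\eps$ small and independent of $N$ this is absorbed, and one iterates backward from $T$.
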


\begin{proof}
For $i \neq j$, we can rewrite the equation \eqref{nashfirstder2} as 
\begin{align*}
        - \partial_t u^{i,j} - \scrL^N u^{i,j} + \cA^{i,j} u^{i,j} + \sum_{k \neq j}  D_{jk} u^{k} \cB^{i,j,k} + \cC^{i,j} = 0, 
\end{align*}
with 
\begin{align*}
    \cA^{i,j} = - D_{jj}u^j D_{p^j} a^{N,j} - D_{x^j} a^{N,j}, \quad \cB^{i,j,k} = -\sum_{l \neq i} D_{p^k} a^{N,l} D_l u^i + \hat{H}^{N,i,k} 1_{k \neq j}, 
    \\
    \cC^{i,j} = -\sum_{k \neq i,j} D_{jj} u^j D_{p^j} a^{N,k} D_k u^i - \sum_{k \neq i,j} D_{x^j} a^{N,k} u^{i,k} +  \hat{H}^{N,i,j} D_{jj} u^j + D_{x^j} \hat{H}^{N,i}. 
\end{align*}

We are now going to show that for $i \neq j$, the coefficients $\cA^{i,j}$, $\cB^{i,j,k}$, $\cC^{i,j}$ satisfy the bounds 
\begin{align} \label{coeffbounds}
    \|\cA^{i,j}\|_{L^2(\scrL^N)} \leq C, \quad \left\|\sum_{k \neq j} |\cB^{i,j,k}|^2 \right\|_{\infty} \leq C/N, \quad \|\cC^{i,j}\|_{L^1(\scrL^N)} \leq C/N.
\end{align}
We start by noting that for each $t_0 \in [0,T]$, $\bx_0 \in (\R^d)^N$, and the process $(\bX_t^{t_0,\bx_0})_{t\in [t_{0},T]}$ defined in \eqref{proc:closed-loop}  we have
    \begin{align*}
        \E\bigg[\int_{t_0}^T |\cA^{i,j}(t,\bX_t^{t_0,\bx_0})|^2 dt \bigg] &\leq \|D_{p^j} a^{N,j}\|_{\infty}^2 \E\bigg[\int_{t_0}^T |D_{jj} u^j(t,\bX_t^{t_0,\bx_0})|^2 dt\bigg] + (T-t_{0}) \|D_{x^j} a^{N,j}\|_{\infty}^2
        \\
        &\leq C \norm{ |D \ba^N|_{\op} }_{\infty}^2\Big(1 +  \| |(D_{kj}u^j)_{k,j = 1,\dots,N}|_{\op} \|_{L^2(\scrL^N)}^2\Big) \leq C, 
    \end{align*}
    where we used the Lipschitz bound on $\ba^N$ from Proposition \ref{prop.fixedpoint.prelims} and the bound on $(D_{kj} u^j)_{k,j = 1,\dots,N}$ from Proposition \ref{prop.nash.preliminaries}. Next, we estimate
    \begin{align*}
        \sum_{k=1}^{N} |\cB^{{i,j,k}}|^2 &\leq C \sum_{k = 1}^N \Big| \sum_{l \neq i} D_{p^k} a^{N,l} D_l u^i\Big|^2 + C \sum_{k=1}^{N} | \hat{H}^{N,i,k}|^2 
        \\
        &\leq C \| D \ba^N\|_{\op}^{2} \sum_{k \neq i} |D_k u^i|^2 + \frac{C}{N} \leq C/N,
    \end{align*}
    where we used the bound on $\sum_{j \neq i} |D_j u^{N,i}|^2$ from Proposition \ref{prop.nash.preliminaries}, as well as the definition of $\hat{H}^{N,i,k}$ from \eqref{def:hatHik} and the decay bounds on $D_{p^{k}}a^{N,i}$ from Proposition \ref{prop.aderivscaling}.
    Finally, we note that using the same bound on $\sum_{j \neq i} |D_j u^{N,i}|^2$ from Proposition \ref{prop.nash.preliminaries} and on $D_{p^{k}}a^{N,i}, D_{x^{k}}a^{N,i}$ from Proposition \ref{prop.aderivscaling}
    \begin{align*}
        &\Big|\sum_{k \neq i,j} D_{jj} u^j D_{p^j} a^{N,k} D_k u^i\Big| \leq \frac{C}{N} |D_{jj} u^j|, \quad  \sum_{k \neq i,j} |D_{x^j} a^{N,k} D_k u^i| \leq C/N, 
        \\
        &\qquad  |\hat{H}^{N,i,j}| |D_{jj} u^j| \leq \frac{C}{N} |D_{jj} u^j|, \quad |D_{x^j} \hat{H}^{N,i}| \leq C/N, 
    \end{align*}
    from which we deduce that
    \begin{align*}
        \| \cC^{i,j} \|_{L^1(\scrL^N)} \leq \frac{C}{N} \Big( 1  + \|D_{jj} u^j \|_{L^1(\scrL^N)} \Big) \leq \frac{C}{N} \Big( 1  + \|D_{jj} u^j \|_{L^2(\scrL^N)} \Big) \leq C/N. 
    \end{align*}
    We have thus established the bounds in \eqref{coeffbounds}. 

    We now aim to use \eqref{coeffbounds} to get the desired estimates. We fix $T_0 \in [0,T)$, then $\eps \in (0,T - T_0)$. Next, we choose $t_0 \in [T_0,T_0 + \eps)$ and $\bx \in (\R^d)^N$. For $i,j,k = 1,\dots,N$, we use the notation 
    \begin{align*}
        \bX := \bX^{t_0,\bx_0}, \quad Y^{i,j} := u^{i,j}(\cdot, \bX), \quad Z^{i,j,k} := \sqrt{2} D_k u^{i,j}(\cdot,\bX), \quad Z^{i,j,0} := \sqrt{2\sigma_0} \sum_{k = 1}^N D_ku^{i,j}(\cdot, \bX). 
    \end{align*}
    By It\^o's formula, we have 
    \begin{align*}
        dY_t^{i,j} = \left( \cA^{i,j}_t Y_t^{i,j} + \sum_{k \neq j} Z_t^{k,j,k} \cB_t^{i,j,k} + \cC_t^{i,j} \right) dt + \sum_{k = 0}^N Z_t^{i,j,k} dW_t^k, \quad t_0 \leq t \leq T,
    \end{align*}
    where we have defined, by abuse of notation,
    \begin{align*}
        \cA^{i,j}_t := \cA^{i,j}(t,\bX_t), \quad \cB_t^{i,j,k} := \frac{1}{\sqrt{2}} \cB^{i,j,k}(t,\bX_t), \quad \cC_t^{i,j} := \cC^{i,j}(t,\bX_t). 
    \end{align*}
    By It\^o's formula and using the previously obtained bounds on $\cA^{i,j}, \cB^{i,j,k}$ and $ \cC^{i,j}$ and recalling the notations \eqref{inf:norms}, we deduce that for each $i \neq j$,
    \begin{align*}
        \E\bigg[ &|Y_{t_0}^{i,j}|^2 + \int_{t_0}^{T_0 + \eps} \sum_{k=1}^{N} |Z_t^{i,j,k}|^2 dt \bigg] \leq  \E\bigg[ |Y^{i,j}_{T_0 + \eps}|^2 + 2 \int_{t_0}^{T_0 + \eps} |Y_t^{i,j}| \Big|\cA^{i,j}_t Y_t^{i,j} + \sum_{k \neq j} Z_t^{k,j,k} \cB_t^{i,j,k} + \cC_t^{i,j} \Big| dt  \bigg]
        \\
        &\leq \|u^{i,j}\|^{2}_{\infty,T_0 + \eps} + C \| u^{i,j} \|_{\infty,T_0, T_0 + \eps} \E\bigg[ \int_{t_0}^{T_0 + \eps} \Big( |\cA_t^{i,j}| |Y_t^{i,j}| + \frac{1}{\sqrt{N}} \big(\sum_{k \neq j} |Z_t^{k,j,k}|^2\big)^{1/2} + |\cC_t^{i,j}| \Big) dt \bigg]
        \\
        &\leq \|u^{i,j}\|^{2}_{\infty,T_0 + \eps} + C \| u^{i,j} \|^{2}_{\infty,T_0, T_0 + \eps} \E\bigg[ \int_{t_0}^{T_0 + \eps} |\cA_t^{i,j}| dt \bigg] 
        \\
        &\qquad + C \| u^{i,j} \|_{\infty,T_0, T_0 + \eps} \E\bigg[  \int_{t_0}^{T_0 + \eps} \Big( \frac{1}{N} \sum_{k \neq j} |Z_t^{k,j,k}|^2 \Big)^{1/2}\bigg] 
        +  C \| u^{i,j} \|_{\infty,T_0, T_0 + \eps} \|\cC^{i,j}\|_{L^1(\scrL^N)}
        \\
        &\leq \|u^{i,j}\|^{2}_{\infty,T_0 + \eps} + C \| u^{i,j} \|^2_{\infty,T_0, T_0 + \eps} \sqrt{\eps} \|\cA^{i,j}\|_{L^2(\scrL^N)}
        \\
        &\qquad + C \| u^{i,j} \|_{\infty,T_0, T_0 + \eps}  \sqrt{\eps} \E\bigg[  \int_{t_0}^{T_0 + \eps} \frac{1}{N} \sum_{k \neq j} |Z_t^{k,j,k}|^2 dt \bigg]^{1/2}
        +  \frac{C}{N} \| u^{i,j} \|_{\infty,T_0, T_0 + \eps}
        \\
        &\leq \|u^{i,j}\|^{2}_{\infty, T_0 + \eps} + \Big(\frac{1}{2} + C\eps + C \sqrt{\eps} \Big) \| u^{i,j} \|^2_{\infty,T_0, T_0 + \eps} + \frac{1}{2N} \sum_{k \neq j} \E\bigg[  \int_{t_0}^{T_0 + \eps}  |Z_t^{k,j,k}|^2 dt \bigg] + \frac{C}{N^2}, 
    \end{align*}
    where in the last line we used Young's inequality. We now take a maximum over $i \neq j$, to find that 
    \begin{align*}
        \max_{i \neq j} \E\bigg[ &|Y_{t_0}^{i,j}|^2 + \int_{t_0}^{T_0 + \eps} \sum_{k=1}^{N} |Z_t^{i,j,k}|^2 dt \bigg] \leq \max_{i \neq j} \bigg( \|u^{i,j}\|^{2}_{\infty, T_0 + \eps} + \Big(\frac{1}{2} + C\eps + C \sqrt{\eps} \Big) \| u^{i,j} \|^2_{\infty,T_0, T_0 + \eps} \bigg) 
        \\
        & \qquad \qquad \qquad + \frac{1}{2N} \max_{i \neq j} \sum_{k=1}^{N} \E\bigg[  \int_{t_0}^{T_0 + \eps}  |Z_t^{i,j,k}|^2 dt \bigg] + \frac{C}{N^2}, 
    \end{align*}
    and so by absorbing the penultimate term we arrive at
         \begin{align} \label{yij.est}
        \max_{i \neq j} \E\bigg[ &|Y_{t_0}^{i,j}|^2 + \int_{t_0}^{T_0 + \eps} \sum_k |Z_t^{i,j,k}|^2 dt \bigg] \leq \max_{i \neq j} \bigg(\|u^{i,j}\|^{2}_{\infty,T_0 + \eps} + \Big(\frac{1}{2} + C\eps + C \sqrt{\eps} \Big) \| u^{i,j} \|^2_{\infty,T_0, T_0 + \eps} \bigg) + \frac{C}{N^2}. 
    \end{align}
    Now we take a supremum over $t_0 \in [T_0, T_0 + \eps]$ and $\bx_0 \in (\R^d)^N$ {on the left hand side}, and recall the definition of $Y^{i,j}$, to find 
    \begin{align*}
        \max_{i \neq j}  \|u^{i,j}\|^{2}_{\infty,T_0, T_0 + \eps} \leq \max_{i \neq j} \|u^{i,j}\|^{2}_{\infty,T_0 + \eps} +  \Big(\frac{1}{2} + C\eps + C \sqrt{\eps} \Big) \max_{i \neq j} \| u^{i,j} \|^2_{\infty,T_0, T_0 + \eps} + \frac{C}{N^2}.
    \end{align*}
    We deduce that there is a constant $\eps_0 > 0$ independent of $N$ such that if $\eps < \eps_0$, then we have 
    \begin{align*}
         \max_{i \neq j}  \|u^{i,j}\|^{2}_{\infty,T_0, T_0 + \eps} \leq \max_{i \neq j} \|u^{i,j}\|^{2}_{\infty,T_0 + \eps} + C/N^2. 
    \end{align*}
    Iterating this inequality and recalling that by assumption $\|u^{i,j}(T,\cdot)\|_{\infty} \leq C/N$, we get the desired bound on $u^{i,j}$. To get the corresponding bound on $(D_ku^{i,j})_{k = 1,\dots,N}$, we return to \eqref{yij.est} and recall the definition of $Z^{i,j,k}$.
\end{proof}

\begin{corollary}\label{cor:kij}
The bounds established in Proposition \ref{prop.uij} imply that there exists a constant $C>0$ such that for $N\in\mathbb{N}$ large enough we have that
\begin{align*}
 \norm{D_k u^{i,j}}_{L^2(\scrL^N)} = \norm{u^{i,j,k}}_{L^2(\scrL^N)}  \leq C/N,\ \ \forall i,j,k\in\{1,\dots,N\},\ i\neq j,
\end{align*}
and
\begin{align*}
 \norm{|D_k u^{i,j}|^2}_{L^1(\scrL^N)} = \norm{|u^{i,j,k}|^2}_{L^1(\scrL^N)}  \leq C/N^{2},\ \ \forall i,j,k\in\{1,\dots,N\},\ i\neq j.
\end{align*}
\end{corollary}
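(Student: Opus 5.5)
The corollary follows directly from the second half of Proposition \ref{prop.uij}, so the plan is simply to unpack the norms. Fix $i \neq j$. Proposition \ref{prop.uij} gives
\begin{align*}
\norm{ \Big(\sum_{k = 1}^N |D_{kj} u^{N,i}|^2 \Big)^{1/2}}_{L^2(\scrL^N)} \leq C/N.
\end{align*}
By the definition of $\norm{\cdot}_{L^2(\scrL^N)}$, this means that for every $(t_0,\bx_0)$
\begin{align*}
\E\bigg[\int_{t_0}^T \sum_{k=1}^N |u^{i,j,k}(t,\bX_t^{t_0,\bx_0})|^2 dt\bigg] \leq C^2/N^2 ,
\end{align*}
where we recall that $u^{i,j,k} = D_{kj}u^{N,i} = D_k u^{i,j}$.

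Since every summand is nonnegative, we may drop all terms except a fixed index $k$. For each $k$ this gives
\begin{align*}
\E\bigg[\int_{t_0}^T |u^{i,j,k}(t,\bX_t^{t_0,\bx_0})|^2 dt\bigg] \leq C^2/N^2 .
\end{align*}
Taking the supremum over $(t_0,\bx_0)$ yields the second claim, $\norm{|u^{i,j,k}|^2}_{L^1(\scrL^N)} \leq C^2/N^2$. Taking square roots of the same inequality yields the first claim, $\norm{u^{i,j,k}}_{L^2(\scrL^N)} \leq C/N$. The identifications $D_k u^{i,j} = u^{i,j,k}$ inside the norms hold by definition of the notation. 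When $d\ge 2$, the same argument applies componentwise, or directly with matrix norms.

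There is no real obstacle here. The only point requiring care is that the constant $C$ is the one produced by Proposition \ref{prop.uij}. That constant is independent of $N$, $i$, $j$, and of the starting point, and it is valid for all $N$ large enough. This uniformity is exactly what lets us take the supremum over $(t_0,\bx_0)$ and still state the bounds uniformly over all $k$ and all pairs $i\neq j$.
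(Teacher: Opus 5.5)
Your proposal is correct and is exactly the intended argument. The paper gives no separate proof: it treats the corollary as an immediate consequence of the second term in Proposition~\ref{prop.uij}. You discard all but one nonnegative summand in $\sum_{k}|D_{kj}u^{N,i}|^2$ and note that $\norm{|u^{i,j,k}|^2}_{L^1(\scrL^N)}=\norm{u^{i,j,k}}_{L^2(\scrL^N)}^2$.
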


\subsection{Estimates on the second derivatives}

We now need to differentiate further the Nash system, to find an equation for the second derivatives $u^{i,j,k} := D_{kj} u^{N,i}$. By explicit computation, we find that
\begin{align*}
    &D_{k} \Big( \scrL^N u^{i,j} \Big) = \scrL^N u^{i,j,k} + \sum_{l = 1}^N \sum_{q = 1}^N D_{p^q} a^{N,l} u^{q,k,q} u^{i,j,l} + \sum_{l = 1}^N D_{x^k} a^{N,l} u^{i,j,l}, 
    \\
    &D_k \Big( \sum_{l \neq i} \sum_{q = 1}^N D_{jq } u^q D_{p^q} a^{N,l} D_l u^i \Big) = \sum_{l \neq i} \sum_{q = 1}^N D_k u^{q,j,q} D_{p^q} a^{N,l} u^{i,l} + \sum_{l \neq i} \sum_{q = 1}^N u^{q,j,q} D_{p^q} a^{N,l} u^{i,k,l} 
    \\
    &\qquad + \sum_{l \neq i} \sum_{q,n = 1}^N D_{p^n p^q} a^{N,l}  u^{q,j,q} u^{n,k,n} u^{i,l} + \sum_{l \neq i} \sum_{q = 1}^N D_{x^k p^q} a^{N,l} u^{q,j,q} u^{i,l}
    \\
    &D_k \Big( \sum_{l \neq i} D_{x^j} a^{N,l} u^{i,l} \Big) = \sum_{l \neq i} D_{x^j} a^{N,l} u^{i,k,l} + \sum_{l \neq i} \sum_{q = 1}^N D_{p^q x^j} a^{N,l} u^{q,k,q} u^{i,l} + \sum_{l \neq i} D_{x^kx^j} a^{N,l} u^{i,l}
    \\
    &D_k \Big(  \sum_{l=1}^{N} \hat{H}^{N,i,l} D_{jl} u^{N,l} \Big) = \sum_{l=1}^{N} \hat{H}^{N,i,l} D_k u^{l,j,l} + \sum_{l,q=1}^{N} D_{p^q} \hat{H}^{N,i,l} u^{q,k,q} u^{l,j,l} 
    \\
    &\qquad + \sum_{l=1}^{N} D_{x^k} \hat{H}^{N,i,l} u^{l,j,l}
    \\
    &D_k \big( D_{x^j} \hat{H}^{N,i} \big) = \sum_{l = 1}^N D_{p^l x^j} \hat{H}^{N,i} u^{l,k,l} + D_{x^kx^j} \hat{H}^{N,i}.
\end{align*}
We deduce that for each $i,j,k = 1,\dots,N$, $u^{i,j,k}$ satisfies
    \begin{align} \label{nash.seconderiv}
    \begin{cases} \ds - \partial_t u^{i,j,k} - \scrL^N u^{i,j,k} + \sum_{q = 1}^4 \cT^{i,j,k,q} = 0, \quad (t,\bx) \in [0,T] \times (\R^d)^N, \vspace{.2cm}
    \\ \ds 
    u^{i,j,k}(T,\bx) = D_{x^k x^j} G^{N,i}(\bx), \quad \bx \in (\R^d)^N, 
    \end{cases}
\end{align}
where 
\begin{align*}
   &\cT^{i,j,k,1} := - \sum_{l \neq i} \sum_{q = 1}^N D_k u^{q,j,q} D_{p^q} a^{N,l} u^{i,l} + \sum_{l=1}^{N} \hat{H}^{N,i,l} D_k u^{l,j,l}, 
   \\
   &\cT^{i,j,k,2} := - \sum_{l = 1}^N \sum_{q = 1}^N D_{p^q} a^{N,l} u^{q,k,q} u^{i,j,l} - \sum_{l = 1}^N D_{x^k} a^{N,l} u^{i,j,l} - \sum_{l \neq i} \sum_{q = 1}^N D_{p^q} a^{N,l} u^{q,j,q} u^{i,l,k} 
   \\
   &\qquad \qquad \qquad - \sum_{l \neq i} D_{x^j} a^{N,l} u^{i,k,l},
   \\
   &\cT^{i,j,k,3} := - \sum_{l \neq i} \sum_{q,n = 1}^N D_{p^n p^q} a^{N,l} u^{q,j,q} u^{n,k,n} u^{i,l} + \sum_{l,q=1}^N D_{p^q} \hat{H}^{N,i,l} u^{q,k,q} u^{l,j,l} 
   \\
   &\qquad \qquad \qquad - \sum_{l \neq i} \sum_{q = 1}^N D_{x^kp^q} a^{N,l} u^{q,j,q} u^{i,l} - \sum_{l \neq i} \sum_{q = 1}^N D_{p^q x^j} a^{N,l} u^{q,k,q} u^{i,l} 
   \\
   &\qquad \qquad \qquad + \sum_{l=1}^{N} D_{x^k} \hat{H}^{N,i,l} u^{l,j,l} + \sum_{l = 1}^N D_{p^lx^j} \hat{H}^{N,i} u^{l,k,l}
   \\
   &\cT^{i,j,k,4} := - \sum_{l \neq i} D_{x^kx^j} a^{N,l} u^{i,l} + D_{x^kx^j} \hat{H}^{N,i}.
\end{align*}

\begin{proposition} \label{prop.uiji}

    There is a constant $C>0$ such that for all $N$ large enough and all $i \neq j$,
    \begin{align*}
        \norm{u^{i,j,i}}_{\infty} + \norm{\Big( \sum_{k = 1}^N |D_k u^{i,j,i}|^2 \Big)^{1/2}}_{L^2(\scrL^N)} \leq C/N.
    \end{align*}
\end{proposition}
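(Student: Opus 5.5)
The plan is to repeat the argument of Proposition \ref{prop.uij}, now applied to equation \eqref{nash.seconderiv} with $k=i$ and $i\neq j$. That is, we write a backward SDE for $Y^{i,j} := u^{i,j,i}(\cdot,\bX)$ along the equilibrium trajectories $\bX=\bX^{t_0,\bx_0}$, with martingale integrands $Z^{i,j,k}:=\sqrt2 D_k u^{i,j,i}(\cdot,\bX)$. We then apply It\^o's formula to $|Y^{i,j}|^2$ on a short interval $[T_0,T_0+\eps]$, take the maximum over pairs $i\neq j$, absorb, and iterate backward from $T$. The terminal datum satisfies $|D_{x^ix^j}G^{N,i}|\le C/N$ by Assumption \ref{assump.regularity.disp}, so the scheme closes provided the terms $\cT^{i,j,i,q}$ can be put in the form
\begin{align*}
\cA^{i,j}\,u^{i,j,i} + \sum_{q} \cB^{i,j,q}\, D_i u^{q,j,q} + \cC^{i,j},
\end{align*}
with coefficients satisfying
\begin{align*}
\|\cA^{i,j}\|_{L^2(\scrL^N)}\le C, \qquad \Big\|\sum_q|\cB^{i,j,q}|^2\Big\|_\infty\le C/N, \qquad \|\cC^{i,j}\|_{L^1(\scrL^N)}\le C/N.
\end{align*}

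\textbf{The gradient terms $\cT^{i,j,i,1}$.} These involve $D_i u^{q,j,q}$ with coefficient $-\sum_{l\neq i}D_{p^q}a^{N,l}u^{i,l}+\hat H^{N,i,q}$. This coefficient is $O(1/N)$ in $\ell^2$ over $q$ by Proposition \ref{prop.nash.preliminaries}, Proposition \ref{prop.aderivscaling} and Lemma \ref{lem.hatH}, and is $O(1/N)$ for each single $q$.

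For $q\neq j$, we have $D_i u^{q,j,q}=D_i D_{qj}u^{N,q}$, which is exactly a component of the gradient of the unknown $u^{q,j,q}$ for the pair $(q,j)$ with $q\neq j$. So it is a $Z$-term of the same family.

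For $q=j$, we use the symmetry of third derivatives: $D_i u^{j,j,j}=D_{ijj}u^{N,j}=D_j u^{j,i,j}$. This is again a gradient component of an unknown of the family, namely the pair $(j,i)$.

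Hence, exactly as in Proposition \ref{prop.uij}, Cauchy--Schwarz gives a bound by
\begin{align*}
\frac{C}{\sqrt N}\Big(\frac1N\sum_{(q,j')}\E\int|Z^{q,j',\cdot}|^2\Big)^{1/2},
\end{align*}
which is absorbed via Young's inequality after maximizing over pairs.

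\textbf{The remaining terms $\cT^{i,j,i,2}$--$\cT^{i,j,i,4}$.} Here we separate the diagonal contributions that are linear in the unknown, such as $D_{p^i}a^{N,i}u^{i,i,i}u^{i,j,i}$ and $D_{x^i}a^{N,i}u^{i,j,i}$. These form $\cA^{i,j}$, which lies in $L^2(\scrL^N)$ by the operator-norm bound on $(D_{ji}u^{N,i})$ in Proposition \ref{prop.nash.preliminaries}. Everything else goes into $\cC^{i,j}$, and each piece is shown to be $O(1/N)$ in $L^1(\scrL^N)$.

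\begin{itemize}
\item Products of two second derivatives, e.g.\ $\sum_{l,q}D_{p^q}a^{N,l}u^{q,i,q}u^{i,j,l}$: split $l=q$ from $l\neq q$, use $|D_{p^q}a^{N,l}|\le C\omega_{l,q}$, and bound $u^{i,j,l}$ ($i\neq j$) by Corollary \ref{cor:kij}, which gives $\|u^{i,j,l}\|_{L^2(\scrL^N)}\le C/N$. The $u^{q,i,q}$ factors are controlled in $L^2$ by Proposition \ref{prop.nash.preliminaries}, and Cauchy--Schwarz then gives an $L^1$ bound of order $1/N$.
\item Terms carrying a factor $u^{i,l}$ with $l\neq i$: use $\|u^{i,l}\|_\infty\le C/N$ from Proposition \ref{prop.uij}.
\item Terms built from $\hat H^{N,i,l}$ and its derivatives: use Lemma \ref{lem.hatH}.
\item Pure data terms: $D_{x^ix^j}\hat H^{N,i}$ and $D_{x^ix^j}a^{N,l}$ are $O(1/N)$ and $O(\omega_{l,i,j})$ respectively, by Lemma \ref{lem.hatH} and Proposition \ref{prop.aderivscaling}.
\end{itemize}

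\textbf{Main obstacle.} I expect the hardest part to be this bookkeeping for $\cT^{i,j,i,2}$ and $\cT^{i,j,i,3}$. Summed over $l,q,n$, triple and double sums can lose powers of $N$ unless one consistently uses that $\sum_{l}|D_{p^q}a^{N,l}|^2\le C\omega_{q,\cdot}^2$ and that the non-decaying quantities ($u^{l,l,l}$, $u^{q,k,q}$) only enter through $L^2(\scrL^N)$ or operator-norm bounds, never in $L^\infty$. A particular danger is terms such as $u^{j,i,j}u^{i,j,j}$ or $u^{q,j,q}u^{i,l,i}$, where two non-$L^\infty$ factors meet. My first attempt would be to bound the unknown-type factor ($u^{j,i,j}$ or $u^{l,j,l}$ with $l\neq j$) by $\max_{i\neq j}\|u^{i,j,i}\|_{\infty,T_0,T_0+\eps}$ and put the other factor in $L^2$ with an $O(1/N)$ or $O(\sqrt\eps)$ prefactor. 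This folds such terms into the $(\tfrac12+C\sqrt\eps)\max\|u^{\cdot}\|^2_{\infty}$ term that is absorbed for $\eps<\eps_0$.

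With these coefficient bounds in hand, the estimate \eqref{yij.est}-type inequality and the backward iteration yield $\max_{i\neq j}\|u^{i,j,i}\|_\infty\le C/N$. Returning to that inequality then gives the $L^2(\scrL^N)$ bound on $(D_ku^{i,j,i})_k$.
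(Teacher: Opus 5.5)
Your proposal is correct and follows essentially the paper's route. The paper uses the same reorganization with gradient coefficients $\cB^{i,j,k}=-\sum_{l\neq i}D_{p^k}a^{N,l}u^{i,l}+\hat{H}^{N,i,k}$ satisfying $\|\sum_k|\cB^{i,j,k}|^2\|_\infty\le C/N$, the same identity $D_iu^{j,j,j}=D_ju^{j,i,j}$ to keep the gradient terms inside the family, and the same short-time It\^o/absorption/backward iteration. Note that this bound makes the $\ell^2$ norm of the coefficients of order $N^{-1/2}$, not $N^{-1}$ as you state; this is still enough, because the Young absorption is done after maximizing over pairs.

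The only difference is that the paper needs neither a zeroth-order coefficient $\cA^{i,j}$ nor your $L^\infty$ fallback for products of two second derivatives. It puts all of $\cT^{i,j,i,2},\cT^{i,j,i,3},\cT^{i,j,i,4}$ into $\cC^{i,j}$ and bounds them in $L^1(\scrL^N)$ by $C/N$ via Cauchy--Schwarz. This works because $u^{i,j,i}=D_iu^{i,j}$ and every $u^{q,j,q}=D_qu^{q,j}$ with $q\neq j$ are already $O(1/N)$ in $L^2(\scrL^N)$ by Proposition \ref{prop.uij}, while the diagonal factors $u^{i,i,i},u^{j,j,j}$ are $O(1)$ in $L^2(\scrL^N)$ by Proposition \ref{prop.nash.preliminaries}.
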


\begin{proof}
For $i \neq j$, we are going to reorganize the equation for $u^{i,j,i}$ as follows:
\begin{align*}
       - \partial_t u^{i,j,i} - \scrL^N u^{i,j,i} + \sum_{k=1}^{N} \cB^{i,j,k} D_i u^{k,j,k} + \cC^{i,j} = 0, 
\end{align*}
with adopting the notations
\begin{align*}
   \cB^{i,j,k} := - \sum_{l \neq i} D_{p^k} a^{N,l} u^{i,l} +  \hat{H}^{N,i,k},
    \quad \cC^{i,j} := \sum_{q = 2}^{4} \cT^{i,j,i,q}. 
\end{align*}
Our aim will be to prove that the coefficients $\cB^{i,j,k}$ and $\cC^{i,j}$ satisfy the bounds 
\begin{align} \label{coeffbounds.iji}
  \norm{ \sum_{k=1}^{N} |\cB^{i,j,k}|^2 }_{\infty} \leq C/N, \quad \norm{\cC^{i,j}}_{L^1(\scrL^N)} \leq C/N.
\end{align}
For the first bound in \eqref{coeffbounds.iji}, we use Propositions \ref{prop.aderivscaling} and \ref{prop.uij} to find that
\begin{align*}
    \sum_{k=1}^{N} \Big| \sum_{l \neq i} D_{p^k} a^{N,l} u^{i,l} \Big|^2 \leq  \frac{C}{N^2} \sum_{k=1}^{N} \Big|\sum_{l=1}^{N} |D_{p^k} a^{N,l}| \Big|^2 \leq C/N,
 \end{align*}
 and then we use Lemma \ref{lem.hatH} to obtain
 \begin{align*}
   \sum_{k=1}^{N} \Big| \hat{H}^{N,i,k } \Big|^2 \leq C/N.
\end{align*}
This gives the first estimate in \eqref{coeffbounds.iji}.
We now turn our attention to bounding $\cC^{i,j}$. We will first bound $\cT^{i,j,i,2}$. Again making use of Proposition \ref{prop.aderivscaling} and Lemma \ref{lem.hatH}, we have
\begin{align} \label{Tiji2.1}
&\left| \sum_{l,q = 1}^N D_{p^q} a^{N,l} u^{q,i,q} u^{i,j,l} \right| \leq C |u^{i,i,i}| \sum_{l = 1}^N |D_{p^i} a^{N,l}| |u^{i,j,l}| + C \left( \sum_{q \neq i} |u^{q,i,q}|^2 \right)^{1/2} \left( \sum_{l = 1}^N |u^{i,j,l}|^2 \right)^{1/2}
   \nonumber  \\
    &\qquad \leq C|u^{i,i,i}| |D_{p^i} a^{N,i}| |u^{i,j,i}| + \frac{C}{\sqrt{N}} |u^{i,i,i}| \left(\sum_{l = 1}^N |u^{i,j,l}|^2 \right)^{1/2} +  C \left( \sum_{q \neq i} |u^{q,i,q}|^2 \right)^{1/2} \left( \sum_{l = 1}^N |u^{i,j,l}|^2 \right)^{1/2}
   \nonumber \\
    &\qquad \leq C |u^{i,i,i}||u^{i,j,i}| + \frac{C}{N} |u^{i,i,i}|^2 + C\sum_{l = 1}^N |u^{i,j,l}|^2 + C\sum_{q \neq i} |u^{q,i,q}|^2
   \nonumber \\
  & \qquad \leq C N |u^{i,j,i}|^{2} + \frac{C}{N} |u^{i,i,i}|^2 + C\sum_{l = 1}^N |u^{i,j,l}|^2 + C\sum_{q \neq i} |u^{q,i,q}|^2,
   \nonumber \\
    &\left| \sum_{l=1}^{N} D_{x^i} a^{N,l} u^{i,j,l} \right| \leq |D_{x^i} a^{N,i}||u^{i,j,i}| + \frac{C}{\sqrt{N}} \left(\sum_{l=1}^{N} |u^{i,j,l}|^2 \right)^{1/2} \leq C |u^{i,j,i}| + \frac{C}{N} + C \sum_{l=1}^{N} |u^{i,j,l}|^2\nonumber \\
    &\qquad \le C N |u^{i,j,i}|^{2} + \frac{C}{N} + C \sum_{l=1}^{N} |u^{i,j,l}|^2.
\end{align}
Similarly, we have 
\begin{align} \label{Tiji2.2}
    &\left| \sum_{l \neq i} \sum_{q = 1}^N D_{p^q} a^{N,l} u^{q,j,q} u^{i,l,i} \right| \leq C |u^{j,j,j}| \sum_{l \neq i} |D_{p^j} a^{N,l}| |u^{i,l,i}| + C \left( \sum_{l \neq i} |u^{i,l,i}|^2 \right)^{1/2}\left( \sum_{q \neq j} |u^{q,j,q}|^{2} \right)^{1/2}
 \nonumber   \\
    &\qquad \leq C |u^{j,j,j}| |u^{i,j,i}| + \frac{C}{\sqrt{N}} |u^{j,j,j}| \left( \sum_{l \neq i} |u^{i,l,i}|^2 \right)^{1/2} + C \left( \sum_{l \neq i} |u^{i,l,i}|^2 \right)^{1/2}\left( \sum_{q \neq j} |u^{q,j,q}|^{2} \right)^{1/2}
  \nonumber  \\
    &\qquad \leq \frac{C}{N} |u^{j,j,j}|^2 + CN |u^{i,j,i}|^2 + C\sum_{l \neq i} |u^{i,l,i}|^2 + C\sum_{q \neq j} |u^{q,j,q}|^2,
 \nonumber   \\
    &\left| \sum_{l \neq i} D_{x^j} a^{N,l} u^{i,l,i} \right| \leq C|u^{i,j,i}| + \frac{C}{\sqrt{N}} \left(\sum_{l \neq i} |u^{i,l,i}|^2\right)^{1/2} \leq C |u^{i,j,i}| + \frac{C}{N} + C\sum_{l \neq i} |u^{i,l,i}|^2 \nonumber\\ 
    & \qquad \leq CN |u^{i,j,i}|^{2} + \frac{C}{N} + C\sum_{l \neq i} |u^{i,l,i}|^2.
\end{align}
Putting together \eqref{Tiji2.1} and \eqref{Tiji2.2} we find that there exists a constant $C>0$ independent of $N$ such that
\begin{align*}
\left| \cT^{i,j,i,2} \right| & \le \frac{C}{N}\left(1 +  |u^{i,i,i}|^2 + |u^{j,j,j}|^2 \right) + C N |u^{i,j,i}|^{2} + C\sum_{l \neq i} |u^{i,l,i}|^2 + C\sum_{l = 1}^N |u^{i,j,l}|^2\\ 
&+ C\sum_{q \neq i} |u^{q,i,q}|^2 + C\sum_{q \neq j} |u^{q,j,q}|^2.
\end{align*}
By Proposition \ref{prop.nash.preliminaries},  $u^{i,i,i}$ and $u^{j,j,j}$ are uniformly bounded in $L^{2}(\scrL^{N})$. To bound the other terms we can apply Proposition \ref{prop.uij}, and we deduce that 
\begin{align} \label{Tiji.2bound}
    \norm{\cT^{i,j,i,2}}_{L^1(\scrL^N)} \leq C/N.
\end{align}
To bound $\cT^{i,j,i,3}$, we use similar arguments to deduce that
\begin{align*}
    & \Big| \sum_{l \neq i} \sum_{q,n = 1}^N D_{p^np^q} a^{N,l} u^{q,j,q} u^{n,i,n} u^{i,l} \Big| \leq \frac{C}{N} \sum_{l \neq i} \sum_{q,n = 1}^N |D_{p^np^q} a^{N,l}| |u^{q,j,q} |u^{n,i,n}|
    \\
    &\qquad \leq \frac{C}{N} \sum_{q,n = 1}^N \left(\frac{1}{N} + 1_{q = n} \right) |u^{q,j,q}| |u^{n,i,n}| 
     \leq \frac{C}{N} \big( \sum_{q = 1}^N |u^{q,j,q}|^2 \big)^{1/2} \big( \sum_{n = 1}^N |u^{n,i,n}|^2 \big)^{1/2} 
    \\
    &\qquad \leq \frac{C}{N} \sum_{q = 1}^N |u^{q,j,q}|^2 + \frac{C}{N} \sum_{n = 1}^N |u^{n,i,n}|^2,
    \\
    & \Big|\sum_{l,q=1}^{N} D_{p^q} \hat{H}^{N,i,l} u^{q,i,q} u^{l,j,l} \Big| \leq  C\sum_{l=1}^{N} |D_{p^i} \hat{H}^{N,i,l}| |u^{i,i,i}| |u^{l,j,l}| + C \sum_{l=1}^{N} \sum_{q \neq i} |D_{p^q}\hat{H}^{N,i,l}| |u^{q,i,q}| |u^{l,j,l}|
    \\
    &\qquad \leq \frac{C}{N} |u^{i,i,i}| |u^{j,j,j}| + \frac{C}{\sqrt{N}} |u^{i,i,i}| \left(\sum_{l \neq j} |u^{l,j,l}|^2\right)^{1/2} + \frac{C}{N} \left( \sum_{q=1}^{N} |u^{q,i,q}|^2 \right)^{1/2} \left( \sum_{l=1}^{N} |u^{l,j,l}|^2 \right)^{1/2}
    \\
    &\qquad \leq \frac{C}{N} |u^{i,i,i}| |u^{j,j,j}| + \frac{C}{N} |u^{i,i,i}|^2 + C \sum_{l \neq i} |u^{l,j,l}|^2 + \frac{C}{N} \left(\sum_{q=1}^{N} |u^{q,j,q}|^2 + \sum_{l=1}^N |u^{l,j,l}|^2 \right), 
    \\
    &\Big|\sum_{l \neq i} \sum_{q=1}^N D_{x^i p^q} a^{N,l} u^{q,j,q} u^{i,l} \Big| \leq \frac{C}{N} \sum_{q=1}^N |u^{q,j,q}| \leq \frac{C}{N} |u^{j,j,j}| + \frac{C}{\sqrt{N}}  \left( \sum_{q \neq j} |u^{q,j,q}|^2 \right)^{1/2}
    \\
    &\qquad \leq \frac{C}{N} \big(1 + |u^{j,j,j}|\big) + \sum_{q \neq j} |u^{q,j,q}|^2, 
    \\
   & \Big| \sum_{l \neq i} \sum_{q=1}^ND_{p^qx^j} a^{N,l} u^{q,i,q} u^{i,l} \Big| \leq \frac{C}{N} \sum_{l \neq i} \sum_{q=1}^N |D_{p^q x^j} a^{N,l}| |u^{q,i,q}| 
    \\
    &\qquad \leq \frac{C}{N} \sum_{q=1}^N |u^{q,i,q}| \leq \frac{C}{N} |u^{i,i,i}| + \frac{C}{N} + \frac{C}{N} \sum_{q \neq i} |u^{q,i,q}|^2, 
    \\
    &\Big| {\sum_{l=1}^N} D_{x^i} \hat{H}^{N,i,l} u^{l,j,l} \Big| \leq \frac{C}{N} \sum_{l=1}^N |u^{l,j,l}|  \leq \frac{C}{N} |u^{j,j,j}| + \frac{C}{N} + \sum_{l \neq j} |u^{l,j,l}|^2, 
    \\
    &\Big| \sum_{l =1}^N D_{p^l x^j} \hat{H}^{N,i} u^{l,i,l} \Big| \leq \frac{C}{N} |u^{i,i,i}| + \sum_{l \neq i} |D_{p^l x^j} \hat{H}^{N,i}| |u^{l,i,l}| \leq \frac{C}{N} |u^{i,i,i}| + \frac{C}{N} + C \sum_{l \neq i} |u^{l,i,l}|^2,
\end{align*}
which when combined with Proposition \ref{prop.uij} is enough to conclude that 
\begin{align} \label{Tiji3.bound}
    \norm{\cT^{i,j,i,3}}_{L^1(\scrL^N)} \leq C/N.
\end{align}
Finally, we have
\begin{align*}
    \Big| \sum_{l \neq i} D_{x^i x^j} a^{N,l} u^{i,l} \Big| \leq \frac{C}{N}, \quad |D_{x^i x^j} \hat{H}^{N,i}| \leq C/N,
\end{align*}
from which we deduce
\begin{align} \label{Tiji4.bound}
    \norm{\cT^{i,j,i,4}}_{\infty} \leq C/N. 
\end{align}
Combining \eqref{Tiji.2bound}, \eqref{Tiji3.bound} and \eqref{Tiji4.bound}, we confirm the last bound in \eqref{coeffbounds.iji}.

We now have established the bound \eqref{coeffbounds.iji}, and we wish to use this to estimate $u^{i,j,i}$. To do this, we will use a strategy very similar to the one used in the proof of Proposition \ref{prop.uij}. We fix $T_0 \in [0,T)$, then $\eps \in (0,T-T_0)$. Next, we choose $t_0 \in [T_0,T_0 + \eps)$, and $\bx_0 \in (\R^d)^N$. For $i,j,k = 1,\dots,N$, we use the notation 
\begin{align*}
    \bX = \bX^{t_0,\bx_0}, \quad Y^{i,j,i} = u^{i,j,i}(\cdot,\bX), \quad Z^{i,j,i,k} = \sqrt{2} D_k u^{i,j,i}(\cdot,\bX), \quad Z^{i,j,i,0} = \sqrt{2\sigma_0} \sum_{k = 1}^n D_{k} u^{i,j,i}(\cdot,\bX).
\end{align*}
By It\^o's formula, we have 
\begin{align*}
    dY_t^{i,j,i} = \Big( \sum_{k=1}^N \cB_t^{i,j,k} Z_t^{k,j,k,i} + \cC_t^{i,j} \Big) dt + \sum_{k = 0}^N Z_t^{i,j,i,k} dW_t^k, 
\end{align*}
where 
\begin{align*}
    \cB_t^{i,j,k} = \frac{1}{\sqrt{2}} \cB^{i,j,k}(t,\bX_t), \quad \cC_t^{i,j} = \cC^{i,j}(t,\bX_t).
\end{align*}
Again using It\^o's formula and the bounds in \eqref{coeffbounds.iji}, we deduce that for each $i \neq j$, 
\begin{align*}
    &\E\bigg[ |Y_{t_0}^{i,j,i}|^2 + \int_{t_0}^{T_0 + \eps} {\sum_{k=0}^N} |Z_t^{i,j,i,k}|^2 dt \bigg] \leq \E\bigg[ |Y_{T_0 + \eps}^{i,j}|^2 + \int_{t_0}^{T_0 + \eps} |Y_t^{i,j,i}| \Big|{\sum_{k=1}^N} \cB_t^{i,j,k} Z_t^{k,j,k,i} + \cC_t^{i,j} \Big| dt \bigg]
    \\
    &\qquad \leq \norm{u^{i,j,i}}_{\infty, T_0 + \eps}^2 + \|u^{i,j,i}\|_{\infty, T_0,T_0 + \eps} \E\bigg[ \int_{t_0}^{T_0 + \eps} \Big({\sum_{k=1}^N} |\cB_t^{i,j,k}| |Z_t^{k,j,k,i}| + |\cC_t^{i,j}| \Big)dt \bigg]
    \\
    &\qquad \leq \norm{u^{i,j,i}}_{\infty, T_0 + \eps}^2 + C \|u^{i,j,i}\|_{\infty, T_0,T_0 + \eps}  \E\bigg[\int_{t_0}^{T_0 + \eps} \Big(\frac{1}{N} {\sum_{k=1}^N} |Z_t^{k,j,k,i}|^2 \Big)^{1/2} dt  \bigg]
    \\
    &\qquad \qquad \qquad + \frac{C}{N} \|u^{i,j,i}\|_{\infty, T_0,T_0 + \eps} 
    \\
    &\qquad \leq \norm{u^{i,j,i}}_{\infty, T_0 + \eps}^2 + C \sqrt{\eps} \|u^{i,j,i}\|_{\infty, T_0,T_0 + \eps} \E\bigg[ \int_{t_0}^{T_0 + \eps} \frac{1}{N} {\sum_{k=1}^N} |Z_t^{k,j,k,i}|^2 dt \bigg]^{1/2}
    \\
    &\qquad \qquad \qquad  + \frac{C}{N} \|u^{i,j,i}\|_{\infty, T_0,T_0 + \eps} 
    \\
    &\qquad \leq \norm{u^{i,j,i}}_{\infty, T_0 + \eps}^2 + \Big(C \eps + \frac{1}{2} \Big) \|u^{i,j,i}\|^2_{\infty, T_0,T_0 + \eps} + \frac{1}{2} \max_{i' \neq j'} \E\bigg[ \int_{t_0}^{T_0 + \eps} {\sum_{k=1}^N} |Z_t^{i',j',i',k}|^2 dt \bigg] + \frac{C}{N^2},
\end{align*}
{where in the last inequality we have used the fact that
\begin{align*}
\E\bigg[ \int_{t_0}^{T_0 + \eps} \frac{1}{N} {\sum_{k=1}^N} |Z_t^{k,j,k,i}|^2 dt \bigg] &  = \frac{1}{N} \E\bigg[ \int_{t_0}^{T_0 + \eps}  |Z_t^{j,j,j,i}|^2 dt \bigg] + \frac{1}{N}\sum_{k\neq j} \E\bigg[ \int_{t_0}^{T_0 + \eps}  |Z_t^{k,j,k,i}|^2 dt \bigg]\\
& \le \frac{1}{N} \E\bigg[ \int_{t_0}^{T_0 + \eps}  |Z_t^{j,j,j,i}|^2 dt \bigg] + \max_{k\neq j} \E\bigg[ \int_{t_0}^{T_0 + \eps}  |Z_t^{k,j,k,i}|^2 dt \bigg]\\
& \le \frac{1}{N} \E\bigg[ \int_{t_0}^{T_0 + \eps}  |Z_t^{j,j,j,i}|^2 dt \bigg] + \max_{k\neq j} \E\bigg[ \int_{t_0}^{T_0 + \eps}  \sum_{l=1}^{N}|Z_t^{k,j,k,l}|^2 dt \bigg]
\end{align*}
}

Take a max over $i \neq j$ to find that 
\begin{align} \label{Zijikbound}
    \max_{i \neq j} &\E\bigg[ |Y_{t_0}^{i,j,i}|^2 + \int_{t_0}^{T_0 + \eps} {\sum_{k=0}^N} |Z_t^{i,j,i,k}|^2 dt \bigg]
    \nonumber \\
    &\leq \max_{i \neq j} \norm{u^{i,j,i}}_{\infty, T_0 + \eps}^2 + \big(C \eps + \frac{1}{2} \big) \max_{i \neq j} \|u^{i,j,i}\|^2_{\infty, T_0,T_0 + \eps} + \frac{1}{2} \max_{i \neq j} \E\bigg[ \int_{t_0}^{T_0  +\eps} {\sum_{k=1}^N} |Z_t^{i,j,i,k}|^2 dt \bigg] + \frac{C}{N^2}, 
\end{align}
from which we deduce that 
\begin{align*}
    \max_{i \neq j} \E\big[ |Y_{t_0}^{i,j,i}|^2 \big] \leq \max_{i \neq j} \norm{u^{i,j,i}}_{\infty, T_0 + \eps}^2 + \big(C \eps + \frac{1}{2} \big) \max_{i \neq j} \|u^{i,j,i}\|^2_{\infty, T_0,T_0 + \eps} { + \frac{C}{N^2}}. 
\end{align*}
Take a supremum over $t_0 \in [T_0,T_0 + \eps]$ and then $\bx_0 \in (\R^d)^N$, and recall the definition of $Y^{i,j,i}$ to obtain 
\begin{align*}
    \max_{i \neq j} \|u^{i,j,i}\|^2_{\infty, T_0,T_0 + \eps} \leq \max_{i \neq j} \norm{u^{i,j,i}}_{\infty, T_0 + \eps}^2 + \big(C \eps + \frac{1}{2} \big) \max_{i \neq j} \|u^{i,j,i}\|^2_{\infty, T_0,T_0 + \eps} { + \frac{C}{N^2}}.
\end{align*}
We deduce that there is a constant $\eps_0 > 0$ which is independent of $N$ such that for any $\eps < \eps_0$, we have 
\begin{align*}
     \max_{i \neq j} \|u^{i,j,i}\|^2_{\infty, T_0,T_0 + \eps} \leq C\max_{i \neq j} \norm{u^{i,j,i}}_{\infty, T_0 + \eps}^2 + C/N^2, 
\end{align*}
and so iterating this bound backwards in time we obtain the bound on $u^{i,j,i}$. The bound on $D_k u^{i,j,i}$ comes from \eqref{Zijikbound} after recalling the definition of $Z^{i,j,i,k}$. 
\end{proof}

\begin{corollary}\label{cor:kiji}
The bounds established in Proposition \ref{prop.uiji} imply that there exists a constant $C>0$ such that for $N\in\mathbb{N}$ large enough we have
\begin{align*}
 \norm{|D_k u^{i,j,i}|^2}_{L^1(\scrL^N)} \leq C/N^{2},\ \ \forall i,j,k\in\{1,\dots,N\},\ i\neq j.
\end{align*}
\end{corollary}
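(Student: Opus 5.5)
The bound follows by unwinding the definition of the $L^2(\scrL^N)$ norm in the second term of Proposition \ref{prop.uiji}, in exactly the way Corollary \ref{cor:kij} was read off from Proposition \ref{prop.uij}. Fix $i \neq j$ and $k \in \{1,\dots,N\}$. Proposition \ref{prop.uiji} gives
\begin{align*}
\norm{\Big(\sum_{l=1}^N |D_l u^{i,j,i}|^2\Big)^{1/2}}_{L^2(\scrL^N)}^2 = \sup_{t_0,\bx_0} \E\bigg[\int_{t_0}^T \sum_{l=1}^N |D_l u^{i,j,i}(t,\bX_t^{t_0,\bx_0})|^2 dt\bigg] \leq \frac{C^2}{N^2}.
\end{align*}

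Since every summand is nonnegative, we may keep only the term $l = k$ inside the expectation. This gives, uniformly over $(t_0,\bx_0)$,
\begin{align*}
\E\bigg[\int_{t_0}^T |D_k u^{i,j,i}(t,\bX_t^{t_0,\bx_0})|^2 dt\bigg] \leq \frac{C^2}{N^2}.
\end{align*}
Taking the supremum over $t_0 \in [0,T]$ and $\bx_0 \in (\R^d)^N$, the left-hand side is by definition $\norm{|D_k u^{i,j,i}|^2}_{L^1(\scrL^N)}$. This yields the claim with constant $C^2$, which is independent of $N$, $i$, $j$ and $k$.

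The argument has no real obstacle. The only point to check is that the constant in Proposition \ref{prop.uiji} is uniform in the indices and valid for all large $N$, which the proposition asserts. When $d \geq 2$ one sums over the components $D_{k_q}$ as elsewhere in the paper, which only changes the constant.
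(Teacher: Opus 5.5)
Your proposal is correct and matches the paper's approach. The paper states this corollary without a separate proof, as an immediate consequence of Proposition \ref{prop.uiji}. Your argument is exactly that: square the $L^2(\scrL^N)$ bound on $\big(\sum_{l}|D_l u^{i,j,i}|^2\big)^{1/2}$, drop every term except $l=k$, and take the supremum over $(t_0,\bx_0)$, just as Corollary \ref{cor:kij} is read off from Proposition \ref{prop.uij}.
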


\begin{proposition} \label{prop.iii}
  Let Assumptions \ref{assump.regularity.disp} and \ref{assump.disp} hold. Then there is a constant $C$ such that for all $N\in\mathbb{N}$ large enough and all $i \neq j$, 
   \begin{align*}
       &\norm{u^{i,i,i}}_{\infty} + \norm{\Big(\sum_{k = 1}^N |D_k u^{i,i,i}|^2 \Big)^{1/2}}_{L^2(\scrL^N)} \leq C, 
       \\
        &\norm{u^{i,j,j}}_{\infty} + \norm{\Big(\sum_{k = 1}^N |D_k u^{i,j,j}|^2 \Big)^{1/2}}_{L^2(\scrL^N)} \leq C/N,
   \end{align*}
\end{proposition}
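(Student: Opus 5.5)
We will follow the same backward-in-time BSDE/It\^o scheme used for Propositions \ref{prop.uij} and \ref{prop.uiji}, now applied to \eqref{nash.seconderiv} with $(j,k)=(i,i)$ and with $(j,k)=(j,j)$, $i\neq j$. The bound on $u^{i,j,j}$ comes first, and the bound on $u^{i,i,i}$ afterwards.

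\textbf{Estimate on $u^{i,j,j}$.} Write the equation for $u^{i,j,j}$ as
\begin{align*}
-\partial_t u^{i,j,j}-\scrL^N u^{i,j,j}+\cA^{i,j}u^{i,j,j}+\sum_{k=1}^N\cB^{i,j,k}D_j u^{k,j,k}+\cC^{i,j}=0 .
\end{align*}
Here $\cB^{i,j,k}=-\sum_{l\neq i}D_{p^k}a^{N,l}u^{i,l}+\hat H^{N,i,k}$ comes from $\cT^{i,j,j,1}$. As in Proposition \ref{prop.uiji}, Propositions \ref{prop.aderivscaling} and \ref{prop.uij} together with Lemma \ref{lem.hatH} give $\|\sum_k|\cB^{i,j,k}|^2\|_\infty\le C/N$.

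In $\cT^{i,j,j,2}$, the two terms in which $u^{i,j,l}$ appears with $l=j$ are $-D_{p^j}a^{N,j}u^{j,j,j}u^{i,j,j}-D_{x^j}a^{N,j}u^{i,j,j}$. These go into a potential $\cA^{i,j}$ with $\|\cA^{i,j}\|_{L^2(\scrL^N)}\le C$, by Proposition \ref{prop.nash.preliminaries}.

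The remaining terms of $\cT^{i,j,j,2},\cT^{i,j,j,3},\cT^{i,j,j,4}$ make up $\cC^{i,j}$. We will show $\|\cC^{i,j}\|_{L^1(\scrL^N)}\le C/N$ using the same Cauchy--Schwarz/Young splittings as in \eqref{Tiji2.1}--\eqref{Tiji2.2}. The off-diagonal quantities $u^{i,j,l}$ ($l\neq j$), $u^{i,l,j}$ and $u^{q,j,q}$ ($q\neq j$) are controlled by Corollaries \ref{cor:kij} and \ref{cor:kiji}, and $u^{j,j,j},u^{i,i,i}$ by Proposition \ref{prop.nash.preliminaries} in $L^2(\scrL^N)$.

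The term that needs care is $\sum_{l\neq i}D_{p^j}a^{N,l}u^{j,j,j}u^{i,l,j}$. Its summand $l=j$ is of the potential type and goes into $\cA^{i,j}$. For $l\neq j$ we have $|D_{p^j}a^{N,l}|\le C/N$, which gives a bound $\frac{C}{N}|u^{j,j,j}|\,(\sum_l|u^{i,l,j}|^2)^{1/2}\sqrt N$, and this is integrable with size $C/N$.

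Once the coefficient bounds are in hand, It\^o's formula on $[t_0,T_0+\eps]$, absorption of $\frac1{2N}\sum_k Z^{k,j,k,j}$ after maximizing over $i\neq j$, and backward iteration from the terminal bound $|D_{x^jx^j}G^{N,i}|\le C/N$ give the $C/N$ estimate, together with the $L^2(\scrL^N)$ gradient bound. The potential $\cA$ is handled exactly as in Proposition \ref{prop.uij}, via $\sqrt\eps\|\cA\|_{L^2(\scrL^N)}\|u\|_{\infty}^2$.

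\textbf{Estimate on $u^{i,i,i}$.} Here the expected size is $O(1)$, so the same scheme should go through with crude bounds. We will put the terms $-D_{p^i}a^{N,i}u^{i,i,i}u^{i,i,i}$ and $-D_{x^i}a^{N,i}u^{i,i,i}$ into a potential $\cA^i$, which is bounded in $L^2(\scrL^N)$. The term $\sum_k\cB^{i,i,k}D_iu^{k,i,k}$ is treated as before: its summand $k=i$ has $|\cB^{i,i,i}|\le C/N$, and the summands with $k\neq i$ are controlled by Corollary \ref{cor:kiji}. All other terms involve $u^{i,i,l}$ for $l\neq i$, which is $u^{i,l,i}=O(1/N)$ in sup norm by Proposition \ref{prop.uiji}, or $u^{q,i,q}$ for $q\neq i$, and they are $O(1)$ in $L^1(\scrL^N)$. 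A short-time It\^o argument with backward iteration then gives the bound $C$.

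\textbf{Main obstacle.} The main obstacle I expect is the quadratic term $D_{p^i}a^{N,i}|u^{i,i,i}|^2$. Because only $\|u^{i,i,i}\|_{L^2(\scrL^N)}$ is available a priori, treating it as a potential requires Proposition \ref{prop.nash.preliminaries}. If that is insufficient, I would first try to exploit the displacement semi-monotonicity more directly: test the equation for $(u^{i,j,j})$ against a bilinear Lyapunov functional built from Remark \ref{rmk.dmonotone.secondorder}.
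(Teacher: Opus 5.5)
Your treatment of $u^{i,i,i}$ is fine and self-contained. It uses only Propositions \ref{prop.nash.preliminaries}, \ref{prop.uij}, \ref{prop.uiji} and Corollary \ref{cor:kiji}. Putting the $O(1)$-coefficient quadratic term into an $L^2(\scrL^N)$-bounded potential is a legitimate variant of the paper's choice, which keeps $|u^{i,i,i}|^2$ in the source and shows $\|\cC^i\|_{L^1(\scrL^N)}\le C$.

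The gap is that you do $u^{i,j,j}$ first. In $\cT^{i,j,j,1}=\sum_k \cB^{i,j,k}D_j u^{k,j,k}$ the summand $k=j$ is $\cB^{i,j,j}D_ju^{j,j,j}$. Here $D_ju^{j,j,j}=D_{jjj}u^{N,j}$ is a gradient component of the diagonal quantity $u^{j,j,j}$, not of any $u^{i',j',j'}$ with $i'\neq j'$. So it cannot be absorbed by maximizing over $i\neq j$. This differs from the analogous summand in Proposition \ref{prop.uiji}, where $D_iu^{j,j,j}=D_ju^{j,i,j}$ is a gradient component of the off-diagonal $u^{j,i,j}$ and is absorbable. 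At that stage nothing controls $D_ju^{j,j,j}$ with $N$-independent constants: Proposition \ref{prop.nash.preliminaries} bounds only $u^{j,j,j}$ itself in $L^2(\scrL^N)$, and admissibility gives only $N$-dependent bounds.

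Moreover, even once $\|D_ju^{j,j,j}\|_{L^2(\scrL^N)}\le C$ is known, this summand must be split off and estimated with its individual coefficient $|\cB^{i,j,j}|\le C/N$. If you fold it into your absorbed sum $\frac{1}{2N}\sum_k |Z^{k,j,k,j}|^2$, it leaves $\frac{1}{N}\E\int|D_ju^{j,j,j}|^2\,dt=O(1/N)$ on the right-hand side. That only yields $\|u^{i,j,j}\|_\infty\lesssim N^{-1/2}$.

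The repair is to reverse the order, as the paper does. First prove the $u^{i,i,i}$ bound, including $\|(\sum_k|D_ku^{i,i,i}|^2)^{1/2}\|_{L^2(\scrL^N)}\le C$. Then, in the $u^{i,j,j}$ equation, $\cB^{i,j,j}D_ju^{j,j,j}$ is a source of $L^1(\scrL^N)$-size $C/N$; paired with $\|u^{i,j,j}\|_\infty$ and closed by Young, it contributes $C/N^2$.

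The paper in fact puts all of $\cT^{i,j,j,1}$ into the source. For $q\neq j$, $D_ju^{q,j,q}$ is already controlled by Corollary \ref{cor:kiji} (cf.\ \eqref{estim:CN}), so no absorption is needed at all. With $u^{i,i,i}$ done first, the paper also has $\|u^{j,j,j}\|_\infty\le C$, which it uses where you use an $L^2(\scrL^N)$ potential. Your potential works equally well, as does the direct bound $\|u^{j,j,j}u^{i,j,j}\|_{L^1(\scrL^N)}\le\|u^{j,j,j}\|_{L^2(\scrL^N)}\|u^{i,j,j}\|_{L^2(\scrL^N)}\le C/N$ via Proposition \ref{prop.uij}.

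The quadratic term you flag as the main obstacle is not one: Proposition \ref{prop.nash.preliminaries} suffices. The real dependency is the one described above.
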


\begin{proof}
    We start with the bound on $u^{i,i,i}$. We can write the equation for $u^{i,i,i}$ as 
    \begin{align*}
        - \partial_t u^{i,i,i} - \scrL^N u^{i,i,i} + \cB^i D_i u^{i,i,i} + \cC^i = 0, 
    \end{align*}
    with 
    \begin{align*}
        \cB^i = - \sum_{l \neq i} D_{p^i} a^{N,l} u^{i,l} + \hat{H}^{N,i,i}, \quad \cC^i = - \sum_{l \neq i} \sum_{q \neq i} D_i u^{q,i,q} D_{p^q} a^{N,l} u^{i,l} + \sum_{l \neq i} \hat{H}^{N,i,l} D_i u^{l,i,l}  + \sum_{q = 2}^{4} \cT^{i,i,i,q},
    \end{align*}
{where the terms $\cT^{i,i,i,q}$ are defined in \eqref{nash.seconderiv}.} Our aim is now to establish the bounds 
    \begin{align} \label{coefficients.iii}
        \norm{\cB^i}_{\infty} \leq C/N, \quad \norm{\cC^i}_{L^1(\scrL^N)} \leq C.
    \end{align}
   
   The bound on $\cB^i$ follows {directly} from Propositions \ref{prop.aderivscaling} and \ref{prop.uij} {and from the definition of $\hat{H}^{N,i,i}$ in \eqref{def:hatHik}.} For the bound on $\cC^i$, we first note that 
 \begin{align*}
       \Big| \sum_{l \neq i} \sum_{q \neq i} D_i u^{q,i,q} D_{p^q} a^{N,l} u^{i,l}  \Big| & \leq \frac{C}{\sqrt{N}} \left(\sum_{q \neq i} |D_i u^{q,i,q}|^2 \right)^{1/2} \left(\sum_{l \neq i} \sum_{q \neq i} |D_{p^q} a^{N,l}|^{2}\right)^{1/2}\\ 
       & \leq C\sum_{q \neq i} |D_i u^{q,i,q}|^2 +\frac{C}{N} \left( \sum_{l,q, l\neq q} |D_{p^q} a^{N,l}|^{2} + \sum_{l\neq i} |D_{p^l} a^{N,l}|^{2} \right)\\
       & \leq C\sum_{q \neq i} |D_i u^{q,i,q}|^2 +C(1+1/N), 
  \end{align*}

\begin{align*}
       &\Big|\sum_{l\neq i} \hat{H}^{N,i,l} D_i u^{l,i,l} \Big| \leq \frac{C}{\sqrt{N}} \left( \sum_{l \neq i} |D_i u^{l,i,l}|^2  \right)^{1/2} \leq C/N + C\sum_{l \neq i} |D_i u^{l,i,l}|^2.
    \end{align*}
 We notice that by Corollary \ref{cor:kiji} we have
 \begin{align*}
 \norm{|D_k u^{i,j,i}|^2}_{L^1(\scrL^N)} \leq C/N^2,\ \forall\ i,j,k\in\{1,\dots,N\},i\neq j. 
 \end{align*}
 
 So,
 \begin{align}\label{estim:CN}
 \norm{\sum_{l \neq i} |D_i u^{l,i,l}|^2}_{L^1(\scrL^N)} \le  \sum_{l \neq i}\norm{ |D_i u^{l,i,l}|^2}_{L^1(\scrL^N)} \le C/N.
 \end{align}
 Therefore, putting together the previous arguments, we obtain
  \begin{align*}
        \norm{ - \sum_{l \neq i} \sum_{m \neq i} D_i u^{m,i,m} D_{p^m} a^{N,l} u^{i,l} + \sum_{l\neq i}  \hat{H}^{i,l} D_i u^{l,i,l}  }_{L^1(\scrL^N)} \leq C.
    \end{align*}

    It remains to shows that
    \begin{align} \label{iii.suff}
        \norm{\cT^{i,i,i,q}}_{L^1(\scrL^N)} \leq C, \quad \text{for } q = 2,3,4.
    \end{align}
    To bound $\cT^{i,i,i,2}$, we note that {$u^{i,l_{1},l_{2}}=u^{i,l_{2},l_{1}}$ for any $i,l_{1},l_{2}\in\{1,\dots,N\}$ and}
    \begin{align*}
        &\Big| \sum_{l,q=1}^{N} D_{p^q} a^{N,l} u^{q,i,q} u^{i,i,l} \Big| 
        \leq C \left(\sum_{q=1}^{N} |u^{q,i,q}|^2\right)^{1/2} \left(\sum_{l=1}^{N} |u^{i,l,i}|^2 \right)^{1/2} 
        \\
        &\qquad \qquad \qquad \leq C\sum_{q=1}^{N} |u^{q,i,q}|^2  + C \sum_{l=1}^{N} |u^{i,l,i}|^2 
        \\
        &\Big| \sum_{l \neq i} \sum_{q=1}^{N} D_{p^q} a^{N,l} u^{q,i,q} u^{i,l,i} \Big| \leq C \left( \sum_{q=1}^{N} |u^{q,i,q}|^2 \right)^{1/2} \left( \sum_{l\neq i} u^{i,l,i} \right)^{1/2} 
        \\
         &\qquad \qquad \qquad \leq C\sum_{q=1}^{N} |u^{q,i,q}|^2 + C \sum_{l\neq i} |u^{i,l,i}|^2
        \\
        &\Big| \sum_{l=1}^{N} D_{x^{i}} a^{N,l} u^{i,{i,l}} \Big| \leq C \left(\sum_{l = 1}^N |u^{i,l,i}|^2 \right)^{1/2} \leq C + C \sum_{l = 1}^N |u^{i,l,i}|^2,
        \\
        &\Big| \sum_{l \neq i} D_{x^i} a^{N,l} u^{i,{i,l}} \Big| \leq \frac{C}{\sqrt{N}} \left(\sum_{l \neq i} |u^{i,l,i}|^2\right)^{1/2} \leq \frac{C}{N} + \sum_{l \neq i} |u^{i,l,i}|^2, 
        \end{align*}
        which combined with Propositions \ref{prop.nash.preliminaries} and \ref{prop.uiji} gives the bund $\|\cT^{i,i,i,2}\|_{L^1(\scrL^N)} \leq C$. Turning to $\cT^{i,i,i,3}$, we have 
        \begin{align*}
        &\Big| \sum_{l \neq i} \sum_{q,n=1}^{N} D_{p^np^q} a^{N,l} u^{q,i,q} u^{n,i,n} u^{i,l}\Big|
        \\
        &\qquad \qquad \qquad \leq \frac{C}{N} \sum_{q,n=1}^{N} \Big( \sum_{l \neq i} |D_{p^n p^q} a^{N,l}|\Big) |u^{q,i,q}| |u^{n,i,n}| 
        \\
        &\qquad \qquad \qquad  \leq \frac{C}{N} \sum_{q,n = 1}^N \big(1/N + 1_{q = n} \big) |u^{q,i,q}| |u^{n,i,n}| \leq \frac{C}{N} \sum_{q=1}^{N} |u^{q,i,q}|^2, 
        \\
        &\Big| \sum_{l,q=1}^{N} D_{p^q} \hat{H}^{N,i,l} u^{q,i,q} u^{l,i,l} \Big| \leq C \sum_{q=1}^{N} |u^{q,i,q}|^2, 
        \\
        &\Big| \sum_{l \neq i} \sum_{q=1}^{N} D_{x^i p^q} a^{N,l} u^{q,i,q} u^{i,l} \Big| \leq \frac{C}{\sqrt{N}} \left(\sum_{q=1}^{N} |u^{q,j,q}|^2\right)^{1/2} \leq C/N + C\sum_{q=1}^{N} |u^{q,j,q}|^2, 
        \\
        &\Big| \sum_{l \neq i} \sum_{q=1}^{N} D_{p^qx^i} a^{N,l} u^{q,i,q} u^{i,l} \Big| \leq \frac{C}{\sqrt{N}} \left(\sum_{q=1}^{N} |u^{q,i,q}|^2\right)^{1/2} \leq \frac{C}{N} + C \sum_{q=1}^{N} |u^{q,i,q}|^2,
        \\
        &\Big| \sum_{l=1}^{N} D_{x^i} \hat{H}^{N,i,l} u^{l,i,l} \Big| \leq \frac{C}{\sqrt{N}} \left(\sum_{l=1}^{N} |u^{l,i,l}|^2\right)^{1/2} \leq \frac{C}{N} {+} \sum_l |u^{l,i,l}|^2, 
        \\
        &\Big| \sum_{l=1}^{N} D_{p^l x^i} \hat{H}^{N,i} u^{l,i,l} \Big| \leq C \left( \sum_{l=1}^{N} |u^{l,i,l}|^2 \right)^{1/2} \leq C + C \sum_{l=1}^{N} |u^{l,i,l}|^2, 
    \end{align*}
    which combined with Propositions \ref{prop.nash.preliminaries} and \ref{prop.uiji} give $\|\cT^{i,i,i,3}\|_{L^1(\scrL^N)} \leq C$. Finally, it is {straightforward} to check that $\|\cT^{i,i,i,4}\|_{\infty} \leq C$, 
  and so we deduce that \eqref{iii.suff} indeed holds. Now we proceed as in the proofs of Propositions \ref{prop.uij} and \ref{prop.uiji}. We fix $T_0 \in [0,T)$, then $\eps \in (0,T-T_0)$. Next, we choose $t_0 \in [T_0,T_0 + \eps)$, and $\bx_0 \in (\R^d)^N$. For $i,j = 1,\dots,N$, we use the notation 
\begin{align*}
    \bX = \bX^{t_0,\bx_0}, \quad Y^{i,i,i} = u^{i,i,i}(\cdot,\bX), \quad Z^{i,i,i,j} = \sqrt{2} D_j u^{i,i,i}(\cdot,\bX), \quad Z^{i,i,i,0} = \sqrt{2\sigma_0} D_k u^{i,i,i}(\cdot,\bX).
\end{align*}
By It\^o's formula, we have 
\begin{align*}
    dY_t^{i,i,i} = \Big( \cB^{i}_t Z_t^{i,i,i,i}  + \cC_t^i \Big) dt + \sum_{k = 0}^N Z_t^{i,j,i,k} dW_t^k, 
\end{align*}
where 
\begin{align*}
    \cB_t^i = \frac{1}{\sqrt{2}} \cB^i(t,\bX_t), \quad \cC_t^i = \cC^i(t,\bX_t). 
\end{align*}
We deduce that 
\begin{align*}
    &\E\bigg[ |Y_{t_0}^{i,i,i}|^2 + \int_{t_0}^{T_0 + \eps} {\sum_{k=0}^{N}} |Z_t^{i,i,i,k}|^2dt \bigg] \leq \E\bigg[ |Y_{T_0 + \eps}^{i,i,i}|^2 + \int_{t_0}^{T_0 + \eps} |Y_t^{i,i,i}| \Big| \cB_t^i Z_t^{i,i,i,i} + \cC_t^i \Big|dt \bigg] 
    \\
    &\qquad \leq \norm{u^{i,i,i}}_{\infty, T_0 + \eps} + C \E\bigg[ \int_{t_0}^{T_0 + \eps} |Y_t^{i,i,i}|^2 dt \bigg] + C \E\bigg[ \int_{t_0}^{T_0 + \eps} |Y_t^{i,i,i}| |\cC_t^i| dt \bigg] + \frac{1}{2} \E\bigg[ \int_{t_0}^{T_0 + \eps} |Z_t^{i,i,i,i}|^2 dt \bigg]
    \\
    &\qquad \leq \norm{u^{i,i,i}}_{\infty, T_0 + \eps} + C \eps \norm{u^{i,i,i}}^2_{\infty, T_0, T_0 + \eps} + C \norm{u^{i,i,i}}_{\infty, T_0, T_0 + \eps} + \frac{1}{2} \E\bigg[ \int_{t_0}^{T_0 + \eps} |Z_t^{i,i,i,i}|^2 dt \bigg], 
\end{align*}
and so absorbing the last term on the right-hand side we get 
\begin{align} \label{yiiibound}
    &\E\bigg[ |Y_{t_0}^{i,i,i}|^2 + \int_{t_0}^{T_0 + \eps} {\sum_{k=0}^{N}} |Z_t^{i,i,i,k}|^2 dt \bigg]
 \leq \norm{u^{i,i,i}}_{\infty, T_0 + \eps} + C \eps \norm{u^{i,i,i}}^2_{\infty, T_0, T_0 + \eps} + C \norm{u^{i,i,i}}_{\infty, T_0, T_0 + \eps}
 \\
 &\qquad \qquad \leq \norm{u^{i,i,i}}_{\infty, T_0 + \eps} + \left(C\eps + \frac{1}{2} \right) \norm{u^{i,i,i}}^2_{\infty, T_0, T_0 + \eps} + C.
\end{align}
Taking a supremum over $t_0$ and $\bx_0$, we find that 
\begin{align*}
    \norm{u^{i,i,i}}^2_{\infty, T_0, T_0 + \eps}  \leq \norm{u^{i,i,i}}_{\infty, T_0 + \eps} + \left(C\eps + \frac{1}{2} \right) \norm{u^{i,i,i}}^2_{\infty, T_0, T_0 + \eps} + C, 
\end{align*}
and the result of the proof follows from choosing $\eps$ small enough and iterating, exactly as in the proofs of Propositions \ref{prop.uij} and \ref{prop.uiji}. Having established the desired uniform bounds on $\|u^{i,i,i}\|_{\infty}$, we obtain the bound on $\norm{\Big(\sum_{k = 1}^N |D_k u^{i,i,i}|^2 \Big)^{1/2}}_{L^2(\scrL^N)}$ from \eqref{yiiibound} by recalling the definition of the definition of $Z^{i,i,i,j}$.

For the estimates on $u^{i,j,j}$ and $D_k u^{i,j,j}$, we write the equation for $u^{i,j,j}$ as 
    \begin{align*}
        - \partial_t u^{i,j,j} - \scrL^N u^{i,j,j} + \cC^{i,j} = 0, \quad  \cC^{i,j} = \sum_{q = 1}^4 \cT^{i,j,j,q},
    \end{align*}
  {where the $\cT^{i,j,j,q}$ terms have been defined in \eqref{nash.seconderiv}.}  
    We will now show that 
    \begin{align} \label{ijj.suff}
        \norm{\cT^{i,j,j,q}}_{L^1(\scrL^N)} \leq C/N, \quad q = 1\dots,N.
    \end{align}
    We start with $\cT^{i,j,j,1}$, and estimate
   \begin{align*}
    &\Big|\sum_{l \neq i} \sum_{q = 1}^N D_j u^{q,j,q} D_{p^q} a^{N,l} u^{i,l}\Big| \leq \frac{C}{N} \left|D_j u^{j,j,j}\right| \sum_{l \neq i} |D_{p^j} a^{N,l}| {+\frac CN \sum_{q \neq j} |D_j u^{q,j,q}|\sum_{l \neq i} |D_{p^q} a^{N,l}|}\\
     &\qquad \qquad \qquad \leq \frac{C}{N} |D_j u^{j,j,j}| {+ \frac{C}{\sqrt{N}}\left(\sum_{q \neq j} |D_j u^{q,j,q}|^{2}\right)^{1/2}}\\
     &\qquad \qquad \qquad \leq \frac{C}{N} |D_j u^{j,j,j}| {+ C\sum_{q \neq j} |D_j u^{q,j,q}|^{2} + \frac{C}{N}}, 
    \\
    &\Big|\sum_{l=1}^{N} \hat{H}^{N,i,l} D_j u^{l,j,l}\Big| \leq \frac{C}{N} |D_j u^{j,j,j}| + \frac{C}{\sqrt{N}} \left( \sum_{l \neq j} |D_j u^{l,j,l}|^2 \right)^{1/2} 
    \\
    &\qquad \qquad \qquad \leq \frac{C}{N}(1 + |D_j u^{j,j,j}|) + C \sum_{l \neq j} |D_j u^{l,j,l}|^2
    \end{align*}
    Using the bounds on $(D_k u^{i,i,i})$ obtained above and Proposition \ref{prop.uiji}, {and Corollary \ref{cor:kiji} and \eqref{estim:CN}} we deduce that 
    \begin{align*}
        \norm{\cT^{i,j,j,1}}_{L^1(\scrL^N)} \leq C/N.
    \end{align*}
    We now turn our attention to $\cT^{i,j,j,2}$, and note that
    \begin{align*}
        &\Big| \sum_{l = 1}^N \sum_{q = 1}^N D_{p^q} a^{N,l} u^{q,j,q} u^{i,j,l}\Big| \leq |u^{j,j,j}| \sum_{l=1}^{N} |D_{p^j} a^{N,l}| |u^{i,j,l}| + \sum_{l=1}^{N} \sum_{q \neq j} |D_{p^q} a^{N,l}| |u^{q,j,q}| |u^{i,j,l}|
    \\
    &\qquad \qquad \qquad \leq C |u^{i,j,j}| + \frac{C}{\sqrt{N}} \left( \sum_{l=1}^{N} |u^{i,j,l}|^2 \right)^{1/2} + C\left( \sum_{l=1}^{N} |u^{i,j,l}|^2 \right)^{1/2} \left( \sum_{q \neq j} |u^{q,j,q}|^2 \right)^{1/2}
    \\
    &\qquad \qquad \qquad \leq C |u^{i,j,j}| + \frac{C}{N} + C\sum_{l=1}^{N} |u^{i,j,l}|^2 {+C\sum_{q \neq j} |u^{q,j,q}|^2}, 
    \\
    &\Big| \sum_{l = 1}^N D_{x^j} a^{N,l} u^{i,j,l}  \Big| \leq C |u^{i,j,j}| + \frac{C}{\sqrt{N}} \left( \sum_{l=1}^{N} |u^{i,j,l}|^2 \right)^{1/2} \leq C |u^{i,j,j}| + \frac{C}{N} + C \sum_{l=1}^{N} |u^{i,j,l}|^2,
    \end{align*}
    and likewise 
    \begin{align*}
    &\Big| \sum_{l \neq i} \sum_{q=1}^{N} D_{p^q} a^{N,l} u^{q,j,q} u^{i,l,j} \Big| \leq C |u^{i,j,j}| + \frac{C}{N} {+C\sum_{q\neq j}|u^{q,j,q}|^{2}} + C\sum_{l=1}^{N} |u^{i,j,l}|^2, 
    \\
    &\Big|\sum_{l \neq i} D_{x^j} a^{N,l} u^{i,j,l} \Big| \leq C |u^{i,j,j}| + \frac{C}{\sqrt{N}} \left(\sum_{l=1}^{N} |u^{i,j,l}|^2\right)^{1/2} \leq {C |u^{i,j,j}|+} C/N + C \sum_{l=1}^{N} |u^{i,j,l}|^2, 
    \end{align*}
    and so using Proposition \ref{prop.uij} {and Corollary \ref{cor:kij}}, we deduce that
    \begin{align*}
        \norm{\cT^{i,j,j,2}}_{L^1(\scrL^N)} \leq C/N. 
    \end{align*}
    For $\cT^{i,j,j,3}$, we estimate
    \begin{align*}
    &\Big|\sum_{l \neq i} \sum_{q,n = 1}^N D_{p^np^q} a^{N,l} u^{q,j,q} u^{n,j,n} u^{i,l} \Big| \leq \frac{C}{N} \sum_{q,n = 1}^N \left(\sum_{l=1}^{N} |D_{p^n p^q} a^{N,l}|\right) |u^{q,j,q}| |u^{n,j,n}| 
    \\
    &\qquad \qquad \qquad \leq \frac{C}{N}{\sum_{q,n = 1}^N} \left(1/N + 1_{n = q} \right) |u^{q,j,q}| |u^{n,j,n}| \leq \frac{C}{N} \sum_{q=1}^{N} |u^{q,j,q}|^2 \leq C/N,
    \\
    &\Big|\sum_{l ,q=1}^{N} D_{p^q} \hat{H}^{N,i,l} u^{q,j,q} u^{l,j,l}\Big| \leq |u^{j,j,j}| \sum_{l=1}^{N} |D_{p^j} \hat{H}^{N,i,l}| |u^{l,j,l}| + |u^{j,j,j}| \sum_{q=1}^{N} |D_{p^q} \hat{H}^{N,i,j}| |u^{q,j,q}|
    \\
    &\qquad \qquad \qquad \qquad + \sum_{q \neq j} \sum_{l \neq j} |D_{p^q} \hat{H}^{N,i,l}| |u^{q,j,q}| |u^{l,j,l}|
    \\
    &\qquad \qquad \qquad \leq \frac{C}{N} {+\sum_{q \neq j} |u^{q,j,q}|^2 + \sum_{l \neq j} |u^{l,j,l}|^2 } + C\left( \sum_{q \neq j} |u^{q,j,q}|^2 \right)^{1/2} \left( \sum_{l \neq j} |u^{l,j,l}|^2 \right)^{1/2} \leq C/N, 
    \\
    &\Big|\sum_{l \neq i} \sum_{q = 1}^N D_{x^j p^q} a^{N,l} u^{q,j,q} u^{i,l}\Big| \leq |u^{j,j,j}| \sum_{l \neq i} |D_{x^j p^{j}} a^{N,l}| |u^{i,l}| + \sum_{l \neq i} \sum_{q \neq j} |D_{x^jp^q} a^{N,l}| |u^{q,j,q}| |u^{i,l}|
    \\
    & \qquad \qquad \qquad \leq \frac{C}{N} + \frac{C}{\sqrt{N}} \left(\sum_{q \neq j} |u^{q,j,q}|^2\right)^{1/2} \leq C/N + \sum_{q \neq j} |u^{q,j,q}|^2 \leq C/N, 
    \\
    &\Big|\sum_{l \neq i} \sum_{q = 1}^N D_{p^q x^j } a^{N,l} u^{q,j,q} u^{i,l}\Big| \leq \frac{C}{N} |u^{j,j,j}| + \sum_{l \neq i} \sum_{q \neq j} |D_{p^q x^j} a^{N,l}| |u^{q,j,q}| |u^{i,l}| \leq \frac{C}{N}, 
    \\
    &\Big| \sum_{l=1}^{N} D_{x^{j}} \hat{H}^{N,i,l} u^{l,j,l} \Big| \leq \frac{C}{N} |u^{j,j,j}| + \sum_{{l \neq j}} |D_{x^{j}} \hat{H}^{N,i,l}| |u^{l,j,l}| \leq C/N, 
    \\
    &\Big|\sum_{l = 1}^N D_{p^l x^j} \hat{H}^{N,i} u^{l,j,l} \Big| \leq \frac{C}{N} |u^{j,j,j}| + \sum_{l \neq {j}} |D_{p^l x^j} \hat{H}^{N,i}| |u^{l,{j},l}| \leq C/N, 
\end{align*}
from which we deduce that $\|\cT^{i,j,j,3}\|_{L^1(\scrL^N)} \leq C/N$. Finally, it is {straightforward} to check that $\norm{\cT^{i,j,j,4}}_{\infty} \leq C/N$, and so we have 
\begin{align} \label{fijjbound}
    \|\cC^{i,j}\|_{L^1(\scrL^N)} \leq C/N.
\end{align} The remainder of the proof is the same as the bounds above on $u^{i,i,i}$ and $(D_k u^{i,i,i})_{k = 1,\dots,N}$, we just use the bound \eqref{fijjbound} in place of \eqref{coefficients.iii}.
\end{proof}

\begin{proposition} \label{prop.ijk}
   Let Assumptions \ref{assump.regularity.disp} and \ref{assump.disp} hold. Then there is a constant $C>0$ such that for all $N$ large enough and all $i,j,k$ distinct, we have
   \begin{align*}
      \norm{u^{i,j,k}}_{\infty} + \norm{ \Big(\sum_{l = 1}^N |D_l u^{i,j,k}|^2 \Big)^{1/2}}_{L^2(\scrL^N)} \leq C/N^2.
   \end{align*}
\end{proposition}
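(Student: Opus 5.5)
We will follow the same scheme as in Propositions \ref{prop.uij}, \ref{prop.uiji} and \ref{prop.iii}. For $i,j,k$ distinct we write the equation \eqref{nash.seconderiv} for $u^{i,j,k}$ as
\begin{align*}
    - \partial_t u^{i,j,k} - \scrL^N u^{i,j,k} + \sum_{l=1}^N \cB^{i,l} D_k u^{l,j,l} + \cC^{i,j,k} = 0,
\end{align*}
where $\cB^{i,l} := - \sum_{q \neq i} D_{p^l} a^{N,q} u^{i,q} + \hat{H}^{N,i,l}$ collects the coefficients from $\cT^{i,j,k,1}$, and $\cC^{i,j,k} := \sum_{q=2}^4 \cT^{i,j,k,q}$. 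As in the proof of Proposition \ref{prop.uiji}, Propositions \ref{prop.aderivscaling} and \ref{prop.uij} together with Lemma \ref{lem.hatH} give $\|\sum_l |\cB^{i,l}|^2\|_\infty \leq C/N$. The target is
\begin{align*}
    \|\cC^{i,j,k}\|_{L^1(\scrL^N)} \leq C/N^2 .
\end{align*}

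\textbf{Bounding the source term.} Every term of $\cT^{i,j,k,2},\cT^{i,j,k,3},\cT^{i,j,k,4}$ is treated by isolating the ``large'' indices. In the sums over $l,q,n$ we separate the cases $l,q,n \in \{i,j,k\}$ from the rest. We then combine three ingredients.
\begin{itemize}
    \item The decay $|D_{p^q}a^{N,l}| \leq C\omega_{l,q}$, $|D_{p^np^q}a^{N,l}| \leq C\omega_{l,q,n}$, and the analogous bounds for $\hat{H}$.
    \item The $L^\infty$ bounds $|u^{i,l}| \leq C/N$, $|u^{i,j,i}| \leq C/N$, $|u^{i,j,j}| \leq C/N$ and $|u^{i,i,i}| \leq C$, from Propositions \ref{prop.uij}, \ref{prop.uiji} and \ref{prop.iii}.
    \item The $L^2(\scrL^N)$ bounds of Corollaries \ref{cor:kij} and \ref{cor:kiji}.
\end{itemize}
For instance, $\cT^{i,j,k,4}$ is pointwise $O(N^{-2})$, since $|D_{x^kx^j}\hat{H}^{N,i}| \leq C\omega_{i,j,k}$ and $\sum_{l\neq i}|D_{x^kx^j}a^{N,l}||u^{i,l}| \leq \frac{C}{N}\sum_l \omega_{l,j,k} \leq C/N^2$.

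In $\cT^{i,j,k,2}$ the dangerous products are of the form $D_{p^k}a^{N,k}\,u^{k,k,k}\,u^{i,j,k}$. These have the shape (bounded coefficient)$\times u^{i,j,k}$, i.e.\ they are zero-order terms $\cA^{i,j,k} u^{i,j,k}$. As in Proposition \ref{prop.uij}, we move them into a potential $\cA^{i,j,k}$ with $\|\cA^{i,j,k}\|_{L^2(\scrL^N)} \leq C$, using Proposition \ref{prop.nash.preliminaries} for $u^{k,k,k}, u^{j,j,j}$. The remaining pieces of $\cT^{i,j,k,2}$ are products of two quantities among $u^{q,j,q}$ ($q\neq j$), $u^{i,j,l}$ and $u^{i,l,k}$ with $l \notin\{i,j,k\}$. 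These are of size $1/N$ individually (in $L^\infty$ or $L^2(\scrL^N)$), hence contribute $O(N^{-2})$ after Cauchy--Schwarz. Here one also uses that $\sum_{l\notin\{i,j,k\}}|u^{i,j,l}|$ carries an extra $1/N$ from the off-diagonal weights $\omega_{l,q}=1/N$.

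\textbf{Main obstacle.} The delicate point is precisely these sums $\sum_{l \notin \{i,j,k\}} u^{i,j,l}$, which involve other fully off-diagonal second derivatives that are not yet known to be $O(N^{-2})$. I would handle this with a maximum over all distinct triples, exactly as the $\max_{i\neq j}$ closed the argument in Proposition \ref{prop.uij}. Such a sum always comes with a coefficient $D_{p^q}a^{N,l}$ or $D_{x^k}a^{N,l}$ of size $1/N$ for $l\neq k$. It is therefore bounded by $C\max_{\text{distinct}}\|u^{i',j',k'}\|_{\infty,T_0,T_0+\eps}$, and this term can be absorbed for small $\eps$.

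\textbf{Closing the estimate.} With the source bound in hand, we apply It\^o's formula to $|Y^{i,j,k}|^2 = |u^{i,j,k}(\cdot,\bX)|^2$ on $[t_0,T_0+\eps]$. As before, we use Young's inequality on the $\cB$-term against $\frac{1}{N}\sum_l |Z^{l,j,l,k}|^2$, absorbed by taking the max over distinct triples, and on the $\cA$-term via $\sqrt{\eps}\|\cA\|_{L^2(\scrL^N)}$. Taking the supremum over $(t_0,\bx_0)$ and then the max over distinct triples gives
\begin{align*}
    M(T_0) \leq M(T_0+\eps) + \bigl(\tfrac12 + C\sqrt{\eps}\bigr)M(T_0) + C/N^4 .
\end{align*}
The terminal data satisfy $|D_{x^kx^j}G^{N,i}| \leq C/N^2$ for distinct indices. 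Iterating backward in time yields $\|u^{i,j,k}\|_\infty \leq C/N^2$, and the $Z$-terms give the $L^2(\scrL^N)$ bound on $(D_l u^{i,j,k})_l$.
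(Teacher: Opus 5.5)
Your overall scheme matches the paper's. You use a backward It\^o energy estimate for $|u^{i,j,k}(\cdot,\bX)|^2$, put the $O(1)$ zero-order coefficients into a potential, and control the sums $\frac{C}{N}\sum_{l\notin\{i,j,k\}}|u^{i,j,l}|$ through a maximum over distinct triples absorbed for small $\eps$. Your treatment of $\cT^{i,j,k,2},\cT^{i,j,k,3},\cT^{i,j,k,4}$ is fine.

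The gap is in the first-order term $\sum_l \cB^{i,l} D_k u^{l,j,l}$. You propose to handle all of it by Young's inequality against $\frac1N\sum_l |Z^{l,j,l,k}|^2$, absorbed via the maximum over distinct triples. That only works for $l\notin\{j,k\}$, where $D_k u^{l,j,l}=D_l u^{l,j,k}$ is a derivative of a distinct-index second derivative. For $l=j$ and $l=k$ you get $D_k u^{j,j,j}$ and $D_k u^{k,j,k}$, which cannot be absorbed. Enlarging the maximum to include $u^{k,j,k}$-type quantities would destroy the $N^{-2}$ target, since those are only $O(1/N)$. If left in the Young step, they contribute $\frac1N\E\int(|Z^{j,j,j,k}|^2+|Z^{k,j,k,k}|^2)\,dt$. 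Even with squared $L^2(\scrL^N)$ norms of order $N^{-2}$, this is $O(N^{-3})$ and yields only $\|u^{i,j,k}\|_\infty\le CN^{-3/2}$. If you bound $D_k u^{j,j,j}$ through Proposition \ref{prop.iii}, which gives only $\|(\sum_k |D_k u^{j,j,j}|^2)^{1/2}\|_{L^2(\scrL^N)}\le C$, the result is worse still. So your recursion with a $C/N^4$ remainder is not justified as written.

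The missing step is to treat these two terms as sources rather than absorbing them.
\begin{itemize}
\item Use the individual pointwise bound $|\cB^{i,l}|\le C/N$ for each $l$, not just $\sum_l|\cB^{i,l}|^2\le C/N$.
\item Use equality of mixed partials: $D_k u^{j,j,j}=D_{kjj}u^{N,j}=D_j u^{j,k,j}$.
\item Apply Proposition \ref{prop.uiji} to the pair $(j,k)$, and to $(k,j)$ for the other term. This gives $\|D_k u^{j,j,j}\|_{L^2(\scrL^N)}+\|D_k u^{k,j,k}\|_{L^2(\scrL^N)}\le C/N$.
\end{itemize}
Then $\E\int |Y^{i,j,k}|\,|\cB^{i,j}D_k u^{j,j,j}+\cB^{i,k}D_k u^{k,j,k}|\,dt\le \frac{C}{N^2}\|u^{i,j,k}\|_{\infty,T_0,T_0+\eps}$. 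Young's inequality turns this into the required $C/N^4$. Only the indices $l\notin\{j,k\}$ should go through Young and absorption, which is how the paper organizes the bound on $\cT^{i,j,k,1}$.
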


\begin{proof}
   Using the bounds on (the derivatives of) $\ba^N$ and $\hat{H}^{N,i}$ from Proposition \ref{prop.aderivscaling} and Lemma \ref{lem.hatH}, as well as the bounds on $u^{i,j,i}, u^{i,i,i}$ and $u^{i,j,j}$ from Propositions \ref{prop.uiji} and \ref{prop.iii}, one can verify the bounds
    \begin{align*}
        &|\cT^{i,j,k,1}| \leq \frac{C}{N} \left(D_k u^{j,j,j}| + |D_k u^{k,j,k}|\right) + \frac{C}{\sqrt{N}} \left( \sum_{q \neq j,k} |D_k u^{q,j,q}|^2 \right)^{1/2} {+ \frac{C}{\sqrt{N}} \left( \sum_{l=1}^{N} |D_k u^{l,j,l}|^2 \right)^{1/2}}, 
        \\
        &|\cT^{i,j,k,2}| \leq C |u^{i,j,k}| + \frac{C}{N} \sum_{l \neq i,j,{k}} |u^{i,j,l}| + \frac{C}{N} \sum_{l \neq i,{j},k} |u^{i,l,k}| + \frac{C}{N^2}, 
        \\
        &|\cT^{i,j,k,3}| \leq C/N^2, \qquad |\cT^{i,j,k,4}| \leq C/N^2,
    \end{align*}
    for $i,j,k$ distinct, where we use the fact that $u^{i,l_{1},l_{2}}=u^{i,l_{2},l_{1}}$. We now fix $T_0 \in [0,T)$, $\eps \in (0,T-T_0]$, and then $(t_0,\bx_0) \in [T_0, T_0 + \eps] \times (\R^d)^N$. For $i,j,k,l = 1,\dots,N$, set 
\begin{align*}
    \bX = \bX^{t_0,\bx_0}, \quad Y_t^{i,j,k} = u^{i,j,k}(t,\bX_t), \quad Z_t^{i,j,k,l} = \sqrt{2} D_l u^{i,j,k}(t,\bX_{t}), \quad Z_t^{i,j,k,0} = \sqrt{2 \sigma_0} \sum_{l = 1}^N D_k u^{i,j,k}(t,\bX_t) dW_t^0.
\end{align*}
We have 
\begin{align*}
    dY_t^{i,j,k} = \sum_{q = 1}^{4} \cT_t^{i,j,k,q} dt + \sum_{l = 0}^N Z_t^{i,j,k,l} dW_t^l, 
\end{align*}
so computing $d|Y_t^{i,j,k}|^2$, integrating from $t_0$ to $T_0 + \eps$, and taking expectations gives 
\begin{align*}
    \E\bigg[ &|Y_{t_0}^{i,j,k}|^2 + \int_{t_0}^{T_0 + \eps} \sum_{{l=0}}^{N} |Z_t^{i,j,k,l}|^2 dt \bigg] \leq \E\bigg[ |Y_{T_0 + \eps}^{i,j,k}|^2 + \int_{t_0}^T |Y_t^{i,j,k}| \Big| \sum_{q = 1}^{4} \cT_t^{i,j,k,q} \Big| dt \bigg]
    \\
    &\leq \|u^{i,j,k}\|^2_{\infty,T_0 + \eps} + C\norm{u^{i,j,k}}_{\infty, T_0, T_0 + \eps} \E\bigg[ \int_{t_0}^T \Bigg\{ \frac{1}{N} \left(|Z_t^{{j,j,j,k}}| + |Z_t^{k,j,k,k}| \right) +  \left(\frac{1}{N} \sum_{q \neq j,k} |Z_t^{q,{j,q,k}}|^2 \right)^{1/2}
    \\
    &\qquad {+\left(\frac{1}{N} \sum_{q=1}^{N} |Z_t^{q,j,q,k}|^2 \right)^{1/2}} +  |Y_t^{i,j,k}| + \frac{1}{N} \sum_{q \neq i,j,{k}} |Y^{i,j,q}| +  \frac{1}{N} \sum_{q \neq {i,j,k}} |Y_t^{i,q,k}| + \frac{1}{N^2} \Bigg\} dt \bigg]
    \\
    &\leq \|u^{i,j,k}\|_{\infty,T_0 + \eps}^2 + C \norm{u^{i,j,k}}_{\infty, T_0,T_0 + \eps} \bigg( \frac{1}{N^2} + \sqrt{\eps} \E\bigg[ \int_{t_0}^T \frac{1}{N} \sum_{{q=1}}^{N} |Z_t^{q,j,k,q}|^2 \bigg]^{1/2}
    \\
    &\qquad \qquad  + \eps \norm{u^{i,j,k}}_{\infty, T_0,T_0 + \eps}
    + \eps \max_{i',j',k' \text{ distinct}}  \norm{u^{i',j',k'}}_{\infty, T_0,T_0 + \eps}  \bigg), 
\end{align*}
where we used Proposition \ref{prop.uiji} to deduce that 
\begin{align*}
   { \E\bigg[ \int_{t_0}^T \big( |Z_t^{j,j,j,k}| + |Z_t^{k,j,k,k}| \big)dt \bigg]} &=  \E\bigg[ \int_{t_0}^T \big( |Z_t^{j,k,j,j}| + |Z_t^{k,j,k,k}| \big)dt \bigg]\\
   & \leq C \Big( \norm{Z^{j,k,j,j}}_{L^2(\scrL^N)} + \norm{Z^{k,j,k,k}}_{L^2(\scrL^N)} \Big) \leq C/N. 
\end{align*}
Taking a maximum over $i,j,k$ distinct, we get 
\begin{align*}
    \max_{i,j,k \text{ distinct}} \E\bigg[ &|Y_{t_0}^{i,j,k}|^2 + \int_{t_0}^{T_0 + \eps} \sum_{l=0}^{N} |Z_t^{i,j,k,l}|^2 dt \bigg] \leq \max_{i,j,k \text{ distinct}} \|u^{i,j,k}\|^2_{\infty, T_0 + \eps}
    \\
    & + \Big(C\eps + \frac{1}{2} \Big) \max_{i,j,k \text{ distinct}} \norm{u^{i,j,k}}_{\infty,T_0,T_0 + \eps}^2 + \frac{C}{N^4} 
    \\
    &+ \sqrt{\eps} \max_{i,j,k \text{ distinct}} \norm{u^{i,j,k}}_{\infty, T_0,T_0 + \eps} \times \max_{i,j,k \text{ distinct}} \E\bigg[ \int_{t_0}^{T_0 + \eps} \frac{1}{N} \sum_{{l=1}}^{N} |Z_t^{i,j,k,l}|^2 dt \bigg]^{1/2}.
\end{align*}
After an application of Young's inequality, we arrive at 
\begin{align} \label{zijkl}
    \max_{i,j,k \text{ distinct}} \E\bigg[ &|Y_{t_0}^{i,j,k}|^2 + \int_{t_0}^{T_0 + \eps} \sum_{l=0}^{N} |Z_t^{i,j,k,l}|^2 dt \bigg] \leq C\max_{i,j,k \text{ distinct}} \|u^{i,j,k}\|_{\infty,T_0 + \eps}^2
    \nonumber \\
    & + C\eps \max_{i,j,k \text{ distinct}} \norm{u^{i,j,k}}_{\infty, T_0,T_0 + \eps}^2 + C/N^4. 
\end{align}
Taking a supremum over $t_0 \in [T_0, T_0 + \eps)$, $\bx_0 \in (\R^d)^N$, and recalling the definition of $Y^{i,j,k}$, we obtain 
    \begin{align*}
    \max_{i,j,k \text{ distinct}}& \norm{u^{i,j,k}}_{\infty, T_0,T_0 + \eps}^2 \leq C\max_{i,j,k \text{ distinct}} \|u^{i,j,k}\|_{\infty,T_0+ \eps}^2
    \\
    & + C\eps \max_{i,j,k \text{ distinct}} \norm{u^{i,j,k}}_{\infty, T_0,T_0 + \eps}^2 + C/N^4, 
\end{align*}
and so we deduce that there is some $\eps_0 > 0$ such that for all $\eps < \eps_0$, we have 
 \begin{align*}
    \max_{i,j,k \text{ distinct}} \norm{u^{i,j,k}}_{\infty, T_0,T_0 + \eps}^2 \leq \max_{i,j,k \text{ distinct}} \|u^{i,j,k}\|_{\infty,T_0 + \eps} + C/N^4. 
\end{align*}
Iterating this estimate gives the desired bound on $u^{i,j,k}$, and then returning to \eqref{zijkl} gives the corresponding bound on $D_l u^{i,j,k}$. 
\end{proof}

\subsection{Estimates on the third derivatives}

The aim of this subsection is to prove the bounds on third spatial derivatives of the solutions to the Nash system. To do this, we will need to differentiate once again the Nash system. Explicit computation shows that 
\begin{align*} 
    &D_l \big( - \scrL^N u^{i,j,k} \big) = - \scrL^N u^{i,j,k,l} - \sum_{q=1}^{N} \Big( D_{x^l} a^{N,q} + \sum_{n=1}^{N} D_{p^n} a^{N,q} u^{n,l,n} \Big) u^{i,j,k,q}, 
    \\
    &D_l \big(\cT^{i,j,k,1}\big) = - \sum_{q \neq i} \sum_{n=1}^{N} D_{p^n} a^{N,q} u^{i,q} D_n u^{n,j,k,l} + \sum_{q=1}^{N} \hat{H}^{N,i,q} D_q u^{q,j,k,l}
    \\
    &\quad - \sum_{q \neq i} \sum_{n=1}^{N} D_{p^n} a^{N,q} u^{i,q,l} u^{n,j,n,k} - \sum_{q \neq i} \sum_{n=1}^{N} \Big( D_{x^lp^n} a^{N,q} + \sum_{\hat q=1}^{N} D_{p^{\hat q} p^n} a^{N,q} u^{\hat q,l,\hat q} \Big) u^{i,q} u^{n,j,n,k}
    \\
    &\quad + \sum_{q=1}^{N} \Big( D_{x^l} \hat{H}^{N,i,q} + \sum_{n=1}^{N} D_{p^n} \hat{H}^{N,i,q} u^{n,l,n} \Big) u^{q,j,q,k}, 
    \\
    &D_l \big(\cT^{i,j,k,2}\big) = - \sum_{q,n=1}^{N} D_{p^n} a^{N,q} u^{n,k,n} u^{i,j,q,l} - \sum_{q=1}^{N} D_{x^k} a^{N,q} u^{i,j,q,l} 
    \\
    &\quad - \sum_{q \neq i} \sum_{n=1}^{N} D_{p^n} a^{N,q} u^{n,j,n} u^{i,q,k,l} - \sum_{q \neq i} D_{x^j} a^{N,q} u^{i,q,k,l}
    \\
    &\quad - \sum_{q,n=1}^{N} D_{p^n} a^{N,q} u^{i,j,q} u^{n,k,n,l} - \sum_{q,n=1}^{N} \Big( D_{x^l p^n} a^{N,q} + \sum_{\hat q=1}^{N} D_{p^{\hat q} p^n} a^{N,q} u^{\hat q,l,\hat q} \Big) u^{n,k,n} u^{i,j,q}
    \\
    &\quad - \sum_{q=1}^{N} \Big( D_{x^lx^k} a^{N,q} + \sum_{n=1}^{N} D_{p^n x^k} a^{N,q} u^{n,l,n} \Big) u^{i,j,q} - \sum_{q \neq i} \sum_n D_{p^n} a^{N,q} u^{n,j,n,l} u^{i,q,k}
    \\
    &\quad - \sum_{q \neq i} \sum_{n=1}^{N} \Big( D_{x^l p^n} a^{N,q} + \sum_{\hat q=1}^{N} D_{p^{\hat q} p^n} a^{N,q} u^{\hat q,l,\hat q} \Big) u^{n,j,n} u^{i,q,k} - \sum_{q \neq i} \Big( D_{x^l x^j} a^{N,q} + \sum_{n=1} D_{p^n x^j} a^{N,q} u^{n,l,n} \Big) u^{i,k,q},
    \\
    &D_l \big( \cT^{i,j,k,3} \big) = - \sum_{r \neq i} \sum_{n,q=1}^{N} D_{p^n p^q} a^{N,r} u^{n,j,n,l} u^{q,k,q} u^{i,r} - \sum_{r \neq i} \sum_{n,q=1}^{N} D_{p^np^q} a^{N,r} u^{n,j,n} u^{q,k,q,l} u^{i,r}
    \\
    &\quad + \sum_{r,n=1}^{N} D_{p^n} \hat{H}^{N,i,r} u^{n,k,n} u^{r,j,r,l} + \sum_{r,n=1}^{N} D_{p^n} \hat{H}^{N,i,r} u^{n,k,n,l} u^{r,j,r}
    \\
    &\quad - \sum_{r \neq i} \sum_{n=1} D_{x^k p^n} a^{N,r} u^{n,j,n,l} u^{i,r} - \sum_{r \neq i} \sum_{n=1}^{N} D_{p^n x^j} a^{N,r} u^{n,k,n,l} u^{i,r}
    \\
    &\quad + \sum_{r=1}^{N} D_{x^k} \hat{H}^{N,i,r} u^{r,j,r,l} + \sum_{r=1}^{N} D_{p^r x^j} \hat{H}^{N,i} u^{r,k,r,l}
    \\
    &\quad - \sum_{r \neq i} \sum_{n,q=1}^{N} D_{p^np^q} a^{N,r} u^{n,j,n} u^{q,k,q} u^{i,r,l} - \sum_{r \neq i} \sum_{n,q=1}^{N} \Big( D_{x^l p^np^q} a^{N,r} + \sum_{\hat r=1} D_{p^{\hat r} p^n p^q} a^{N,r} u^{\hat r,l,\hat r} \Big) u^{n,j,n} u^{q,k,q} u^{i,r}
    \\
    &\quad + \sum_{r,n=1}^{N} \Big( D_{x^l p^n} \hat{H}^{N,i,r} + \sum_{q=1}^{N} D_{p^qp^n} \hat{H}^{N,i,r} u^{q,l,q} \Big) u^{n,k,n} u^{r,j,r}
    \\
    &\quad - \sum_{r \neq i} \sum_{n=1}^{N} D_{x^k p^n} a^{N,r} u^{n,j,n} u^{i,r,l} - \sum_{r \neq i} \sum_{n=1} \Big( D_{x^lx^k p^n} a^{N,r} + \sum_{q=1}^{N} D_{p^q x^k p^n} a^{N,r} u^{q,l,q} \Big) u^{n,j,n} u^{i,r}
    \\
    &\quad - \sum_{r \neq i} \sum_{n=1}^{N} D_{p^n x^j} a^{N,r} u^{n,k,n} u^{i,r,l} - \sum_{r \neq i} \sum_{n=1}^{N} \Big( D_{x^l p^n x^j} a^{N,r} + \sum_{q=1}^{N} D_{p^q p^n x^j} a^{N,r} u^{q,l,q} \Big) u^{n,k,n} u^{i,r}
    \\
    &\quad + \sum_{r=1}^{N} \Big( D_{x^lx^k} \hat{H}^{N,i,r} + \sum_{n=1}^{N} D_{p^nx^k} \hat{H}^{N,i,r} u^{n,l,n} \Big)u^{r,j,r} + \sum_{r=1}^{N} \Big( D_{x^l p^r x^j} \hat{H}^{N,i} + \sum_{n=1}^{N} D_{p^np^r x^j} \hat{H}^{N,i} u^{n,l,n} \Big) u^{r,k,r}, 
    \\
    &D_l \big( \cT^{i,j,k,4} \big) = - \sum_{q \neq i} D_{x^kx^j} a^{N,q} u^{i,q,l} - \sum_{q \neq i} \Big( D_{x^lx^kx^j} a^{N,q} + \sum_{n=1}^{N} D_{p^n x^kx^j} a^{N,q} u^{n,l,n} \Big) u^{i,q}
    \\
    &\quad + D_{x^lx^kx^j} \hat{H}^{N,i} + \sum_{q=1}^{N} D_{p^q x^kx^j} \hat{H}^{N,i} u^{q,l,q}.
 \end{align*}
Collecting terms appropriately, we find that we can write the equation for $u^{i,j,k,l}$ as
 \begin{align*}
    &- \partial_t u^{i,j,k,l} - \scrL^N u^{i,j,k,l} + \cA^{i,j,k,l} u^{i,j,k,l} +  \sum_{r \neq l} \cA_1^{i,j,k,l;r} u^{i,j,k,r} 
    \\
    &\qquad +\sum_{r \neq k} \cA_2^{i,j,k,l;r} u^{i,j,r,l}  +  \sum_{r \neq j} \cA_3^{i,j,k,l;r} u^{i,r,k,l} + \sum_{r = 1}^N \cB^{i,j,k,l; r} D_r u^{r,j,k,l} + \cC^{i,j,k,{l}} = 0, 
\end{align*}
where the coefficients
\begin{align*}
    &\cA^{i,j,k,l}, \quad (\cA_1^{i,j,k,l; r})_{ r = 1,\dots,N, \, r \neq l}, \quad (\cA_2^{i,j,k,l; r})_{ r = 1,\dots,N, \, r \neq k}, \quad (\cA_3^{i,j,k,l; r})_{ r = 1,\dots,N, \, r \neq j}, 
    \\
    &(\cB^{i,j,k,l; r})_{r = 1,\dots,N}, \quad \cC^{i,j,k,{l}} 
\end{align*}
are functions on $[0,T] \times (\R^d)^N$ defined explicitly by 
\begin{align*}
    &\cA^{i,j,k,l} = -D_{x^l} a^{N,l} -\sum_{n=1}^N D_{p^n} a^{N,l} u^{n,l,n} - \sum_{n=1}^N D_{p^n} a^{N,k} u^{n,k,n} - D_{x^k} a^{N,k} 
    \\
    &\quad  - 1_{i \neq j} \sum_{n=1}^N D_{p^n} a^{N,j} u^{n,j,n} - 1_{i \neq j} D_{x^j} a^{N,j}, 
    \\
    &\cA^{i,j,k,l;r}_1 =  - \big( D_{x^l} a^{N,r} + \sum_{n=1}^N D_{p^n} a^{N,r} u^{n,l,n} \big)
    \\
    &\cA^{i,j,k,l;r}_2 = - \big( D_{x^k} a^{N,r} + \sum_{n=1}^N D_{p^n} a^{N,r} u^{n,k,n} \big), 
    \\
    &\cA^{i,j,k,l;r}_3 = - 1_{r \neq i} \big(D_{x^j} a^{N,r} + \sum_{n=1}^N D_{p^{n}} a^{N,r} u^{n,j,n}\big), 
    \\
    &\cB^{i,j,k,l; r} = \hat{H}^{N,i,r} - \sum_{n \neq i} D_{p^r} a^{N,n} u^{i,n}, 
    \\
    &\cC^{i,j,k,l} = - \sum_{r \neq i} \sum_{n=1}^N D_{p^n} a^{N,r} u^{i,r,l} u^{n,j,n,k} - \sum_{r \neq i} \sum_{n=1}^N \Big( D_{x^lp^n} a^{N,r} + \sum_{q=1}^N D_{p^q p^n} a^{N,r} u^{q,l,q} \Big) u^{i,r} u^{n,j,n,k}
    \\
    &\quad + \sum_{r=1}^N \Big( D_{x^l} \hat{H}^{N,i,r} + \sum_{n=1} D_{p^n} \hat{H}^{N,i,q} u^{n,l,n} \Big) u^{r,j,r,k}
    \\
    &\quad - \sum_{r,n=1}^N D_{p^n} a^{N,r} u^{i,j,r} u^{n,k,n,l} - \sum_{r,n=1}^N \Big( D_{x^l p^n} a^{N,r} + \sum_{q=1}^N D_{p^q p^n} a^{N,r} u^{q,l,q} \Big) u^{n,k,n} u^{i,j,r}
    \\
    &\quad - \sum_{r=1}^N \Big( D_{x^lx^k} a^{N,r} + \sum_{n=1}^N D_{p^n x^k a^{N,r}} u^{n,l,n} \Big) u^{i,j,r} - \sum_{r \neq i} \sum_{n=1}^N D_{p^n} a^{N,r} u^{n,j,n,l} u^{i,r,k}
    \\
    &\quad - \sum_{r \neq i} \sum_{n=1}^N \Big( D_{x^l p^n} a^{N,r} + \sum_{q=1}^N D_{p^q p^n} a^{N,r} u^{q,l,q} \Big) u^{n,j,n} u^{i,r,k} - \sum_{r \neq i} \Big( D_{x^l x^j} a^{N,r} + \sum_{n=1}^N D_{p^n x^j} a^{N,r} u^{n,l,n} \Big) u^{i,k,r}
    \\
    &\quad - \sum_{r \neq i} \sum_{n,q=1}^N D_{p^n p^q} a^{N,r} u^{n,j,n,l} u^{q,k,q} u^{i,r} - \sum_{r \neq i} \sum_{n,q=1}^N D_{p^np^q} a^{N,r} u^{n,j,n} u^{q,k,q,l} u^{i,r}
    \\
    &\quad + \sum_{r,n=1}^N D_{p^n} \hat{H}^{N,i,r} u^{n,k,n} u^{r,j,r,l} + \sum_{r,n=1}^N D_{p^n} \hat{H}^{N,i,r} u^{n,k,n,l} u^{r,j,r}
    \\
    &\quad - \sum_{r \neq i} \sum_{n=1}^N D_{x^k p^n} a^{N,r} u^{n,j,n,l} u^{i,r} - \sum_{r \neq i} \sum_{n=1} D_{p^n x^j} a^{N,r} u^{n,k,n,l} u^{i,r}
    \\
    &\quad + \sum_{r=1}^N D_{x^k} \hat{H}^{N,i,r} u^{r,j,r,l} + \sum_{r=1}^N D_{p^r x^j} \hat{H}^{N,i} u^{r,k,r,l}
    \\
    &\quad - \sum_{r \neq i} \sum_{n,q=1}^N D_{p^np^q} a^{N,r} u^{n,j,n} u^{q,k,q} u^{i,r,l} - \sum_{r \neq i} \sum_{n,q=1}^N \Big( D_{x^l p^np^q} a^{N,r} + \sum_{\hat r=1}^N D_{p^{\hat r} p^n p^q} a^{N,r} u^{\hat r,l,\hat r} \Big) u^{n,j,n} u^{q,k,q} u^{i,r}
    \\
    &\quad + \sum_{r,n=1}^N \Big( D_{x^l p^n} \hat{H}^{N,i,r} + \sum_{q=1}^N D_{p^qp^n} \hat{H}^{N,i,r} u^{q,l,q} \Big) u^{n,k,n} u^{r,j,r}
    \\
    &\quad - \sum_{r \neq i} \sum_{n=1}^N D_{x^k p^n} a^{N,r} u^{n,j,n} u^{i,r,l} - \sum_{r \neq i} \sum_{n=1}^N \Big( D_{x^lx^k p^n} a^{N,r} + \sum_{q=1}^N D_{p^q x^k p^n} a^r u^{q,l,q} \Big) u^{n,j,n} u^{i,r}
    \\
    &\quad - \sum_{r \neq i} \sum_{n=1}^N D_{p^n x^j} a^{N,r} u^{n,k,n} u^{i,r,l} - \sum_{r \neq i} \sum_{n=1}^N \Big( D_{x^l p^n x^j} a^{N,r} + \sum_{q=1}^N D_{p^q p^n x^j} a^{N,r} u^{q,l,q} \Big) u^{n,k,n} u^{i,r}
    \\
    &\quad + \sum_{r=1}^N \Big( D_{x^lx^k} \hat{H}^{N,i,r} + \sum_{n=1}^N D_{p^nx^k} \hat{H}^{N,i,r} u^{n,l,n} \Big)u^{r,j,r} + \sum_{r=1}^N \Big( D_{x^l p^r x^j} \hat{H}^{N,i} + \sum_{n=1}^N D_{p^np^r x^j} \hat{H}^{N,i} u^{n,l,n} \Big) u^{r,k,r}
    \\
    &\quad - \sum_{r \neq i} D_{x^kx^j} a^{N,r} u^{i,r,l} - \sum_{r \neq i} \Big( D_{x^lx^kx^j} a^{N,r} + \sum_{n=1}^N D_{p^n x^kx^j} a^{N,r} u^{n,l,n} \Big) u^{i,r}
    \\
    &\quad + D_{x^lx^kx^j} \hat{H}^{N,i} + \sum_{r=1}^N D_{p^r x^kx^j} \hat{H}^{N,i} u^{r,l,r}
\end{align*}

The key point about these coefficients is the following lemma, which can be proved using the bounds already obtained in the previous subsection, together with the bounds on the derivatives of $\ba^N$, $\hat{H}^{N,i}$, $\hat{H}^{N,i,j}$. For completeness, we provide its proof in Appendix \ref{app:B}.

\begin{lemma} \label{lem.third.order.coeff}
 Let Assumptions \ref{assump.regularity.disp} and \ref{assump.disp} hold. Then there is a constant $C>0$ such that for all $N\in\mathbb{N}$ large enough, we have 
 \begin{align*}
     \|\cA^{i,j,k,l}\|_{\infty} \leq C, \quad \|\cA^{i,j,k,l;r}_q\|_{\infty} \leq C/N, \quad \|\cB^{i,j,k,l; r}\|_{\infty} \leq C/N, \quad \|\cC^{i,j,k,l}\|_{L^1(\scrL^N)} \leq C \omega_{i,j,k,l},
 \end{align*}
 for all $i,j,k,l,r\in\{1,\dots,N\}$ and for $q\in\{1,2,3\}$.
\end{lemma}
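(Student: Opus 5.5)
The plan is to bound each coefficient term by term. The estimates come from Proposition \ref{prop.aderivscaling} and Lemma \ref{lem.hatH} for $\ba^N,\hat H^{N,i},\hat H^{N,i,j}$, together with the second-order bounds already established: Propositions \ref{prop.uij}, \ref{prop.uiji}, \ref{prop.iii}, \ref{prop.ijk}. Combined, these give $\|u^{i,j}\|_\infty\le C/N$ for $i\ne j$ and $\|u^{i,j,k}\|_\infty\le C\omega_{i,j,k}$. In particular $u^{n,n,n}$ is bounded in $L^\infty$, and $|u^{n,l,n}|\le C/N$ for $n\ne l$.

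\textbf{The $\cA$ and $\cB$ coefficients.} These bounds are pointwise and essentially immediate.

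For $\cA^{i,j,k,l}$, each sum $\sum_n D_{p^n}a^{N,l}u^{n,l,n}$ splits into the term $n=l$, which is $O(1)$, and the terms $n\ne l$, which total $\sum_{n\ne l}(C/N)(C/N)\le C/N$. The $D_x a$ terms are bounded. Hence $\|\cA^{i,j,k,l}\|_\infty\le C$.

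For $\cA_q^{i,j,k,l;r}$ with $r\ne l$, we have $|D_{x^l}a^{N,r}|\le C/N$. In the sum, the term $n=l$ gives $|D_{p^l}a^{N,r}||u^{l,l,l}|\le C/N$. The term $n=r$ gives $C\,|u^{r,l,r}|\le C/N$. The remaining terms give $N\cdot N^{-1}\cdot N^{-1}$. This proves $\|\cA_q^{i,j,k,l;r}\|_\infty\le C/N$.

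For $\cB^{i,j,k,l;r}$, we have $|\hat H^{N,i,r}|\le C/N$ and $\sum_{n\ne i}|D_{p^r}a^{N,n}||u^{i,n}|\le (C/N)\sum_n\omega_{n,r}\le C/N$.

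\textbf{The bound on $\cC^{i,j,k,l}$: strategy.} This is the main work, and it must be done in $L^1(\scrL^N)$. The difficulty is that some terms contain third derivatives of the form $u^{n,j,n,k}=D_k u^{n,j,n}$, for which only $L^2(\scrL^N)$ bounds are available. These come from the $Z$-estimates in Propositions \ref{prop.uij}–\ref{prop.ijk}: $\|(\sum_k|D_ku^{n,j,n}|^2)^{1/2}\|_{L^2(\scrL^N)}\le C/N$ for $n\ne j$, and $\le C$ for $n=j$.

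I would organize the terms of $\cC^{i,j,k,l}$ into two families.

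\emph{Family (a): terms involving only first and second derivatives of $u$.} These are estimated in $L^\infty$. One multiplies the derivative bounds of $\ba^N$ and $\hat H$ (scaling like $\omega$ with the appropriate indices) against $u^{i,r}$ and $u^{\cdot,\cdot,\cdot}$. Summation over the free indices $r,n,q$ is handled by the rules $\sum_r\omega_{r,a,b}\le C\omega_{a,b}$ and $\omega_{a,b}\omega_{c,d}\le C\omega_{a,b,c,d}$ when indices are shared appropriately. Each such term should come out $\le C\omega_{i,j,k,l}$. A representative example is $\sum_{r\ne i}D_{x^lx^kx^j}a^{N,r}u^{i,r}$: it is bounded by $\sum_r\omega_{r,j,k,l}\cdot\omega_{i,r}$, and this is $\le C\omega_{i,j,k,l}$ after separating the cases $r\in\{j,k,l\}$ and $r\notin\{j,k,l\}$.

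\emph{Family (b): terms linear in a third derivative $u^{n,a,n,b}$.} Here I would split the sum at $n\in\{i,j,k,l\}$ versus $n$ generic. For the finitely many special $n$, the prefactor carries the correct $\omega$ weight and is in $L^\infty$. For generic $n$, Cauchy–Schwarz gives
\[
\frac{C}{N}\Big(\sum_n|u^{n,a,n,b}|^2\Big)^{1/2}\cdot\sqrt N ,
\]
and its $L^1(\scrL^N)$ norm is controlled by $\|\cdot\|_{L^1}\le\sqrt T\|\cdot\|_{L^2}$ together with the $Z$-bounds from Corollaries \ref{cor:kij}, \ref{cor:kiji} and estimate \eqref{estim:CN}. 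The product structure helps as well: the prefactors $u^{i,r,l}$ or $u^{i,r}$ are in $L^\infty$, so no Hölder exponent beyond $L^\infty\times L^1$ is needed.

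\textbf{Main obstacle.} I expect the hardest step to be the case analysis over coincidences among $i,j,k,l$, which is needed to obtain exactly the weight $\omega_{i,j,k,l}$ rather than something cruder. The worst cases are the terms where the only decay comes from an $L^2$ third-derivative bound. A typical one is $\sum_{r,n}D_{p^n}\hat H^{N,i,r}u^{n,k,n}u^{r,j,r,l}$ with $i,j,k,l$ distinct, which must reach $N^{-3}$. My plan there is to use Lemma \ref{lem.hatH} to gain $\omega_{i,r,n}$ from the $\hat H$ factor and $\omega_{n,k}$ from $u^{n,k,n}$, and then pair these with the $L^2$ bound on $(u^{r,j,r,l})_r$. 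If this falls short by a factor of $\sqrt N$, the fallback is to use the fact that $\|u^{r,j,r,l}\|_{L^2(\scrL^N)}\le C/N$ holds for each $r\ne j$ individually, rather than only in aggregate.
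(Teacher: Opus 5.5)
\textbf{There is a genuine gap, in family (b) of $\cC^{i,j,k,l}$.} Your bounds for $\cA^{i,j,k,l}$, $\cA_q^{i,j,k,l;r}$ and $\cB^{i,j,k,l;r}$ are correct and coincide with the paper's. The problem is the third-derivative bounds you plan to use: $\|u^{n,j,n,k}\|_{L^2(\scrL^N)}\le C/N$ for $n\neq j$ and $\le C$ for $n=j$, independently of $k$. This is what Corollary \ref{cor:kiji} and \eqref{estim:CN} record. These bounds are too weak by a full factor $N$ whenever $k\notin\{n,j\}$, and with them the target $\omega_{i,j,k,l}=N^{-3}$ for distinct $i,j,k,l$ is out of reach.

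For example, in $\sum_{r\neq i}\sum_n D_{p^n}a^{N,r}u^{i,r,l}u^{n,j,n,k}$ the coefficient of $u^{n,j,n,k}$ is $O(\omega_{i,l,n})$.
\begin{itemize}
\item The special term $n=j$ contributes $O(N^{-2})\cdot O(1)$. So your claim that for special $n$ the prefactor alone carries the right weight fails here.
\item The generic $n$ contribute $N\cdot N^{-2}\cdot N^{-1}=N^{-2}$.
\item Cauchy--Schwarz over $n$ cannot recover the loss. The only control on $\sum_n|u^{n,j,n,k}|^2$ comes from summing these same individual bounds.
\end{itemize}
Likewise, in your worst-case term $\sum_{r,n}D_{p^n}\hat H^{N,i,r}u^{n,k,n}u^{r,j,r,l}$ the coefficient of $u^{r,j,r,l}$ is $O(\omega_{i,k,r})$. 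With your fallback, the generic $r$ alone contribute $N\cdot N^{-2}\cdot N^{-1}=N^{-2}$. The shortfall is a factor $N$, not $\sqrt N$, and the fallback is the very bound causing it.

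\textbf{The missing idea is symmetry of mixed partial derivatives:} $u^{n,j,n,k}=D_{knj}u^{N,n}=D_n u^{n,j,k}$. This gives sharp individual bounds.
\begin{itemize}
\item When $n,j,k$ are distinct, the $L^2(\scrL^N)$ bound on $\big(\sum_l|D_lu^{i,j,k}|^2\big)^{1/2}$ in Proposition \ref{prop.ijk} yields $\|u^{n,j,n,k}\|_{L^2(\scrL^N)}\le C/N^2$.
\item Propositions \ref{prop.uiji} and \ref{prop.iii} cover the coincident cases. For instance, $u^{j,j,j,k}=D_j u^{j,k,j}$ is $O(1/N)$ in $L^2(\scrL^N)$, not $O(1)$.
\end{itemize}
Together these give, for every $n$ individually, $\|u^{n,a,n,b}\|_{L^1(\scrL^N)}\le C\,\omega_{n,a,b}$.

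With this in hand the paper needs no special/generic splitting and no Cauchy--Schwarz. It applies the triangle inequality in $L^1(\scrL^N)$ term by term. All other factors are bounded in $L^\infty$ by $\omega$-weights from Proposition \ref{prop.aderivscaling}, Lemma \ref{lem.hatH} and the second-order bounds. The argument closes with convolution rules such as $\sum_q\omega_{a,b,q}\omega_{c,q}\le C\omega_{a,b,c}$ and $\sum_q\omega_{a,b,q}\omega_{c,d,q}\le C\omega_{a,b,c,d}$. For example, the first term above is bounded by $C\sum_n\omega_{i,l,n}\,\omega_{j,k,n}\le C\omega_{i,j,k,l}$.
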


The uniform estimates on the third order derivatives can be formulated as follows.

\begin{proposition} \label{prop.uijkl}
   Let Assumptions \ref{assump.regularity.disp} and \ref{assump.disp} hold. Then there is a constant $C>0$ such that for all $N\in\mathbb{N}$ large enough, and all $i,j,k,l \in \{1,\dots,N\}$, we have 
    \begin{align*}
        \|u^{i,j,k,l} \|_{\infty} \leq C \omega^N_{i,j,k,l}.
    \end{align*}
\end{proposition}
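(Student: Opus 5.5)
The plan is to run the same backward-in-time energy argument used for Propositions \ref{prop.uij}, \ref{prop.uiji}, \ref{prop.iii} and \ref{prop.ijk}, now on the equation for $u^{i,j,k,l}$ written above, with Lemma \ref{lem.third.order.coeff} supplying the coefficient bounds. Fix $T_0\in[0,T)$, $\eps$ small, $(t_0,\bx_0)\in[T_0,T_0+\eps)\times(\R^d)^N$, and set
\[
Y^{i,j,k,l}_t=u^{i,j,k,l}(t,\bX_t),\qquad Z^{i,j,k,l,r}_t=\sqrt2\,D_ru^{i,j,k,l}(t,\bX_t),
\]
with $\bX=\bX^{t_0,\bx_0}$. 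By It\^o's formula,
\[
dY^{i,j,k,l}=\Big(\cA Y+\sum_{r\ne l}\cA_1 Y^{i,j,k,r}+\sum_{r\ne k}\cA_2 Y^{i,j,r,l}+\sum_{r\ne j}\cA_3 Y^{i,r,k,l}+\sum_r \cB^{;r}Z^{r,j,k,l,r}/\sqrt2+\cC\Big)dt+dM.
\]
Expanding $|Y|^2$ and taking expectations gives $\E|Y_{t_0}|^2+\E\int|Z|^2$ bounded by the terminal value $\|D_{lkj}G^{N,i}\|^2_{\infty}\le C(\omega_{i,j,k,l})^2$ plus $\|u^{i,j,k,l}\|_{\infty,T_0,T_0+\eps}$ times the integrated drift.

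The drift contributions are bounded as follows. The term $\cA$ contributes $C\eps\|u^{i,j,k,l}\|^2$. The $\cA_q$ terms contribute $\frac{C\eps}{N}\sum_{r}\|u^{i,j,k,r}\|_{\infty}$. The $\cB$ term contributes $\sqrt\eps\big(\frac1N\sum_r\E\int|Z^{r,j,k,l,r}|^2\big)^{1/2}$, by the bound $|\cB|\le C/N$ and Cauchy--Schwarz. The $\cC$ term contributes $C\omega_{i,j,k,l}\|u^{i,j,k,l}\|$.

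Since the weights differ by index pattern, the maxima must be organized by \emph{equivalence class of coincidence pattern} of $(i,j,k,l)$. There are finitely many patterns (which indices coincide), and for each pattern $\pi$ I define
\[
S_\pi(T_0,\eps)=\max_{(i,j,k,l)\in\pi}\big(\omega_{i,j,k,l}\big)^{-2}\,\|u^{i,j,k,l}\|^2_{\infty,T_0,T_0+\eps},
\]
and $S=\max_\pi S_\pi$.

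The key combinatorial check is that the $\cA_q$ sums respect the weights. For instance, $\frac1N\sum_r\omega_{i,j,k,r}\le C\omega_{i,j,k,l}$: the at most three values of $r$ in $\{i,j,k\}$ give $\frac1N\omega_{i,j,k}\le C\omega_{i,j,k,l}$, while the remaining $r$ give $\omega_{i,j,k,r}=\omega_{i,j,k}/N\le\omega_{i,j,k,l}$. Symmetry of $u^{i,\cdot,\cdot,\cdot}$ in its last three indices is used here. For the $\cB$ term, the diagonal-type entries $r\in\{j,k,l\}$ carry weight $\omega_{j,k,l}$ and the others carry $\omega_{r,j,k,l}$. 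In both cases $\frac1N\sum_r\omega_{r,j,k,l}^2\le C\omega_{i,j,k,l}^2$ whenever $i\notin\{j,k,l\}$ or $i\in\{j,k,l\}$. I will check this case by case, since this is the step where an off-by-$N$ factor could appear. If it fails for $i\notin\{j,k,l\}$, I will first treat the patterns with $i\in\{j,k,l\}$ (analogous to the ordering used in Section 4.3) and then feed those bounds in as known source terms, as in Proposition \ref{prop.ijk}.

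The $Z$ term is then absorbed by Young's inequality after taking the weighted max over patterns, exactly as in \eqref{zijkl}. This yields
\[
S(T_0,\eps)\le C\max_{\pi,\,(i,j,k,l)\in\pi}\omega_{i,j,k,l}^{-2}\|u^{i,j,k,l}\|^2_{\infty,T_0+\eps}+(C\eps+\tfrac12)S(T_0,\eps)+C.
\]
Choosing $\eps<\eps_0$ with $\eps_0$ independent of $N$ and iterating backward from $T$ over $O(1/\eps_0)$ steps gives $S\le C$, which is the claim.

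The main obstacle is the weighted bookkeeping in the coupling terms. The $\cA_q$ and $\cB$ terms link index patterns with different $\omega$ weights, so the Young absorption must not lose a factor of $N$ when passing from, for example, $\omega_{j,k,l}$ to $\omega_{i,j,k,l}=\omega_{j,k,l}/N$. If a loss of $N$ does occur, I will resolve it by a two-stage ordering: first bound the patterns with $i\in\{j,k,l\}$, then the fully distinct case.
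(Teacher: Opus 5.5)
There is a genuine gap, and it is exactly the step you flagged. The inequality $\frac1N\sum_r\omega_{r,j,k,l}^2\le C\omega_{i,j,k,l}^2$ is false when $i\notin\{j,k,l\}$. The entries $r\in\{j,k,l\}$ have weight $\omega_{r,j,k,l}=\omega_{j,k,l}$, so the left side is of order $\omega_{j,k,l}^2/N$, while $\omega_{i,j,k,l}^2=\omega_{j,k,l}^2/N^2$.

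The lost factor $N$ comes from applying Cauchy--Schwarz to the whole sum $\sum_r\cB^{i,j,k,l;r}Z^{r,j,k,l,r}$. That replaces the coefficient $C/N$ of each of the (at most three) diagonal terms by an effective $C/\sqrt N$.

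Your fallback does not repair this, because the patterns with $i\in\{j,k,l\}$ do not form a closed subsystem. Their $\cB$-term involves $D_r u^{r,j,k,l}$ for every $r\notin\{j,k,l\}$. For $i=j$, say, their $\cA_3$-term involves $u^{j,r,k,l}$ with $j\notin\{r,k,l\}$. Both are precisely the second-stage unknowns. In Propositions \ref{prop.uiji}--\ref{prop.ijk} the ordering worked because each new family only needed bounds that were already proved, or the a priori $L^2(\scrL^N)$ bounds of Proposition \ref{prop.nash.preliminaries}. No such bounds are available for third derivatives.

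The paper's proof avoids the loss by splitting the $\cB$-sum before the Young step. Keep the terms $r\in\{j,k,l\}$ separate, each with its own coefficient $C/N$, and apply Cauchy--Schwarz only over $r\notin\{j,k,l\}$. Let $M$ be the maximum over all $(i',j',k',l')$ of $\omega_{i',j',k',l'}^{-2}\,\E\int_{t_0}^{T_0+\eps}\sum_r|Z^{i',j',k',l',r}_t|^2\,dt$. After Young's inequality the $Z$-terms left over are
\[
\frac{\delta}{N^2}\sum_{r\in\{j,k,l\}}\E\int|Z^{r,j,k,l,r}|^2+\frac{\delta}{N}\sum_{r\notin\{j,k,l\}}\E\int|Z^{r,j,k,l,r}|^2\le\frac{4\delta\,\omega_{j,k,l}^2}{N^2}\,M\le 4\delta\,\omega_{i,j,k,l}^2\,M .
\]
This holds for every index pattern, since $\omega_{i,j,k,l}\ge\omega_{j,k,l}/N$ always. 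Hence a single weighted maximum over all $(i,j,k,l)$ (the paper's $\phi$) closes directly, and no ordering of patterns is needed.

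A two-stage variant also works, though not the one you describe. First run your full Cauchy--Schwarz argument with the $i$-independent weight $\omega_{j,k,l}$, for which it does close. Then rerun the weighted argument, treating the diagonal $\cB$-terms as known sources.

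The rest of your bookkeeping (the $\cA_q$ sums, the $\cC$ term, the terminal data and the backward iteration in steps of size $\eps_0$) matches the paper.
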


\begin{proof}
Fix $T_0 \in [0,T)$, $\eps \in (0,T - T_0]$, and then $(t_0,\bx_0) \in [{T}_0, T_0 + \eps] \times (\R^d)^N$. Now for $i,j,k,l,r \in \{1,\dots,N\}$, set 
\begin{align*}
    &\bX = \bX^{t_0,\bx_0}, \quad Y_t^{i,j,k,l} = u^{i,j,k,l}(t,\bX_t), 
    \\
    &Z_t^{i,j,k,l,r} = \sqrt{2} D_r u^{i,j,k,l}(t,\bX_t), \quad Z_t^{i,j,k,l,0} = \sqrt{2\sigma_0} \sum_{r = 1}^N D_r u^{i,j,k,l}(t,\bX_t). 
\end{align*}
Then, we have 
\begin{align*}
    dY_t^{i,j,k,l} &= \bigg( \cA^{i,j,k,l}_t Y_t^{i,j,k,l} + \sum_{r \neq l} \cA_{1,t}^{i,j,k,l;r} Y_t^{i,j,k,r} + \sum_{r \neq k} \cA_{2,t}^{i,j,k,l;r} Y_t^{i,j,r,l} + \sum_{r \neq j} \cA_{3,t}^{i,j,k,l;r} Y_t^{i,r,k,l} 
    \\
    &\qquad + \sum_{r = 1}^N  \cB^{i,j,k,l; r}_t Z_t^{r,j,k,l,r} + \cC_t^{i,j,k,l} \bigg)dt + \sum_{r = 0}^N Z_t^{i,j,k,l,r} dW_t^r, 
\end{align*}
where 
\begin{align*}
    \cA_t^{i,j,k,l} = \cA(t,\bX_t), \quad \cA_{q,t}^{i,j,k,l;r} = \cC_q^{i,j,k,l;r}(t,\bX_t), \quad \cB_t^{i,j,k,l; r} = \frac{1}{\sqrt{2}} \cB^{i,j,k,l; r}(t,\bX_t),
\end{align*}
$i,j,k,l,r\in\{1,\dots,N\}, q\in\{1,2,3\}.$ Define 
\begin{align*}
    \phi(t) := \max_{i,j,k,l} \frac{\norm{u^{i,j,k,l}}_{\infty,t}}{\omega^N_{i,j,k,l}}.
\end{align*}
Our goal is to show that $\phi$ is bounded. We compute $d |Y_t^{i,j,k,l}|^2$, integrate from $t_0$ to $T_0 + \eps$, and use Lemma \ref{lem.third.order.coeff} to deduce that 
\begin{align*}
    &\E\bigg[ |Y_{t_0}^{i,j,k,l}|^2 + \int_{t_0}^T \sum_{{r=0}}^N |Z_t^{i,j,k,l,r}|^2 dt \bigg] \leq \E\bigg[ |Y_{T_0 + \eps}^{i,j,k,l}|^2 + {2}\int_{t_0}^{T_0 + \eps} |Y_t^{i,j,k,l}| \bigg|  \cA^{i,j,k,l}_t Y_t^{i,j,k,l}
    \\
    &\qquad +  \sum_{r \neq l} \cA_{1,t}^{i,j,k,l;r} Y_t^{i,j,k,r} +\sum_{r \neq k} \cA_{2,t}^{i,j,k,l;r} Y_t^{i,j,r,l} + \sum_{r \neq j} \cA_{3,t}^{i,j,k,l;r} Y_t^{i,r,k,l} 
    \\
    &\qquad +  \sum_{r = 1}^N \cB^{i,j,k,l; r}_t Z_t^{r,j,k,l,r} + \cC_t^{i,j,k,l} \bigg| dt\bigg]
    \\
    &\leq \phi(T_0 + \eps)^2 (\omega_{i,j,k,l}^N)^2 + C \omega_{i,j,k,l}^N \left(\sup_{T_0 \leq t \leq T_0 + \eps} \phi(t) \right) \E\bigg[ \int_{t_0}^T \bigg( |Y_t^{i,j,k,l}| + \frac{1}{N} \sum_{r \neq i,j,k} |Y_t^{i,j,k,r}| 
    \\
    &\qquad + \frac{1}{N} \sum_{r=1}^N |Y_t^{i,j,r,l}| + \frac{1}{N} \sum_{r=1}^N |Y_t^{i,r,k,l}|  \bigg) dt \bigg] 
    \\
    &\qquad +  C \omega_{i,j,k,l}^N\left(\sup_{T_0 \leq t \leq T_0 + \eps} \phi(t) \right) \sqrt{\eps} \E\bigg[ \int_{t_0}^T \frac{1}{N} \sum_{r=1}^N |Z_t^{r,j,k,l,r}|^2 \bigg]^{1/2} + C\left(\sup_{T_0 \leq t \leq T_0 + \eps} \phi(t) \right)  \big(\omega_{i,j,k,l}^N\big)^2
    \\
    &\leq \phi(T_0 + \eps)^2 (\omega_{i,j,k,l}^N)^2 + C (\omega_{i,j,k,l}^N)^2 \left( \sup_{T_0 \leq t \leq T_0 + \eps} \phi(t) \right)^2 \eps + \frac{1}{2} \left(\sup_{T_0 \leq t \leq T} \phi(t) \right)^2 + C \left(\omega_{i,j,k,l}^N\right)^{2}
    \\
    &\qquad + \frac{1}{8 N} \E\bigg[ \int_{t_0}^T \sum_{r \neq j,k,l} |Z_t^{r,j,k,l,r}|^2 dt \bigg] + \frac{1}{8 N} \E\bigg[ \int_{t_0}^T \Big( |Z_t^{j,j,k,l,j}|^2 + |Z_t^{k,j,k,l,k}|^2 + |Z_t^{{l,j,k,l,l}}|^2 \Big) dt \bigg]
    \\
    &\leq \phi(T_0 + \eps)^2 (\omega_{i,j,k,l}^N)^2 + C (\omega_{i,j,k,l}^N)^2 \big( \sup_{T_0 \leq t \leq T_0 + \eps} \phi(t) \big)^2 \eps + \frac{1}{2} \big(\sup_{T_0 \leq t \leq T} \phi(t) \big)^2 + C \omega_{i,j,k,l}^2
    \\
    &\qquad + \frac{1}{2}  (\omega_{i,j,k,l}^{N})^{2} \max_{i',j',k',l'} \bigg(\big(\omega_{i',j',k',l'}^N\big)^{-2} \E\bigg[\int_{t_0}^T \sum_{r=1}^N |Z_t^{i',j',k',l',m'}|^2 dt \bigg] \bigg). 
\end{align*}
Dividing by $(\omega_{i,j,k,l}^N)^2$ and then taking a supremum in $i,j,k,l$, we obtain 
\begin{align*}
    &\sup_{i,j,k,l} (\omega_{i,j,k,l}^N)^{-2} \E\big[ |Y_{t_0}^{i,j,k,l}|^2 \big]
    \\
    & \quad \leq \phi(T_0 + \eps)^2 + C \left( \sup_{T_0 \leq t \leq T_0 + \eps} \phi(t) \right)^2 \eps + \frac{1}{2} \left(\sup_{T_0 \leq t \leq T} \phi(t) \right)^2 + C.
\end{align*}
Taking a supremum over $t_0 \in [T_0, T_0 + \eps]$ and $\bx_0 \in (\R^d)^N$, we find that 
\begin{align*}
    \left(\sup_{T_0 \leq t \leq T_0 + \eps} \phi(t)\right)^2 \leq \phi(T_0 + \eps)^2 + C \eps \left(\sup_{T_0 \leq t \leq T_0 + \eps} \phi(t)\right)^2 + \frac{1}{2} \left(\sup_{T_0 \leq t \leq T} \phi(t)\right)^2 + C, 
\end{align*}
so there is a constant $\eps_0 > 0$ such that if $\eps < \eps_0$, then 
\begin{align*}
    \left(\sup_{T_0 \leq t \leq T_0 + \eps} \phi(t)\right)^2 \leq C\phi(T_0 + \eps)^2 + C.
\end{align*}
Iterating this inequality gives the desired bound.
\end{proof}

\subsection{Completing the proof of Theorem \ref{thm.uniform.nash}}

We have already obtained estimates on the spatial derivatives of $u^{N,i}$ up to order three. To complete the proof of Theorem \ref{thm.uniform.nash}, we need to understand the growth of $u^{N,i}$ and the time regularity. We proceed with a sequence of lemmas. 

\begin{lemma}
    \label{lem.ani.growth}
    Let Assumptions \ref{assump.regularity.disp} and \ref{assump.disp} hold. Then there is a constant $C$ such that for all $N\in\mathbb{N}$ large enough, we have 
    \begin{align*}
        |a^{N,i}(\bx, \bp)| \leq C\big(1 + |x^i| + |p^i| + M_1(m_{\bx}^N) + M_1(m_{\bp}^N) \big).
    \end{align*}
\end{lemma}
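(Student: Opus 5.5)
We reduce the growth bound on $a^{N,i}$ to the Lipschitz estimates of Proposition \ref{prop.aderivscaling} by comparing $\ba^N(\bx,\bp)$ with its value at a reference point. The most natural reference point is $(\bm{0},\bm{0})$. The difficulty is that a plain Lipschitz bound in $\ell^2$ only gives $|\ba^N(\bx,\bp)-\ba^N(\bm 0,\bm 0)|_{\ell^2}\le C(|\bx|+|\bp|)$. That is of order $\sqrt N\,M_2^{1/2}$, which is too crude. We therefore use the finer componentwise decay $\|D_{x^j}a^{N,i}\|_\infty+\|D_{p^j}a^{N,i}\|_\infty\le C\omega^N_{i,j}$.

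\textbf{Step 1: the reference value.} First we bound $|a^{N,i}(\bm 0,\bm 0)|$ uniformly in $N$ and $i$. At $(\bm 0,\bm 0)$ all players are symmetric. By uniqueness of the fixed point (Proposition \ref{prop.fixedpoint.prelims}), $a^{N,i}(\bm 0,\bm 0)=\bar a$ for every $i$, where $\bar a=-D_pH(0,0,\delta_{(0,\bar a)})$.

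To bound $\bar a$, we compare with the fixed point of \eqref{def.Phi} at $\nu=\delta_{(0,0)}$. Since $M_2(m^N_{\bm 0,\bm 0})=0$, the third item of Proposition \ref{prop.fixedpoint.prelims} gives $m^N_{\bm 0,\ba^N(\bm0,\bm0)}=\Phi(\delta_{(0,0)})$ exactly. Hence $\bar a$ is the $a$-marginal of the fixed $N$-independent measure $\Phi(\delta_{(0,0)})$, and so $|\bar a|\le C$.

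If one prefers a direct argument, apply the monotonicity inequality of Lemma \ref{lem.discretemonotone}, integrated along the segment from $\bm 0$ to $\bar{\ba}=(\bar a,\dots,\bar a)$. This gives $C_{L,a}N|\bar a|^2\lesssim N|\bar a|\,\|D_aL(0,0,\cdot)\|$, using that $D_aL$ at $a=0$ is bounded because $D_\mu L$ and second derivatives are bounded.

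\textbf{Step 2: the main estimate.} Set $\gamma(s)=(s\bx,s\bp)$ for $s\in[0,1]$ and write
\[
a^{N,i}(\bx,\bp)-a^{N,i}(\bm0,\bm0)=\int_0^1\sum_{j=1}^N\big(D_{x^j}a^{N,i}(\gamma(s))\,x^j+D_{p^j}a^{N,i}(\gamma(s))\,p^j\big)\,ds .
\]
Applying Proposition \ref{prop.aderivscaling}, the term $j=i$ contributes at most $C(|x^i|+|p^i|)$. The terms $j\neq i$ contribute at most $\frac CN\sum_{j}(|x^j|+|p^j|)=C\big(M_1(m^N_{\bx})+M_1(m^N_{\bp})\big)$. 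Combining this with Step 1 gives the claim.

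\textbf{Main obstacle.} The substantive point is Step 1, i.e. that the reference value is bounded independently of $N$. The symmetry/uniqueness argument, combined with the exact identification through $\Phi$ at a Dirac mass (or the monotonicity computation), should settle it. Step 2 is routine once the $\omega^N_{i,j}$ decay of Proposition \ref{prop.aderivscaling} is available, and that decay is exactly what makes the $\ell^1$-type averages $M_1$ appear instead of $\ell^2$ norms.
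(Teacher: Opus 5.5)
Your proposal is correct and follows essentially the paper's proof. It bounds $a^{N,i}(\bm 0,\bm 0)$ uniformly via the third item of Proposition \ref{prop.fixedpoint.prelims}, where your remark that the error vanishes exactly at $(\bm 0,\bm 0)$, so that $a^{N,i}(\bm 0,\bm 0)$ is the $N$-independent $a$-marginal of $\Phi(\delta_{(0,0)})$, makes the paper's one-line claim precise. It then integrates along the segment, using the componentwise $\omega^N_{i,j}$ bounds of Proposition \ref{prop.aderivscaling}, to produce $|x^i|+|p^i|$ plus the $M_1$ averages.

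In your optional direct argument, only the single value $D_aL(0,0,\delta_{(0,0)})$ actually enters. Bounded second derivatives give Lipschitz continuity of $D_aL(0,0,\cdot)$, not boundedness. This aside is not needed for the proof.
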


\begin{proof}
    Note that the convergence of $\ba^N$ to $\Phi$ in Proposition \ref{prop.fixedpoint.prelims} guarantees that there is a constant $C>0$ such that for all $N\in\mathbb{N}$ large enough $|a^{N,i}(\bm{0}, \bm{0})| \leq C$ {(this fact is also detailed in \cite[Lemma 3.6]{JacMes})}. Thus the the result follows form the Lipschitz bound in Proposition \ref{prop.aderivscaling}, since 
    \begin{align*}
        |a^{N,i}(\bx,\bp)| &\leq |a^{N,i}(\bm{0}, \bm{0})| + \sum_{j = 1}^N \|D_{x^j} a^{N,i}\|_{\infty} |x^j| + \sum_{j = 1}^N \|D_{p^j} a^{N,i}\|_{\infty} |p^j| 
        \\
        &\leq C \Big( |x^i| + |p^i| + \frac{1}{N} \sum_{j = 1}^N |x^j| + \frac{1}{N} \sum_{j = 1}^N |p^j| \Big)
        \\
        &= C\big(1 + |x^i| + |p^i| + M_1(m_{\bx}^N) + M_1(m_{\bp}^N) \big)
    \end{align*}
\end{proof}

\begin{lemma} \label{lem.uni.growth}
    Let Assumptions \ref{assump.regularity.disp} and \ref{assump.disp} hold. Then there is a constant $C>0$ such that for all $N\in\mathbb{N}$ large enough, we have 
    \begin{align} \label{ui.growth}
       |u^{N,i}(t,\bx)| \leq C\big( 1 + |x^i|^2 + M_1(m_{\bx}^N) \big), 
    \end{align}
    and 
    \begin{align} \label{diui.growth}
        |D_i u^{N,i}(t,\bx)| \leq C \big(1 + |x^i| + M_1(m_{\bx}^N) \big)
    \end{align}
    for each $t \in [0,T]$, $\bx \in (\R^d)^N$.
\end{lemma}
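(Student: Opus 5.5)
Start with \eqref{diui.growth}, since it feeds into \eqref{ui.growth}. Integrate along a segment in $x^i$ alone, or directly use the bounds on the other components. Propositions \ref{prop.iii} and \ref{prop.uiji} give $\|D_{ii}u^{N,i}\|_\infty\le C$ and $\|D_{ji}u^{N,i}\|_\infty\le C/N$ for $j\neq i$. Hence
\begin{align*}
|D_iu^{N,i}(t,\bx)|\le |D_iu^{N,i}(t,\bm{0})| + C|x^i| + \frac{C}{N}\sum_{j\neq i}|x^j|\le |D_iu^{N,i}(t,\bm{0})| + C\big(|x^i|+M_1(m_{\bx}^N)\big).
\end{align*}
It then suffices to bound $D_iu^{N,i}(t,\bm{0})$ uniformly in $N$, $i$, $t$. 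By symmetry of the data, $D_iu^{N,i}(t,\bm 0)$ does not depend on $i$. The plan is a representation argument at $\bx_0=\bm 0$. Write $Y_t=D_iu^{N,i}(t,\bX_t)$ with $\bX=\bX^{t_0,\bm 0}$, using the equation \eqref{nashfirstder2} with $j=i$. Its terminal value is $D_xG(X_T^i,m_{\bX_T}^{N,-i})$ plus an $O(1/N)$ term, which has linear growth in $|X^i_T|+M_1$. The source term $D_{x^i}\hat H^{N,i}$ has linear growth in $|x^i|+|p^i|+M_1(m_\bx^N)+M_1(m_\bp^N)$ by Lemma \ref{lem.hatH}. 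The remaining coefficients are either bounded or controlled in $L^2(\scrL^N)$ by Propositions \ref{prop.nash.preliminaries} and \ref{prop.uij}. This gives a linear BSDE whose source depends on $|Y_t|$ and on $\frac1N\sum_j|D_ju^{N,j}(t,\bX_t)|$. We also need moment bounds on $\bX$. By Lemma \ref{lem.ani.growth} the drift $a^{N,i}(\bX,\diag\bu^N)$ grows like $1+|X^i|+|D_iu^{N,i}|+M_1(m^N_\bX)+M_1(m^N_{\diag \bu^N})$.

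The main obstacle is this coupling. The growth of $D_iu^{N,i}$ feeds the drift of $\bX$, which feeds back into the estimate. To close it, let $K(t):=\sup_{\bx,i}\frac{|D_iu^{N,i}(t,\bx)|}{1+|x^i|+M_1(m^N_\bx)}$; this is finite for each $N$ by admissibility, since second derivatives are bounded. Using this in Lemma \ref{lem.ani.growth}, Gronwall gives $\E[|X^i_s|+M_1(m^N_{\bX_s})]\le C(1+|x_0^i|+M_1(m^N_{\bx_0}))e^{C(1+\sup K)(s-t_0)}$. Plugging this into the BSDE representation on a short interval $[T_0,T_0+\eps]$ gives an inequality of the form $\sup_{[T_0,T_0+\eps]}K\le C K(T_0+\eps)+C+C\eps\sup_{[T_0,T_0+\eps]}K^2$. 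This is not obviously closable; the quadratic term is the danger. Two alternatives avoid it. One is to run the argument first for the $\bd_2$-type displacement estimates (as in \cite{JacMes}), which give Lipschitz dependence of equilibria on initial data and hence linear growth directly. The other is to note that, since $\|D_{ii}u^{N,i}\|_\infty\le C$ is already uniform, the only unknown is the single number $|D_iu^{N,i}(t,\bm 0)|$. At $\bx_0=\bm 0$ the drift bound then involves just $\sup_t|D_iu^{N,i}(t,\bm0)|$ linearly, and a Gronwall argument in $t$ on this scalar closes linearly. I would pursue the latter.

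For \eqref{ui.growth}, integrate: $u^{N,i}(t,\bx)-u^{N,i}(t,\bm0)=\int_0^1 \sum_j D_ju^{N,i}(t,s\bx)\cdot x^j\,ds$. The $j=i$ term is bounded by $C(1+|x^i|+M_1)|x^i|\le C(1+|x^i|^2+M_1^2)$. The terms $j\neq i$ contribute $\frac CN\sum_j|x^j|=CM_1(m^N_\bx)$ by Proposition \ref{prop.uij}. The cross term $M_1|x^i|$ must be split more carefully to avoid $M_1^2$. For this, integrate first in the directions $x^j$, $j\ne i$, at $x^i=0$, then in $x^i$. By \eqref{diui.growth} with $x^i$ varying and the others fixed, $|D_iu^{N,i}|\le C(1+|x^i|+M_1)$, which still produces $M_1|x^i|$. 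Instead, use the uniform bound $|D_{ji}u^{N,i}|\le C/N$: this shows $D_iu^{N,i}(t,\bx)-D_iu^{N,i}(t,(x^i,\bm0^{-i}))$ is bounded by $CM_1$, and then the $x^i$-integral gives an $M_1|x^i|$ term anyway. So I expect the stated bound to require also using the quadratic coercivity of $L$ and the boundedness from below of $G$. Concretely, a direct value-function argument (testing with zero control for the upper bound, and coercivity \eqref{coercive} for the lower bound) gives $|u^{N,i}|\le C(1+|x^i|^2+M_1)$ at the level of costs, with $M_1$ entering only through the $M_2^{1/2}(\mu)$ term in \eqref{coercive}. Finally, $u^{N,i}(t,\bm0)$ is bounded by the same cost comparison.
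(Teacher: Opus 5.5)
Your argument for \eqref{diui.growth} is correct, though it differs from the paper's. The gap is in \eqref{ui.growth}.

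\textbf{On \eqref{diui.growth}.} You and the paper make the same reduction. Using $\|D_{ii}u^{N,i}\|_\infty\le C$ and $\|D_{ji}u^{N,i}\|_\infty\le C/N$, it suffices to bound $D_iu^{N,i}$ at a single point.

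The paper gets that point bound from two-sided bounds on the value function.
\begin{itemize}
\item The lower bound $u^{N,i}\ge -C$ comes from the cost representation, since $L$ and $G$ are bounded below.
\item The upper bound comes from integrating the pointwise inequality $\partial_t u^{N,i}\ge \frac1C|D_iu^{N,i}|^2-C(\cdots)$ in time at fixed $\bx_0$ and averaging over $i$.
\item The mean value theorem between $\bm 0$ and $\bm e^{i,q}$ then bounds $D_iu^{N,i}$ at one point.
\end{itemize}
This needs no trajectory moment estimates, and it also gives $|u^{N,i}(t,\bm 0)|\le C$.

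Your linear BSDE started from $\bm 0$ also closes. Set $k(t):=\max_i|D_iu^{N,i}(t,\bm 0)|$.
\begin{itemize}
\item Lemmas \ref{lem.ani.growth} and \ref{lem.hatH} bound the drift and the source by $C(1+k(t)+|X^i_t|+M_1(m^N_{\bX_t}))$.
\item Moments started from $\bm 0$ are then at most $C(1+\int_{t_0}^s k)$.
\item This yields $k(t_0)\le C+C\int_{t_0}^T k$, and Gronwall finishes.
\end{itemize}
This route uses only the growth of $D_xH$ and $D_xG$, not the bounds in \eqref{coercive}. However, do not appeal to ``symmetry of the data'' to make $D_iu^{N,i}(t,\bm 0)$ independent of $i$. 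The lemma concerns an arbitrary admissible solution, and uniqueness for the Nash system is open. Taking the max over $i$, as above, avoids the issue.

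\textbf{On \eqref{ui.growth}.} The cross term $M_1|x^i|$ comes from your choice of integration order; it is not a real obstruction. Do the two steps in the opposite order.
\begin{itemize}
\item First integrate in $x^i$ with $\bx^{-i}=\bm 0$. Along this segment $M_1(m^N_{(sx^i,\bm 0)})\le |x^i|/N$, so \eqref{diui.growth} gives $|D_iu^{N,i}|\le C(1+|x^i|)$. This step contributes $C(1+|x^i|^2)$.
\item Then move $\bx^{-i}$ from $\bm 0$ to its final value with $x^i$ fixed. The bound $\|D_ju^{N,i}\|_\infty\le C/N$ for $j\neq i$ (Proposition \ref{prop.uij}) does not depend on $x^i$. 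So this step contributes $\frac CN\sum_{j\ne i}|x^j|\le CM_1(m^N_\bx)$.
\end{itemize}
Together with $|u^{N,i}(t,\bm 0)|\le C$, this is exactly \eqref{ui.growth}. It is what the paper means by ``this in turn implies the desired estimate on $u^{N,i}$.''

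The value-function fallback you substitute does not give the claim. The upper bound in \eqref{coercive} involves $M_2^{1/2}(\mu)$ of the joint state--control empirical measure, and this is not dominated by $M_1$. For example, with $\bx=(0,\dots,0,R)$ one has $M_1=R/N$ but $M_2^{1/2}=R/\sqrt N$. So testing the zero control only yields $C(1+|x^i|^2+M_2^{1/2}(m^N_\bx))$ plus control-moment terms. That is exactly why the paper uses its value bound only at the two points $\bm 0$ and $\bm e^{i,q}$, and takes the $M_1$ dependence from the derivative estimates.

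One could repair the value-function route by using that $D_\mu L$ is bounded, so $L$ is $\bd_1$-Lipschitz in $\mu$, together with moment bounds for the deviated dynamics. That is much heavier than needed.

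For the remaining constant $|u^{N,i}(t,\bm 0)|\le C$, your cost comparison at $\bm 0$ is fine once \eqref{diui.growth} is known. Alternatively, the Nash system gives $|\partial_t u^{N,i}(t,\bm 0)|\le C$ directly from the derivative bounds and $k\le C$.
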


\begin{proof}
    First, note that by It\^o's formula, we have the representation 
    \begin{align*}
        u^{N,i}(t_0,\bx_0) = \E\bigg[ \int_{t_0}^T L\big( X_t^i, \alpha_t^i, m_{\bX_t, \bm{\alpha}_t}^{N,-i}\big) dt + G(X_t^i,m_{\bX_T}^{N,i}) \bigg], 
    \end{align*}
    with 
    \begin{align*}
        \bX_t = \bX_t^{t_0,\bx_0}, \quad \balpha_t = \ba^N\big( \bX_t, \diag \bu^N(t,\bX_t) \big).
    \end{align*}
    In view of the lower bounds on $L$ and $G$ appearing in Assumption \ref{assump.regularity.disp}, we deduce that $u^{N,i}$ is bounded from below, i.e. there is a constant $C$ such that $u^{N,i}(t,\bx) \geq -C$ for all $t,\bx$. To obtain an upper bound, notice that because $L(x,a,\mu) \leq C\big( 1 + |x|^2 + |a|^2 + M_2(\mu)^{1/2} \big)$, we have 
    \begin{align*}
        H(x,a,\mu) \geq \frac{1}{C} |a|^2 - C\Big(1 + |x|^2 + M_2(\mu)^{1/2}\Big). 
    \end{align*}
    Combining this with Lemma \ref{lem.ani.growth} and Propositions \ref{prop.iii} and \ref{prop.uij}, we obtain
    \begin{align*}
        \partial_t u^{N,i} &= - \sum_{j = 1}^N \Delta_j u^{N,i} - \sigma_0 \sum_{j,k = 1}^N \text{tr}\big(D_{jk} u^{N,i}\big) + H\big(x^i, D_i u^{N,i}, \ba^N(\bx, \diag \bu^N) \big) 
        \\
        &\qquad \qquad + \sum_{j \neq i} a^{N,j}(\bx, \diag \bu^N) \cdot D_ju^{N,i}
        \\
        & \geq \frac{1}{C} |D_iu^{N,i}|^2 - C\Big( 1 + |x^i|^2 + M_2^{1/2}(m_{\bx}^N) + M_2^{1/2}( \diag \bu^N) \Big).
    \end{align*}
    As a consequence, for each fixed $(t_0,\bx_0)$, we have 
    \begin{align} \label{timederiv.est}
        u^{N,i}(t_0,\bx_0) &+ \frac{1}{C} \int_{t_0}^T |D_i u^{N,i}(t,\bx_0)|^2 dt \leq C\Big(1 + |x^i|^2 + M_2^{1/2}(m_{\bx}^N) \Big) + C \Big(\int_{t_0}^T \frac{1}{N} \sum_{j = 1}^N |D_j u^{N,j}(t,\bx_0)|^2 dt \Big)^{1/2}.
    \end{align}
    Recalling that $u^{N,i}$ is bounded from below, we can sum over $i$ to get 
    \begin{align*}
       \frac{1}{N} \sum_{j = 1}^N \int_{t_0}^T |D_i u^{N,i}(t,\bx_0)|^2 dt \leq C\Big(1 + M_2(m_{\bx}^N)\Big) + C \Big(\int_{t_0}^T \frac{1}{N} \sum_{j = 1}^N |D_j u^{N,j}(t,\bx_0)|^2 dt \Big)^{1/2}, 
    \end{align*}
    then apply Young's inequality to get
    \begin{align*}
        \frac{1}{N} \sum_{j = 1}^N \int_{t_0}^T |D_i u^{N,i}(t,\bx_0)|^2 dt \leq  C\Big(1 + M_2(m_{\bx}^N)\Big).
    \end{align*}
    Plugging this back into \eqref{timederiv.est} gives the upper bound 
    \begin{align*}
        u^{N,i}(t_0,\bx_0) \leq C\Big( 1 + |x^i|^2 + M_2^{1/2}(m_{\bx}^n) \Big). 
    \end{align*}
    Next, we fix $t_0 \in [0,T]$, $i \in \{1,...,N\}$, and $q \in \{1,...,d\}$. We define $\bm{e}^{i,q} = (0,...,0, e_q,0,...,0)$, where $e_q$ is  the $q$-th standard basis vector in $\R^d$ and is appears in the $i$-th slot of $\bm{e}^{i,q}$. Using the above bound on $|u^{N,i}(t_0,\cdot)|$ and applying the mean value theorem, we find that there exists $h \in [0,1]$ such that 
    \begin{align*}
       | D_{x^i_q} u^{N,i}(t_0,h\bm{e}^{i,q}) | = | u^{N,i}(t_0, \bm{e}^{i,q}) - u^{N,i}(t_0,\bm{0}) | \leq C, 
    \end{align*}
    with $C$ independent of $N$ and $t_0$. Together with the upper bounds on $D_{ij} u^{N,i}$ from the previous subsection, this gives the desired estimate on $D_i u^{N,i}$, and then this in turn implies the desired estimate on $u^{N,i}$. 
\end{proof}

\begin{lemma} \label{lem.nash.timereg}
     Let Assumptions \ref{assump.regularity.disp} and \ref{assump.disp} hold. Then there is a constant $C>0$ such that for all $N\in\mathbb{N}$ large enough, we have 
     \begin{align*}
         |\partial_t u^{N,i}(t,\bx)| \leq C\big(1 + |x^i|^2 + M_1^2(m_{\bx}^N) \big), \quad |\partial_t D_j u^{N,i}| \leq C\big(1 + |x^i| + |x^j| + M_1(m_{\bx}^N) \big).
     \end{align*}
\end{lemma}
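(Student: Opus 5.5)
The plan is to read off $\partial_t u^{N,i}$ and $\partial_t D_j u^{N,i}$ directly from the equations they satisfy, namely \eqref{nashsystem2} and its once-differentiated version \eqref{nashfirstder2}. Every term on the right-hand side is then bounded using the spatial derivative estimates already proved (Propositions \ref{prop.uij}, \ref{prop.uiji}, \ref{prop.iii}, \ref{prop.ijk}, \ref{prop.uijkl}), the growth bounds of Lemmas \ref{lem.ani.growth} and \ref{lem.uni.growth}, and the bounds on $\ba^N$, $\hat H^{N,i}$, $\hat H^{N,i,j}$ from Proposition \ref{prop.aderivscaling} and Lemma \ref{lem.hatH}. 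No further FBSDE argument should be needed, since the needed derivatives are all already controlled in $L^\infty$.

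\textbf{Bound on $\partial_t u^{N,i}$.} From \eqref{nashsystem2},
\[
\partial_t u^{N,i} = -\sum_j \Delta_j u^{N,i} - \sigma_0\sum_{j,k}\tr(D_{jk}u^{N,i}) + \hat H^{N,i}(\bx,\diag\bu^N) - \sum_{j\neq i} a^{N,j}\cdot D_j u^{N,i}.
\]
\begin{itemize}
\item The diffusion terms are bounded by $C\sum_{j,k}\omega_{i,j,k}\le C$.
\item For the transport term, $|D_ju^{N,i}|\le C/N$ for $j\ne i$, and Lemma \ref{lem.ani.growth} with \eqref{diui.growth} gives $|a^{N,j}|\le C(1+|x^j|+M_1(m^N_{\bx}))$. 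Averaging over $j$ yields $C(1+M_1(m^N_{\bx}))$.
\item For the Hamiltonian term, $H$ has bounded second derivatives, so $|H(x,p,\mu)|\le C(1+|x|^2+|p|^2+M_2(\mu))$. This quadratic bound in the measure is too crude, so I would instead use the Lipschitz-in-$\bd_1$ dependence on $\mu$ implied by bounded $D_\mu H$ (bounded $D_\mu L$ plus the envelope formula $D_\mu H=-D_\mu L$). This gives a bound linear in $M_1(\mu)$, and $M_1(m^N_{\bx,\ba^N})\le C(1+M_1(m^N_{\bx}))$ by Lemma \ref{lem.ani.growth}.
\end{itemize}
Inserting $p^i=D_iu^{N,i}$ with \eqref{diui.growth}, and using $|x|\le 1+|x|^2$, gives $C(1+|x^i|^2+M_1^2(m^N_{\bx}))$.

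\textbf{Bound on $\partial_t D_j u^{N,i}$.} Use \eqref{nashfirstder2} and bound each term in the same way.
\begin{itemize}
\item $\scrL^N u^{i,j}$ contains second-order terms bounded by $C\sum_{k,l}\omega_{i,j,k,l}\le C\omega_{i,j}$ (Proposition \ref{prop.uijkl}), plus a drift $\sum_k a^{N,k}\cdot D_k u^{i,j}$. For the drift, the $k\in\{i,j\}$ terms give $\omega_{i,j}(1+|x^i|+|x^j|+M_1)$, and the remaining terms are at most $\omega_{i,j,k}\le\omega_{i,j}/N$ times $|a^{N,k}|$, which averages to $\omega_{i,j}(1+M_1)$.
\item The coupling terms $\sum_{k\ne i}\sum_l D_{jl}u^l D_{p^l}a^{N,k}u^{i,k}$, $\sum_{k\ne i} D_{x^j}a^{N,k}u^{i,k}$ and $\sum_k D_{jk}u^{N,k}\hat H^{N,i,k}$ are bounded by $C\omega_{i,j}$. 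This is a direct count of powers of $N$ using $|u^{i,k}|\le C/N$, $|D_{p^l}a^{N,k}|\le C\omega_{k,l}$ and $|\hat H^{N,i,k}|\le C/N$.
\item The last term $D_{x^j}\hat H^{N,i}$ is controlled by Lemma \ref{lem.hatH}: it is $O(1/N)$ if $j\ne i$, and $O(1+|x^i|+|p^i|+M_1)$ if $j=i$. The latter is again linear after substituting \eqref{diui.growth}.
\end{itemize}
I state the bound with the factor $\omega_{i,j}$, consistent with \eqref{nash.timereg}; the weaker bound in the lemma follows.

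\textbf{Expected obstacle.} The delicate step is the Hamiltonian term in the first bound. The lemma requires growth only in $M_1$, not $M_2$, so one must use $\bd_1$-Lipschitz continuity of $H$ in the measure variable rather than the second-derivative bound. One must also check that $M_1$ of the joint empirical measure $m^N_{\bx,\ba^N(\bx,\diag\bu^N)}$ is controlled by $M_1(m^N_{\bx})$. That control comes from Lemma \ref{lem.ani.growth} together with averaging \eqref{diui.growth} over $i$, since $M_1(m^N_{\diag\bu^N})\le C(1+M_1(m^N_{\bx}))$.
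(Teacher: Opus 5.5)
Your proposal is correct and follows the same route as the paper. Both bounds are read off pointwise from \eqref{nashsystem2} and \eqref{nashfirstder2} by inserting the established spatial-derivative bounds, the growth estimates of Lemmas \ref{lem.ani.growth} and \ref{lem.uni.growth}, and Lemma \ref{lem.hatH}; your use of the envelope formula to get bounded $D_\mu H$ (so the Hamiltonian term grows only in $M_1$, not $M_2$) and your sharper $\omega^N_{i,j}$-weighted bound for $\partial_t D_j u^{N,i}$ are details the paper leaves implicit.
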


\begin{proof}
    Directly from the Nash system \eqref{nashsystem}, we see that 
    \begin{align*}
        |\partial_t u^{N,i}| &= \Bigg|- \sum_{j=1}^{N} \Delta_j u^{N,i} - {\sigma_{0}}\sum_{j,k=1}^{N} \tr\big(D_{kj} u^{N,i}\big) + H\big(x^i, D_i u^{N,i}, m_{\bx, \ba^{N}(\bx,\diag \bu^N)}^{N,-i} \big) 
        \\
        &\qquad  + \sum_{j \neq i} a^{N,j}(\bx, \diag \bu^N) \cdot D_j u^{N,i} \Bigg|.
    \end{align*}
   The bound on $\partial_t u^{N,i}$ thus follows from applying the bounds on $a^{N,i}$, $D_j u^{N,i}$ and $D_{kj} u^{N,i}$ from Propositions \ref{prop.uij}, \ref{prop.uiji}, \ref{prop.iii}, and \ref{prop.ijk}, as well as Lemmas \ref{lem.ani.growth} and \ref{lem.uni.growth}.
   
    Next, for $i,j = 1,\dots,N$, we use \eqref{nashfirstder2} to write 
    \begin{align*}
        |\partial_t u^{i,j}| &= \Bigg| - \sum_{k=1}^{N} \Delta_{k} u^{i,j} - \sigma_{0} \sum_{k,l=1}^{N} \tr\big( D_{kl} u^{i,j} \big) - \sum_{k=1}^{N} D_k u^{{i,j}} a^{N,k}
        \\
        &\quad - \sum_{k \neq i} \sum_{l=1}^{N} D_{jl} u^l D_{p^l} a^{N,k} u^{i,k} - \sum_{k \neq i} D_{x^j} a^{N,k} u^{i,k} + \sum_{k \neq i} D_{jk} u^{N,k}  \hat{H}^{N,i,k} + D_{x^j} \hat{H}^{N,i} \Bigg|.
    \end{align*}
    Again the desired bound follows from plugging in the bounds we have already obtained on the derivatives of $u^{N,i}$ from Propositions \ref{prop.uiji}, \ref{prop.iii}, \ref{prop.ijk}, \ref{prop.uijkl} and the bounds on $\hat{H}^{N,i,j}$ and $\hat{H}^{N,i}$ from Lemma \ref{lem.hatH}.
\end{proof}

\begin{lemma}
    \label{lem.moments}  Let Assumptions \ref{assump.regularity.disp} and \ref{assump.disp} hold. Then there is a constant $C>0$ such that for all $N\in\mathbb{N}$ large enough, and each $(t_0,\bx_0) \in [0,T) \times (\R^d)^N$ and $i \in \{1,\dots,N\}$, we have 
     \begin{align*}
        \sup_{t_0 \leq t \leq T} \E\big[ |X_{t}^{t_0,\bx_0,i}| \big] \leq C\big(1 + |x_0^i| + M_1(m_{\bx_{0}}^N) \big).
     \end{align*}
\end{lemma}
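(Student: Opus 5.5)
We bound the first moment directly from the SDE \eqref{proc:closed-loop}, using the linear growth of the equilibrium drift. Write $\bX = \bX^{t_0,\bx_0}$. The drift of player $i$ is $a^{N,i}(\bX_t, \diag \bu^N(t,\bX_t))$. Combining Lemma \ref{lem.ani.growth} with \eqref{diui.growth} from Lemma \ref{lem.uni.growth} gives
\begin{align*}
|a^{N,i}(\bx,\diag \bu^N(t,\bx))| \leq C\Big(1 + |x^i| + M_1(m_{\bx}^N) + \frac{1}{N}\sum_{j=1}^N |D_ju^{N,j}(t,\bx)|\Big) \leq C\big(1 + |x^i| + M_1(m_{\bx}^N)\big),
\end{align*}
where in the last step we apply \eqref{diui.growth} to each $j$ and average over $j$. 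The constant is uniform in $N$ large and in $t$.

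\textbf{Step 1: the empirical first moment.} Integrate the SDE and take absolute values and expectations. Setting $f(t) := \E[M_1(m_{\bX_t}^N)] = \frac1N\sum_j \E|X^j_t|$ and averaging the bound above over $i$, we get
\begin{align*}
f(t) \leq M_1(m_{\bx_0}^N) + C\int_{t_0}^t \big(1 + f(s)\big)\,ds + C\sqrt{t-t_0}.
\end{align*}
The last term comes from $\E|\sqrt2 W^i_t - \sqrt2 W^i_{t_0} + \sqrt{2\sigma_0}(W^0_t-W^0_{t_0})| \leq C\sqrt{T}$. Grönwall's inequality then yields $\sup_{t}f(t) \leq C(1 + M_1(m_{\bx_0}^N))$.

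\textbf{Step 2: the individual coordinate.} For $g_i(t) := \E|X^i_t|$, the same integration gives
\begin{align*}
g_i(t) \leq |x_0^i| + C\int_{t_0}^t \big(1 + g_i(s) + f(s)\big)\,ds + C.
\end{align*}
Inserting the bound on $f$ from Step 1 and applying Grönwall once more gives $\sup_t g_i(t) \leq C(1 + |x^i_0| + M_1(m_{\bx_0}^N))$, which is the claim.

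The only delicate point is the \emph{a priori} integrability needed to justify Grönwall. The SDE has a globally Lipschitz drift: $\ba^N$ is Lipschitz, and $D_ju^{N,j}$ is Lipschitz by the bounded second derivatives in the definition of admissible solutions. So $\bX$ has finite moments for each fixed $N$, and $f$ and $g_i$ are finite and continuous. No uniformity in $N$ is needed for this step, only finiteness. The essential input for the uniform constant is the drift bound, which rests on \eqref{diui.growth} averaged over $j$, and that estimate is already available.
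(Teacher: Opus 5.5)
Your proposal is correct and is essentially the paper's proof: bound the equilibrium drift linearly using Lemma \ref{lem.ani.growth} and \eqref{diui.growth}, average over $i$ and apply Gr\"onwall to control $\E[M_1(m_{\bX_t}^N)]$, then insert this into the estimate for the $i$-th coordinate and apply Gr\"onwall again. One cosmetic slip: your intermediate display drops the $|p^i| = |D_iu^{N,i}|$ term of Lemma \ref{lem.ani.growth}, but \eqref{diui.growth} bounds it by the same expression, so the conclusion is unchanged; your explicit treatment of the Brownian increment and of the a priori finiteness needed for Gr\"onwall supplies details the paper leaves implicit.
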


\begin{proof}
   Recall that 
   \begin{align*}
       X^{t_0,\bx_0,i}_t = x_0^i + \int_{t_0}^t a^{N,i}\big(\bX_t^{t_0,\bx_0}, D_iu^{N,i}(t,\bX_t^{t_0,\bx_0}) \big) dt + \sqrt{2} (W_t^i - W_{t_0}^i) + \sqrt{2\sigma_0} (W_t^0 - W_{t_0}^0). 
   \end{align*}
   From Lemmas \ref{lem.ani.growth} and \ref{lem.uni.growth}, there is a constant $C$ independent of $N$, $t_0$, $t$ and $\bx_0$ such that
   \begin{align} \label{xibound}
       \E[ |X_t^{t_0,\bx_0,i}| ] \leq  |x_0^i| + C \E\bigg[ \int_{t_0}^t \Big(1 + |X_s^{t_0,\bx_0,i}| + M_1(\bX^{t_0,\bx_0}_s) \Big) ds \bigg]. 
   \end{align}
   Taking an average over $i$ and applying Gronwall's lemma, we obtain 
   \begin{align*}
       \E\big[ M_1(\bX_s^{t_0,\bx_0}) ] \leq C \big( 1 + M_1(\bx_0) \big), 
   \end{align*}
   and then plugging this into \eqref{xibound} completes the proof.
\end{proof}

\begin{lemma}
    \label{lem.ijk.time}  Let Assumptions \ref{assump.regularity.disp} and \ref{assump.disp} hold. Then there is a constant $C>0$ such that for all $N\in\N$ large enough {and $t_{0}\in[0,T)$, $\bx_{0}\in\R^{dN}$ $h\in[0,T-t_{0}),$} we have 
     \begin{align*}
        |D_{kj} u^{N,i}(t_0,\bx_0) - D_{kj} u^{N,i}(t_0 + h,\bx_0)| \leq C \omega_{ijk}^N\big( 1 + |x_{{0}}^i| + |x_{0}^j| + |x_{0}^k| + M_1(m_{\bx_{0}}^N) \big)\sqrt{h},
     \end{align*}
   for all $i,j,k\in\{1,\dots,N\}.$
\end{lemma}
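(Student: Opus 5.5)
We will use the stochastic representation of $u^{i,j,k}=D_{kj}u^{N,i}$ along the equilibrium process $\bX=\bX^{t_0,\bx_0}$, following the way time regularity is usually extracted from the Itô representation. Using the equation \eqref{nash.seconderiv} and Itô's formula on $[t_0,t_0+h]$, we get
\begin{align*}
u^{i,j,k}(t_0,\bx_0) = \E\Big[ u^{i,j,k}(t_0+h,\bX_{t_0+h}) + \int_{t_0}^{t_0+h} \sum_{q=1}^4 \cT^{i,j,k,q}(t,\bX_t)\,dt \Big].
\end{align*}
Accordingly, we split
\begin{align*}
u^{i,j,k}(t_0,\bx_0) - u^{i,j,k}(t_0+h,\bx_0) = \E\big[u^{i,j,k}(t_0+h,\bX_{t_0+h}) - u^{i,j,k}(t_0+h,\bx_0)\big] + \E\Big[\int_{t_0}^{t_0+h}\sum_{q}\cT^{i,j,k,q}\,dt\Big],
\end{align*}
and we estimate the two terms separately.

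\emph{Spatial increment.} By the mean value theorem and Proposition \ref{prop.uijkl}, this term is bounded by
\[
C\,\E\Big[\sum_l \omega_{i,j,k,l}^N |X^l_{t_0+h}-x_0^l|\Big].
\]
Since $\omega_{i,j,k,l}^N=\omega_{i,j,k}^N$ when $l\in\{i,j,k\}$ and $\omega_{i,j,k,l}^N=\omega_{i,j,k}^N/N$ otherwise, this is at most
\[
C\omega^N_{i,j,k}\Big(\sum_{l\in\{i,j,k\}}\E|X^l_{t_0+h}-x_0^l| + \tfrac1N\sum_l \E|X^l_{t_0+h}-x_0^l|\Big).
\]
Each increment satisfies
\[
\E|X^l_{t_0+h}-x_0^l|\le C\sqrt h + \E\int_{t_0}^{t_0+h}|a^{N,l}(\bX_t,\diag\bu^N(t,\bX_t))|\,dt .
\]
By Lemmas \ref{lem.ani.growth}, \ref{lem.uni.growth} and \ref{lem.moments}, together with $M_1(m^N_{\diag \bu^N})\le C(1+M_1(m^N_\bx))$ (from \eqref{diui.growth}), the drift is controlled by $C(1+|X^l_t|+M_1(m^N_{\bX_t}))$. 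Its expectation is then bounded by $C(1+|x_0^l|+M_1(m_{\bx_0}^N))$; for the averaged part we use the Gronwall estimate on $\E M_1$ from the proof of Lemma \ref{lem.moments}. This yields the contribution $C\omega_{i,j,k}^N(1+|x_0^i|+|x_0^j|+|x_0^k|+M_1(m^N_{\bx_0}))\sqrt h$, using $h\le T$.

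\emph{Drift term.} Now that the $L^\infty$ bounds of Theorem \ref{thm.uniform.nash} are available, we will show $\|\cT^{i,j,k,q}\|_\infty\le C\omega^N_{i,j,k}$ for $q=2,3,4$. These terms involve only first and second derivatives of $u$ together with derivatives of $\ba^N,\hat H^{N,i}$, so the $\omega$-counting from Propositions \ref{prop.uij}--\ref{prop.ijk} and Lemma \ref{lem.hatH} applies. The one point needing care is $u^{i,i}=D_iu^{N,i}$, which grows linearly, but it does not appear in these terms: only $u^{i,l}$ with $l\ne i$ appear. The term $\cT^{i,j,k,1}$ contains third derivatives $D_ku^{q,j,q}$, and we bound it pointwise with Proposition \ref{prop.uijkl}. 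For example,
\[
\sum_l|\hat H^{N,i,l}||D_ku^{l,j,l}|\le \tfrac CN\sum_l \omega_{l,j,l,k}^N\le C\omega_{i,j,k}^N,
\]
and the sums with $D_{p^q}a^{N,l}u^{i,l}$ are handled the same way. Hence the drift term is at most $C\omega^N_{i,j,k}h\le C\omega^N_{i,j,k}\sqrt h$.

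The main obstacle is the index bookkeeping: we must verify that every sum in $\cT^{i,j,k,q}$ actually gains the factor $\omega_{i,j,k}^N$ in $L^\infty$, rather than only in $L^1(\scrL^N)$ as in the earlier proofs. We will check this by splitting each sum according to whether the summation indices coincide with $i,j,k$. The $\sqrt h$ rate itself comes from the Brownian increments in the spatial term.
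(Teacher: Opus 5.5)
Your proof is correct and follows the paper's argument. You use the same split into a spatial increment along $\bX^{t_0,\bx_0}$, controlled by Proposition \ref{prop.uijkl} and Lemma \ref{lem.moments}, and a time term from Itô's formula, controlled by $\|\cT^{i,j,k,q}\|_\infty\le C\omega^N_{i,j,k}$. The one difference is that you take expectations before absolute values, so the stochastic integral (a true martingale, since the third derivatives are bounded) drops out and you avoid the Burkholder--Davis--Gundy step the paper uses. Note a harmless sign slip: since $(\partial_t+\scrL^N)u^{i,j,k}=\sum_q\cT^{i,j,k,q}$, the representation is $u^{i,j,k}(t_0,\bx_0)=\E\big[u^{i,j,k}(t_0+h,\bX_{t_0+h})-\int_{t_0}^{t_0+h}\sum_q\cT^{i,j,k,q}\,dt\big]$.
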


\begin{proof}
    We estimate 
    \begin{align*}
        |D_{kj} u^{N,i}(t_0+ h, \bx_0) -D_{kj} u^{N,i}(t_0,\bx_0)| &\leq \E\Big[|D_{kj} u^{N,i}(t_0 + h,\bx_0) - D_{kj} u^{N,i}(t_0 + h,\bX_{t_0 + h}^{t_0,\bx_0})|
        \\
        &\quad + |D_{kj} u^{N,i}(t_0 + h,\bX_{t_0 + h}^{t_0,\bx_0}) - D_{kj} u^{N,i}(t_0,\bx_0)|\Big].
    \end{align*}
    Using Proposition \ref{prop.uijkl} and Lemma \ref{lem.moments}, we have 
    \begin{align} \label{uijk.time.1}
        \E\Big[|D_{kj} &u^{N,i}(t_0 + h,\bx_0) - D_{kj} u^{N,i}(t_0 + h,\bX_{t_0+h}^{t_0,\bx_0})| \Big] \leq C \sum_{l=1}^{N} \omega_{i,j,k,l}^N \E\big[ |x_0^l - X_{t_0 + h}^{t_0,\bx_0,l}| \big]
       \nonumber \\
        &\leq C \sum_{l=1}^{N} \omega_{i,j,k,l}^N \bigg( \sqrt{h} + \E\bigg[\int_{t_0}^{t+h} |a^{N,l}(t,\bX_t^{t_0,\bx_0})| dt \bigg] \bigg)
      \nonumber  \\
        &\leq C \sum_{l=1}^{N} \omega_{i,j,k,l}^N \bigg( \sqrt{h} + \E\bigg[\int_{t_0}^{t_0+h} \Big( |X_t^{t_0,\bx_0,l}| + \frac{1}{N} \sum_{r=1}^{N} |X_t^{t_0,\bx_0,r}|\Big) dt \bigg] \bigg)
      \nonumber  \\
        &\leq C \sum_{l=1}^{N} \omega^N_{i,j,k,l} \Big( \sqrt{h} + h \big(1 + |x_0^l| + M_1(m_{\bx_0}^N) \big) \Big)
      \nonumber  \\
        &\leq C \sqrt{h} \omega^N_{i,j,k} \Big(1 + |x_0^i| + |x_0^j| + |x_0^k| + M_1(m_{\bx}^N) \Big).
    \end{align}
    For the other term, we use the equation \eqref{nash.seconderiv} and It\^o's formula to deduce that 
    \begin{align*}
       &\E\Big[ |D_{kj} u^{N,i}(t_0 + h,\bX_{t_0 + h}^{t_0,\bx_0}) - D_{kj} u^{N,i}(t_0,\bx_0)| \Big] 
       \\
       &\quad = \E\bigg[ \bigg| \int_{t_0}^{t_0 + h} \sum_{q = 1}^4 \cT^{i,j,k}(t, \bX_t^{t_0,\bx_0} ) dt +  \sqrt{2} \int_{t_0}^{t_0 + h} \sum_{l = 1}^N D_{lkj} u^{N,i} dW_t^k + \sqrt{2\sigma_0} \int_{t_0}^{t_0 + h} \big(\sum_{l = 1}^N D_{lkj} u^{N,i}\big) dW_t^0 \bigg| \bigg].
    \end{align*}
    Using the estimates obtained on $u^{i,j}$, $u^{i,j,k}$, $u^{i,j,k,l}$, one can check that 
    \begin{align*}
        \norm{\cT^{i,j,k,q}}_{\infty} \leq C \omega_{i,j,k}, \quad q = 1,\dots,4, 
    \end{align*}
    so that 
    \begin{align*}
       \E\bigg[ \bigg| \int_{t_0}^{t_0 + h} \sum_{q = 1}^4 \cT^{i,j,k,{q}}(t, \bX_t^{t_0,\bx_0} ) dt\bigg|\bigg] \leq Ch \omega_{i,j,k}^N. 
    \end{align*}
    Meanwhile, we have 
    \begin{align} \label{uijk.time.2}
      \E\bigg[& \bigg| \sqrt{2} \int_{t_0}^{t_0 + h} \sum_{l = 1}^N D_{lkj} u^{N,i} dW_t^k + \sqrt{2\sigma_0} \int_{t_0}^{t_0 + h} \big(\sum_{l = 1}^N D_{lkj} u^{N,i}\big) dW_t^0 \bigg| \bigg] 
     \nonumber \\
      &{\leq} C \bigg( \E\bigg[ \int_{t_0}^{t_0 + h} \Big( \sum_{l = 1}^N |D_{lkj} u^{N,i}|^2 + \Big| \sum_{l = 1}^N D_{lkj} u^{N,i} \Big|^2 \Big) dt \bigg] \bigg)^{1/2}
      \nonumber \\
      &\leq C \sqrt{h} \omega_{i,j,k}^N, 
    \end{align}
 where in the penultimate inequality we have used the Burkholder--Davis--Gundy inequality and the last bound is coming from Proposition \ref{prop.uijkl}. Combining \eqref{uijk.time.1} and \eqref{uijk.time.2} completes the proof.
\end{proof}

\begin{lemma}
    \label{lem.dtu.timereg}
    Let assumptions \ref{assump.regularity.disp} and \ref{assump.disp} hold. Then there is a constant $C>0$ such that for all $N\in\N$ large enough, we have 
    \begin{align*}
        |\partial_t u^{N,i}(t,\bx) - \partial_t u^{N,i}(s,\bx)| \leq C \Big( |t-s|^{1/2} + \big(1 + |x^i|^2 + M_1^2(m_{\bx}^N) \big)|t-s| \Big),
    \end{align*}
for all $t,s\in[0,T],\bx\in\R^{dN}$ and $i\in\{1,\dots,N\}$.
\end{lemma}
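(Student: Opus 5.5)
The plan is to read off $\partial_t u^{N,i}$ from the Nash system \eqref{nashsystem2} and estimate the time increment of each term separately. We have
\begin{align*}
    \partial_t u^{N,i} = - \sum_{j} \Delta_j u^{N,i} - \sigma_0 \sum_{j,k} \tr\big(D_{jk} u^{N,i}\big) + \hat{H}^{N,i}(\bx,\diag \bu^N) - \sum_{j \neq i} a^{N,j}(\bx,\diag \bu^N)\cdot D_j u^{N,i}.
\end{align*}
Fix $\bx$ and $s<t$, and write $\delta f := f(t,\bx)-f(s,\bx)$. The three groups of terms will be treated as follows.

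\emph{Second-order terms.} By Lemma \ref{lem.ijk.time}, $|\delta D_{kj}u^{N,i}|\le C\omega^N_{i,j,k}(1+|x^i|+|x^j|+|x^k|+M_1(m^N_{\bx}))|t-s|^{1/2}$. Summing over $j$ for the Laplacian, and over $j,k$ for the common-noise term, uses $\sum_{j,k}\omega^N_{i,j,k}\le C$. The weighted factors $|x^j|$ average out against the $1/N$ weights to give $M_1$. This yields a contribution $C(1+|x^i|+M_1(m^N_{\bx}))|t-s|^{1/2}$. If the sharper form with a bare $|t-s|^{1/2}$ is wanted, one instead reruns the argument of Lemma \ref{lem.ijk.time}. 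There one would use Itô's formula along $\bX^{s,\bx}$ together with the third-derivative bounds of Proposition \ref{prop.uijkl}, plus a time-derivative bound $|\partial_t D_{kj}u^{N,i}|$ obtained from \eqref{nash.seconderiv}. The drift-displacement part is what produces the linear-in-$|t-s|$ term with growth $1+|x^i|^2+M_1^2$.

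\emph{Hamiltonian term.} $\hat{H}^{N,i}$ is evaluated at $\bp=\diag\bu^N$. By the mean value theorem and Lemma \ref{lem.hatH}, $|\delta \hat{H}^{N,i}|\le \sum_j |D_{p^j}\hat{H}^{N,i}|\,|\delta D_ju^{N,j}|$. Here $|D_{p^j}\hat H^{N,i}|\le C(1/N+1_{i=j}(1+|x^i|+|p^i|+M_1(m^N_{\bx})+M_1(m^N_{\bp})))$, evaluated along the segment between the two time values. By Lemma \ref{lem.nash.timereg}, $|\delta D_j u^{N,j}|\le C(1+|x^j|+M_1(m^N_{\bx}))|t-s|$. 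Combining this with Lemma \ref{lem.uni.growth} to control $|p^i|$ and $M_1(m^N_{\bp})$ by $1+|x^i|+M_1(m^N_{\bx})$, we get a bound $C(1+|x^i|^2+M_1^2(m_{\bx}^N))|t-s|$.

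\emph{Transport term.} We split
\begin{align*}
    \delta\big(a^{N,j}D_ju^{N,i}\big) = \delta a^{N,j}\, D_j u^{N,i}(t) + a^{N,j}(s)\,\delta D_ju^{N,i}.
\end{align*}
For the first piece, Proposition \ref{prop.aderivscaling} and Lemma \ref{lem.nash.timereg} give $|\delta a^{N,j}|\le C\sum_k\omega_{j,k}(1+|x^k|+M_1)|t-s|$, and Proposition \ref{prop.uij} gives $|D_ju^{N,i}|\le C/N$ for $j\ne i$. For the second piece, Lemmas \ref{lem.ani.growth} and \ref{lem.uni.growth} give $|a^{N,j}|\le C(1+|x^j|+M_1)$, and Lemma \ref{lem.nash.timereg} gives $|\delta D_ju^{N,i}|\le C N^{-1}(1+|x^i|+|x^j|+M_1)|t-s|$. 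Summing over $j\neq i$ with the $1/N$ weights and using Cauchy--Schwarz, we have $\frac1N\sum_j|x^j|^2\le$ something. Since only $M_1$ is available, products like $\frac1N\sum_j |x^j|^2$ must be avoided. This is where care is needed: I would place the bounded factor $|D_ju^{N,i}|\le C/N$ against the growing factor, never two growing factors together. Then the sums reduce to $(1+|x^i|+M_1)^2\le C(1+|x^i|^2+M_1^2)$.

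The main obstacle is the transport term. One must check that the growth stays within $|x^i|^2+M_1^2$ rather than producing second moments. This requires tracking exactly which factor carries the $\omega$-decay in each product, as outlined above. If it fails, I would use the slightly weaker bound of Lemma \ref{lem.nash.timereg} for $\partial_tD_ju^{N,i}$ restricted to $j\neq i$, which is $O(1/N)$ times linear growth.
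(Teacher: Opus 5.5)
Your route is the paper's: you read $\partial_t u^{N,i}$ off \eqref{nashsystem2}, use Lemma \ref{lem.ijk.time} for the second-order terms, the mean value theorem with Lemma \ref{lem.hatH} and Lemma \ref{lem.nash.timereg} for $\hat H^{N,i}$, and the product rule for the transport term. The second-order terms, the Hamiltonian term and the piece $\delta a^{N,j}\,D_ju^{N,i}$ are handled correctly. Your remark on the bare $|t-s|^{1/2}$ is also right, and sharper than what the paper writes; the paper only records $C(1+|x^i|+M_1(m^N_{\bx}))|t-s|^{1/2}$ for these terms. The proof of Lemma \ref{lem.ijk.time} actually gives $C\omega^N_{i,j,k}\big(\sqrt{h}+h(1+|x^i|+|x^j|+|x^k|+M_1(m^N_{\bx}))\big)$, with growth entering only through the drift displacement. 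No bound on $\partial_tD_{kj}u^{N,i}$ is needed for this; one read off from \eqref{nash.seconderiv} would involve fourth derivatives, which are not controlled uniformly in $N$.

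The gap is exactly where you suspected, and your remedy does not close it. In the piece $\sum_{j\neq i}a^{N,j}(s)\,\delta D_ju^{N,i}$ both factors grow in $|x^j|$: $|a^{N,j}|\le C(1+|x^j|+M_1)$ and $|\delta D_ju^{N,i}|\le CN^{-1}(1+|x^i|+|x^j|+M_1)|t-s|$. So the sum contains $\frac{C}{N}\sum_{j\neq i}|x^j|^2\,|t-s|$, a second moment that is not controlled by $1+|x^i|^2+M_1^2(m^N_{\bx})$.

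Your rule ``put the bounded factor $C/N$ against the growing one'' cannot be applied to this piece. The only $x^j$-uniform bound, $|\delta D_ju^{N,i}|\le 2C/N$, carries no factor $|t-s|$, and interpolating merely trades $M_2$ for a moment of order $3/2$. Your fallback is the very bound that produces the $M_2$. Nor should one expect better: by \eqref{nashfirstder2}, $\partial_tD_ju^{N,i}$ contains the transport term $-\sum_k a^{N,k}\cdot D_kD_ju^{N,i}$. Its $k=j$ summand is of order $N^{-1}|a^{N,j}|$, so $\sum_{j\neq i}a^{N,j}\cdot\partial_tD_ju^{N,i}$ contains contributions quadratic in $a^{N,j}$ with weight $1/N$.

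The paper's proof performs the same product estimate and simply asserts the bound $C(1+|x^i|^2+M_1^2(m^N_{\bx}))$, so it has the same gap. What the argument genuinely proves is the estimate with $M_1^2(m^N_{\bx})$ replaced by $M_2(m^N_{\bx})$. That suffices downstream, since Corollary \ref{cor.subseq} only uses these bounds on sets where $M_2(m)\le R$. The correct repair is to weaken the statement accordingly, not to search for a cleverer splitting.
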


\begin{proof}
    We use the fact that by \eqref{nashsystem}
    \begin{align*}
        \partial_t u^{N,i} &= - \sum_{j=1}^{N} \Delta_j u^{N,i} - \sigma_0 \sum_{j,k=1}^{N} \tr\big(D_{kj} u^{N,i}\big) + H\left(x^i, D_i u^{N,i}, m_{\bx, \ba^{N}(\bx,\diag \bu^N)}^{N,-i} \right) 
        \\
        &\qquad  + \sum_{j \neq i} a^{N,j}(\bx, \diag \bu^N) \cdot D_j u^{N,i},  
    \end{align*}
    together with the estimates on the time regularity we have obtained so far. In particular, by Lemma \ref{lem.ijk.time}, we have 
    \begin{align*}
        \Bigg| \sum_{j=1}^{N} \Delta_j u^{N,i}(t,\bx) - &\sum_{j=1}^{N} \Delta_j u^{N,i}(s,\bx) \Bigg| \leq C \sum_{j=1}^{N} \omega^N_{i,j} \big(1 + |x^i| + |x^j| + M_1(m_{\bx}^N) \big) |t-s|^{1/2} 
        \\
        &\leq C \big(1 + |x^i| + M_1(m_{\bx}^N) \big)|t-s|^{1/2}, 
    \end{align*}
    and similarly
    \begin{align*}
        \Bigg|\sum_{j,k=1}^{N} \tr\big(D_{kj} u^{N,i}(t,\bx)\big) & - \sum_{j,k=1}^{N} \tr\big(D_{kj} u^{N,i}(s,\bx)\big)\Bigg| \leq C \sum_{j,k=1}^{N} \omega^N_{i,j,k} \big(1 + |x^i| + |x^j| + |x^k| + M_1(m_{\bx}^N) \big) |t-s|^{1/2}
        \\
        &\leq C \big(1 + |x^i| + M_1(m_{\bx}^N) \big)|t-s|^{1/2}.
    \end{align*}
    Next, we estimate 
    \begin{align*}
        \Big| & H\big(x^i, D_iu^{N,i}(t,\bx),  m_{\bx, \ba^N(\bx, \diag \bu^N(t,\bx))}^{N,-i} \big) - H\big(x^i, D_iu^{N,i}({s},\bx), m_{\bx, \ba^N(\bx, \diag \bu^N({s},\bx))}^{N,-i} \big) \Big|
        \\
        &\leq C \big(1 + |D_iu^{N,i}(t,\bx)| + |D_iu^{N,i}(s,\bx)| \big) |D_iu^{N,i}(t,\bx)- D_iu^{N,i}(s,\bx)| 
        \\
        &\qquad + \frac{C}{N} \sum_{j \neq i} |D_j u^{N,j}(t,\bx) - D_ju^{N,j}(s,\bx) |
        \\
        &\leq C \big(1 + |x^i| \big) \big(1 + |x^i| + M_1(m_{\bx}^N) \big)|t-s| + \frac{C}{N} \sum_{j \neq i} \big(1 +|x^i| + |x^j| + M_1(m_{\bx}^N) \big)|t-s| 
        \\
        &\leq C\big(1 + |x^i|^2 + M_1^2(m_{\bx}^N) \big)|t-s|.
    \end{align*}
    Finally, we have 
    \begin{align*}
       \Big|\partial_t &\Big( \sum_{j \neq i} a^{N,j}\big(\bx, \diag \bu^N\big) D_j u^{N,i}\Big) \Big| = \sum_{j \neq i} a^{N,j}(\bx, \diag \bu^N) \partial_t D_j u^{N,i} 
       \\
       &\qquad  + \sum_{j \neq i} \sum_k D_{p^k} a^{N,j}(\bx, \diag \bu^N) \partial_t D_k u^{N,k} D_j u^{N,i}
       \\
       &\leq C\sum_{j \neq i} \big(1 + |x^j| + M_1(m_{\bx}^N) \big) \big(1 + |x^i| + |x^j| + M_1(m_{\bx}^N) \big) \omega_{i,j}^N 
       \\
       &\qquad + \frac{C}{N} \sum_{j \neq i} \sum_k \big(1/N + 1_{j = k} \big) \big(1 + |x^k| + M_1(m_{\bx}^N) \big)
       \\
       &\leq C\big(1 + |x^i|^2 + M_1^2(m_{\bx}^N) \big), 
    \end{align*}
    which completes the proof.
\end{proof}

\begin{proof}[Proof of Theorem \ref{thm.uniform.nash}]
    Combine Propositions \ref{prop.uij}, \ref{prop.uiji}, \ref{prop.iii}, \ref{prop.ijk}, \ref{prop.uijkl} with Lemmas \ref{lem.uni.growth},  \ref{lem.nash.timereg}, \ref{lem.ijk.time}, and \ref{lem.dtu.timereg}.
\end{proof}

\section{Existence via compactness}\label{sec:5}

Let us define for each $N \in \N$ the set of $N$-point empirical measures, 
\begin{align*}
    \cP^N \coloneqq \big\{ m_{\bx}^N : \bx \in (\R^d)^N \big\} \subset \cP_2(\R^d).
\end{align*}
Next, we define 
\begin{align*}
    &\cD^{1,N} := \Big\{ \big(t,x^1, m_{\bx}^{N,-1} \big) : (t,\bx) \in [0,T] \times (\R^d)^N \Big\} = [0,T] \times \R^d \times \cP^{N-1},
    \\
    &\cD^{2,N} := \Big\{ \big(t, x^1, m_{\bx}^{N,-1}, x^j \big) :  (t,\bx) \in [0,T] \times (\R^d)^N, \,\, j = 2,\dots,N \Big\} \subset [0,T] \times \R^d \times \cP^{N-1} \times \R^d, 
    \\
    &\cD^{3,N} := \Big\{ \big(t,x^1, m_{\bx}^{N,-1}, x^j,x^k \big) :  (t,\bx) \in [0,T] \times (\R^d)^N\,\, j,k = 2,\dots,N, \,\, j \neq k \Big\} 
    \\
    &\qquad \qquad \qquad \subset [0,T] \times \R^d \times \cP^{N-1} \times \R^d \times \R^d.
\end{align*}
With this notation, $\cD^{1,N}$ is just $[0,T] \times \R^d \times \cP^{N-1}$, $\cD^{2,N}$ is the set of $(t,x,m,y) \in \R^d \times \cP^{N-1}(\R^d) \times \R^d$ such that $ y \in \spt(m)$, and $\cD^{3,N}$ is the set of $(t,x,m,y,z) \in  \R^d \times \cP^{N-1}(\R^d) \times \R^d \times \R^d$ such that $m \in \cP^{N-1}$ and $y$ and $z$ are the locations of two distinct atoms appearing in the empirical measure $m$.
We also find it useful to define 
\begin{align*}
    \cD^1 := [0,T] \times \R^d \times \cP_2(\R^d), \quad \cD^2 := [0,T] \times \R^d \times  \cP_2(\R^d) \times \R^d, \quad \cD^3 := [0,T] \times \R^d \times \cP_2(\R^d) \times \R^d \times \R^d.
\end{align*}
We note that as $N \to \infty$, $\cD^{1,N}$ is becoming a denser and denser subset of $\cD^1$, and likewise for $\cD^{2,N}$ and $\cD^{3,N}$. These latter two cases seem to be a bit more subtle, but there one can rely on the fact that fully supported probability measures are dense in the set of probability measures.
 For each $N \in \N$, we are going to define eight functions 
\begin{align*}
    U^N, \,\, U^N_t, \,\, U^N_x, \,\, U^N_{xx}, \,\, U_m^N, \,\, U_{xm}^N, \,\, U_{ym}^N, \,\, U^N_{mm}
\end{align*}
with
\begin{align*}
    &U^N,U^N_t : \cD^{1,N} \to \R,  
    \qquad  U^N_x : \cD^{1,N} \to \R^d, \qquad  \qquad U_{xx}^N :  \cD^{1,N} \to \R^{d \times d}
    \\
    &U^N_m :  \cD^{2,N} \to \R^d,  
  \quad  \qquad \,\,\, U^N_{xm},  U^N_{ym} :  \cD^{2,N} \to \R^{d \times d},
    \\
   &U^N_{mm} : \cD^{3,N} \to \R^{d \times d}
\end{align*}
by fixing a symmetric admissible solution $\bu^N$ to \eqref{nashsystem2} (which exists for all large enough $N$ by Proposition \ref{prop.existence.N}) and setting, for each $(t,\bx) \in [0,T] \times (\R^d)^N$, and each $j,k = 2,\dots,N$ with $j \neq k$, 
\begin{align} \label{def.un}
    &U^N(t,x^1,m_{\bx}^{N,-1}) := u^{N,1}(t,\bx), \qquad \qquad \qquad \quad   \, \, \,\,
    U_t^N(t,x^1,m_{\bx}^{N,-1}) := \partial_t u^{N,1}(t,\bx),
   \nonumber  \\
    &U^N_x(t,x^1,m_{\bx}^{N,-1}) := D_1u^{N,1}(t,\bx), \qquad \qquad \quad \quad \,\,
    U^N_{xx}(t,x^1,m_{\bx}^{N,-1}) := D_{11}u^{N,1}(t,\bx),
    \nonumber\\
    & U^N_m(t,x^1,m_{\bx}^{N,-1}, x^j) := (N-1) D_j u^{N,1}(t,\bx),
   \quad\ \ 
   U_{xm}^N(t,x^1,m_{\bx}^{N,-1}, x^j) := (N-1) D_{1j} u^{N,1}(t,\bx),
  \nonumber\\
    &U_{ym}^N(t,x^1,m_{\bx}^{N,-1}, x^j) := (N-1) D_{jj} u^{N,1}(t,\bx),
     \quad U^N_{mm}(t,x^1,m_{\bx}^{N,-1},x^j,x^k) := (N-1)^2 D_{jk} u^{N,1}(t,\bx).
\end{align}
\begin{remark}
\begin{itemize}
\item We notice that the construction of the functions above needs only the existence of a symmetric admissible solution $\bu^N$, and in particular uniqueness of solutions of the Nash system \eqref{nashsystem2} is not needed. As discussed in Remark \ref{rmk.uniqueness}, uniqueness of the solutions of the limiting master equation is straightforward even if uniqueness for the $N$-player Nash system is not, and so the limiting master function $U$ we build will not depend on the choice of $\bu^N$. While the uniqueness of solutions to \eqref{nashsystem2} is expected, this remains a nontrivial open question due to the fact that we allow $u^{N,i}$ to grow quadratically in the $x^{i}$ variable (compared to the known results in the literature imposing linear growth at infinity).
\item The choice of the entry $x^{1}$ in the definition of the eight functions above is for convenience only. Due to the symmetry property one could have used the definition $U^N(t,x^i,m_{\bx}^{N,-i}) = u^{N,i}(t,\bx)$ (and similarly for all other functions) for any arbitrary $i\in\{1,\dots,N\}$.
\end{itemize} 
\end{remark}

\begin{lemma} \label{lem.compactness}
    Let Assumptions \ref{assump.regularity.disp} and \ref{assump.disp} hold. Then the functions $U^N, U^N_x, U^N_t,U^N_{xx},  U^N_m, U^N_{xm}, U^N_{ym}, U^N_{mm}$ are well-defined by the formulas appearing in \eqref{def.un}, and there is a constant $C>0$ such that for all $N\in\N$ large enough, we have the following estimates:
    \newline \newline 
    For each $(t,x,m), (t',x',m') \in \cD^{1,N}$, we have
   \begin{itemize} 
  \item  
        $|U^N(t,x,m)| \leq C\big(1 + |x|^2 + M_1(m)\big),$
  \item 
           $|U^N(t,x,m) - U^N(t',x',m')| \begin{array}{l}
           \\
           \\
           \\
             \leq C\big(1 + |x| + |x'| + M_1(m) + M_1(m') \big)|x-x'|  
            \nonumber \\[4pt]
            + C\big(1 + |x|^2 + |x'|^2 + M_1^2(m) + M_1^2(m') \big)|t-s|\\[5pt]
             + C \bd_1(m,m'),
            \end{array}
       $\vspace{5pt}
      \item $|U^N_t(t,x,m)| \leq C\big(1 + |x|^2 + M_1^2(m_{\bx}^N) \big),$
      \item
     $
       |U^N_t(t,x,m) -  U^N_t(t',x',m')| 
       \begin{array}{l} 
       \\
       \\
       \\
       \leq C \big(1 + |x|^2 + M_1^2(m) \big) |t-s|^{1/2} \\[4pt]
        + C\big(1 + |x| + |x'| + M_1(m) + M_1(m')\big) |x-x'|\\[5pt]       
        + C\big(1 + |x| + |x'| + M_2(m) + M_2(m') \big) \bd_2(m,m'),
     \end{array}
     $\vspace{5pt}
     \item $ |U_x^N(t,x,m)| \leq C \big(1 + |x| + M_1(m) \big),$
     \item  $|U^N_x(t,x,m) - U^N_x(t',x',m')| 
     \begin{array}{l}
     \\
     \\
     \leq C\big(1 + |x| + |x'| + M_1(m) + M_1(m')\big)|t - t'| \\[7pt]
       + C\big(|x -x'| + \bd_1(m,m') \big).
      \end{array}
       $
      \end{itemize}
    For each $(t,x,m),(t',x',m') \in \cD^{1,N}$, we have 
    \begin{itemize}
    \item $|U_{xx}^N(t,x,m)| \leq C,$
    \item $ |U_{xx}^N(t,x,m) - U_{xx}^N(t',x',m')| \begin{array}{l}
    \\
    \leq C\big(1 + |x| + |x'| + M_1(m) + M_1(m') \big) |t-t'|^{1/2} 
        \\[3pt]
        + C \big(|x-x'| + \bd_1(m,m') \big).
      \end{array}
    $
    \end{itemize}
    For each $(t,x,m,y)$, $(t',x',m',y') \in \cD^{2,N}$, we have
    \begin{itemize}
    \item $ |U^N_{xm}(t,x,m,y)| + |U^N_{ym}(t,x,m,y)| \leq C, $
    \item $   |U^N_{xm}(t,x,m,y) - U^N_{xm}(t',x',m',y')|
         \begin{array}{l}
         \\
         \leq C\big(1+ |x| + |x'| + |y| + |y'| \big) |t-t'|^{1/2}\\[4pt] 
        + C\big(|x-x'| + \bd_1(m,m') + |y-y'| \big). 
    \end{array}
    $
    \item $|U^N_{ym}(t,x,m,y) - U^N_{ym}(t',x',m',y')|  \begin{array}{l}
         \\
         \leq C\big(1+ |x| + |x'| + |y| + |y'| \big) |t-t'|^{1/2}\\[4pt] 
        + C\big(|x-x'| + \bd_1(m,m') + |y-y'| \big). 
    \end{array}$
    \end{itemize}
    For each $(t,x,m,y,z)$, $(t',x',m',y',z') \in \cD^{3,N}$, we have
    \begin{itemize}
    \item $ |U^N_{mm}(t,x,m,y,z)| \leq C,$
    \vspace{8pt}
    \item $ |U^N_{mm}(t,x,m,y,z) - U^N_{mm}(t',x',m',y',z')|\\[5pt]
\begin{array}{l} 
        \qquad\qquad\qquad\qquad \leq  C\big(1+ |x| + |x'| + |y| + |y'| + |z| + |z'| + M_1(m) + M_1(m') \big) |t-t'|^{1/2} 
        \\[4pt]
        \qquad\qquad\qquad\qquad + C\big(|x-x'| + \bd_1(m,m') + |y-y'| + |z-z'| \big).
  \end{array}
    $
    \end{itemize}
\end{lemma}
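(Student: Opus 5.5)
The plan has two parts: check that the formulas in \eqref{def.un} are well-defined, and then read every estimate off Theorem \ref{thm.uniform.nash} by interpolating between configurations.

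\textbf{Well-definedness.} This comes from symmetry. If $(t,x^1,m_{\bx}^{N,-1}) = (t,\ov{x}^1,m_{\ov{\bx}}^{N,-1})$, then $\ov{\bx}$ is obtained from $\bx$ by permuting the coordinates $2,\dots,N$. By \eqref{symmetry.2}, $u^{N,1}$ and $\partial_t u^{N,1}$ agree at the two points, and so do $D_1u^{N,1}$ and $D_{11}u^{N,1}$, since \eqref{symmetry.2} holds identically in $\bx$ and can be differentiated in $x^1$. For $U^N_m$, $U^N_{xm}$ and $U^N_{ym}$ we also fix the atom $y=x^j$. Differentiating \eqref{symmetry.2} shows that $D_j u^{N,1}(\bx)=D_{\sigma(j)}u^{N,1}(\ov{\bx})$. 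If an atom $y$ has multiplicity larger than one, all coordinates sitting at $y$ give the same derivative value; this follows by applying the transposition that swaps two such coordinates, which fixes $\bx$. Hence the value depends only on $(t,x,m,y)$. The argument for $U^N_{mm}$ is the same, applied to the pair of indices $j\neq k$. When $y=z$ occurs as two distinct atoms, i.e.\ two coordinates at the same point, the transposition argument again shows the value does not depend on which coordinates are chosen.

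\textbf{Estimates.} Each bound comes from Theorem \ref{thm.uniform.nash} applied to a suitable choice of representatives. The pointwise bounds are direct. For example, $|U^N_m| = (N-1)|D_j u^{N,1}| \le C$ by \eqref{nash.deriv.scaling}, and $|U^N_{mm}|=(N-1)^2|D_{jk}u^{N,1}|\le C$ because $\omega_{1,j,k}=N^{-2}$. The growth of $U^N$ comes from \eqref{nash.quadgrowth}, and the growth of $U^N_x$ from \eqref{diui.growth}. For the modulus of continuity, fix $m=m^{N,-1}_{\bx}$ and $m'=m^{N,-1}_{\bx'}$, and reorder $\bx'$ by a permutation of the indices $2,\dots,N$ so that $\frac{1}{N-1}\sum_{j\ge2}|x^j-x'^j| = \bd_1(m,m')$. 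This is possible because optimal couplings between uniform empirical measures with the same number of atoms can be taken to be permutations (Birkhoff--von Neumann). When $y$, resp.\ $(y,z)$, are prescribed atoms, I need the optimal matching to pair $x^j$ with $x'^j$. If it does not, I will fix $j$ (and $k$) and match only the remaining atoms. This costs at most $\frac{C}{N}(|y-y'|+|z-z'|)+\bd_1(m,m')$ up to constants, which is acceptable. With the representatives chosen, integrate along the segment from $\bx$ to $\bx'$ and from $t$ to $t'$. For instance,
\begin{align*}
|U^N_{xm}(t,x,m,y)-U^N_{xm}(t,x',m',y')| \le (N-1)\sum_{l}\|D_{l1j}u^{N,1}\|_\infty |x^l-x'^l| \le C\Big(|x-x'|+|y-y'|+\tfrac{1}{N}\sum_{l\ne 1,j}|x^l-x'^l|\Big),
\end{align*}
using $\omega_{1,1,j,l}\le N^{-1}$ for $l\in\{1,j\}$ and $\le N^{-2}$ otherwise. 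The other spatial Lipschitz bounds are analogous and use the second- and third-derivative bounds. For $U^N$ itself, the bound on $D_1u^{N,1}$ produces the factor $(1+|x|+|x'|+M_1)|x-x'|$, while the $C/N$ bound on $D_j u^{N,1}$ produces $C\bd_1$. The time increments come from \eqref{nash.timereg} and the Hölder estimates that follow it. The moment factors $M_1(m_{\bx}^N)$ there are comparable to $|x|+M_1(m)$.

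\textbf{Main obstacle.} I expect the main obstacle to be $U^N_t$. Its spatial Lipschitz bound needs $D_j\partial_t u^{N,1}$. I would bound this by differentiating the Nash system in $x^j$, as in Lemma \ref{lem.nash.timereg}. That gives $|\partial_t D_j u^{N,1}|\le C\omega_{1,j}(1+|x^1|+|x^j|+M_1)$. The resulting weight $\frac1N\sum_j |x^j||x^j-x'^j|$ is then controlled by Cauchy--Schwarz, which yields $M_2^{1/2}\bd_2$. This explains why $\bd_2$ and $M_2$ appear in that estimate.
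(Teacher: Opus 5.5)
Your proposal is correct and follows the paper's route. The paper's proof simply attributes well-definedness to the symmetry of $\bu^N$ and the estimates to Theorem~\ref{thm.uniform.nash}; your transposition argument for repeated atoms and your interpolation along near-optimal permutations (pinning $y,z$ adds only $\frac{C}{N}(|y-y'|+|z-z'|)$) are exactly the details that proof leaves implicit. Two small points: for the spatial modulus of $U^N_t$, take a $\bd_2$-optimal rather than a $\bd_1$-optimal permutation, so that Cauchy--Schwarz actually produces $\bd_2(m,m')$. Also, exactly as with the paper's argument, the time moduli of $U^N_{xm}$ and $U^N_{ym}$ obtained from \eqref{nash.timereg} carry an extra factor $M_1(m)+M_1(m')$ that the statement omits; this is harmless for the compactness argument on $\cD^2_R$, where $M_2(m)\le R$.
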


\begin{proof}
   The fact that these objects are well-defined comes from the assumed symmetry of the admissible solution $(u^{N,i})_{i = 1,\dots,N}$, and the estimates come from Theorem \ref{thm.uniform.nash}.
\end{proof}

We now use the estimates in Lemma \ref{lem.compactness} to extract a (locally uniformly) convergent subsequence. For this, we introduce the following notation: for $R > 0$, we set 
\begin{align*}
    &\cD^{1}_R = \big\{(t,x,m) \in \cD^{N} : |x| \leq R, \, M_2(m) \leq R \big\}, 
    \\
    &\cD^{2}_R = \big\{(t,x,m,y) \in \cD^{2} : |x| \leq R, \, M_2(m) \leq R, \, |y| \leq R \big\}, 
    \\
    &\cD^{3}_R = \big\{(t,x,m,y,z) \in \cD^{3} : |x| \leq R, \, M_2(m) \leq R, \, |y| \leq R, \, |z| \leq R\big\}. 
\end{align*}
as well as $\cD^{i,N}_R = \cD^{i,N} \cap \cD^{i}_R$, $i = 1,2,3$. 

\begin{corollary}
    \label{cor.subseq}
    There exists a subsequence $(N_k)_{k \in \N}$ and functions
    \begin{align*}
    &U, U_t : \cD^1 \to \R,  \,\,\, \qquad U_x : \cD^1 \to \R^d,
    \qquad \qquad U_{xx} :  \cD^1 \times \cP_2(\R^d) \to \R^{d \times d}
    \\
    &U_m : \cD^2  \to \R^d,  
    \qquad \,\,\,\,
    U_{xm},  U_{ym} :  \cD^2 \to \R^{d \times d},
    \\
   &U_{mm} : \cD^3 \to \R^{d \times d}
   \end{align*}
    such that for each $R > 0$, 
    \begin{align*}
       &\sup_{(t,x,m) \in \cD^{1,N}_R} |U(t,x,m) - U^{N_k}(t,x,m)| \xrightarrow{k \to \infty} 0, 
       \\
       &\sup_{(t,x,m) \in \cD^{1,N}_R} |U_t(t,x,m) - U^{N_k}_t(t,x,m)| \xrightarrow{k \to \infty} 0,
       \\
       &\sup_{(t,x,m) \in \cD^{1,N}_R} |U_x(t,x,m) - U^{N_k}_x(t,x,m)| \xrightarrow{k \to \infty} 0,
       \\
       &\sup_{(t,x,m,y) \in \cD^{2,N}_R} |U_m(t,x,m) - U^{N_k}_m(t,x,m,y)| \xrightarrow{k \to \infty} 0, 
       \\
       &\sup_{(t,x,m,y) \in \cD^{2,N}_R} |U_{xm}(t,x,m) - U^{N_k}_{xm}(t,x,m,y)| \xrightarrow{k \to \infty} 0, 
       \\
       &\sup_{(t,x,m,y) \in \cD^{2,N}_R} |U_{ym}(t,x,m) - U^{N_k}_{ym}(t,x,m,y)| \xrightarrow{k \to \infty} 0, 
       \\
       &\sup_{(t,x,m,y,z) \in \cD^{3,N}_R} |U_{mm}(t,x,m) - U^{N_k}_{mm}(t,x,m,y,z)| \xrightarrow{k \to \infty} 0. 
    \end{align*}
\end{corollary}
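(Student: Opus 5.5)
The plan is an Arzelà--Ascoli argument in which the discrete domains $\cD^{i,N}$ are first filled in by Lipschitz/Hölder extensions. Each function $U^N_\bullet$ is defined only on $\cD^{i,N}$. By Lemma \ref{lem.compactness}, on each $\cD^{i,N}_R$ it is bounded by a constant $C_R$ independent of $N$ and uniformly continuous, with a modulus $\rho_R$ independent of $N$. Concretely, $\rho_R(s)=C_R(s+s^{1/2})$ in the metric
\[
|t-t'|+|x-x'|+\bd_1(m,m')+|y-y'|+|z-z'|,
\]
with $\bd_2$ replacing $\bd_1$ for $U^N_t$. On $\cD^1_R$ the weights $1+|x|+M_1(m)$ are controlled by $R$, since $M_1\le M_2^{1/2}$.

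Step 1 (extension). Because the modulus $\rho_R$ is concave and subadditive, I would use the McShane-type formula
\[
\tilde V^N(q)=\inf_{q'\in\cD^{i,N}_R}\big\{V^N(q')+\rho_R(\mathrm{dist}(q,q'))\bigr\}.
\]
This extends each $V^N$ from $\cD^{i,N}_R$ to all of $\cD^i_R$, componentwise for vector or matrix valued functions, with the same modulus. I would then truncate to keep the bound $C_R$. Working with a countable increasing sequence $R\in\N$ handles all radii at once.

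Step 2 (relative compactness). The set $\{m:M_2(m)\le R\}$ is compact in $(\cP_1,\bd_1)$, because second moments give uniform integrability. It is only closed and bounded for $\bd_2$, but bounded second moments make $\bd_2$-convergence equivalent to $\bd_1$-convergence plus convergence of $M_2$. To avoid this subtlety for $U_t$, I would interpolate $\bd_2\le C_R\,\bd_1^{\theta}$ on sets with bounded higher moments, or restrict to $M_{2+\delta}$ bounded.

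A cleaner alternative for $U_t$ is to note that the continuity in $m$ is really needed only as $\bd_1$-continuity. It can be recovered from $|\partial_t D_ju^{N,i}|\le C\omega_{i,j}(\cdots)$, which is Lemma \ref{lem.nash.timereg}, by the same mean-value reasoning used for $U^N$. I expect this to give a $\bd_1$-Lipschitz bound with weight $1+|x|+M_1$, and I would adopt it if the interpolation becomes messy.

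Hence $\cD^i_R$ is a compact metric space in the $\bd_1$ metric, and Arzelà--Ascoli yields uniformly convergent subsequences of $\tilde V^N$ on each $\cD^i_R$. A diagonal extraction over $R\in\N$ and over the eight functions then gives a single subsequence $N_k$ and limits $U,U_t,\dots,U_{mm}$ on $\cD^1,\cD^2,\cD^3$. These limits are consistent across $R$ because $\cD^i_R\subset\cD^i_{R+1}$.

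Step 3 (conclusion). Since $\tilde V^N=V^N$ on $\cD^{i,N}_R$, we have
\[
\sup_{\cD^{i,N}_R}|V-V^{N_k}|\le\sup_{\cD^i_R}|V-\tilde V^{N_k}|\to0,
\]
which is the claimed convergence. The limits inherit the bounds and moduli, so they are uniquely determined wherever $\bigcup_N\cD^{i,N}$ is dense. For $\cD^1$ this density is clear. For $\cD^2$ and $\cD^3$ one approximates $(m,y)$ by empirical measures having an atom near $y$, which is possible with vanishing $\bd_1$-cost.

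The main obstacle is Step 2 for $U_t$, where the only available estimate is in $\bd_2$. There I would first try the $\bd_1$-Lipschitz route via $\partial_tD_ju^{N,i}$, and fall back on interpolation otherwise. The other functions are routine.
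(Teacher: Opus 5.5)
Your overall scheme (inf-convolution extension off the discrete sets, $\bd_1$-compactness of $\{M_2\le R\}$, Arzel\`a--Ascoli, diagonal extraction) is the paper's. For $U^N,U^N_x,U^N_{xx},U^N_m,U^N_{xm},U^N_{ym},U^N_{mm}$ it goes through. Extending with a modulus rather than a Lipschitz constant is the right way to handle the $|t-t'|^{1/2}$ terms and the joint extension in $(y,z)$. Note that $U^N_m$ is not covered by Lemma \ref{lem.compactness}, so its bounds must be read off Theorem \ref{thm.uniform.nash}.

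The genuine gap is $U_t$. You correctly observe that Lemma \ref{lem.compactness} controls $U^N_t$ in $m$ only through $\bd_2$, which gives no equicontinuity on the $\bd_1$-compact set $\{M_2\le R\}$; the paper's ``same reasoning'' passes over this. Neither of your remedies closes it.
\begin{itemize}
\item Interpolation needs a bound on $M_{2+\delta}$, so it gives uniform convergence only on $\{M_{2+\delta}\le R\}$. That is strictly weaker than the statement: an empirical measure in $\cD^{1,N}_R$ with one atom at distance $\sqrt{R(N-1)}$ has $M_{2+\delta}$ of order $N^{\delta/2}$.
\item The mean-value route fails because of the $|x^j|$ weight in $|\partial_t D_j u^{N,1}|\le \frac{C}{N}(1+|x^1|+|x^j|+M_1(m^N_{\bx}))$. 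Moving that one atom from $0$ to $\sqrt{R(N-1)}$ costs $\bd_1=\sqrt{R/(N-1)}\to 0$, while the integrated bound stays of order $R$.
\end{itemize}
In fact $m\mapsto\chi(M_2(m))$, with $\chi$ smooth, bounded and $\chi(r)=r$ near $0$, obeys the same derivative bound and is not $\bd_1$-equicontinuous on $\{M_2\le R\}$. So no $\bd_1$-Lipschitz estimate with weight $1+|x|+M_1$ can follow from these bounds alone.

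What works is to use the structure of $\partial_t u^{N,1}$ given by the Nash system.
\begin{itemize}
\item The diffusion terms are averages over atoms of $U^N_{xx},U^N_{ym},U^N_{xm},U^N_{mm}$, hence $\bd_1$-Lipschitz on $\cD^{1,N}_R$.
\item The Hamiltonian term is also $\bd_1$-Lipschitz there. This uses that $D_\mu H$ is bounded and that Proposition \ref{prop.aderivscaling} gives $\sum_j|a^{N,j}(\bx,\bp)-a^{N,j}(\bx',\bp')|\le C\sum_k(|x^k-x'^k|+|p^k-p'^k|)$.
\item The only delicate term is $\frac{1}{N-1}\sum_{j\ge 2}a^{N,j}\cdot U^N_m(t,x,m,x^j)$. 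After matching atoms optimally, and using that $U^N_m$ is bounded and $a^{N,j}$ grows linearly, its increment is at most
$C\bd_1(m,m')+\frac{C}{N}\sum_j(1+|x'^j|+M_1)\min\{1,|x^j-x'^j|+\bd_1(m,m')\}$.
\end{itemize}
Split the last sum at $|x'^j|=K$ and use $\frac{1}{N}\sum_{|x'^j|>K}(1+|x'^j|)\le C_R/K$ on $\{M_2\le R\}$. This gives the uniform $\bd_1$-modulus $C_R\inf_{K\ge 1}\big((1+K)s+K^{-1}\big)\le C_R(s+\sqrt{s})$. That is exactly what your extension and Arzel\`a--Ascoli steps require, so with this replacement for $U_t$ the argument is complete.
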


\begin{proof}
    For fixed $R > 0$, we can use Lemma \ref{lem.compactness} to find $C_R>0$ large enough that $U^{N,R} : \cD^1_R \to \R$ defined by 
    \begin{align*}
        U^{N,R}(t,x,m) = \inf_{\by \in (\R^d)^{N-1}, \, M_2(m_{\by}^N) \leq R} \Big\{ U^{N}(t,x,m_{\by}^N) + C_R \bd_1(m_{\by}^N, m) \Big\}
    \end{align*}
    is Lipchitz (uniformly in $N$) and satisfies 
    \begin{align*}
        U^{N,R}(t,x,m) = U^{N}(t,x,m), \quad \text{for  } (t,x,m) \in \cD^{1,N}_R.
    \end{align*}
    Applying the Arzel\`a--Ascoli compactness criterion, and noting that $\cD^{1}_R$ is compact when $m$ is endowed with the 1-Wasserstein metric $\bd_1$, we can find for each $R > 0$ a function $U^R : \cD^1_R \to \R$ and a subsequence $(N_k)_{k \in \N}$ such that  
    \begin{align*}
        \sup_{(t,x,m) \in \cD^{1,N}_R} |U^R(t,x,m) - U^{N_k}(t,x,m)| \leq   \sup_{(t,x,m) \in \cD^{1}_R} |U(t,x,m) - U^{N_k, R}(t,x,m)| \xrightarrow{k \to \infty} 0. 
    \end{align*}
    Applying a standard diagonalization argument, we find a function $U : \cD^1 \to \R$ and a single subsequence $(N_k)_{k \in \N}$ such that 
    \begin{align*}
         \sup_{(t,x,m) \in \cD^1_R} |U(t,x,m) - U^{N_k}(t,x,m)| \xrightarrow{k \to \infty} 0
    \end{align*}
    for each $R > 0$. Finally, applying the same reasoning to the functions {$U^N_t$}, $U^N_x$, $U^N_{xx}$, $U^N_{m}$, $U^{N}_{xm}$, $U^N_{ym}$, $U^N_{mm}$ (in the case of the functions $U^{N}_{xm},U^{N}_{ym},U^{N}_{mm}$ one needs to do the Lipschitz extension also in the $y,z$ variables) and passing to a further subsequence each time if necessary, we obtain the result.
\end{proof}

Our next aim is to show that the limit $U$ {is regular enough and} satisfies the master equation \eqref{ME}. For this, {we will show that we have in particular the natural connection between $U$ and the functions $U_t,U_x,U_m$, etc. constructed in Corollary \ref{cor.subseq}, i.e. $\partial_tU = U_t$, $D_x U = U_x$, $D_m U = U_m$ and so on.} We will need the following sequence of technical lemmas.

\begin{lemma} \label{lem.errorterms}
    There is a constant $C>0$ such that for each $N\in\N$ large enough and each $(t,\bx) \in [0,T] \times (\R^d)^N$, we have
    \begin{align*}
        \Big| H\big(x^1, D_1 u^{N,1}, m_{\bx, \ba^N(\bx,\diag \bu^N)}^{N,-1} \big) - \hat{H}\big(x^1,D_1 u^{N,1}, m_{\bx, \diag \bu^N}^{N,-1} \big) \Big| \leq \frac{C}{\sqrt{N}} \Big(|x^1| + M_2^{1/2}(m_{\bx}^N) \Big), 
    \end{align*}
    as well as 
    \begin{align*}
    \Big| D_p H \big(x^j,D_j u^{N,j},  m_{\bx, \ba^N(\bx,\diag \bu^N)}^{N,-j} \big) - D_p \hat{H}(x^j, D_j u^{N,j}, m_{\bx, \diag \bu^N}^{N,-j} \big) \Big| \leq \frac{C}{\sqrt{N}} \Big(|x^j| + M_2^{1/2}(m_{\bx}^N) \Big),
    \end{align*}
 {where $\hat H$ is defined in \eqref{HatHdef}.}   
\end{lemma}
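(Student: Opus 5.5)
The plan is to reduce both inequalities to a Wasserstein estimate between the discrete action profile and the mean field fixed point, and then use the Lipschitz regularity of $H$ and $D_pH$ in the measure variable. Write $\bp = \diag \bu^N(t,\bx)$ and set $\mu^{N,-1} := m^{N,-1}_{\bx,\ba^N(\bx,\bp)}$ and $\nu^{N,-1} := m^{N,-1}_{\bx,\bp}$. By definition \eqref{HatHdef}, $\hat H(x^1,p^1,\nu^{N,-1}) = H(x^1,p^1,\Phi(\nu^{N,-1}))$. Assumption \ref{assump.regularity.disp} gives that $D_\mu H$ and $D_\mu D_p H$ are bounded; for the latter this uses the second derivative bounds, and for $D_\mu H$ it uses the identity $D_\mu H = -D_\mu L$ together with the boundedness of $D_\mu L$. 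Hence $H$ and $D_pH$ are $\bd_1$-Lipschitz, and so $\bd_2$-Lipschitz, in the measure argument, uniformly in $(x,p)$. Both differences are therefore bounded by $C\,\bd_2\big(\mu^{N,-1},\Phi(\nu^{N,-1})\big)$ (respectively with index $j$), and it suffices to show this distance is at most $\frac{C}{\sqrt N}\big(|x^1| + M_2^{1/2}(m^N_{\bx})\big)$, up to an additive $C/\sqrt N$ which I expect is harmless or can be absorbed.

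For this, I will apply the triangle inequality through the full empirical measures:
\[
\bd_2\big(\mu^{N,-1},\Phi(\nu^{N,-1})\big) \le \bd_2\big(\mu^{N,-1}, m^N_{\bx,\ba^N}\big) + \bd_2\big(m^N_{\bx,\ba^N}, \Phi(m^N_{\bx,\bp})\big) + \bd_2\big(\Phi(m^N_{\bx,\bp}),\Phi(\nu^{N,-1})\big).
\]
The middle term is controlled by the third bullet of Proposition \ref{prop.fixedpoint.prelims}, giving $\frac{C}{\sqrt N}M_2^{1/2}(m^N_{\bx,\bp})$. The last term is handled by the Lipschitz property of $\Phi$ in the second bullet, which reduces it to $C\bd_2(m^N_{\bx,\bp},\nu^{N,-1})$. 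For the first and third terms, removing one atom changes the empirical measure by a $\bd_2$-amount of order $N^{-1/2}$ times the size of that atom and the second moment. Concretely, one can couple by sending the mass at atom $1$ uniformly onto the others, which gives $\bd_2(m^N_{\bz},m^{N,-1}_{\bz})^2 \le \frac{C}{N}\big(|z^1|^2 + M_2(m^N_{\bz})\big)$.

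It remains to bound $|p^1|$, $M_2(m^N_{\bp})$, $|a^{N,1}|$ and $M_2(m^N_{\ba^N})$ in terms of $|x^1|$ and the moments of $m^N_{\bx}$. Lemma \ref{lem.uni.growth} gives $|p^j| \le C(1+|x^j|+M_1(m^N_{\bx}))$, and Lemma \ref{lem.ani.growth} gives the analogous bound for $a^{N,j}$. Squaring and averaging yields $M_2(m^N_{\bp}) + M_2(m^N_{\ba^N}) \le C(1+M_2(m^N_{\bx}))$. Taking square roots gives the claimed rate, with the constant $1$ appearing inside as an additive $C/\sqrt N$. The index-$j$ estimate is identical, using $x^j$ and $p^j$.

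The main obstacle is the bookkeeping in the moment estimates, in particular making sure that the quadratic growth allowed for $u^{N,i}$ does not spoil the linear-in-$x$ bound on $p^j = D_ju^{N,j}$. This is exactly what \eqref{diui.growth} in Lemma \ref{lem.uni.growth} provides. I also need to confirm that the bound $|D_\mu H| \le C$ indeed follows from the duality identity and the assumptions.
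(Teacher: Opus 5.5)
Your proposal is correct and follows essentially the same route as the paper: the triangle inequality through the full empirical measures, the third bullet of Proposition \ref{prop.fixedpoint.prelims} for the middle term, Lipschitz continuity of $\Phi$ plus a one-atom-removal estimate for the outer terms, and the growth bounds of Lemmas \ref{lem.ani.growth} and \ref{lem.uni.growth}. Working in $\bd_2$ with the $N^{-1/2}$ rate for the removal terms is, if anything, more careful than the paper's $C/N$ claim for the $\Phi$-term.

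The additive $C/\sqrt{N}$ you flag cannot actually be absorbed, since the stated right-hand side can be arbitrarily small. So your argument and the paper's both really prove the bound with $1+|x^1|+M_2^{1/2}(m^N_{\bx})$; the paper drops the constant silently. This weaker form is all that Lemma \ref{lem.equation} needs, since there the error only has to vanish locally uniformly as $N\to\infty$.
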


\begin{proof}
    The goal is to reduce to the estimate 
    \begin{align} \label{aN.conv}
        \bd_2\big( m_{\bx, \ba^N(\bx,\bp)}^{N}, \Phi(m_{\bx, \bp}^N) \big) \leq \frac{C}{\sqrt{N}} M_2^{1/2}(m_{\bx,\bp}^N)
    \end{align}
    from Proposition \ref{prop.fixedpoint.prelims}. To this end, we estimate 
    \begin{align*}
       \Big| &H\big(x^1, D_1 u^{N,1}, m_{\bx, \ba^N(\bx,\diag \bu^N)}^{N,-1} \big) - \hat{H}\big(x^1,D_1 u^{N,1}, m_{\bx, \diag \bu^N}^{N,-1} \big) \Big| 
       \\
       &= \Big| H\big(x^1, D_1 u^{N,1}, m_{\bx, \ba^N(\bx,\diag \bu^N)}^{N,-1} \big) - H\big(x^1,D_1 u^{N,1}, \Phi\big(m_{\bx, \diag \bu^N}^{N,-1}\big) \big) \Big|
       \\
       &\leq C \bd_1 \big( m_{\bx, \ba^N(\bx,\diag \bu^N)}^{N,-1}, \Phi\big(m_{\bx, \diag \bu^N}^{N,-1}\big) \big)
       \\
       &\leq C \bd_1 \big( m_{\bx, \ba^N(\bx,\diag \bu^N)}^{N,-1}, m_{\bx, \ba^N(\bx,\diag \bu^N)}^{N} \big) + C \bd_1 \big( m_{\bx, \ba^N(\bx,\diag \bu^N)}^{N}, \Phi\big(m_{\bx, \diag \bu^N}^{N}\big) \big) 
       \\
       &\qquad + C \bd_1 \big( \Phi\big(m_{\bx, \diag \bu^N}^{N,-1}\big), \Phi\big(m_{\bx, \diag \bu^N}^{N}\big) \big)
       \\
       &\leq \frac{C}{N} \Big(|x^1| + |a^{N,1}| + M_1(m^N_{\bx, \ba^N}) + M_1(m^N_{\bx, \diag \bu^N}) \Big) + C \bd_1 \big( m_{\bx, \ba^N(\bx,\diag \bu^N)}^{N}, \Phi\big(m_{\bx, \diag \bu^N}^{N}\big) \big),  
    \end{align*}
    and then applying \eqref{aN.conv}, {Lemma} \ref{lem.ani.growth} {and the fact that the $\bd_1$ metric is dominated by the $\bd_2$ one}, we get 
    \begin{align*}
        \Big| &H\big(x^1, D_1 u^{N,1}, m_{\bx, \ba^N(\bx,\diag \bu^N)}^{N,-1} \big) - \hat{H}\big(x^1,D_1 u^{N,1}, m_{\bx, \diag \bu^N}^{N,-1} \big) \Big| 
        \\
        &\leq \frac{C}{N} \Big(|x^1| + |D_1 u^{N,1}| + M_1(m_{\bx}^N) + M_1(m^N_{\diag \bu^N}) \Big) + \frac{C}{\sqrt{N}} M_2^{1/2}(m_{\bx,\diag \bu^N}^N)^{1/2}, 
    \end{align*}
    and finally using the growth bounds on $D_i u^{N,i}$ from Lemma \ref{lem.uni.growth}, we obtain
    \begin{align*}
        \Big| &H\big(x^1, D_1 u^{N,1}, m_{\bx, \ba^N(\bx,\diag \bu^N)}^{N,-1} \big) - \hat{H}\big(x^1,D_1 u^{N,1}, m_{\bx, \diag \bu^N}^{N,-1} \big) \Big| 
        \\
        &\leq \frac{C}{N} \Big(|x^1| + M_1(m_{\bx}^N)\Big) + \frac{C}{\sqrt{N}} M_2^{1/2}(m_{\bx}^N) \leq \frac{C}{\sqrt{N}} \Big(|x^1| + M_2^{1/2}(m_{\bx}^N) \Big).
    \end{align*}
    The proof for the second bound is almost identical, and so is omitted. 
\end{proof}

\begin{lemma} \label{lem.equation}
    The functions $U, {U_t,} U_x, U_{xx}, U_m, U_{xm}, U_{ym}, U_{mm}$ from Corollary \ref{cor.subseq} satisfy 
    \begin{align} \label{ME.subseq}
    \begin{cases}
        \ds - U_t -
        (1 + \sigma_0) {\rm{tr}}\big(U_{xx}(t,x,m)\big) -  (1 + \sigma_0) \int_{\R^d} {\rm{tr}}\big(U_{ym}(t,x,m,y) \big)m(dy)    \vspace{.2cm}
        \\ \ds
        \qquad - \sigma_0 \int_{\R^d} \int_{\R^d} {\rm{tr}}\big(U_{mm}(t,x,m,y,y') \big) m(dy) m(dy') - 2\sigma_0 \int_{\R^d} {\rm{tr}}\big(U_{xm}(t,x,m,y) \big) m(dy)   \vspace{.2cm}
        \\ \ds \qquad + \hat{H}\Big(x, U_x(t,x,m), \big({\rm{Id}}, U_x(t,\cdot,m) \big)_{\#} m \Big) \vspace{.2cm}
        \\
        \ds \qquad + \int_{\R^d} D_p \hat{H}\Big(y,U_x(t,y,m), \big({\rm{Id}}, U_x(t,\cdot,m) \big)_{\#} m \Big) \cdot  U_m(t,x,m,y) m(dy) = 0 
        \vspace{.2cm} \\ 
        \qquad \qquad \qquad (t,x,m) \in (0,T) \times \R^d \times \cP_2(\R^d), 
        \vspace{.2cm} \\ \ds 
        U(T,x,m) = G(x,m), \quad (x,m) \in \R^d \times \cP_2(\R^d),
    \end{cases}
\end{align}
\end{lemma}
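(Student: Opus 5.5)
The plan is to rewrite the Nash system \eqref{nashsystem} for $i=1$, evaluated at an arbitrary $(t,\bx)$, entirely in terms of the functions in \eqref{def.un}, and then pass to the limit along the subsequence of Corollary \ref{cor.subseq}. The first step uses symmetry ($D_{jj}u^{N,1}$ is independent of which $j\neq 1$ is chosen, given the atom $x^j$) to rewrite the diffusion terms. The Laplacians give
\begin{align*}
\sum_{j}\Delta_j u^{N,1}=\tr U^N_{xx}+\frac{1}{N-1}\sum_{j\ge2}\tr U^N_{ym}(t,x^1,m^{N,-1}_{\bx},x^j).
\end{align*}
The common-noise term $\sum_{j,k}\tr D_{jk}u^{N,1}$ splits into four pieces: the $(1,1)$ piece, giving $\tr U^N_{xx}$; the pieces $(1,j)$ and $(j,1)$, giving $2\int \tr U^N_{xm}\,dm^{N,-1}_{\bx}$; the diagonal $j=k\ge2$, giving $\int \tr U^N_{ym}$ again; and the off-diagonal $j\neq k\ge2$, giving $\frac{1}{(N-1)^2}\sum_{j\neq k}\tr U^N_{mm}$. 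The last expression differs from the double integral against $m^{N,-1}_{\bx}\otimes m^{N,-1}_{\bx}$ only by the missing diagonal, which has size $O(1/N)$ since $U^N_{mm}$ is bounded. The Hamiltonian term is handled by Lemma \ref{lem.errorterms}: it equals $\hat H(x^1,U^N_x(t,x^1,m^{N,-1}_{\bx}),m^{N,-1}_{\bx,\diag\bu^N})$ up to an error $O(N^{-1/2}(|x^1|+M_2^{1/2}))$. The transport term becomes $\frac{1}{N-1}\sum_{j\ge2}D_p\hat H(x^j,\dots)\cdot U^N_m(t,x^1,m^{N,-1}_{\bx},x^j)$ up to the same order of error.

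The second step identifies the measure arguments. Note that $D_ju^{N,j}(t,\bx)=U^N_x(t,x^j,m^{N,-j}_{\bx})$, and $m^{N,-j}_{\bx}$ differs from $m^{N,-1}_{\bx}$ by $O(1/N)(|x^1|+|x^j|)$ in $\bd_1$. By the Lipschitz bound on $U^N_x$ in Lemma \ref{lem.compactness}, it follows that $m^{N,-1}_{\bx,\diag\bu^N}$ is $\bd_2$-close to $(\mathrm{Id},U^N_x(t,\cdot,m^{N,-1}_{\bx}))_\#m^{N,-1}_{\bx}$. Since $\hat H$ and $D_p\hat H$ are Lipschitz in $\nu$ for $\bd_2$ (Proposition \ref{prop.fixedpoint.prelims} together with the regularity of $H$), these replacements cost only $o(1)$ locally uniformly. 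The result is an exact identity of the form \eqref{ME.subseq} for the $U^N$ functions on $\cD^{1,N}$, with a remainder $R^N(t,x,m)$ satisfying $\sup_{\cD^{1,N}_R}|R^N|\to0$.

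The third step passes to the limit. Fix $(t,x,m)\in\cD^1$ and choose empirical $m_k\in\cP^{N_k-1}$ with $\bd_2(m_k,m)\to0$ and bounded second moments (for instance, i.i.d.\ samples or quantization of $m$). The locally uniform convergence in Corollary \ref{cor.subseq} is the limit in $N$; combined with the uniform-in-$N$ equicontinuity of Lemma \ref{lem.compactness}, which passes to the limit functions, it gives $U^{N_k}_\bullet(t,x,m_k)\to U_\bullet(t,x,m)$. The integral terms are integrals against $m_k$ of functions converging locally uniformly in $y$, with linear growth controlled by moments. The main obstacle is exactly here: the convergence in Corollary \ref{cor.subseq} only holds on $\cD^{2,N}_R$, meaning $y$ must be an atom of $m$, and $|y|\le R$. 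I would handle this by truncating, $\int_{|y|>R}(\cdots)\,dm_k\le C(1+R)^{-1}M_2(m_k)$, and using uniform convergence on the atoms with $|y|\le R$. For the $U^N_{mm}$ integral, the only pairs outside $\cD^{3,N}$ are the diagonal ones, already shown negligible. The continuity of $\hat H$ and $D_p\hat H$ in $(x,p,\nu)$ together with the $\bd_2$-convergence of the pushforwards $(\mathrm{Id},U^{N_k}_x)_\#m_k\to(\mathrm{Id},U_x)_\#m$ takes care of the nonlinear terms. The terminal condition follows from $U^N(T,x^1,m^{N,-1}_{\bx})=G(x^1,m^{N,-1}_{\bx})$ and the continuity of $G$.
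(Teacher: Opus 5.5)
Your proposal is correct and follows the paper's proof. You rewrite the Nash equation for $u^{N,1}$ through the functions in \eqref{def.un}, and use Lemma \ref{lem.errorterms} and the Lipschitz bound on $U^N_x$ to replace the Hamiltonian and transport terms. This gives the equation on $\cD^{1,N}$ up to an error of order $N^{-1/2}(|x|+M_2^{1/2}(m))$, and you then pass to the limit along the subsequence of Corollary \ref{cor.subseq}. Your third step makes explicit what the paper compresses into the single phrase ``passing to the limit'': approximating $m$ by empirical measures in $\bd_2$, truncating the $y$-integrals, and discarding the negligible diagonal of the $U_{mm}$ term.
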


\begin{proof}
    We start by noting that for each $(t,\bx) \in [0,T] \times (\R^d)^N$, we have 
    \begin{align} \label{comp.idio}
        \sum_{j = 1}^N \Delta_j u^{N,1}(t,\bx) &= \tr\big(U^N_{xx}(t,x^1,m_{\bx}^{N,-1}) \big) + \frac{1}{N-1} \sum_{j = 2}^N \tr\big( U_{ym}^N(t,x^1,m_{\bx}^{N,-1}, x^j) \big) 
        \nonumber \\
        &= \tr\big(U^N_{xx}(t,x^1,m_{\bx}^{N,-1})\big) + \int_{\R^d} \tr\big({U}^N_{ym}(t,x^1,m_{\bx}^{N,-1},y) \big) m_{\bx}^{N,-1}(dy).
    \end{align}
    Next, we compute 
    \begin{align*}
        \sum_{j,k = 1}^N& \tr\big(D_{jk} u^{N,1} \big) = \sum_{j = 1}^N \Delta_j u^{N,1}(t,\bx) + 2 \sum_{j = 2}^N \tr\big( D_{1j} u^{N,1}(t,\bx) \big) + \sum_{j,k = 2,\dots,N, \, j \neq k} \tr\big(D_{jk} u^{N,1}(t,\bx) \big)
        \\
        &= \tr\big(U^N_{xx}(t,x^1,m_{\bx}^{N,-1}) \big) + \frac{1}{N-1} \sum_{j = 2}^N \tr\big( U_{ym}^N(t,x^1,m_{\bx}^{N,-1}, x^j) \big) 
        \\
        &\quad + \frac{2}{N-1} \sum_{j = 2}^N \tr\big( U_{xm}^N(t,x^1,m_{\bx}^{N,-1},x^j) \big) + \frac{1}{(N-1)^2} \sum_{j,k = 2,\dots,N, \, j \neq k} \tr\big( U_{mm}^N(t,x^1,m_{\bx}^{N,-1},x^j,x^k \big)
        \\
        &= \tr\big(U^N_{xx}(t,x^1,m_{\bx}^{N,-1})\big) + \int_{\R^d} \tr\big(U^N_{ym}(t,x^1,m_{\bx}^{N,-1},y) \big) m_{\bx}^{N,-1}(dy)
        \\
        &\quad + 2 \int_{\R^d} \tr\big( U_{ym}^N(t,x^1,m_{\bx}^{N,-1},y) \big) m_{\bx}^{N,-1}(dy) + \int_{\R^d} \int_{\R^d} \tr\big( U_{mm}^N(t,x^1,m_{\bx}^{N,-1},y,z) \big) (m_{\bx}^{N,-1})^{\otimes 2}(dy,dz) 
        \\
        &\quad - \frac{1}{{(N-1)^2}} \sum_{j=2}^N \tr\big( U_{mm}^N(t,x^1,m_{\bx}^{N,-1},x^j,x^j) \big).
    \end{align*}
    Since $U^N_{mm}$ is bounded uniformly in $N$, we deduce that there is a constant $C$ such that 
    \begin{align} \label{comp.common}
        &\bigg| \sum_{j,k = 1}^N \tr\big(D_{jk} u^{N,1} \big) - \bigg( \tr\big(U^N_{xx}(t,x^1,m_{\bx}^{N,-1})\big) + \int_{\R^d} \tr\big(U^N_{ym}(t,x^1,m_{\bx}^{N,-1},y) \big) m_{\bx}^{N,-1}(dy)
        \nonumber \\
        &\quad + 2 \int_{\R^d} \tr\big( U_{ym}^N(t,x^1,m_{\bx}^{N,-1},y) \big) m_{\bx}^{N,-1}(dy) + \int_{\R^d} \int_{\R^d} \tr\big( U_{mm}^N(t,x^1,m_{\bx}^{N,-1},y,z) \big) (m_{\bx}^{N,-1})^{\otimes 2}(dy,dz) \bigg) \bigg| \vspace{.5cm}
       \nonumber \\
        &\qquad \leq C/N.
    \end{align}
    Next, we note that 
    \begin{align*}
        &m_{\bx, \diag \bu^N}^{N,-1} =\frac{1}{N-1} \sum_{j = 2}^N \delta_{(x^j,D_ju^{N,j})} = \frac{1}{N-1} \sum_{j = 2}^N \delta_{(x^j,U_x^N(t,x^j,m_{\bx}^{N,-j}))}, 
        \\
        &\big( \text{Id}, U_x^N(t,\cdot,m_{\bx}^{N,-1})\big)_{\#} m_{\bx}^{N,-1}  = \frac{1}{N-1} \sum_{j = 2}^N \delta_{(x^j,U_x^N(t,x^j,m_{\bx}^{N,-1}))}, 
    \end{align*}
    so that 
    \begin{align} \label{empiricalbound}
        \bd_1\Big(m_{\bx, \diag \bu^N}^{N,-1},& \big( \text{Id}, U_x(t,\cdot,m_{\bx}^{N,-1})\big)_{\#} m_{\bx}^{N,-1} \Big) \leq \frac{1}{N-1} \sum_{j = 2}^N |U_x^N(t,x^j, m_{\bx}^{N,-1}) {-U_x^N(t,x^j, m_{\bx}^{N,-j})}| 
        \nonumber \\
        &\leq \frac{C}{N} \sum_{j = 2}^N \bd_1\big( m_{\bx}^{N,-1}, m_{\bx}^{N,-j} \big) \leq \frac{C}{N} \big(|x^1| + M_1(m_{\bx}^N) \big).
    \end{align}
    It follows that 
    \begin{align} \label{comp.hamiltonian}
        &\Big| \hat{H}\big(x^1,D_1u^{N,1}, m_{\bx, \diag \bu^N}^{N,-1} \big) - \hat{H}\Big(x^1,U^N_x(t,x^1,m_{\bx}^{N,-1}), \big( \text{Id}, U_x^N(t,\cdot,m_{\bx}^{N,-1})\big)_{\#} m_{\bx}^{N,-1} \Big) \Big| 
       \nonumber  \\
        &
        \qquad \leq \frac{C}{N} \big(|x^1| + M_1(m_{\bx}^N) \big), 
    \end{align}
    and thus combining \eqref{comp.hamiltonian} with Lemma \ref{lem.errorterms}, we find that
    \begin{align*}
        \Big|&H\big(x^1, D_1u^{N,1},m_{\bx, \ba^N(\bx, \diag \bu^N)}^{N,-1}\big)  - \hat{H}\Big(x^1,U^N_x(t,x^1,m_{\bx}^{N,-1}), \big( \text{Id}, U_x^N(t,\cdot,m_{\bx}^{N,-1})\big)_{\#} m_{\bx}^{N,-1} \Big)\Big|
        \\
        &\leq \Big|H\big(x^1, D_1u^{N,1},m_{\bx, \ba^N(\bx, \diag \bu^N)}^{N,-1}\big) - \hat{H}\big(x^1,D_1u^{N,1}, m_{\bx, \diag \bu^N}^{N,-1} \big)\Big|
        \\
        &\quad + \Big| \hat{H}\big(x^1,D_1u^{N,1}, m_{\bx, \diag \bu^N}^{N,-1} \big) - \hat{H}\Big(x^1,U^N_x(t,x^1,m_{\bx}^{N,-1}), \big( \text{Id}, U_x^N(t,\cdot,m_{\bx}^{N,-1})\big)_{\#} m_{\bx}^{N,-1} \Big)\Big|
        \\
        &\leq \frac{C}{\sqrt{N}} \big(|x^1| + M_2^{1/2}(m_{\bx}^N) \big) + \frac{C}{N} \big(|x^1| +M_1(m_{\bx}^N) \big) \leq \frac{C}{\sqrt{N}} \big(|x^1| + M_2^{1/2}(m_{\bx}^N) \big).
    \end{align*}
    By similar reasoning, and using \eqref{empiricalbound} again, we have
    \begin{align*}
    \Big|D_p &\hat{H}(x^j, D_j u^{N,j}, m_{\bx, \diag \bu^N}^{N,-j} \big) - D_p \hat{H}\Big( x^j, U^N_x(t,x^j,m_{\bx}^{N,-j}), \big(\text{Id},U_x^N(t,\cdot,m_{\bx}^{N,-1}) \big)_{\#} m_{\bx}^{N,-1} \Big) \Big|
        \\
        &\leq C |D_j u^{N,j} - U_x^N(t,x^j,m_{\bx}^{N,-1})| + C \bd_1\Big(m_{\bx, \diag \bu^N}^{N,-j}, \big(\text{Id},U_x^N(t,\cdot,m_{\bx}^{N,-1}) \big)_{\#} m_{\bx}^{N,-1} \Big) 
        \\
        &= C |U_x^N(t,x^j,m_{\bx}^{N,-j}) - U_x^N(t,x^j,m_{\bx}^{N,-1})| + C \bd_1\Big(m_{\bx, \diag \bu^N}^{N,-j}, \big(\text{Id},U_x^N(t,\cdot,m_{\bx}^{N,-1}) \big)_{\#} m_{\bx}^{N,-1} \Big) 
        \\
        &\leq \frac{C}{N} \big(|x^1| + |x^j| + M_1(m_{\bx}^N) \big) + C\bd_1\Big(m_{\bx, \diag \bu^N}^{N,-1}, \big( \text{Id}, U_x(t,\cdot,m_{\bx}^{N,-1})\big)_{\#} m_{\bx}^{N,-1} \Big) 
        \\
        &\qquad + \bd_1\Big(m_{\bx, \diag \bu^N}^{N,-1}, m_{\bx, \diag \bu^N}^{N,-j}\Big)
        \\
        &\leq \frac{C}{N} \big(|x^1| + |x^j| + M_1(m_{\bx}^N) \big), 
    \end{align*}
    so that 
    \begin{align} \label{comp.transport}
        &\Big| \int_{\R^d} D_p \hat{H}\Big(y, U_x^N(t,y,m_{\bx}^{N,-1}), \big(\text{Id}, U_x^N(t,\cdot,m_{\bx}^{N,-1}\big)_{\#} m_{\bx}^{N,-1} \Big) \cdot U_m^N(t,x^1,m_{\bx}^{N,-1},y) m_{\bx}^{N,-1}(dy) 
      \nonumber  \\
        &\quad - \sum_{j=2}^N D_pH \big(x^j,D_ju^{N,j},m_{\bx, \ba^N(\bx,\diag \bu^N)}^{N,-j} \big) \cdot D_j u^{N,1} \Big|
      \nonumber  \\
        &= \bigg| \sum_{j = 2}^N D_j u^{N,1} \cdot \bigg( D_p H(x^j, D_j u^{N,j}, m_{\bx, \diag \bu^N}^{N,-j} \big) - D_p \hat{H}\Big( x^j, U^N_x(t,x^j,m_{\bx}^{N,-j}), \big(\text{Id},U_x^N(t,\cdot,m_{\bx}^{N,-1}) \big)_{\#} m_{\bx}^{N,-1} \bigg) \bigg|
     \nonumber   \\
        &\leq \frac{C}{N} \sum_{j = 2}^N \Big| D_p H(x^j, D_j u^{N,j}, m_{\bx, \diag \bu^N}^{N,-j} \big) - D_p \hat{H}\Big( x^j, U^N_x(t,x^j,m_{\bx}^{N,-j}), \big(\text{Id},U_x^N(t,\cdot,m_{\bx}^{N,-1}) \big)_{\#} m_{\bx}^{N,-1}\Big| 
       \nonumber  \\
        &\leq \frac{C}{N^2} \sum_{j = 2}^N \big(|x^1| + |x^j| + M_1(m_{\bx}^N) \big) \leq \frac{C}{N} \big(|x^1| + M_1(m_{\bx}^N) \big).
    \end{align}
    Putting together \eqref{comp.idio}, \eqref{comp.common}, \eqref{comp.hamiltonian} and \eqref{comp.transport}, and using the Nash system \eqref{nashsystem}, we deduce that 
    \begin{align*}
        \ds &\bigg| - U_t^N -
        (1 + \sigma_0) \tr\big(U^N_{xx}(t,x,m)\big) -  (1 + \sigma_0) \int_{\R^d} \tr\big(U^N_{ym}(t,x,m,y) \big)m(dy)    \vspace{.2cm}
        \\ \ds
        &\qquad - \sigma_0 \int_{\R^d} \int_{\R^d} \tr\big(U_{mm}^N(t,x,m,y,y') \big) m(dy) m(dy') - 2\sigma_0 \int_{\R^d} \tr\big(U^N_{xm}(t,x,m,y) \big) m(dy)   \vspace{.2cm}
        \\ \ds &\qquad + \hat{H}\Big(x, U^N_x(t,x,m), \big(\text{Id}, U_x(t,\cdot,m) \big)_{\#} m \Big) \vspace{.2cm}
        \\
        \ds &\qquad + \int_{\R^d} D_p \hat{H}\Big(y,U^N_x(t,y,m), \big(\text{Id}, U^N_x(t,\cdot,m) \big)_{\#} m \Big) \cdot  U^N_m(t,x,m,y) m(dy)\bigg| \leq \frac{C}{\sqrt{N}} \big(|x| + M_2^{1/2}(m) \big)
        \vspace{.2cm}
\end{align*}
for each $(t,x,m) \in \cD^{1,N}$. We emphasize that the inequality above holds only for $(t,x,m) \in \cD^{1,N}$, in particular $m$ is assumed to be an empirical measure in this inequality. Passing to the limit, we get \eqref{ME.subseq}.
\end{proof}

\begin{proposition} \label{prop.consistency}
    The function $U$ from Corollary \ref{cor.subseq} is $C^{1,2}{(\cD^1)}$, and satisfies 
    \begin{align*}
       &\partial_t U = U_t, \quad  D_x U = U_x, \quad D_{xx} U = U_{xx}, \quad D_m U = U_m, 
       \\
       &D_{xm} U = U_{xm}, \quad D_{ym} U = U_{ym}, \quad D_{mm} U = U_{mm}.
    \end{align*}
\end{proposition}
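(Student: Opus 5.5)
The plan is to verify each identity by integrating the limit objects along suitable paths. The finite-$N$ identities used are exact fundamental-theorem-of-calculus relations between $u^{N,1}$ and its derivatives. We then pass to the limit using the locally uniform convergence of Corollary \ref{cor.subseq} and the uniform equicontinuity bounds of Lemma \ref{lem.compactness}. Throughout we work on $\cD^{1,N}_R$ and use density of $\cD^{1,N}$ in $\cD^1$ (with respect to $\bd_1$ on bounded second moment sets). We also use that $U,U_t,\dots$ are continuous, since they are locally uniform limits of uniformly equicontinuous (Lipschitz-extended) functions.

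\textbf{Time and $x$ derivatives.} For $(t,\bx)$ fixed we have
\begin{align*}
u^{N,1}(s,\bx)-u^{N,1}(t,\bx)=\int_t^s \partial_t u^{N,1}(r,\bx)\,dr,
\end{align*}
and similarly for $D_1u^{N,1}$ in the direction $x^1$. In the notation of \eqref{def.un}, this gives
\begin{align*}
U^N(s,x,m)-U^N(t,x,m)=\int_t^sU^N_t(r,x,m)\,dr
\end{align*}
for $m\in\cP^{N-1}$, together with the analogous identities for $U^N_x$ and $U^N_{xx}$ along segments in $x$. Passing to the limit along $N_k$ gives these identities for empirical $m$ and hence, by continuity, for all $m$. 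Continuity of $U_t,U_x,U_{xx}$ then yields $\partial_tU=U_t$, $D_xU=U_x$ and $D_{xx}U=U_{xx}$.

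\textbf{Measure derivative.} The key step is to show $D_mU=U_m$. I would use the characterization of the intrinsic derivative through the linear derivative. It suffices to prove, for all $m,m'\in\cP_2$,
\begin{align*}
U(t,x,m')-U(t,x,m)=\int_0^1\int_{\R^d}\frac{\delta U}{\delta m}(t,x,m_\lambda,y)\,(m'-m)(dy)\,d\lambda,
\end{align*}
with $\frac{\delta U}{\delta m}$ a primitive in $y$ of $U_m$. I prefer instead to work with displacement along maps, which is what the discrete structure sees. For empirical measures $m=m^{N,-1}_{\bx}$ and $m'=m^{N,-1}_{\bx'}$ with the same labeling, the fundamental theorem of calculus along $\bx+\lambda(\bx'-\bx)$ gives
\begin{align*}
U^N(t,x,m')-U^N(t,x,m)=\int_0^1\frac1{N-1}\sum_{j\ge2}U^N_m\big(t,x,m_\lambda,x^j_\lambda\big)\cdot(x'^j-x^j)\,d\lambda,
\end{align*}
that is, an integral of $U^N_m$ against the coupling $(\mathrm{Id},\cdot)$. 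Passing to the limit gives
\begin{align*}
U(t,x,\mathcal L(\xi'))-U(t,x,\mathcal L(\xi))=\int_0^1\E\big[U_m(t,x,\mathcal L(\xi_\lambda),\xi_\lambda)\cdot(\xi'-\xi)\big]\,d\lambda
\end{align*}
for $\xi_\lambda=\xi+\lambda(\xi'-\xi)$. This holds first for random variables with finitely many equally likely values, and then for all $\xi,\xi'\in L^2$ by approximation, using the Lipschitz and growth bounds. This shows that the lift $\tilde U(\xi)=U(t,x,\mathcal L(\xi))$ is Fréchet differentiable on $L^2$ with gradient $U_m(t,x,\mathcal L(\xi),\xi)$, which is continuous in $\xi$ because $U_m$ is continuous and Lipschitz in $(m,y)$. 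By the standard Lions lift characterization (Carmona--Delarue), this yields $D_mU=U_m$.

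\textbf{Mixed and second measure derivatives.} The same argument applies to the remaining identities. Applying it to $U^N_x$ in the measure direction (the $x^j$-derivatives of $D_1u^{N,1}$) gives $D_mD_xU=U_{xm}$. Applying it to $U^N_m$ in the $y$ direction gives $D_{ym}U=U_{ym}$; this uses that moving the atom $x^j$ alone changes $y=x^j$ and the measure, with the measure contribution of order $1/N$. Applying it to $U^N_m$ in the measure direction over atoms $k\ne j$ gives $D_{mm}U=U_{mm}$.

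\textbf{Main obstacle.} I expect the main difficulty to be this last point: in the discrete identities, perturbing atom $x^j$ moves both the measure argument and the distinguished point $y$. The diagonal terms must therefore be separated and shown to be $O(1/N)$, which follows from the bounds $\omega^N_{i,j,k}$ in Theorem \ref{thm.uniform.nash}. The approximation from empirical to general measures also needs care, via the local uniform estimates with $\bd_1$ and second-moment control.
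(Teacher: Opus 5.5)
Your proposal is correct and follows essentially the paper's route: the fundamental theorem of calculus along straight-line displacements in $(\R^d)^N$, passage to the limit along $N_k$ via Corollary \ref{cor.subseq}, and identification of $D_mU$ through the gradient of the $L^2$ lift (the Lions lift characterization), with the remaining identities handled the same way. Your version also spells out steps the paper leaves as ``similar'' or implicit. These are the extension from equiprobable discrete laws to all of $L^2$, Fr\'echet differentiability of the lift from the Lipschitz continuity of $U_m$, and the diagonal bookkeeping: the distinguished atom is kept fixed for $D_{mm}$, and the $O(1/N)$ measure correction is absorbed for $D_{ym}$.
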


\begin{proof}
    Let us show that $D_m U$ exists and $D_m U = U_m$, the other proofs being similar. First, we lift $U$ to a function $\hat{U} : [0,T] \times \R^d \times L^2 \to \R$, given by 
    \begin{align*}
        \hat{U}(t,x,Y) = U\big(t,x,\cL(Y) \big).
    \end{align*}
    Showing that $D_m U = U_m$ is equivalent {by \cite{GanTud:19}} to showing that 
    \begin{align} \label{lifted.grad}
        \nabla_{L^2} \hat{U}(t,x,Y) = U_m\big(t,x,\cL(Y), Y\big), 
    \end{align}
    and this is what we will now show. We fix an arbitrary $t \in [0,T]$, $x \in \R^d$, $\by = (y^2,\dots,y^N) \in (\R^d)^{N-1}$, and $\bz = (z^2,\dots,z^N) \in (\R^d)^{N-1}$. We let $\Omega^2,\dots,\Omega^N$ be a partition of $\Omega$ into $N-1$ sets of equal probability, and define random variables $Y$ and $Z$ by 
    \begin{align*}
        (X,Y) = (x^i,z^i) \text{ on } \Omega^i. 
    \end{align*}
    Now, we have, {for the subsequence $(N_k)_{k \in \N}$ given in Corollary \ref{cor.subseq},}
    \begin{align*}
        \hat{U}(t,x, Y) & - \hat{U}(t,x,Z) = U\Big(t,x, \frac{1}{N-1} \sum_{i = 2}^N \delta_{y^i} \Big) - U\Big(t,x, \frac{1}{N-1} \sum_{i = 2}^N \delta_{z^i} \Big)
        \\
        &= \lim_{k \to \infty} \bigg( U^{N_k}\Big(t,x, \frac{1}{N-1} \sum_{i = 2}^N \delta_{y^i} \Big) - U^{N_k}\Big(t,x, \frac{1}{N-1} \sum_{i = 2}^N \delta_{z^i} \Big) \bigg)
        \\
        &= \lim_{k \to \infty} \Big( u^{N_k,1}\big(t, (x,\by)\big) -u^{N_k,1}\big(t, (x,\bz)\big) \Big)
        \\
        &= \lim_{k \to \infty} \Big( \int_0^1 \sum_{j = 2}^N D_j u^{N_k,1}\big(t, (x, (s\by + (1-s) \bz) ) \big) \cdot (y^i - z^i) ds \Big) 
        \\
        &= \lim_{k \to \infty} \Big( \int_0^1 \frac{1}{N-1} \sum_{j = 2}^N U^{N_k}_m\big(t, x, m_{s\by + (1-s) \bz}^{N,-1}, sy^j + (1-s) z^j ) \big) \cdot (y^i - z^i) 
        ds\Big)
        \\
        &= \lim_{k \to \infty} \Big( \int_0^1\E\Big[ U_m^{N_k}\big(t,x, \cL(sY + (1-s) Z), sY + (1-s) Z \big) \cdot (Y-Z)  \Big]  ds \Big)
        \\
        &= \int_0^1\E\Big[ U_m\big(t,x, \cL(sY + (1-s) Z), sY + (1-s) Z \big) \cdot (Y-Z) \Big] ds, 
    \end{align*}
    which shows that \eqref{lifted.grad} indeed holds, so that $U_m = D_m U$. 
\end{proof}

\begin{proof}[Proof of Theorem \ref{thm.main.disp}]
    For existence, we combine Lemma \ref{lem.equation} with Proposition \ref{prop.consistency}. For uniqueness, we reduce the question to the uniqueness of Nash equilibria for the corresponding MFGC, as discussed in \cite{JacMes}. We fix a classical solution $U$ to the master equation and $(t_0,x_0,m_0) \in [0,T] \times \R^d \times \cP_2(\R^d)$, and define $X$ by
   \begin{align*}
      \begin{cases}
           dX_t = - D_p \hat{H}\Big(X_t, D_xU(t,X_t, m_t), \big(\text{Id}, D_xU(t,\cdot, m_t)\big)_{\#} m_t \Big) dt + \sqrt{2} dW_t + \sqrt{2\sigma_0} dW_t^0, 
       \\
       X_{t_0} \sim m_0, \quad m_t = \cL^0(X_t), 
      \end{cases}
   \end{align*}
   where $W$ and $W^0$ are independent Brownian motions on some filtered probability space satisfying the usual conditions, $\cL^0(X_t) = \cL(X_t | \cF_t^{W_0})$, $\cF_t^{W^0}$ being the filtration generated by $W^0$, and $X_{t_0}$ is independent of $W$ and $W^0$. Then, It\^o's formula shows that
   \begin{align*}
       \mu_t = \Phi\Big( \big(\text{Id}, D_xU(t, \cdot, m_t) \big)_{\#} m_t \Big)
   \end{align*}
   is a mean field equilibrium in the sense defined e.g. on page 6 of \cite{JacMes}, and that 
   \begin{align*}
       U(t_0,x_0,m_0) = \inf_{\alpha} \E\bigg[ \int_{t_0}^T L\big(X_t^{\alpha}, \alpha_t, \mu_t \big) dt + G(X_T^{x_0}, \mu_T) \bigg], 
   \end{align*}
   where the infimum is taken over all square-integrable, adapted processes $\alpha$ and $X^{\alpha}$ is determined from $\alpha$ by
   \begin{align*}
       dX_t^{\alpha} = \alpha_t dt + \sqrt{2} dW_t + \sqrt{2\sigma_0} dW_t^0, \qquad X_{t_0}^{\alpha} = x^0.
   \end{align*}
   Because this holds for any classical solution $U$, we see that uniqueness of classical solutions follows from uniqueness of mean field equilibria, and under the displacement semi-monotonicity conditions considered here (more precisely, Assumptions \ref{assump.regularity.disp} and \ref{assump.disp}), uniqueness of equilibria follows from e.g. Theorem A.5 in \cite{JacMes}. 
\end{proof}

\section{The Lasry--Lions monotone case}\label{sec:6}

Suppose now that Assumptions \ref{assump.regularity.LL} and \ref{assump.LL} hold. By Lemma \ref{lem.lagrangianLL}, this means that Assumption \ref{assump.fixedpoint} holds, as well. Then Theorem \ref{thm.main.disp} allows us to produce local in time solutions, with the length of the time horizon depending on $G$ {and $L$} only through {their} displacement semi-monotonicity constant. The proof of Theorem \ref{thm.main.LL} thus reduces to the following a-priori estimate.

\begin{proposition}
    \label{prop.ll.apriori}
    Suppose that Assumptions \ref{assump.regularity.LL} and \ref{assump.LL} hold. Then there is a constant $C>0$ with the following property. If $T_0 \in [0,T)$ and $U : [T_0,T] \times \R^d \times \cP_2(\R^d) \to \R$ is a classical solution to \eqref{ME} on the the time interval $[T_0,T]$, then we have 
    \begin{align} \label{disp.monotone.prop}
        \E\Big[ \big(D_x U(t,X,\cL(X)) - D_xU(t,X',\cL(X')) \big)\cdot (X-X') \Big] \geq -C\E\big[|X - X'|^2 \big]
    \end{align}
    for all $t \in [T_0,T]$ and all square-integrable $X,X'$. 
\end{proposition}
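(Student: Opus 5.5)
We reduce \eqref{disp.monotone.prop} to a uniform bound on the second order quantity
\[
\E\big[ Y^\top D_{xx}U(t,X,\cL(X)) Y + Y^\top D_{m}D_xU(t,X,\cL(X),\hat X)\hat Y\big],
\]
where $(X,Y)$ and $(\hat X,\hat Y)$ are i.i.d. This parallels Remark \ref{rmk.dmonotone.secondorder}: interpolating $X_s = X' + s(X-X')$ and differentiating in $s$ shows that \eqref{disp.monotone.prop} follows if this quantity is $\geq -C\E[|Y|^2]$. Here $Y$ may be taken independent-copy compatible, since for general (non-i.i.d.) lifts the cross term $\E[Y^\top D_mD_xU(\dots,\hat X)\hat Y]$ is computed with $(\hat X,\hat Y)$ an independent copy. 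We split this quantity into a ``diagonal'' part, controlled by a bound $|D_{xx}U|\le C$, and a ``nonlocal'' part. The nonlocal part we show is nonnegative by propagation of Lasry--Lions monotonicity, using the second order characterization of Remark \ref{rmk.llmonotone.secondorder}. The bound $-C\E|Y|^2$ then follows with $C=\|D_{xx}U\|_\infty$.

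\textbf{Step 1: propagation of LL-monotonicity.} We show that for each $t\in[T_0,T]$, $m\mapsto U(t,\cdot,m)$ is LL-monotone, i.e. $\int (U(t,x,m)-U(t,x,m'))d(m-m')\ge 0$. Infinitesimally this means $\E[Y^\top D_mD_xU(t,X,\cL(X),\hat X)\hat Y]\ge0$. We use the standard argument of \cite{CarDelLasLio}. Fix $t_0$ and $m_0,m_0'$, and let $m_t,m_t'$ be the (conditional on $W^0$) equilibrium flows driven by the feedback $-D_p\hat H(\cdot)$ as in the uniqueness proof of Theorem \ref{thm.main.disp}. Let $u_t=U(t,\cdot,m_t)$ and $u'_t=U(t,\cdot,m'_t)$; by It\^o's formula these solve the stochastic HJB equations with Hamiltonian $H(x,p,\mu_t)$, where $\mu_t=\Phi((\mathrm{Id},Du_t)_\#m_t)$. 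Differentiating $\int (u_t-u_t')d(m_t-m_t')$ in time, the common noise terms cancel in expectation. The remainder splits into two terms. The first is a convexity term $\int [H(x,Du')-H(x,Du)-D_pH(x,Du)(Du'-Du)]dm_t$ plus its symmetric counterpart, which is nonnegative. The second is the interaction term coming from the $\mu$-dependence. Rewritten via the Lagrangian using $\mu_t$ as the joint law of $(X_t,\alpha_t)$, this term is exactly the expression in \eqref{llmonotone.L}. So the terminal quantity, which is nonnegative by \eqref{llmonotone.g}, is at most the initial one. This gives monotonicity of $U(t_0,\cdot,\cdot)$. The key identity is the envelope relation $D_\mu H = -D_\mu L(x,-D_pH,\mu,\cdot)$ from Section \ref{sec:3}; it converts the $H$-interaction with $\mu_t=\Phi(\cdot)$ into an $L$-interaction integrated against $\mu_t-\mu'_t$.

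\textbf{Step 2: uniform bound on $D_{xx}U$.} Under Assumption \ref{assump.regularity.LL}, $|D_xG|\le C$ and $|D_xH|\le C(1+|p|)$. A standard Bernstein/FBSDE argument for the (random, common-noise) HJB satisfied by $u_t$ along $m_t$ then gives $|D_xU|\le C$ with $C$ independent of $T_0$. Differentiating twice in $x$ gives a linear backward equation for $D_{xx}U$ with bounded coefficients ($D_{pp}H$, $D_{xp}H$, $D_{xx}H$ are bounded and $D_xU$ is bounded). From this we obtain $\|D_{xx}U\|_\infty\le C$ with $C$ depending only on the data and $T$. One subtlety: $\mu_t$ depends on $x$ only through $m_t$, which is frozen when differentiating in $x$, so no $D_{xm}$ terms enter.

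\textbf{Main obstacle.} The delicate point is Step 1. There the cross terms involve $\hat H$, hence $\Phi$, and the computation must be organized so that the interaction term becomes exactly the expression in \eqref{llmonotone.L} evaluated at $\mu_t,\mu'_t$, and is not merely comparable to it. I would first try to write the time derivative of $\int(u-u')d(m-m')$ at the level of the Lagrangian. The idea is to introduce $\alpha=-D_pH(x,Du,\mu)$ and use the fixed point relation $\mu=(\mathrm{Id},\alpha)_\#m$ so that the $\mu$-terms appear as $\int[L(x,a,\mu)-L(x,a,\mu')]d(\mu-\mu')$. A weaker one-sided inequality obtained via Fenchel--Young would suffice if the exact identity proves awkward.
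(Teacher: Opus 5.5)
Your architecture matches the paper's. The reduction via the second-order characterization is Lemma \ref{lem.llmonotone.comp}, Step 1 is Proposition \ref{prop.LLpropagation}, and Step 2 is Proposition \ref{prop.dxx}.

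Step 1 works, but only in the Fenchel--Young form you mention at the end. The split you first describe (an $H$-convexity term at frozen $\mu$ plus a remainder) leaves the interaction evaluated at crossed momenta, namely $\int[H(x,Du',\mu)-H(x,Du',\mu')]\,dm+\int[H(x,Du,\mu')-H(x,Du,\mu)]\,dm'$, and that is not the expression in \eqref{llmonotone.L}. Instead, set $\alpha=-D_pH(x,Du_t,\mu_t)$ and $\alpha'=-D_pH(x,Du'_t,\mu'_t)$, so that $\mu_t=(\mathrm{Id},\alpha)_\# m_t$ and $\mu'_t=(\mathrm{Id},\alpha')_\# m'_t$. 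Discarding the Fenchel--Young gaps then gives, in expectation,
\[
\frac{d}{dt}\int (u_t-u_t')\,d(m_t-m_t') \le -\int\big[L(x,a,\mu_t)-L(x,a,\mu_t')\big]\,d(\mu_t-\mu_t')(x,a)\le 0 .
\]
The $D_xU$ bound in Step 2 is also fine.

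The genuine gap is the $D_{xx}U$ bound in Step 2. Differentiating the HJB for $u_t=U(t,\cdot,m_t)$ twice in $x$ does not give a linear equation. Up to common-noise martingale terms, $W=D_{xx}u_t$ solves the matrix Riccati equation
\[
-\partial_t W-(1+\sigma_0)\Delta W+D_pH\cdot D_xW+D_{xx}H+D_{xp}H\,W+W\,D_{px}H+W\,D_{pp}H\,W=0 .
\]
Since $D_{pp}H\ge 0$, a maximum-principle argument gives only the semiconcavity bound $W\le C$. In the other direction the quadratic term can drive $W$ to $-\infty$ backward in time (compare $\frac{d}{ds}w=-cw^2$ with $w(0)<0$). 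Yet \eqref{disp.monotone.prop} needs exactly this lower bound: after LL monotonicity removes the nonlocal part, what remains is $\E[Y^\top D_{xx}U\,Y]\ge -C\E|Y|^2$. The boundedness of $D_{xx}U$ built into the definition of classical solution does not help. That constant depends on $U$ and $T_0$, while the continuation argument in Theorem \ref{thm.main.LL} needs it uniform in $T_0$.

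The paper avoids the quadratic term by differentiating only once. Along the frozen flow $(m_t,\mu_t)$, $V=D_xU$ solves an equation in which $D_{xx}U=D_xV$ enters linearly, through $D_xV\,D_p\hat H(x,V,\mu_t)$, and $V$ is already known to be bounded. The paper then writes $V$ in mild form against the heat kernel, using the non-degenerate idiosyncratic noise and the independence of $W$ from $(W^0,m,\mu)$. Combined with the smoothing estimate $\|D_x(p_s*\psi)\|_\infty\le Cs^{-1/2}\|\psi\|_\infty$, this yields a Gronwall inequality with an integrable singular kernel for $\sup_{x,m}|D_{xx}U(t,x,m)|$, which closes with a constant independent of $T_0$. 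Some such use of the non-degenerate diffusion is what your Step 2 is missing.
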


Before proving Proposition \ref{prop.ll.apriori}, we explain how it implies Theorem \ref{thm.main.LL}.

\begin{proof}[Proof of Theorem \ref{thm.main.LL}]
    Suppose that for some $T_0 \in [0,T)$, we have a classical solution $U : [T_0,T] \times \R^d \times \cP_2(\R^d) \to \R$. Then Proposition \ref{prop.ll.apriori} shows that there is a constant $C>0$ independent of $T_0$ such that 
     \begin{align*}
        \E\Big[ \big(D_x U(T_0,X,\cL(X)) - D_xU(T_0,X',\cL(X')) \big)\cdot (X-X') \Big] \geq -C\E\big[|X - X'|^2 \big].
    \end{align*}
    By Theorem \ref{thm.main.disp}, it follows that there is an $\eps > 0$ which is independent of $T_0$ {(but dependent on the semi-monotonicity constant $C$ and the semi-monotonicity constants $C_{L,a}$ and $C_{L,x}$ of $L$; i.e. it is small enough such that $C_{L,a}-\eps C - (\eps^2/2)C_{L,x}>0$)} such that if $U$ is a classical solution to \eqref{ME} on the time interval $[T_0,T]$, then the equation
    \begin{align} 
    \begin{cases}
        \ds - \partial_t V - \sigma \Delta_{\id} V - \sigma_0 \Delta_{\com } V + \hat{H}\Big(x, D_x V(t,x,m), \big(\text{Id}, D_xV(t,\cdot,m) \big)_{\#} m \Big) \vspace{.2cm}
        \\
        \ds \qquad + \int_{\R^d} D_p \hat{H}\Big(y,D_xV(t,y,m), \big(\text{Id}, D_xV(t,\cdot,m) \big)_{\#} m \Big) \cdot D_m V(t,x,m,y) m(dy) = 0 
        \vspace{.2cm} \\ 
        \qquad \qquad \qquad (t,x,m) \in ((T_0 - \eps) \vee 0, T_0) \times \R^d \times \cP_2(\R^d), 
        \vspace{.2cm} \\ \ds 
        V(T_0,x,m) = U(T_0,x,m), \quad (x,m) \in \R^d \times \cP_2(\R^d)
    \end{cases}
\end{align}
admits a classical solution. By concatenating $V$ and $U$, we find that there is a constant $\eps > 0$, independent of $T_0$, with the following property: if there exists a classical solution to \eqref{ME} on the time interval $[T_0,T]$, then there also exists a classical solution on the larger time interval $[(T_0 - \eps) \vee T_0, T]$. By induction, this completes the proof.

\end{proof}

Here is a {well-known} lemma {(see for instance \cite[Remark 2.8]{GanMesMouZha})} which reduces the a-priori estimate \eqref{disp.monotone.prop} to Lasry--Lions monotonicity and an estimate on $D_{xx} U$. 

\begin{lemma} \label{lem.llmonotone.comp}
    Suppose that for some $C > 0$, a function $V : \R^d \times \cP_2(\R^d)\to \R$ is $C^2$, Lasry--Lions monotone, and $\|D_{xx} V\|_{\infty} \leq C$. Then we have 
    \begin{align*} 
        \E\Big[ \big(D_x V(X,\cL(X)) - {D_x}V(X',\cL(X')) \big)\cdot (X-X') \Big] \geq - C \E\big[|X - X'|^2 \big]
    \end{align*}
    for all square-integrable random variables $X,X'$. 
\end{lemma}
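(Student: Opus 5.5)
We prove the inequality by splitting $D_xV(X,\cL(X))-D_xV(X',\cL(X'))$ into a displacement part in $x$ and a measure part, and then treat each piece separately. Set $m=\cL(X)$, $m'=\cL(X')$, and write
\begin{align*}
D_xV(X,m)-D_xV(X',m') = \big(D_xV(X,m)-D_xV(X',m)\big) + \big(D_xV(X',m)-D_xV(X',m')\big).
\end{align*}
For the first bracket, the mean value theorem and $\|D_{xx}V\|_\infty\le C$ give $\big(D_xV(X,m)-D_xV(X',m)\big)\cdot(X-X')\ge -C|X-X'|^2$ pointwise. It then remains to show that the measure part contributes a nonnegative quantity, up to a further error of order $\E|X-X'|^2$.

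For the measure part, we interpolate along $X_s=X'+s(X-X')$, with $m_s=\cL(X_s)$ and $\Delta X=X-X'$. Assuming enough regularity to differentiate under the expectation, we can write $D_xV(y,m)-D_xV(y,m')=\int_0^1\E\big[D_mD_xV(y,m_s,\hat X_s)\hat{\Delta X}\big]ds$, where $(\hat X,\hat X')$ is an independent copy of $(X,X')$. Hence the measure contribution is
\begin{align*}
\int_0^1 \E\big[\Delta X^\top D_mD_xV(X',m_s,\hat X_s)\hat{\Delta X}\big]\,ds .
\end{align*}
Lasry--Lions monotonicity, in the second-order form of Remark \ref{rmk.llmonotone.secondorder}, says that $\E[Y^\top D_mD_xV(Z,\cL(Z),\hat Z)\hat Y]\ge0$ whenever $(Z,Y)$ and $(\hat Z,\hat Y)$ are i.i.d. Our expression does not quite have this form, because the base point is $X'$ rather than $X_s$. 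We therefore replace $X'$ by $X_s$ in the first argument, writing $D_mD_xV(X',\cdot)=D_mD_xV(X_s,\cdot)-\big(D_mD_xV(X_s,\cdot)-D_mD_xV(X',\cdot)\big)$. After the replacement the main term is nonnegative by monotonicity, applied with $Z=X_s$ and $Y=\Delta X$.

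The main obstacle is controlling the correction term $\E\big[\Delta X^\top\big(D_mD_xV(X_s,m_s,\hat X_s)-D_mD_xV(X',m_s,\hat X_s)\big)\hat{\Delta X}\big]$. The hypotheses control only $D_{xx}V$, not $D_xD_mD_xV$. The cleaner route is to avoid this term entirely by using the symmetric first-order structure. Write $\E[(D_xV(X,m)-D_xV(X',m'))\cdot\Delta X]=\frac{d}{d\epsilon}\big|_{0}$ of a suitable functional. Equivalently, set $\phi(s)=\E[D_xV(X_s,m_s)\cdot\Delta X]$, so that $\phi(1)-\phi(0)=\int_0^1\phi'(s)ds$, with
\begin{align*}
\phi'(s)=\E\big[\Delta X^\top D_{xx}V(X_s,m_s)\Delta X\big]+\E\big[\Delta X^\top D_mD_xV(X_s,m_s,\hat X_s)\hat{\Delta X}\big].
\end{align*}
The second term is now exactly in the Lasry--Lions form, with $Z=X_s$ and $Y=\Delta X$, so it is $\ge0$. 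The first term is $\ge -C\E|\Delta X|^2$. Integrating over $s\in[0,1]$ gives the claim.

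The only remaining technical point is justifying the differentiation of $\phi$. This uses the $C^2$ regularity of $V$ and the chain rule for the lift $Y\mapsto V(x,\cL(Y))$ on $L^2$. If $D_mD_xV$ is only defined as a continuous bounded function, we first argue for bounded $X,X'$ (or smooth approximations) and then pass to the limit by dominated convergence, using the quadratic bound on $D_xV$.
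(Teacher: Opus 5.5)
Your final argument via $\phi(s)=\E[D_xV(X_s,m_s)\cdot\Delta X]$ is correct, and it is essentially the standard argument the paper has in mind. The paper gives no proof; it cites \cite[Remark 2.8]{GanMesMouZha}, where one adds the second-order Lasry--Lions inequality to the bound $D_{xx}V\ge -C I_{d\times d}$ to obtain the second-order displacement semi-monotonicity inequality, and your interpolation is exactly what integrates that second-order inequality into the first-order one. You can drop the initial splitting detour, and the only point needing care is your closing regularity remark (the chain rule for the lift, dominated convergence, and truncation when $D_mD_xV$ is not bounded).
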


The following proposition contains an a-priori estimate on $D_{xx} U$ which, in light of Lemma \ref{lem.llmonotone.comp}, will be useful in proving Proposition \ref{prop.ll.apriori}.

\begin{proposition} \label{prop.dxx}
    Let Assumption \ref{assump.regularity.disp} and \ref{assump.LL} hold. Then there is a constant $C>0$ with the following property: for any $T_0 \in [0,T)$ and any $U : [T_0,T] \times \R^d \times \cP_2(\R^d) \to \R$ satisfying \eqref{ME}, we have 
    \begin{align*}
        \sup_{(t,x,m) \in [T_0,T] \times \R^d \times \cP_2(\R^d)} \Big( |D_x U(t,x,m)| + |D_{xx} U(t,x,m)| \Big) \leq C. 
    \end{align*}
\end{proposition}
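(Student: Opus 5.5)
We work along the characteristics of the master equation, i.e.\ the MFG equilibrium flow. Fix $(t_0,m_0)$ with $t_0\in[T_0,T]$, solve the conditional McKean--Vlasov SDE used in the uniqueness proof of Theorem \ref{thm.main.disp},
\[
dX_t = -D_p\hat H\big(X_t, D_xU(t,X_t,m_t), \nu_t\big)dt + \sqrt2 dW_t + \sqrt{2\sigma_0}dW^0_t,\qquad \nu_t:=(\mathrm{Id},D_xU(t,\cdot,m_t))_{\#}m_t,\quad m_t=\mathcal{L}^0(X_t),
\]
and set $u(t,x):=U(t,x,m_t)$ and $\mu_t:=\Phi(\nu_t)$. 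Itô's formula for functions of measures shows that $u$ solves the backward stochastic HJB equation
\[
d_tu=\big[-(1+\sigma_0)\Delta u + H(x,D_xu,\mu_t)\big]dt + v_t\, dW^0_t,
\]
with frozen $W^0$-adapted measure flow $(\mu_t)$. This uses $\hat H(x,p,\nu_t)=H(x,p,\mu_t)$, so the fixed-point map enters only through $\mu_t$. Since $(t_0,x,m_0)$ is arbitrary, it suffices to bound $D_xu$ and $D_{xx}u$ on $[t_0,T]\times\R^d$ uniformly in $m_0$ and $T_0$.

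For $D_xu$, differentiate the HJB in $x$ and represent $D_xu(t_0,x)$ via a linear BSDE along the optimal path $dX_t=-D_pH(X_t,D_xu,\mu_t)dt+\dots$ started at $x$. The source term is $D_xH(X_t,D_xu,\mu_t)$, bounded by $C(1+|D_xu|)$ by Assumption \ref{assump.regularity.LL}, and the terminal condition $D_xG$ is bounded. Gronwall along the path gives $\sup|D_xu|\le C$ with $C$ depending only on $T$ and the data. Equivalently, use the Lipschitz estimate for the value function of a control problem whose Lagrangian $L(\cdot,a,\mu_t)$ has $x$-Lipschitz constant growing like $1+|a|$, combined with the a priori bound on the optimal control.

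For $D_{xx}u$, first obtain the semiconcavity bound $D_{xx}u\le C$. The value function of the control problem with frozen flow $\mu_t$ is semiconcave with constant controlled by $\|D_{xx}L\|_\infty$, $\|D_{xx}G\|_\infty$ and $T$: compare $u(x+h)+u(x-h)-2u(x)$ using the optimal control from $x$ for all three starting points, with the same noise. Neither monotonicity nor $D_xu$ bounds enter here. The lower bound $D_{xx}u\ge -C$ is the main obstacle, since it is genuinely an upper bound on the Hessian of a Hamilton--Jacobi solution. I would get it by differentiating the equation twice. With $w=\xi^\top D_{xx}u\,\xi$, the equation for $w$ has the form
\[
-\partial_t w - \mathscr{L}w + \xi^\top D_{xx}u\, D_{pp}H\, D_{xx}u\,\xi + 2\xi^\top D_{xp}H\, D_{xx}u\,\xi + D_{xx}H[\xi,\xi]=\text{martingale},
\]
where $\mathscr{L}$ is the generator along the optimal path. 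Since $D_{pp}H\ge0$, the quadratic term has a favorable sign for an upper bound, not a lower one. So for the lower bound I would instead exploit the bounded Hessian of $H$ in $p$, namely $D_{pp}H\le C$ from strict convexity of $L$, together with the already obtained two-sided $D_xu$ bound and upper bound $D_{xx}u\le C$. A Riccati comparison along characteristics then prevents $D_{xx}u$ from blowing down to $-\infty$ in finite time, while the nondegenerate noise is used through a Bernstein/maximum-principle argument on $\xi^\top D_{xx}u\,\xi$. If that fails, I would fall back on the interior $C^{1,1}$ regularity available for uniformly parabolic HJB with bounded gradient and Lipschitz-in-$x$ data (Assumption \ref{assump.regularity.disp} bounds second derivatives of $H$), which gives $|D_{xx}u|\le C$ depending only on $\sup|D_xu|$, $T$ and the data. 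The essential point throughout is that all constants are independent of $(\mu_t)$, hence of $m_0$ and $T_0$, because every bound on $L,H,G$ is uniform in the measure argument.
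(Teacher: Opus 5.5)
\textbf{Verdict: the proposal has a genuine gap.} The crux, the lower Hessian bound $D_{xx}U\ge -C$ uniformly in $T_0$, is identified but not proved, and the proposed Riccati route cannot prove it.

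\textbf{What is fine.} Your $D_xU$ bound matches the paper's argument: a linear BSDE with driver $D_xH$ along the optimal path, closed by Gronwall using $|D_xH|\le C(1+|p|)$ and $|D_xG|\le C$. Your semiconcavity bound $D_{xx}U\le C$, obtained by shifting the optimal control, is also correct. There are two small slips.
\begin{itemize}
\item Differentiating the HJB in $x$ (once for $D_xu$, twice for $w$) requires $x$-derivatives of $U$ beyond those a classical solution is assumed to have. This is why the paper obtains the BSDE from the stochastic maximum principle plus strict convexity instead.
\item For $\sigma_0>0$, your backward stochastic HJB is missing the cross term $-2\sigma_0\int\mathrm{tr}\,D_{xm}U(t,x,m_t,y)\,m_t(dy)=-\sqrt{2\sigma_0}\,\mathrm{div}\,v_t$.
\end{itemize}

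\textbf{Why the Riccati comparison fails.} From your equation for $w=\xi^\top D_{xx}u\,\xi$, the comparison yields, for $\lambda(t):=\sup_{x,m}\big(\lambda_{\min}(D_{xx}U(t,x,m))\big)^-$,
\[
\lambda(t_0)\le C+C\int_{t_0}^T\big(1+\lambda(t)^2\big)\,dt .
\]
Its comparison solution blows up after a time of order $1/C$. So you only get a bound on a short interval near $T$. You do not get the $T_0$-independent bound that the continuation argument in Theorem \ref{thm.main.LL} needs. Neither the $D_xU$ bound nor $D_{xx}U\le C$ helps, because $|D_{xx}u\,\xi|^2$ is driven by the negative eigenvalues. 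A maximum principle on $w$ cannot overcome the sign of the quadratic term, so the ``Bernstein'' step is not an argument.

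\textbf{Why the fallback does not close it.} Your appeal to ``interior $C^{1,1}$ regularity'' is where the proof actually has to happen, but it is only asserted, and it does not apply as stated. For $\sigma_0>0$, $u=U(\cdot,\cdot,m_\cdot)$ is a random field solving a backward SPDE with $W^0$-adapted coefficients. Interior estimates are also local, whereas you need uniformity in $x\in\R^d$, $m_0$ and $T_0$.

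\textbf{The missing idea (the paper's route).} Let the idiosyncratic noise alone do the smoothing, through a Duhamel formula taken conditionally on $W^0$.
\begin{enumerate}
\item Evaluate $U$ along the driftless process $Y_t=x_0+\sqrt2(W_t-W_{t_0})+\sqrt{2\sigma_0}(W^0_t-W^0_{t_0})$. The master equation gives $dU(t,Y_t,m_t)=\hat H\big(Y_t,D_xU(t,Y_t,m_t),\nu_t\big)\,dt+dM_t$, with all cross-variation terms cancelling.
\item Since $W$ is independent of $(W^0,m,\mu)$, integrate out $W$ against the Gaussian kernel $p_{t-t_0}$ and keep only the expectation over $W^0$.
\item Differentiating in $x_0$ expresses $D_xU(t_0,\cdot,m_0)$ through $D_x\hat H$ and $D_{xx}U\,D_p\hat H$ convolved with $p_{t-t_0}$. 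Thus $D_{xx}U$ enters linearly, not quadratically.
\item Putting one more derivative on the kernel costs $C(t-t_0)^{-1/2}$. For $\Lambda(t):=\sup_{x,m}|D_{xx}U(t,x,m)|$, which is finite because $U$ is classical, this gives
\[
\Lambda(t_0)\le C+C\int_{t_0}^T (t-t_0)^{-1/2}\big(1+\Lambda(t)\big)\,dt .
\]
\item This inequality closes on any horizon (iterate once, then apply Gronwall), with $C$ independent of $T_0$. Semiconcavity is then unnecessary.
\end{enumerate}
Note that controlling the transport term this way requires a uniform bound on $D_p\hat H(\cdot,D_xU,\nu_t)$, which is not explicit in the hypotheses. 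You should check that bound, or absorb the Lipschitz part of the drift into the semigroup.
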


\begin{proof}
   We start with the bound on $D_x U$. For this, we are going to use a stochastic representation of $D_x U$ in terms of the Pontryagin maximum principle. This will be an extension of the same result for standard mean field games (see e.g. Theorem 4.10 of \cite{CarDel:vol2}), so we only sketch the proof. We fix $(t_0,x_0,m_0) \in [0,T] \times \R^d \times \cP_2(\R^d)$, and define $X$ by
   \begin{align*}
      \begin{cases}
           dX_t = - D_p \hat{H}\Big(X_t, D_xU(t,X_t, m_t), \big(\text{Id}, D_xU(t,\cdot, m_t)\big)_{\#} m_t \Big) dt + \sqrt{2} dW_t + \sqrt{2\sigma_0} dW_t^0, 
       \\
       X_{t_0} \sim m_0, \quad m_t = \cL^0(X_t), 
      \end{cases}
   \end{align*}
   where $W$ and $W^0$ are independent Brownian motions on some filtered probability space satisfying the usual conditions, and $\cL^0(X_t) = \cL(X_t | \cF_t^{W_0})$, $\cF_t^{W^0}$ being the filtration generated by $W^0$. 
   By a verification argument, $m$ is {the first marginal of a} MFE for the game started from $m_{t_0}$ at time $t_0$, and in light of the uniqueness result as a consequence of the Lasry--Lions monotonicity (discussed in \cite[page 306]{CD1}), it is {coming from} the unique MFE, {which has the representation} 
   \begin{align*}
       \mu_t = \big(\text{Id}, D_xU(t,\cdot, m_t)\big)_{\#} m_t. 
   \end{align*}
   For $x_0 \in \R^d$, we define $(X^{x_0}_{t})_{t\in[t_{0},T]}$ by 
   \begin{align*}
       dX_t^{x_0} = - D_p \hat{H}\Big(X_t^{x_0}, D_xU(t,X_t^{x_0}, m_t), \mu_t \Big) dt + \sqrt{2} dW_t + \sqrt{2\sigma_0} dW_t^0, \quad X_{t_0}^{x_{0}}= {x_0}. 
   \end{align*}
 On the one hand, a verification argument shows that the control
 \begin{align*}
     \alpha^*_t = - D_p \hat{H}\Big( X_t^{x_0}, D_x U(t, X_t^{x_0}, m_t), \mu_t \Big)
 \end{align*}
 is the unique optimizer for the control problem
   \begin{align*}
       \inf_{\alpha} \E\bigg[ \int_{t_0}^T L\Big(X_t, \alpha_t, \Phi(\mu_t) \Big) dt + G(X_T, m_T) \bigg]
   \end{align*}
   with dynamics 
   \begin{align*}
       d X_t = \alpha_t dt + \sqrt{2} dW_t + \sqrt{2\sigma_0} dW_t^0, \quad X_{t_0} = x_0.
   \end{align*}
   On the other hand, the stochastic maximum principle applied to this control problem shows that we must have
   \begin{align*}
    - D_p \hat{H}\Big( X_t^{x_0}, D_x U(t, X_t^{x_0}, m_t), \mu_t \Big) =   \alpha_t^* = - D_p \hat{H}\Big(X_t^{x_0}, Y_t^{x_0}, \mu_t \Big), 
   \end{align*}
   where $Y^x$ is the unique solution of the BSDE
   \begin{align*}
       dY_t^{x_0} = D_x \hat{H}\big(X_t^{x_0}, Y_t^{x_0}, \mu_t \big) dt + Z_t^{x_0} dW_t +  Z_t^{x_0,0} dW_t^0, \quad Y_T^{x_0} = D_x G(X_T^{x_0}, m_T).
   \end{align*}
   By the strict convexity of $\hat{H}$ in $p$, we deduce that 
   \begin{align*}
       D_x U(t,X_t^{x_0},m_t) = Y_t^{x_0}.
   \end{align*}
   Now $d |Y_t^{x_0}|^2$, taking expectations, and using Assumption \ref{assump.regularity.LL}, we get 
   \begin{align*}
       \E\big[ |Y_t^{x_0}|^2 \big] \leq C \Big(1 + \E\bigg[ \int_t^T |Y_s^{x_0}|^2 ds \bigg]\Big), 
   \end{align*}
   and so Gronwall's Lemma gives 
   \begin{align*}
       |D_x U(t_0,x_0, m_0)|^2 = |Y_{t_0}^{x_0}|^2 \leq \sup_{t_0 \leq t \leq T} \E\big[ |Y_t^{x_0}|^2 \big] \leq C, 
   \end{align*}
   which gives the desired bound on $D_x U$.
   
   For the bound on $D_{xx} U$, we are going to mimic an argument in {the proof of \cite[Proposition 5.53]{CarDel:vol2}}. We fix $(t_0,\bx_0,m_0)$, and we define $X^{x_0}_t, m_t, \mu_t$ as before. We also define $Y$ by 
   \begin{align*}
       Y_t = x_0 + \sqrt{2}W_t^{t_0} + \sqrt{2\sigma_0}W_t^{0,t_0}, \quad W_t^{t_0} = W_t - W_{t_0}, \quad W_t^{0,t_0} = W_t^0 - W_{t_0}^0. 
   \end{align*}
   Then we compute 
   \begin{align*}
       dU(t,Y_t, m_t) = \hat{H}\big(Y_t, D_xU(t,Y_t,m_t), \mu_t\big) dt + dM_t, 
   \end{align*}
   with $M$ a martingale whose form is unimportant. We now use the independence of $W^{t_0}$ from $(W^{0,t_0}, m, \mu)$ to deduce that
   \begin{align*}
       U(t_0&,x_0,m_0) = \E\bigg[ \int_{t_0}^T -\hat{H}\big(Y_s, D_x U(s,Y_s, m_s), \mu_s \big) ds + G(Y_T, \mu_T) \bigg]
       \\
       &= \E\bigg[ \int_{t_0}^T \int_{\R^d} -\hat{H}\big(x + y + \sqrt{2\sigma_0}W_t^{0,t_0}, D_x U(t, x + y + \sqrt{2\sigma_0}W_t^{0,t_0}, m_t\big) p_{T-t}(y) dy dt
       \\
       &\qquad + \int_{\R^d} G(x + y + \sqrt{2\sigma_0} W_t^{0,t_0} p_{T-t_0}(dy)\bigg], 
   \end{align*}
   with $(t,x) \mapsto p_t(x)$ being the fundamental solution to the heat equation. Differentiating in $x$, we find that $V \coloneqq D_x U$ satisfies
        \begin{align*}
       V(t_0&,x_0,m_0) 
       \\
       &= \E\bigg[ \int_{t_0}^T \int_{\R^d} -D_x \hat{H}\big(x + y + \sqrt{2\sigma_0}W_t^{0,t_0}, V(t, x + y + \sqrt{2\sigma_0}W_t^{0,t_0}, \mu_t\big) p_{T-t}(y)
       \\
       &\quad + \int_{t_0}^T \int_{\R^d} \bigg( -D_x V\big(t,x+y+ \sqrt{2\sigma_0}W_t^{0,t_0}, m_t \big)
       \\
       &\qquad \qquad \qquad  \times D_p \hat{H}\big(x + y + \sqrt{2\sigma_0} W_t^{0,t_0}, V(t, x + y + \sqrt{2\sigma_0}W_t^{0,t_0}, \mu_t\big)p_{T-t}(y) \bigg) dy dt
       \\
       &\quad  + \int_{\R^d} D_x G\big(x + y + \sqrt{2\sigma_0}W_t^{0,t_0}\big)  p_{T-t_0}(y)  dy\bigg]
       \\
       &= \E\bigg[ \int_{t_0}^T \int_{\R^d} - D_x H\big(y + \sqrt{2\sigma_0}W_t^{0,t_0}, V(t, y + \sqrt{2\sigma_0}W_t^{0,t_0}, m_t\big) p_{T-t}(x-y)
       \\
       &\quad + \int_{t_0}^T \int_{\R^d} \bigg( - D_x V\big(t,y+ \sqrt{2\sigma_0}W_t^{0,t_0}, m_t \big)
       \\
       &\qquad \qquad \qquad \times  D_p \hat{H}\big(y + \sqrt{2\sigma_0}W_t^{0,t_0}, V(t,  y + \sqrt{2\sigma_0}W_t^{0,t_0}, \mu_t\big) p_{T-t}(x-y) \bigg)  dy dt
       \\
       &\quad + \int_{\R^d} D_xG \big( y + \sqrt{2\sigma_0}W_t^{0,t_0} \big) p_{T-t_0}(x-y) dy\bigg].
   \end{align*}
  Now using the bound on $V = D_x U$, Assumption \ref{assump.regularity.LL}, and the classical smoothing estimate 
   \begin{align*}
       \norm{ D_x \int_{\R^d} \psi(y) p_{T-t}(x-y) dy }_{\infty} \leq \frac{C}{\sqrt{T-t}} \|\psi\|_{\infty}, 
   \end{align*}
   {(where $C>0$ is independent of $\psi \in L^\infty(\R^d)$)} 
   we obtain the bound 
   \begin{align*}
       \sup_{(x,\mu) \in \R^d \times \cP_2(\R^d)} \big| D_{xx}U(t_0,x,\mu) \big| \leq C + \int_{t_0}^T \frac{C}{\sqrt{T-t}} \Big(1 + \sup_{(x,\mu) \in \R^d \times \cP_2(\R^d)}  \big| D_{xx}U(t,x,\mu) \big| \Big) dt, 
   \end{align*}
   which as in {the proof of \cite[Proposition 5.53]{CarDel:vol2}} is enough to conclude the desired bound. {We emphasize that the constant $C>0$ produced to uniformly bound the second derivative might depend on $T$, but it is independent of $T_0$.}
\end{proof}

\begin{proposition} \label{prop.LLpropagation}
Let Assumption \ref{assump.regularity.disp} and \ref{assump.LL} hold. Then if $U : [T_0,T] \times \R^d \times \cP_2(\R^d) \to \R$ satisfies \eqref{ME}, then $U(t,\cdot, \cdot)$ is Lasry--Lions monotone for each $t \in [T_0,T]$. 
\end{proposition}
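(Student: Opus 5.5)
Fix $t_1\in[T_0,T]$ and two measures $m_0^1,m_0^2\in\cP_2(\R^d)$. I want to show that
\[
\int_{\R^d}\big[U(t_1,x,m_0^1)-U(t_1,x,m_0^2)\big]\,d(m_0^1-m_0^2)(x)\ge 0 .
\]
I will use the classical Lasry--Lions argument along two mean field equilibria, adapted to controls. First I construct the equilibrium flows with the master equation. For $k=1,2$, let $X^k$ solve the McKean--Vlasov SDE driven by $-D_p\hat H(X^k_t,D_xU(t,X^k_t,m^k_t),\nu^k_t)$ with $\nu^k_t:=(\text{Id},D_xU(t,\cdot,m^k_t))_\#m^k_t$, $m^k_t=\cL^0(X^k_t)$ and $X^k_{t_1}\sim m^k_0$, as in the proof of Proposition \ref{prop.dxx}. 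Set $\mu^k_t:=\Phi(\nu^k_t)$. By construction $\mu^k_t$ is the conditional joint law of $(X^k_t,\alpha^k_t)$, where $\alpha^k_t=-D_pH(X^k_t,D_xU,\mu^k_t)$. Itô's formula for $U(t,x,m^k_t)$ along $m^k$ (the common-noise Itô formula for flows of conditional measures) and the master equation then show that $u^k_t(x):=U(t,x,m^k_t)$ solves the stochastic HJB equation
\[
-d_tu^k=\big[\Delta u^k+\sigma_0\Delta u^k-H(x,D_xu^k,\mu^k_t)+\text{(common-noise correction)}\big]dt-dM^k_t .
\]
The transport term in the master equation is exactly what cancels the drift of $m^k$. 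The density $m^k$ solves the stochastic Fokker--Planck equation with drift $-D_pH(x,D_xu^k,\mu^k_t)$.

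Next comes the duality computation. I compute $d\int(u^1_t-u^2_t)\,d(m^1_t-m^2_t)$ with Itô's product rule and integrate from $t_1$ to $T$. The common-noise cross terms cancel as in \cite[Ch.~4]{CarDel:vol2}. Taking expectations gives
\[
\int(u^1-u^2)\,d(m^1-m^2)\Big|_{t_1}
=\E\Big[\int (G(\cdot,m^1_T)-G(\cdot,m^2_T))\,d(m^1_T-m^2_T)\Big]+\E\int_{t_1}^T I_t\,dt .
\]
Here $I_t$ collects the terms $\int\big[H(x,p^2,\mu^2)-H(x,p^1,\mu^1)-D_pH(x,p^1,\mu^1)\cdot(p^2-p^1)\big]dm^1$ together with the symmetric term with the roles of $1$ and $2$ exchanged. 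The first term on the right is $\ge0$ by \eqref{llmonotone.g}.

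The key step, and the step I expect to be the main obstacle, is proving $I_t\ge0$. In standard MFG this is immediate from separability, but here $H$ depends on $\mu$, which is itself determined by the controls. I will pass to the Lagrangian. By duality, $H(x,p^k,\mu^k)=-a^k\cdot p^k-L(x,a^k,\mu^k)$ with $a^k=-D_pH(x,p^k,\mu^k)$. The cross term is bounded by $H(x,p^j,\mu^k)\ge -a^j\cdot p^k-L(x,a^j,\mu^k)$ whenever this helps. After rewriting, the sum should reduce to
\[
\int\!\!\int\big[L(x,a,\mu^1)-L(x,a,\mu^2)\big]\,d(\mu^1-\mu^2)(x,a)
\]
plus nonnegative convexity gaps. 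Here I use $\mu^k=(x,a^k(x))_\#m^k$ and the fact that $\int L(x,a^k,\mu^j)\,dm^k$ is an integral against $\mu^k$. The displayed quantity is $\ge0$ by \eqref{llmonotone.L}. The delicate point is organizing the terms so that every $H$-difference evaluated at mismatched measures is converted, through the Fenchel inequality, into $L$ evaluated along $\mu^1,\mu^2$. Concretely, I will write $H(x,p^2,\mu^2)-H(x,p^1,\mu^1)-D_pH(x,p^1,\mu^1)\cdot(p^2-p^1)$ as $\big[H(x,p^2,\mu^2)+a^1\cdot p^2+L(x,a^1,\mu^2)\big]+\big[L(x,a^1,\mu^1)-L(x,a^1,\mu^2)\big]$. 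The first bracket is $\ge0$ by Fenchel, and the second, integrated against $m^1$, is an integral against $\mu^1$.

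Finally, the needed integrability and growth follow from the classical-solution bounds and Proposition \ref{prop.dxx}. At time $t_1$ the left-hand side is the Lasry--Lions quantity for $U(t_1,\cdot,\cdot)$, which concludes the proof.
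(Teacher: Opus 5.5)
Your argument is correct, and it takes a different route from the paper. The paper gives no argument of its own here. It defers to the propagation computations of \cite{MouZha:2022}, which are formulated for the master equation written with the implicitly defined Hamiltonian $\hat H$, in a framework where regularity of the fixed-point map $\Phi$ is taken as given. You instead run the classical Lasry--Lions duality argument along two equilibria generated by $U$ itself. The one genuinely new ingredient is your Fenchel decomposition: with $a^1=-D_pH(x,p^1,\mu^1)$ one has the identity
\[
H(x,p^2,\mu^2)-H(x,p^1,\mu^1)-D_pH(x,p^1,\mu^1)\cdot(p^2-p^1)=\big[H(x,p^2,\mu^2)+a^1\cdot p^2+L(x,a^1,\mu^2)\big]+\big[L(x,a^1,\mu^1)-L(x,a^1,\mu^2)\big].
\]
The fixed-point relation gives $\mu^k_t=(\text{Id},a^k_t)_{\#}m^k_t$. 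Hence the two $L$-brackets, integrated against $m^1_t$ and $m^2_t$ respectively, add up exactly to $\int\!\!\int[L(x,a,\mu^1_t)-L(x,a,\mu^2_t)]\,d(\mu^1_t-\mu^2_t)(x,a)\geq 0$, pathwise in the common noise. What your route buys is a self-contained proof that uses only:
\begin{itemize}
\item the zeroth-order inequalities \eqref{llmonotone.L} and \eqref{llmonotone.g};
\item Fenchel's inequality;
\item the $\bd_2$-Lipschitz continuity of $\Phi$ from Proposition \ref{prop.fixedpoint.prelims}, needed only to solve the conditional McKean--Vlasov SDE.
\end{itemize}
It uses no differentiability of $\Phi$ or $\hat H$ in the measure argument, no verification or optimality, and no uniqueness of equilibria. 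The paper's route buys brevity and places the statement inside a framework that treats several monotonicity notions uniformly.

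Three small points.
\begin{itemize}
\item The cleanest rigorous form of your duality step is the joint It\^o formula for $U(t,X^j_t,m^k_t)$, $j,k\in\{1,2\}$, with $X^1,X^2$ driven by the same $W^0$. By \eqref{ME}, all second-order and common-noise terms cancel, and the drift is $a^j_t(X^j_t)\cdot D_xU(t,X^j_t,m^k_t)+H\big(X^j_t,D_xU(t,X^j_t,m^k_t),\mu^k_t\big)$. This gives your expression for $I_t$ without any SPDE product rule.
\item Your heuristic inequality $H(x,p^j,\mu^k)\ge -a^j\cdot p^k-L(x,a^j,\mu^k)$ has mismatched momenta; Fenchel requires the same $p$ on both sides. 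The concrete decomposition you then use is the correct one.
\item Do not invoke Proposition \ref{prop.dxx}. Its proof uses Assumption \ref{assump.regularity.LL}, which is not among the hypotheses here. The bounds built into the definition of a classical solution already provide all the growth and integrability you need.
\end{itemize}
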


\begin{proof}
This follows from the computations appearing in the proof of Remark 4.4 in \cite{MouZha:2022}.
\end{proof}

\begin{proof}[Proof of Proposition \ref{prop.ll.apriori}]
    For existence, combine Propositions \ref{prop.dxx} with Proposition \ref{prop.LLpropagation}. Uniqueness follows from the uniqueness of mean field equilibria as in the proof of Theorem \ref{thm.main.disp}, and uniqueness of equilibria under the Lasry-Lions monotonicity condition can be obtained by mimicking the proof of Theorem 3.29 in \cite{CD1}, for instance.
\end{proof}

\appendix 

\section{Existence of solutions to the Nash system}\label{app:A}

The goal of this Appendix is to prove the following existence result.

\begin{proposition} \label{prop.existence.N}
   {Let Assumptions \ref{assump.regularity.disp} and \ref{assump.disp} hold.} Then for all $N\in\N$ large enough, there exists a symmetric admissible {classical} solution $\bu^N$ to the Nash system \eqref{nashsystem2}.
\end{proposition}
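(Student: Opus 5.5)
The plan is a continuation (Leray--Schauder) argument combined with truncation, with the a priori estimates of Section \ref{sec.uniform} supplying the compactness. By Proposition \ref{prop.aderivscaling} and Lemma \ref{lem.hatH}, for $N$ large the maps $\ba^N$ and $\hat{H}^{N,i}$ are $C^4$, with bounded derivatives of order one through three (here $N$ is fixed, so uniformity in $N$ is not needed). The only obstruction to a standard parabolic existence theory is the quadratic growth of $\hat{H}^{N,i}$ in $p^i$ and the unbounded data. I will therefore first solve a regularized system. Let $\pi_R$ be a smooth truncation of $\bp$ with $|\pi_R(\bp)|\le 2R$ and $\pi_R(\bp)=\bp$ on $|\bp|\le R$. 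Replace $\diag\bu^N$ by $\pi_R(\diag\bu^N)$ inside $\hat{H}^{N,i}$ and $a^{N,j}$. Also replace $G^{N,i}$ by a version with bounded gradient, for instance $G^{N,i}\ast\rho$ composed with a radial cutoff, while keeping $D^2,D^3,D^4$ bounded. Then the coefficients of \eqref{nashsystem2} become globally Lipschitz and smooth. A Picard iteration in the space of functions with bounded first to fourth spatial derivatives, equivalently a fixed point for the linear heat semigroup of $\sum_j\Delta_j+\sigma_0\sum_{jk}\tr D_{jk}$ on short intervals, gives a local classical solution. The quantity that must be controlled globally is $\|D\bu^N\|_\infty$, and higher derivatives follow by differentiating the equation as in Section \ref{sec.uniform}.

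The key step is that the estimates of Section \ref{sec.uniform} hold for the truncated systems with constants independent of $R$ and of the truncation of $G$. This fixes $R$ larger than the a priori bound, so the truncation is inactive. I expect this to be the main obstacle, because the proofs of Propositions \ref{prop.uij}--\ref{prop.uijkl} rely on Proposition \ref{prop.nash.preliminaries} from \cite{JacMes}, which uses displacement semi-monotonicity through the FBSDE structure of the true system. My first attempt is to organize the argument as a continuity method in the data rather than by truncation. Interpolate $G_\lambda=\lambda G+(1-\lambda)G_0$ and $L_\lambda=\lambda L+(1-\lambda)L_0$, where $L_0(x,a,\mu)=\frac{C_{L,a}}{2}|a|^2+\frac{1}{C}|x|^2$-type data and $G_0=0$. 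These interpolants satisfy Assumptions \ref{assump.regularity.disp} and \ref{assump.disp} uniformly in $\lambda$, since both monotonicity inequalities are convex in the data. At $\lambda=0$ the system decouples and is explicitly solvable. Openness in $\lambda$ comes from the implicit function theorem in weighted spaces (admissible class, quadratic growth with bounded $D^2$--$D^4$), where the linearized system is a linear parabolic system with bounded coefficients. Closedness comes from Theorem \ref{thm.uniform.nash}, which applies to every admissible solution and gives bounds on $D\bu^N$, $D^2\bu^N$, $D^3\bu^N$, and on time regularity that are uniform in $\lambda$. Differentiating once more, in the same manner as Proposition \ref{prop.uijkl} for fixed $N$, yields a bound on fourth derivatives. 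Arzel\`a--Ascoli then allows passage to the limit in $\lambda$.

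Symmetry is obtained by uniqueness for fixed $N$ within the class reached by continuation. Alternatively, the fixed point map preserves the closed subspace of symmetric tuples, since the data $G(x^i,m^{N,-i}_{\bx})$ and $H(x^i,p^i,m^{N,-i}_{\bx,\ba})$ are permutation equivariant and $\ba^N$ is equivariant by uniqueness of the fixed point \eqref{fixedpointintro}. So every step (Picard iteration and implicit function theorem) can be performed inside the symmetric subspace, which gives \eqref{symmetry.1} and \eqref{symmetry.2} automatically. The regularity $D_{kj}u^{N,i}\in C^{1,2}_{\mathrm{loc}}$ required in Definition \ref{defn.admissible} follows from parabolic Schauder theory, using the $C^4$ regularity of $\ba^N$ and the $C^5$ regularity of the data.
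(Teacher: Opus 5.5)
Your endpoint and closedness steps are sound. The interpolated data satisfy Assumptions \ref{assump.regularity.disp} and \ref{assump.disp} uniformly in $\lambda$: the monotonicity inequalities are affine in the data, so the semi-monotonicity constant of $(L_\lambda,G_\lambda)$ is at least $C_{\dis}$. At $\lambda=0$ the system is solved by $u^{N,i}(t,\bx)=\phi(t,x^i)$ with $\phi$ quadratic. Theorem \ref{thm.uniform.nash} gives $\lambda$-uniform bounds for \emph{admissible} solutions.

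The gap is openness. First, the linearized system does not have bounded coefficients. Linearizing $\hat{H}^{N,i}(\bx,\diag\bu)-\sum_{j\neq i}a^{N,j}(\bx,\diag\bu)\cdot D_ju^{N,i}$ produces the drift $\sum_j a^{N,j}(\bx,\diag\bu)\cdot D_j$ of $\scrL^N$. This drift grows linearly because $D_iu^{N,i}$ does (Lemma \ref{lem.uni.growth}; Assumption \ref{assump.regularity.disp} allows quadratic growth of $G$ and $L$ in $x$).

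More seriously, the implicit function theorem cannot be run in a space that encodes admissibility. The perturbation $(\lambda-\lambda_0)(G-G_0)$ changes $D_iu^{N,i}$ by an amount of order $|\lambda-\lambda_0|(1+|x^i|)$. Because $D_p\ba^N$ and $D_{pp}\hat{H}^{N,i}$ genuinely depend on $\bp$, the equation satisfied by $D^2\bv$ contains terms $c\,D_iv^i$. Here $c$ involves $D^2_{\bx}\big[D_{p^i}\hat{H}^{N,i}(\bx,\diag\bu)\big]$, which is bounded but nonzero, and $D_iv^i$ grows linearly. The consequences are:
\begin{itemize}
\item $D^2\bv$ is in general unbounded;
\item the linearization is not onto a space with sup-norm control of $D^2$--$D^4$;
\item $\lambda\mapsto\bu_\lambda$ is not even continuous into such a space, since second derivatives move by $O(1)$ where $|x^i|\sim|\lambda-\lambda_0|^{-1}$.
\end{itemize}
In polynomially weighted spaces the implicit function theorem may go through. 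But the solutions it produces are then not known to be admissible, and Theorem \ref{thm.uniform.nash} cannot be applied to them. Its proof absorbs sup-norms such as $\|D_{ji}u^{N,i}\|_{\infty,T_0,T_0+\eps}$ into the left-hand side, so it needs them finite a priori. Your openness and closedness steps therefore take place in incompatible classes.

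The missing idea is to continue in time rather than in the data. This also removes the obstacle you identified with truncation. The paper proceeds as follows.
\begin{enumerate}
\item It truncates in both $\bx$ and $\bp$ and solves the truncated system with general terminal data $g^i$.
\item Lemma \ref{lem.apriori.shorttime} shows, by a crude Gronwall argument with constants depending on $N$ but not on $R$, that $\max_{j\neq i}\|D_jv^{R,i}\|_\infty+\max_{i,j,k}\|D_{kj}v^{R,i}\|_\infty$ stays bounded on a time interval of length $\eps$. This $\eps$ depends only on the same quantities for $g$, not on the unbounded $D_ig^i$.
\item No monotonicity is used in this short-time step, so it does not matter that truncation destroys the structure behind Proposition \ref{prop.nash.preliminaries}.
\item Sending $R\to\infty$ gives a local admissible solution, symmetric by uniqueness for the truncated system (Proposition \ref{prop.existence.shorttime}).
\item Theorem \ref{thm.uniform.nash}, applied to the untruncated solution on $[T-k\eps,T]$, controls exactly these quantities independently of $k$. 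One therefore restarts from $u^{N,i}(T-k\eps,\cdot)$ with a fixed step and reaches $t=0$.
\end{enumerate}
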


    To prove this, we will show that for any $(g^i)_{i = 1,\dots,N}$, $g^i \in C^5\big( (\R^d)^N \big)$ with bounded derivatives of order $2$, $3$, and $4$, the system
    \begin{align} \label{nashsystem3}
    \begin{cases}
       \ds  - \partial_t v^{i} - \sum_{j = 1}^N \Delta_{j} v^{i} - \sigma_0 \sum_{j,k = 1}^N \tr\big(D_{jk} v^{i} \big) + \hat{H}^{N,i}(\bx,\diag \bv)
       \\
       \ds \qquad - \sum_{j \neq i} a^{N,j}(\bx, \diag \bv^N) \cdot D_j v^{i} = 0, 
      \quad  (t,\bx) \in [0,T] \times (\R^d)^N, 
       \\
       \ds v^{i}(T,\bx) = g^i(\bx), \quad \bx \in (\R^d)^N.
    \end{cases}
\end{align}
has a solution if $T < \eps$, for some $\eps$ depending on $(g^i)_{i = 1,\dots,N}$ only through $\max_{j \neq i} \|D_j g^i\|_{\infty}$ and $\max_i \|D^2 g^i\|_{\infty}$. To this end, we will introduce a truncation procedure: we let $(\pi_R)_{R > 0}$ be a collection of maps such that all the derivatives of $\pi_R$ are bounded uniformly in $R$, and $\pi_R(x) = x$ for $|x| \leq R$. For $\bx \in (\R^d)^N$, set $\pi_R(\bx) = (\pi_R(x^1),\dots,\pi_R(x^N))$. Then define 
    \begin{align*}
        \hat{H}^{N,R,i}(\bx,\bp) = \hat{H}^{N,i}\big( \pi_R(\bx), \pi_R(\bp) \big), \quad a^{N,R,i}(\bx,\bp) = a^{N,i}\big( \pi_R(\bx), \pi_R(\bp) \big), \quad g^{R,i}(\bx) = g^i(\pi(\bx)).
    \end{align*}
    We consider the equation
    \begin{align} \label{nashsystem4}
    \begin{cases}
       \ds  - \partial_t v^{R,i} - \sum_{j = 1}^N \Delta_{j} v^{R,i} - \sigma_0 \sum_{j,k = 1}^N \tr\big(D_{jk} v^{R,i} \big) + \hat{H}^{N,R,i}(\bx,\diag \bv^R)
       \\
       \ds \qquad - \sum_{j \neq i} a^{N,R,j}(\bx, \diag \bv^N) \cdot D_j v^{R,i} = 0, 
      \quad  (t,\bx) \in [0,T] \times (\R^d)^N, 
       \\
       \ds v^{R,i}(T,\bx) = g^{R,i}(\bx), \quad \bx \in (\R^d)^N.
    \end{cases}
\end{align}
This equation has a unique classical solution $\bv^R = (v^{R,1},\dots,v^{R,N})$, see e.g. Proposition 3.3 of \cite{ma1994}.

\begin{lemma} \label{lem.apriori.shorttime}
    Let Assumption \ref{assump.regularity.disp} hold, and assume that $g^i \in C^5((\R^d)^N)$  with bounded derivatives of order $2$, $3$, $4$, and $5$. Then there is a constant $\eps$ which depends on $(g^i)_{i = 1,\dots,N}$ only through $\max_{j \neq i} \|D_j g^i\|_{\infty}$ and $\max_{i,j,k} \|D_{jk} g^i\|_{\infty}$ such that if $T < \eps$, then
    \begin{align*}
         \max_{j \neq i} \|D_j v^{R,i}\|_{\infty} + \max_{i,j,k} \|D_{kj} v^{R,i} \|_{\infty} \leq C, 
    \end{align*}
    for some constant $C>0$ which is independent of $R$. 
\end{lemma}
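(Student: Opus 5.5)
The plan is to prove a short-time a priori bound for the truncated system \eqref{nashsystem4}. The truncated system has globally Lipschitz, bounded-derivative coefficients, so its classical solution $\bv^R$ is smooth with bounded derivatives for each fixed $R$. All quantities below are therefore finite, and only the $R$-independence needs proof.

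\textbf{Step 1: norms and differentiated equations.} For $t \in [0,T]$ I define
\begin{align*}
    \Lambda(t) := \max_{j \neq i} \|D_j v^{R,i}\|_{\infty,t,T} + \max_{i,j,k} \|D_{kj} v^{R,i}\|_{\infty,t,T},
\end{align*}
which is finite for each $R$ by the regularity of $\bv^R$. Differentiating \eqref{nashsystem4} once and twice, exactly as in \eqref{nashfirstder2} and \eqref{nash.seconderiv}, gives linear equations for $v^{i,j} := D_j v^{R,i}$ and $v^{i,j,k} := D_{kj} v^{R,i}$. Their transport operator is the generator $\scrL^{N,R}$ with drift $a^{N,R,j}(\bx,\diag \bv^R)$, and their source terms are polynomial in lower derivatives. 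The coefficients of these sources are derivatives of $\hat{H}^{N,R,i}$ and $a^{N,R,j}$. By Proposition \ref{prop.aderivscaling}, Lemma \ref{lem.hatH}, and the uniform-in-$R$ bounds on derivatives of $\pi_R$, these coefficients are bounded independently of $R$. The one exception is $D_{x^j}\hat{H}^{N,i}$ and $D_{p^i}\hat{H}^{N,i}$ when $j = i$, which grow linearly. After truncation, however, $D_{x^i}\hat{H}^{N,R,i}(\bx,\bp)$ only involves $\pi_R$ of its arguments, and could depend on $R$.

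\textbf{Step 2: estimates along characteristics.} To avoid the $R$-dependence above, I only estimate quantities whose equations do not contain undifferentiated $D_{x^i}\hat{H}^{N,i}$. These are $D_j v^{R,i}$ for $j \neq i$, whose source contains only $D_{x^j}\hat{H}^{N,i}$, bounded by $C/N$, and all second derivatives, whose sources contain second derivatives of $\hat{H}$, which are bounded. The diagonal first derivative $D_i v^{R,i}$ appears inside the arguments of $\hat{H}^{N,R,i}$ and $a^{N,R,j}$, but their derivatives are bounded uniformly, so it never needs a bound of its own.

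Writing each equation in Duhamel form along the SDE driven by $\scrL^{N,R}$ (or more simply, taking $\|\cdot\|_\infty$ via the Feynman--Kac representation), I expect an estimate of the form
\begin{align*}
    \Lambda(t) \leq \Lambda_g + C\,(T-t)\,\big(1 + \Lambda(t)\big)^3,
\end{align*}
with constants depending on $N$ but not on $R$. Here $\Lambda_g := \max_{j \neq i}\|D_j g^i\|_\infty + \max_{i,j,k}\|D_{jk} g^i\|_\infty$. The cubic power comes from terms like $D_{p^np^q}a^{N,l}\, v^{q,j,q}\, v^{n,k,n}\, v^{i,l}$. The sums over $N$ indices only cost factors of $N$, which are allowed since $\eps$ may depend on $N$.

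\textbf{Step 3: continuity argument.} Since $\Lambda$ is continuous in $t$ and $\Lambda(T) = \Lambda_g$, a bootstrap closes the estimate. If $T < \eps := 1/\big(C(2 + 2\Lambda_g)^3\big)$, then $\Lambda(t)$ can never reach $2\Lambda_g + 1$. This gives the claimed bound with constant $2\Lambda_g + 1$, independent of $R$.

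\textbf{Main obstacle.} The delicate point is Step 2: confirming that no source term in the equations for $D_j v^{R,i}$ ($j \neq i$) or for the second derivatives involves an unbounded factor such as $a^{N,R}$ itself, $D_{x^i}\hat{H}$, or $D_i v^{R,i}$ multiplied by something not decaying. One such product is $\sum_{k \neq i} D_{x^j}a^{N,k}\, v^{i,k}$, which involves only bounded coefficients. For third-order quantities, if the $\|\cdot\|_\infty$ Duhamel bound fails, I would instead use the transport terms $D_{jl}v^l\, D_{p^l}a^{N,k}\, v^{i,k}$, which are bounded by $\Lambda^2$, and treat the drift $a^{N,R}$ only through the characteristic flow, so that its growth never enters the estimate.
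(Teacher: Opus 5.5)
There is a genuine gap in Step 2. Player $i$'s Hamiltonian depends on $\diag \bv^R = (D_l v^{R,l})_l$, so the equation for $D_j v^{R,i}$ already contains the second derivatives $D_{jl} v^{R,l}$ of the \emph{other} unknowns. After one more differentiation, the equation for $v^{R,i,j,k} := D_{kj} v^{R,i}$ contains the term
\begin{align*}
    \cT^{i,j,k,1} = - \sum_{q=1}^N \Big( \sum_{l \neq i} D_{p^q} a^{N,R,l}\, D_l v^{R,i} \Big) D_q v^{R,q,j,k} + \sum_{l \neq i} D_{p^l} \hat{H}^{N,R,i}\, D_l v^{R,l,j,k},
\end{align*}
which involves \emph{third} derivatives of $\bv^R$.

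These third derivatives are not controlled by $\Lambda$. So the sources of your second-derivative equations are not bounded by $C(1+\Lambda)^3$, and $\Lambda(t) \le \Lambda_g + C(T-t)(1+\Lambda(t))^3$ does not follow from a Duhamel or Feynman--Kac sup-norm bound. A sup bound on $v^{R,q,j,k}$ gives no control of its gradient. Nor can the term be moved into the drift via Girsanov: in the equation for the $i$-th component, it is the gradient of the $q$-th component that appears. Your ``main obstacle'' paragraph looks one level too low. The products $D_{jl}v^{R,l} D_{p^l}a^{N,R,k} D_k v^{R,i}$ in the first-order equations are harmless (bounded by $C\Lambda^2$); the obstruction is what they become after one more differentiation.

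The paper closes the estimate with an $L^2$/BSDE energy estimate rather than a sup-norm one.
\begin{itemize}
\item Apply It\^o's formula to $|v^{R,i,j,k}(t,\bX_t)|^2$. This produces $\E\int_{t_0}^T \sum_l |Z_t^{i,j,k,l}|^2\,dt$ on the left-hand side, with $Z_t^{i,j,k,l} = \sqrt{2}\, D_l v^{R,i,j,k}(t,\bX_t)$. These are precisely the gradients appearing in $\cT^{i,j,k,1}$.
\item Bound $|\cT^{i,j,k,1}| \le C(1+m^R(t)) \sum_r |Z_t^{r,j,k,r}|$. Young's inequality then leaves $\frac{1}{2N}\E\int \sum_r |Z_t^{r,j,k,r}|^2\,dt$ on the right.
\item Sum over all $i,j,k$, so that this term is absorbed by the left-hand side (constants may depend on $N$).
\item Only then take the supremum over $\bx_0$. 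This gives $m^R(t)^2 \le C\big(m^R(T)^2 + 1 + \int_t^T m^R(s)^4\,ds\big)$, which closes for short time.
\end{itemize}

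Your treatment of $D_j v^{R,i}$ for $j \neq i$ is fine, since those sources involve only second derivatives. Excluding $D_i v^{R,i}$ is also the right call. For the second derivatives, though, you need either this $Z$-absorption mechanism or a genuine substitute. One such substitute would be a semigroup gradient estimate with a $(T-s)^{-1/2}$ kernel, but you would have to prove its constants are uniform in $R$ despite the drift $a^{N,R}$ growing with $R$. Without one of these, the argument does not close.
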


\begin{proof}
  We emphasize throughout this proof, $C$ denotes a constant which is independent of $R$ but which, crucially, can depend on $N$. Define 
  \begin{align*}
      m^R(t) = \max_{j \neq i} \|D_j v^{R,i}\|_{\infty,t} + \max_{i,j,k} \|D_{kj} v^{R,i}\|_{\infty,t}.
  \end{align*}
  For each $i,j,k = 1,\dots,N$, we have that $v^{R,i,j} \coloneqq D_j v^{R,i}$ and $v^{R,i,j,k} = D_{kj} v^{R,i}$ satisfy 
   \begin{align*} 
    \begin{cases} \ds - \partial_t v^{R, i,j} - \scrL^N v^{R,i,j}
   - \sum_{k \neq i} \sum_{l = 1}^N D_{jl} v^{R,l} D_{p^l} a^{N,R,k} v^{i,k} - \sum_{k \neq i} D_{x^j} a^{N,R,k} v^{R,i,k}   \\ \ds
    \quad  + \sum_{k \neq i}  D_{jk}v^{R,k}  D_{p^k} \hat{H}^{N,R,i}
    + D_{x^j} \hat{H}^{N,R,i} = 0, \quad (t,\bx) \in [0,T] \times (\R^d)^N, \vspace{.2cm} 
    \\ \ds 
    v^{R,i,j}(T,\bx) = D_{x^j} g^i(\bx), \quad \bx \in (\R^d)^N, 
    \end{cases}
\end{align*}
and
   \begin{align*} 
    \begin{cases} \ds - \partial_t v^{R,i,j,k} - \scrL^N v^{R,i,j,k} + \sum_{q = 1}^4 \cT^{R,i,j,k,q} = 0, \quad (t,\bx) \in [0,T] \times (\R^d)^N, \vspace{.2cm}
    \\ \ds 
    v^{R,i,j,k}(T,\bx) = D_{x^k x^j} g^i(\bx), \quad \bx \in (\R^d)^N, 
    \end{cases}
\end{align*}
where 
\begin{align*}
   &\cT^{i,j,k,1} = - \sum_{l \neq i} \sum_{q = 1}^N D_k v^{R,q,j,q} D_{p^q} a^{N,R,l} v^{R,i,l} + \sum_{l \neq i} D_{p^l} \hat{H}^{N,R,i} D_k v^{R,l,j,l}, 
   \\
   &\cT^{i,j,k,2} = - \sum_{l = 1}^N \sum_{q = 1}^N D_{p^q} a^{N,R,l} v^{R,q,k,q} v^{R,i,j,l} - \sum_{l = 1}^N D_{x^k} a^{N,R,l} v^{R,i,j,l} - \sum_{l \neq i} \sum_{q = 1}^N D_{p^q} a^{N,R,l} v^{R,q,j,q} v^{R,i,l,k} 
   \\
   &\qquad \qquad \qquad - \sum_{l \neq i} D_{x^j} a^{N,R,l} v^{R,i,k,l},
   \\
   &\cT^{i,j,k,3} = - \sum_{l \neq i} \sum_{q,n = 1}^N D_{p^n p^q} a^{N,R,l} v^{R,q,j,q} v^{R,n,k,n} v^{R,i,l} + \sum_{l \neq i} \sum_{q = 1}^N D_{p^q p^l} \hat{H}^{N,R,i} v^{R,q,k,q} v^{R,l,j,l} 
   \\
   &\qquad \qquad \qquad - \sum_{l \neq i} \sum_{q = 1}^N D_{x^kp^q} a^{N,R,l} v^{R,q,j,q} v^{R,i,l} - \sum_{l \neq i} \sum_{q = 1}^N D_{p^q x^j} a^{N,R,l} v^{R,q,k,q} v^{R,i,l} 
   \\
   &\qquad \qquad \qquad + \sum_{l \neq i} D_{x^k p^l} \hat{H}^{N,R,i} v^{R,l,j,l} + \sum_{l = 1}^N D_{p^lx^j} \hat{H}^{N,R,i} v^{R,l,k,l}
   \\
   &\cT^{i,j,k,4} = - \sum_{l \neq i} D_{x^kx^j} a^{N,R,l} v^{R,i,l} + D_{x^kx^j} \hat{H}^{N,R,i}, 
\end{align*}
and with $\scrL^N$ now denoting the the differential operator, {similar to the one defined in \eqref{def:L^N}, but with truncated coefficients,} which acts on $f : [0,T] \times (\R^d)^N \to \R$ via 
\begin{align*}
    \scrL^N f = \sum_{j = 1}^N \Delta_j f + \sum_{j,k = 1}^N \tr\big(D_{jk} f \big) + \sum_{j = 1}^N a^{N,R,j} (\bx, \diag \bu^N) \cdot D_j f.
\end{align*}
Fix $(t_0,\bx_0) \in [0,T] \times (\R^d)^N$, and set define a process $\bX$ by 
\begin{align*}
    dX_t^i = a^{N,R,i}(\bX_t, D_{i} v^{R,i}(t,\bX_t)) dt + \sqrt{2} dW_t^i + \sqrt{2\sigma_0} dW_t^0, 
\end{align*}
and then set 
\begin{align*}
    Y_t^{i,j} = v^{R,i,j}(t,\bX_t), \quad Z_t^{i,j,k} = \sqrt{2} v^{R,i,j,k}(t,\bX_t), \quad Z_t^{i,j,0} = \sqrt{2\sigma_0} \sum_{k = 1}^N v^{R,i,j,k}(t,\bX_t) 
\end{align*}
for $i,j,k = 1,\dots,N$. Then apply It\^o's formula as in the proof of Proposition \ref{prop.uij}, and use the estimates on $\hat{H}^{N,i}$ and $\ba^{N,i}$ from Proposition \ref{prop.aderivscaling} and Lemma \ref{lem.hatH}, together with the bounds on the derivatives of $\pi_R$, to deduce that for $i \neq j$, we have
\begin{align*}
    \E\bigg[ &|Y_{t_0}^{i,j}|^2 + \sum_{{k = 0}}^N \int_{t_0}^T |Z_t^{i,j,k}|^2 dt \bigg]
    \\
    &= \E\bigg[  |Y_{T}^{i,j}|^2 + {2}\int_{t_0}^T Y_t^{i,j} \Big( \sum_{k \neq i} \sum_{l=1}^{N} D_{jl}v^{R,i} D_{p^l} a^{N,R,k} v^{R,i,k} + \sum_{k \neq i} D_{x^j} a^{N,R,k} v^{R,i,k}
    \\
    &\qquad - \sum_{k \neq i} D_{jk} v^{N,R,k} D_{p^k} \hat{H}^{N,R,i} {-} D_{x^j} \hat{H}^{N,R,i}\Big)(t,\bX_t) dt \bigg]
     \\
     &\leq C\Big( |m^R(T)|^2 + 1 + \int_{t_0}^T |m^R(t)|^3 dt \Big),
\end{align*}
with $C$ independent of $R$. 
Taking a supremum over $\bx$, and then a max over $i \neq j$, we find that 
\begin{align} \label{shorttime.vij}
    \max_{j \neq i} \|D_j v^i\|^2_{\infty, t} \leq C \Big( |m^R(T)|^2 + 1 + \int_{t}^T |m^R(s)|^{3} ds \Big). 
\end{align}
Next, we define
\begin{align*}
    Y_t^{i,j,k} = v^{R,i,j,k}(t,\bX_t), \quad Z_t^{i,j,k,l} = \sqrt{2} D_{x^l}u^{R,i,j,k}(t,\bX_t), \quad Z_t^{i,j,0} = \sqrt{2\sigma_0} \sum_{l = 1}^N D_{x^l} u^{R,i,j,k}(t,\bX_t), 
\end{align*}
and apply It\^o's formula as in the proof of Proposition \ref{prop.uiji} to find that 
\begin{align*}
    \E\bigg[ &|Y_{t_0}^{i,j,k}|^2 + \sum_{{l = 0}}^N \int_{t_0}^T |Z_t^{i,j,k,l}|^2 dt \bigg]
    \\
    &= \E\bigg[  |Y_{T}^{i,j,k}|^2 - {2}\int_{t_0}^T Y_t^{i,j,k} \Big( \sum_{q = 1}^4 \cT^{i,j,k,q}(t,\bX_t) \Big) dt \bigg].
\end{align*}
One can directly check that 
\begin{align*}
    |\cT^{i,j,k,1}| \leq C \big(1 + m^{R}(t) \big) \sum_{{r=1}}^{N} |Z_t^{r,j,r,k}|, \quad |\cT^{i,j,k,2}| + |\cT^{i,j,k,3}| + |\cT^{i,j,k,4}| \leq C \big(1 + m^R(t)^3\big). 
\end{align*}
We thus obtain 
\begin{align*}
    \E\bigg[ &|Y_{t_0}^{i,j,k}|^2 + \sum_{{l = 0}}^N \int_{t_0}^T |Z_t^{i,j,k,l}|^2 dt \bigg]
    \\
    &\leq C \Big( |m^R(T)|^2 + \int_{t_0}^T (1 + |m^R(t)|^2) \sum_{{r=1}}^{N} |Z_t^{r,j,r,k}| dt +  \int_{t_0}^T \big(1 + |m^R(t)|^4 \big) dt \Big)
    \\
    &\leq C \Big( |m^R(T)|^2 + 1 + \int_{t_0}^T |m^R(t)|^4 dt \Big) + \frac{1}{2 N} \E\bigg[ \int_{t_0}^T \sum_{{r=1}}^{N} |Z_t^{r,j,r,k}|^2 dt \bigg]
\end{align*}

Summing over all $i,j,k$, we get that 
    \begin{align*}
    \sum_{i,j,k = 1}^N \E\bigg[ &|Y_{t_0}^{i,j,k}|^2 + \sum_{{l = 0}}^N \int_{t_0}^T |Z_t^{i,j,k,l}|^2 dt \bigg]
    \\
    &\leq C \Big( |m^R(T)|^2 + 1 + \int_{t_0}^T |m^R(t)|^4 dt \Big) + \frac{1}{2} \sum_{i = 1}^{N} \E\bigg[ \int_{t_0}^T \sum_{{j,k,r=1}}^{N} |Z_t^{r,j,r,k}|^2 dt \bigg]
    \\
    &\leq C \Big( |m^R(T)|^2 + 1 + \int_{t_0}^T |m^R(t)|^4 dt \Big) + \frac{1}{2} \sum_{{i,j,k,l=1}}^{N} \E\bigg[ \int_{t_0}^T \sum_{r=1}^{N} |Z_t^{i,j,k,l}|^2 dt \bigg]
\end{align*}
and so 
  \begin{align*}
    \sum_{i,j,k = 1}^N \E\bigg[ |Y_{t_0}^{i,j,k}|^2 + \sum_{l = 1}^N \int_{t_0}^T |Z_t^{i,j,k,l}|^2 dt \bigg]
   \leq  C \Big( |m^R(T)|^2 + 1 + \int_{t_0}^T |m^R(t)|^4 dt \Big).
    \end{align*}
Taking a supremum over $\bx_0$, we deduce that 
\begin{align*}
    \max_{i,j,k} \|D_{kj} v^i\|_{\infty,t} \leq C \Big( |m^R(T)|^2 + 1 + \int_{t_0}^T |m^R(t)|^4 dt \Big),
\end{align*}
which we combine with \eqref{shorttime.vij} to obtain
\begin{align*}
    |m^R(t)|^2 \leq C \Big( |m^R(T)|^2 + 1 + \int_{t_0}^T |m^R(t)|^4 dt \Big), 
\end{align*}
from which the result easily follows.
\end{proof}

Using Lemma \ref{lem.apriori.shorttime}, we get the following short time existence result.

\begin{proposition} \label{prop.existence.shorttime}
   Let Assumption \ref{assump.regularity.disp} hold, and assume that $g^i \in C^5((\R^d)^N)$  with bounded derivatives of order $2$, $3$, $4$, and $5$. Then there is a constant $\eps>0$ which depends on $(g^i)_{i = 1,\dots,N}$ only through
    \begin{align*}
        \max_{j \neq i} \|D_j g^i\|_{\infty} + \max_{i,j,k} \|D_{kj} g^i\|_{\infty}
    \end{align*}
    such that if $T < \eps$, then there exists a classical solution to \eqref{nashsystem3}, with $v^i, D_j v^i$, $D_{kj} v^i \in C^{1,2}_{\text{loc}}$, and with bounded derivatives of order $2,3$, and $4$. Moreover, if $(g^i)_{i = 1,\dots,N}$ satisfies the symmetry conditions \eqref{symmetry.1}, \eqref{symmetry.2}, then so does $(v^{N,i})_{i = 1,\dots,N}$.
\end{proposition}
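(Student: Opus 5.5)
The plan is to pass to the limit $R \to \infty$ in the truncated systems \eqref{nashsystem4}, using Lemma \ref{lem.apriori.shorttime} as the main a priori input. Fix $T<\eps$ with $\eps$ given by Lemma \ref{lem.apriori.shorttime}. Then $\max_{j\neq i}\|D_j v^{R,i}\|_\infty + \max_{i,j,k}\|D_{kj}v^{R,i}\|_\infty \le C$, with $C$ independent of $R$ (possibly depending on $N$). The first step is to upgrade this to $R$-independent local bounds on $v^{R,i}$, $D_iv^{R,i}$, and derivatives of order up to four.

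For the low-order quantities, $D_iv^{R,i}$ grows at most linearly, by integrating the bound on $D_{ii}v^{R,i}$ from a bounded value at the origin. That bounded value comes from a representation of $v^{R,i}(t,0)$ along the dynamics, exactly as in Lemma \ref{lem.uni.growth}. Then $v^{R,i}$ grows at most quadratically. In particular, the drift $a^{N,R,j}(\bx,\diag \bv^R)$ and the composed Hamiltonian are locally bounded uniformly in $R$, by Lemma \ref{lem.ani.growth} and Lemma \ref{lem.hatH}.

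For the third and fourth derivatives, I differentiate the system once and twice more, as in the third-order computations of Section \ref{sec.uniform}. Since $N$ is fixed, I do not track $N$-scaling. The equations for $D_{lkj}v^{R,i}$ are linear in the highest-order unknowns, with coefficients controlled by $\ba^N,\hat H^{N,i}$ (Proposition \ref{prop.aderivscaling}, Lemma \ref{lem.hatH}) and by the already bounded second derivatives; the derivatives of $\pi_R$ are bounded uniformly in $R$. The same It\^o/BSDE energy estimate with a Gronwall argument then gives $\|D^3 v^{R,i}\|_\infty\le C$, and likewise $\|D^4 v^{R,i}\|_\infty \le C$. These use the bounded derivatives of order 3, 4, 5 of $g^i$, and the $C^4$ regularity of $\ba^N$ together with the bounded higher derivatives of $H$.

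Time regularity follows from the equation: $\partial_t v^{R,i}$ and $\partial_t D v^{R,i}$ are locally bounded, and H\"older continuity of second derivatives in time follows as in Lemma \ref{lem.ijk.time}.

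With these bounds, I apply Arzel\`a--Ascoli on compacts together with a diagonal extraction as $R\to\infty$, getting $v^{R,i}\to v^i$ in $C^{1,2}_{\text{loc}}$ along with $D_jv^{R,i},D_{kj}v^{R,i}$ in $C^{1,2}_{\text{loc}}$. On $\{|\bx|\le R', |\diag \bv^R|\le R'\}$ the truncation is inactive once $R\ge R'$, and $\diag\bv^R$ is locally bounded uniformly in $R$. Hence the coefficients converge locally uniformly, and $v=(v^i)$ is a classical solution of \eqref{nashsystem3} with the stated regularity and bounded derivatives of order 2, 3, 4.

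For symmetry, I choose $\pi_R$ to act componentwise and identically on each coordinate. Then \eqref{nashsystem4} is invariant under permutations of the players: $\ba^N$ and $\hat H^{N,i}$ inherit the permutation covariance of \eqref{fixedpointintro}, by uniqueness of the fixed point. So if $(g^i)$ satisfies \eqref{symmetry.1}, \eqref{symmetry.2}, the permuted solution solves the same truncated system. Uniqueness of the truncated solution (Proposition 3.3 of \cite{ma1994}) then gives symmetry of $\bv^R$, and symmetry passes to the limit.

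I expect the main obstacle to be the $R$-uniform higher-order (third and fourth derivative) bounds. The equations for $D^3 v^{R,i}$ contain products such as $D_k u^{q,j,q}$ terms, which must be absorbed through the $Z$-terms exactly as in Lemma \ref{lem.apriori.shorttime}; I would first try to repeat that summation-over-indices absorption at each order. A second point needing care is the non-global Lipschitz growth of $\hat H^{N,i}$ in $p^i$, which I handle using the local (linear-growth) bounds above.
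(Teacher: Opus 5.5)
Your overall plan is the paper's. You take the $R$-uniform bounds from Lemma \ref{lem.apriori.shorttime}, upgrade regularity uniformly in $R$, extract a limit as $R\to\infty$, and get symmetry from permutation covariance plus uniqueness for \eqref{nashsystem4}. Where the paper invokes ``standard parabolic regularity'', you run the It\^o/BSDE energy method on the equations for $D^3v^{R,i}$ and $D^4v^{R,i}$. Since these are linear in the top-order unknowns, that is a reasonable way to get $R$-uniform $L^\infty$ bounds on derivatives of order $3$ and $4$.

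\textbf{The gap is at the top order.} $L^\infty$ bounds on $D^4v^{R,i}$ give equicontinuity of $D^3v^{R,i}$, but not of $D^4v^{R,i}$ or $\partial_t D^2v^{R,i}$. So Arzel\`a--Ascoli does \emph{not} yield $D_{kj}v^{R,i}\to D_{kj}v^i$ in $C^{1,2}_{\text{loc}}$. The limit only has bounded measurable fourth derivatives, and $D_{kj}v^i\in C^{1,2}_{\text{loc}}$ is not proved. That property is asserted in the statement and is needed later to apply It\^o's formula to $D_{kj}u^{N,i}$.

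Pushing the energy method one order higher is not available. A bound on $D^5v$ needs fifth derivatives of $\hat H^{N,i}$, hence $\ba^N\in C^5$, which would require $H$ of class $C^6$ rather than the $C^5$ of Assumption \ref{assump.regularity.disp}.

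What is missing is a top-order H\"older estimate, and this is exactly the parabolic-regularity step in the paper. The family $(D_{kj}v^{R,i})$ solves a linear parabolic system with three features:
\begin{enumerate}
\item every component has the same constant-coefficient principal part $\sum_l\Delta_l+\sigma_0\sum_{l,n}\tr D_{ln}$;
\item components are coupled only through lower-order terms, the first-order ones being of the form $D_l(D_{kj}v^{R,l})$;
\item the coefficients are locally H\"older uniformly in $R$, thanks to your $D^3$ and $D^4$ bounds.
\end{enumerate}
Interior Schauder estimates then bound $D^2v^{R,i}$ in $C^{2+\alpha}_{\text{loc}}$ uniformly in $R$, which gives the needed compactness. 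Alternatively, apply Schauder theory to the limiting system.

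\textbf{A smaller point.} The $R$-uniform linear growth of $D_iv^{R,i}$ cannot be obtained ``exactly as in Lemma \ref{lem.uni.growth}''. That proof uses the control representation of $u^{N,i}$, lower bounds on $L$ and $G$, and the coercivity of $H$ in $p$. Here $g^i$ is an arbitrary $C^5$ function, not bounded below. The truncated nonlinearity $\hat H^{N,i}(\pi_R\bx,\pi_R\bp)$ has no control representation and no coercivity in $p^i$ beyond $|\pi_R(p^i)|$.

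Instead, once the bounds of Lemma \ref{lem.apriori.shorttime} hold, $D_iv^{R,i}$ solves a linear equation whose drift and source grow at most linearly in $|\bx|+|\diag\bv^R|$ (by Lemma \ref{lem.hatH}). A Feynman--Kac representation at $(t,0)$ and a linear Gronwall argument then bound $\max_i|D_iv^{R,i}(t,0)|$ uniformly in $R$. This places no further restriction on $\eps$, consistent with the stated dependence of $\eps$ on $(g^i)$.
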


\begin{proof}
    Lemma \ref{lem.apriori.shorttime} shows provides the existence of constants $\eps>0$ and $C>0$ (depending only on the constants indicated) such that if $T < \eps$, then the second spatial derivatives of $v^{R,i}$ are bounded, uniformly in $R$. From here, standard parabolic regularity together with the regularity of $\hat{H}^{N,i}$ and $a^{N,i}$ allows us to verify that $v^{R,i}$ is bounded in the parabolic H\"older space $C_{t,\bx,\text{loc}}^{2 + \alpha}$, and that the derivatives of order $2$, $3$, and $4$ are bounded. We can thus produce a solution $(v^i)_{i = 1,\dots,N}$ to \eqref{nashsystem3} with the desired regularity properties. If $(g^i)_{i = 1,\dots,N}$ satisfies the symmetry condition \eqref{symmetry}, then so does $(g^{R,i})_{i = 1,\dots,N}$, and hence $(v^{R,i})_{i = 1,\dots,N}$ by the uniqueness of classical solutions to \eqref{nashsystem4}, and so $(v^i)_{i = 1,\dots,N}$ satisfies \eqref{symmetry} also. 
\end{proof}

\begin{proof}[Proof of Proposition \ref{prop.existence.N}]
    Note that because of Assumption \ref{assump.regularity.disp}, the function $g^{N,i}$ defined as $g^{N,i}(\bx) := G(x^i,m_{\bx}^{N,-i})$ is of class $C^5$, with bounded derivatives of order $2$, $3$, $4$ and $5$. Proposition \ref{prop.existence.shorttime} thus allows us to find a solution with the desired properties on some time interval $[T-\eps,T]$, {with some $\eps>0$ small,} and then our main a-priori estimate Theorem \ref{thm.uniform.nash} ensures that $D_j u^{N,i}$ and $D_{kj} u^{N,i}$ are bounded independently of $\eps$. We can thus build a solution backwards in time by repeatedly applying Proposition \ref{prop.existence.shorttime}.
\end{proof}

\section{Proof of Lemmas \ref{lem.hatH} and \ref{lem.third.order.coeff}}\label{app:B}

Recall that for $N \in \N$ and $i \in \{1,...,N\}$, $\hat{H}^{N,i} : (\R^d)^N \times (\R^d)^N \to \R$ is defined by 
\begin{align} \label{hatHi.app}
    \hat{H}^{N,i}(\bx,\bp) = H\big(x^i, p^i, m_{\bx, \ba^N(\bx,\bp)}^{N,-i} \big).
\end{align}
In addition, for $i,j = 1,...,N$, $\hat{H}^{i,j} : (\R^d)^N \times (\R^d)^N \to \R$ is defined by 
\begin{align} \label{hatHij.app}
    \hat{H}^{i,j}(\bx,\bp) = \frac{1}{N-1} \sum_{q \neq i} D_{\mu}^a H\big( x^i,p^i,m_{\bx,\ba^N(\bx,\bp)}^{N,-i}, x^q,a^{N,q}(\bx,\bp) \big) D_{p^j} a^{N,q}(\bx,\bp).
\end{align}

\begin{proof}[Proof of Lemma \ref{lem.hatH}]
    As in Section \ref{sec.uniform}, we argue as if $d = 1$ for simplicity of notation, and we begin by differentiating \eqref{hatHi.app} to see that 
\begin{align*}
    &D_{p^j} \hat{H}^{N,i}(\bx,\bp) = D_p H\big( x^i,p^i,m_{\bx,\ba^N}^{N,-i}\big) 1_{ i = j}
    \\
    &\qquad + \frac{1}{N-1} \sum_{q \neq i} D_{\mu}^a H\big(x^i, p^i, m_{\bx,\ba^N}^{N,-i}, x^q, a^{N,q} \big) D_{p^j} a^{N,q}, 
    \\
    &D_{p^k p^j} \hat{H}^{N,i}(\bx,\bp) = D_{pp} H \big( x^i,p^i,m_{\bx,\ba^N}^{N,-i} \big) 1_{i = j = k}
    \\
    &\qquad + \frac{1}{N-1} \sum_{q \neq i} D_{\mu}^a D_p H\big( x^i,p^i,m_{\bx,\ba^N}^{N,-i}, x^q,a^{N,q} \big) \Big( D_{p^k} a^{N,q} 1_{i = j} + D_{p^j} a^{N,q} 1_{i = k} \Big)
    \\
    &\qquad + \frac{1}{(N-1)^2} \sum_{q \neq i} \sum_{r \neq i} D_{\mu}^a D_{\mu}^a H\big( x^i,p^i,m_{\bx,\ba^N}^{N,-i}, x^q,a^{N,q},x^r,a^{N,r} \big) D_{p^j} a^{N,q} D_{p^k} a^{N,r}
    \\
    &\qquad + \frac{1}{N-1} \sum_{q \neq i} D_a D_{\mu}^a H\big( x^i,p^i,m_{\bx,\ba^N}^{N,-i}, x^q,a^{N,q} \big) D_{p^k} a^{N,q} D_{p^j} a^{N,q}
    \\
    &\qquad + \frac{1}{N-1} \sum_{q \neq i} D_{\mu}^a H\big( x^i,p^i,m_{\bx,\ba^N}^{N,-i}, x^q,a^{N,q} \big) D_{p^jp^k} a^{N,q}, 
    \\
    &D_{p^kp^jp^l} \hat{H}^{N,i} = D_{ppp} H\big(x^i,p^i, m_{\bx,\ba^N}^{N,-i}) 1_{i = j = k = l}
    \\
    &\qquad + \frac{1}{N-1} \sum_{q \neq i} D_{\mu}^a D_{pp} H\big(x^i,p^i,m_{\bx,\ba^N}^{N,-i}, x^q,a^{N,q}\big) \Big( D_{p^l} a^{N,q} 1_{i = j = k} + D_{p^k} a^{N,q} 1_{i = j = l} + D_{p^j} a^{N,q} 1_{i = k = l} \Big)
    \\
    &\qquad + \frac{1}{N-1} \sum_{q \neq i} D_a D_{\mu}^a D_p H\big( x^i,p^i,m_{\bx,\ba^N}^{N,-i}, x^q,a^{N,q} \big) \Big( D_{p^l} a^{N,q} D_{p^k} a^{N,q} 1_{i = j} 
    \\
    &\qquad \qquad \qquad \qquad \qquad \qquad \qquad \qquad \qquad \qquad \qquad+ D_{p^l} a^{N,q} D_{p^j} a^{N,q} 1_{i = k} + D_{p^j} a^{N,q} D_{p^k} a^{N,q} 1_{i = l}\Big)
    \\
    &\qquad + \frac{1}{(N-1)^2} \sum_{q \neq i} \sum_{r \neq i} D_{\mu}^a D_{\mu}^a D_p H \big(x^i,p^i,m_{\bx,\ba^N}^{N,-i}, x^q, a^{N,q}, x^r, a^{N,r} \big) \Big( D_{p^k} a^{N,q} D_{p^l} a^{N,r} 1_{i = j}
    \\
    &\qquad \qquad \qquad \qquad \qquad \qquad \qquad \qquad \qquad \qquad \qquad
    + D_{p^j} a^{N,q} D_{p^k} a^{N,r} 1_{i = l} + D_{p^j} a^{N,q} D_{p^l} a^{N,r} 1_{i = k}\Big)
    \\
    &\qquad + \frac{1}{N-1} \sum_{q \neq i} D_{\mu}^a D_p H \big(x^i,p^i,m_{\bx,\ba^N}^{N,-i},x^q,a^{N,q}\big) \Big( D_{p^lp^k} a^{N,q} 1_{i = j} 
     \\
    &\qquad \qquad \qquad \qquad \qquad \qquad \qquad \qquad \qquad \qquad \qquad
    + D_{p^jp^l} a^{N,q} 1_{i = k} + D_{p^jp^k} a^{N,q} 1_{i = l} \Big)
    \\
    &\qquad + \frac{1}{(N-1)^3} \sum_{q \neq i} \sum_{r \neq i} \sum_{s \neq i} D_{\mu}^a D_{\mu}^a D_{\mu}^a H\big( x^i, p^i,m_{\bx,\ba^N}^{N,-i}, x^q, a^{N,q}, x^r, a^{N,r}, x^s, a^{N,s} \big) D_{p^j} a^{N,q} D_{p^k} a^{N,q} D_{p^l} a^{N,k}
    \\
    &\qquad + \frac{1}{(N-1)^2} \sum_{q \neq i} \sum_{r \neq i} D_a D_{\mu}^a D_{\mu}^a H\big(x^i, p^i,m_{\bx,\ba^N}^{N,-i}, x^q, a^{N,q}, x^r, a^{N,r}\big) \Big( D_{p^l} a^{N,q} D_{p^j} a^{N,q} D_{p^k} a^{N,r}
    \\
    &\qquad \qquad \qquad \qquad \qquad \qquad \qquad \qquad \qquad \qquad \qquad
    + D_{p^j} a^{N,q} D_{p^k} a^{N,q} D_{p^l} a^{N,r} \Big)
    \\
    &\qquad + \frac{1}{(N-1)^2} \sum_{q \neq i} \sum_{r \neq i} D_{a'} D_{\mu}^a D_{\mu}^a H\big(x^i, p^i,m_{\bx,\ba^N}^{N,-i}, x^q, a^{N,q}, x^r, a^{N,r}\big)  D_{p^l} a^{N,r} D_{p^j} a^{N,q} D_{p^k} a^{N,r}
    \\
    &\qquad + \frac{1}{(N-1)^2} \sum_{q \neq i} \sum_{r \neq i} D_{\mu}^a D_{\mu}^a H\big(x^i,p^i,m_{\bx,\ba^N}^{N,-i},x^q,a^{N,q} x^r, a^{N,r}\big) \Big( D_{p^l p^j} a^{N,q} D_{p^k} a^{N,r}
    \\
    &\qquad \qquad \qquad \qquad \qquad \qquad \qquad \qquad \qquad \qquad \qquad + D_{p^j} a^{N,q} D_{p^lp^k} a^{N,r} + D_{p^l} a^{N,r} D_{p^j p^k} a^{N,q} \Big)
    \\
    &\qquad + \frac{1}{N-1} \sum_{q \neq i} D_a D_a D_{\mu}^a H\big(x^i,p^i,m_{\bx,\ba^N}^{N,-i},x^q, a^{N,q}\big) D_{p^l} a^{N,q} D_{p^k} a^{N,q} D_{p^j} a^{N,q} 
    \\
    &\qquad + \frac{1}{(N-1)} \sum_{q \neq i} D_a D_{\mu}^a H\big( x^i,p^i,m_{\bx,\ba^N}^{N,-i}, x^q, a^{N,q} \big) \Big( D_{p^l} a^{N,q} D_{p^jp^k} a^{N,q} + D_{p^l p^k} a^{N,q} D_{p^j} a^{N,q} + D_{p^k} a^{N,q} D_{p^l p^j}a^{N,q} \Big)
    \\
    &\qquad +  \frac{1}{(N-1)} \sum_{q \neq i} D_{\mu}^a H\big( x^i,p^i,m_{\bx,\ba^N}^{N,-i}, x^q, a^{N,q} \big) D_{p^lp^jp^k} a^{N,q}
\end{align*}
Because $H$ has bounded second derivatives, we can use the bounds on the derivative of $\ba^N$ from Proposition \ref{prop.aderivscaling} to obtain
\begin{align*}
    |D_pH\big( x^i, p^i,m_{\bx,\ba^N}^{N,-i})| &\leq C \bigg(1 + |x^i| + |p^i| + \frac{1}{N} \sum_{j = 1}^N |x^j| + \frac{1}{N} \sum_{j = 1}^N |a^{N,j}(\bx,\bp)| \bigg)
    \\
    &\leq C \bigg(1 + |x^i| + |p^i| + \frac{1}{N} \sum_{j = 1}^N |x^j| + \frac{1}{N} \sum_{j = 1}^N |p^j| \bigg),
\end{align*}
while the boundedness of $D_{\mu} H$ and Proposition \ref{prop.aderivscaling} yield
\begin{align*}
    \Big| \frac{1}{N-1} \sum_{j \neq i} D_{\mu}^a H\big(x^i, p^i ,m_{\bx,\ba^N}^{N,-i}, x^q, a^{N,q}\big) D_{p^j} a^{N,q} \Big|
    \leq \frac{C}{N} \sum_{q \neq i} |D_{p^j} a^{N,q}| \leq \frac{C}{N}.
\end{align*}
Together, these bounds give 
\begin{align*}
|D_{p^j} \hat{H}^{N,i}(\bx,\bp) | \leq C \bigg(1 + |x^i| + |p^i| + \frac{1}{N} \sum_{j = 1}^N |x^j| + \frac{1}{N} \sum_{j = 1}^N |p^j| \bigg) 1_{i = j} + \frac{C}{N},
\end{align*}
as desired. 

Next, to bound $D_{p^kp^j} \hat{H}^{N,i}$, we note that by the boundedness of the second derivatives of $H$, 
\begin{align*}
    &\big| D_{pp} H\big( x^i,p^i,m_{\bx,\ba^{N}}^{N,-i} \big)1_{i = j = k} \big| \leq C 1_{i = j = k} \leq C \omega_{i,j,k}^N, 
    \\
    &\Big|\frac{1}{N-1} \sum_{q \neq i} D_{\mu}^a D_p H\big( x^i,p^i,m_{\bx,\ba^N}^{N,-i}, x^q,a^{N,q} \big) \Big( D_{p^k} a^{N,q} 1_{i = j} + D_{p^j} a^{N,q} 1_{i = k} \Big)\Big|
    \\
    &\quad \leq \frac{C}{N} \Big( \sum_{q \neq i} |D_{p^k} a^{N,q}|1_{i = j} + \sum_{q \neq i} |D_{p^j} a^{N,q}| 1_{i = k} \Big) 
    \\
    &\quad \leq \frac{C}{N} \Big(1_{i = j} + 1_{i = k}\Big) \leq C \omega_{i,j,k}^N
    \\
    &\Big|\frac{1}{(N-1)^2} \sum_{q \neq i} \sum_{r \neq i} D_{\mu}^a D_{\mu}^a H\big( x^i,p^i,m_{\bx,\ba^N}^{N,-i}, x^q,a^{N,q},x^r,a^{N,r} \big) D_{p^j} a^{N,q} D_{p^k} a^{N,r} \Big| 
    \\
    &\quad \leq \frac{C}{N^2} \sum_{q,r = 1}^N |D_{p^j} a^{N,q}| |D_{p^k} a^{N,r}| 
    \\
    &\quad \leq \frac{C}{N^2} \Big( \sum_{q \neq j} \sum_{r \neq k} |D_{p^j} a^{N,q}| |D_{p^k} a^{N,r}| + \sum_{q \neq j} |D_{p^j} a^{N,q}| |D_{p^k}a^{N,k}| + \sum_{r \neq k} |D_{p^k} a^{N,r}| |D_{p^j} a^{N,j}| + |D_{p^k} a^{N,k}| |D_{p^j} a^{N,j}| \Big)
    \\
    &\quad \leq \frac{C}{N^2}
    \\
    &\Big| \frac{1}{N-1} \sum_{q \neq i} D_a D_{\mu}^a H\big( x^i,p^i,m_{\bx,\ba^N}^{N,-i}, x^q,a^{N,q} \big) D_{p^k} a^{N,q} D_{p^j} a^{N,q} \Big| 
    \\
    &\quad \leq \frac{C}{N} \sum_{q=1}^N |D_{p^k} a^{N,q}| |D_{p^j}a^{N,q}| \leq \frac{C}{N} \sum_{q \neq j,k} |D_{p^j} a^{N,q}| |D_{p^k} a^{N,q}| + \frac{C}{N} |D_{p^k} a^{N,k}| |D_{p^j} a^{N,k}| + \frac{C}{N} |D_{p^j} a^{N,k}| 
    \\
    &\quad \leq C \Big( \frac{1}{N^2} + \frac{1}{N} 1_{j = k} \Big) \leq C \omega_{i,j,k}^N,
    \\
    &\Big|\frac{1}{N-1} \sum_{q \neq i} D_{\mu}^a H\big( x^i,p^i,m_{\bx,\ba^N}^{N,-i}, x^q,a^{N,q} \big) D_{p^jp^k} a^{N,q} \Big|
    \\
    &\quad \leq \frac{C}{N} \sum_{q = 1}^N |D_{p^jp^k} a^{N,q}| \leq \frac{C}{N} \sum_{q = 1}^N \omega_{j,k,q}^N \leq \frac{C}{N} \omega_{j,k}^N \leq \omega_{i,j,k}^N.
\end{align*}
We thus arrive at 
\begin{align*}
    \big| D_{p^j p^k} \hat{H}^{N,i}(\bx,\bp) \big| \leq C \omega_{i,j,k}^N.
\end{align*}
We next turn to estimating $|D_{p^l p^k p^j} \hat{H}^{N,i}|$. Using again the boundedness of the derivatives of $H$ (this time of order $2$ and $3$) and Proposition \ref{prop.aderivscaling}, we have 
\begin{align*}
    &\Big| D_{ppp} H\big(x^i,p^i, m_{\bx,\ba^N}^{N,-i}) 1_{i = j = k = l} \Big| \leq C 1_{i = j = k = l} \leq C \omega_{i,j,k,l}^N, 
    \\
    &\Big| \frac{1}{N-1} \sum_{q \neq i} D_{\mu}^a D_{pp} H\big(x^i,p^i,m_{\bx,\ba^N}^{N,-i}, x^q,a^{N,q}\big) \Big( D_{p^l} a^{N,q} 1_{i = j = k} + D_{p^k} a^{N,q} 1_{i = j = l} + D_{p^j} a^{N,q} 1_{i = k = l} \Big) \Big| 
    \\
    &\quad \leq \frac{C}{N} \sum_{q = 1}^N \Big( |D_{p^l} a^{N,q}| 1_{i = j = k} + |D_{p^k} a^{N,q}| 1_{i = j = l} + |D_{p^j} a^{N,q}| 1_{i = k = l} \Big) 
    \\
    &\quad \leq \frac{C}{N} \Big( 1_{i = j = k} + 1_{i = j = l} + 1_{i = k = l} \Big) \leq C \omega_{i,j,k,l}^N, 
    \\
    & \Big| \frac{1}{N-1} \sum_{q \neq i} D_a D_{\mu}^a D_p H\big( x^i,p^i,m_{\bx,\ba^N}^{N,-i}, x^q,a^{N,q} \big) \Big( D_{p^l} a^{N,q} D_{p^k} a^{N,q} 1_{i = j} 
    \\
    &\qquad \qquad \qquad \qquad \qquad \qquad \qquad \qquad \qquad \qquad \qquad+ D_{p^l} a^{N,q} D_{p^j} a^{N,q} 1_{i = k} + D_{p^j} a^{N,q} D_{p^k} a^{N,q} 1_{i = l}\Big) \Big|
    \\
    &\quad \leq \frac{C}{N}  \sum_{q = 1}^N \Big( |D_{p^l} a^{N,q}| |D_{p^k} a^{N,q}| 1_{i = j} +  |D_{p^l} a^{N,q}| |D_{p^j} a^{N,q} | 1_{i = k} + |D_{p^j} a^{N,q}||D_{p^k} a^{N,q}| 1_{i = l} \Big)
    \\
    &\quad \leq \frac{C}{N} \Big( \omega_{l,k}^N 1_{i = j} + \omega_{j,l}^N 1_{i = k} + \omega_{j,k}^N 1_{i = l} \Big) \leq C \omega_{i,j,k,l}^N, 
    \\
    & \Big|\frac{1}{(N-1)^2} \sum_{q \neq i} \sum_{r \neq i} D_{\mu}^a D_{\mu}^a D_p H \big(x^i,p^i,m_{\bx,\ba^N}^{N,-i}, x^q, a^{N,q}, x^r, a^{N,r} \big) \Big( D_{p^k} a^{N,q} D_{p^l} a^{N,r} 1_{i = j}
    \\
    &\qquad \qquad \qquad \qquad \qquad \qquad \qquad \qquad \qquad \qquad \qquad
    + D_{p^j} a^{N,q} D_{p^k} a^{N,r} 1_{i = l} + D_{p^j} a^{N,q} D_{p^l} a^{N,r} 1_{i = k}\Big)\Big|
    \\
    &\quad \leq \frac{C}{N^2} \sum_{q,r = 1}^N \Big( |D_{p^k} a^{N,q}| | D_{p^l} a^{N,r}| 1_{i = j} + |D_{p^j} a^{N,q}| | D_{p^k} a^{N,r}| 1_{i = l} + |D_{p^j} a^{N,q}| | D_{p^l} a^{N,r}| 1_{i = k} \Big)
    \\
    &\quad \leq \frac{C}{N^2} \Big(1_{i = j} + 1_{i = l} + 1_{i = k} \Big) \leq C \omega_{i,j,k,l}^N, 
    \\
    &\Big| \frac{1}{N-1} \sum_{q \neq i} D_{\mu}^a D_p H \big(x^i,p^i,m_{\bx,\ba^N}^{N,-i},x^q,a^{N,q}\big) \Big( D_{p^lp^k} a^{N,q} 1_{i = j} 
    + D_{p^jp^l} a^{N,q} 1_{i = k} + D_{p^jp^k} a^{N,q} 1_{i = l} \Big) \Big|
    \\
    &\quad \leq \frac{C}{N} \sum_{q = 1}^N \Big( |D_{p^lp^k} a^{N,q}| 1_{i = j} + |D_{p^jp^l} a^{N,q}|1_{i = k} + |D_{p^jp^k} a^{N,q}| 1_{i = l} \Big)
    \\
    &\quad \leq \frac{C}{N} \Big( \omega_{k,l}^N 1_{i = j} + \omega_{j,l}^N 1_{i = k} + \omega_{j,k}^N 1_{i = l} \Big) \leq C \omega_{i,j,k,l}^N, 
    \\
    & \Big| \frac{1}{(N-1)^3} \sum_{q \neq i} \sum_{r \neq i} \sum_{s \neq i} D_{\mu}^a D_{\mu}^a D_{\mu}^a H\big( x^i, p^i,m_{\bx,\ba^N}^{N,-i}, x^q, a^{N,q}, x^r, a^{N,r}, x^s, a^{N,s} \big) D_{p^j} a^{N,q} D_{p^k} a^{N,q} D_{p^l} a^{N,k} \Big| 
    \\
    &\quad \leq \frac{C}{N^3} \sum_{q,r,s = 1}^N |D_{p^j} a^{N,q}| |D_{p^k} a^{N,q}| |D_{p^l} a^{N,k}| \leq  C \omega_{i,j,k,l}^N,
    \\
    & \Big|\frac{1}{(N-1)^2} \sum_{q \neq i} \sum_{r \neq i} D_a D_{\mu}^a D_{\mu}^a H\big(x^i, p^i,m_{\bx,\ba^N}^{N,-i}, x^q, a^{N,q}, x^r, a^{N,r}\big) \Big( D_{p^l} a^{N,q} D_{p^j} a^{N,q} D_{p^k} a^{N,r}
    \\
    &\qquad \qquad \qquad \qquad \qquad \qquad \qquad \qquad \qquad \qquad \qquad
    + D_{p^j} a^{N,q} D_{p^k} a^{N,r} D_{p^l} a^{N,q} \Big) \Big| 
    \\
    &\leq \frac{C}{N^2} \sum_{q,r = 1}^N \Big( |D_{p^l} a^{N,q}| |D_{p^j} a^{N,q}| |D_{p^k} a^{N,r}| + |D_{p^j} a^{N,q}| |D_{p^k} a^{N,q}||D_{p^l} a^{N,r}| \Big)
    \\
    &\leq \frac{C}{N^2} \Big(1_{l = j} + 1_{j = k} + \frac{1}{N} \Big) \leq C\omega_{i,j,k,l}^N, 
    \\
    &\Big|\frac{1}{(N-1)^2} \sum_{q \neq i} \sum_{r \neq i} D_{a'} D_{\mu}^a D_{\mu}^a H\big(x^i, p^i,m_{\bx,\ba^N}^{N,-i}, x^q, a^{N,q}, x^r, a^{N,r}\big)  D_{p^l} a^{N,r} D_{p^j} a^{N,q} D_{p^k} a^{N,r} \Big|
    \\
    &\quad \leq \frac{C}{N^2} \sum_{q,r = 1}^N |D_{p^l} a^{N,r}| |D_{p^j} a^{N,q}| |D_{p^k} a^{N,r}| \leq \frac{C}{N^2} \Big( 1_{l = k}  + \frac{1}{N} \Big) \leq C \omega_{i,j,k,l}^N,
    \\
    &\Big| \frac{1}{(N-1)^2} \sum_{q \neq i} \sum_{r \neq i} D_{\mu}^a D_{\mu}^a H\big(x^i,p^i,m_{\bx,\ba^N}^{N,-i},x^q,a^{N,q} x^r, a^{N,r}\big) \Big( D_{p^l p^j} a^{N,q} D_{p^k} a^{N,r}
    \\
    &\qquad \qquad \qquad \qquad \qquad \qquad \qquad \qquad \qquad \qquad \qquad + D_{p^j} a^{N,q} D_{p^lp^k} a^{N,r} + D_{p^l} a^{N,r} D_{p^j p^k} a^{N,q} \Big) \Big|
    \\
    &\quad \leq \frac{C}{N^2} \sum_{q, r = 1}^N \Big( |D_{p^lp^j} a^{N,q}||D_{p^k} a^{N,r}| + |D_{p^j} a^{N,q}| |D_{p^l p^K} a^{N,r}| + |D_{p^l} a^{N,r}| |D_{p^j p^k} a^{N,q} | \Big)
    \\
    &\quad \leq \frac{C}{N^2} \Big( \omega_{l,j}^N + \omega_{l,k}^N + \omega_{j,k}^N \Big) \leq C \omega_{i,j,k,l}^N 
    \\
    &\Big|\frac{1}{N-1} \sum_{q \neq i} D_a D_a D_{\mu}^a H\big(x^i,p^i,m_{\bx,\ba^N}^{N,-i},x^q, a^{N,q}\big) D_{p^l} a^{N,q} D_{p^k} a^{N,q} D_{p^j} a^{N,q}\Big|
    \\
    &\quad \leq \frac{C}{N} \sum_{q = 1}^N |D_{p^l} a^{N,q}| |D_{p^k} a^{N,q}| |D_{p^j} a^{N,q}| \leq \frac{C}{N} \omega_{j,k,l}^N \leq C \omega_{i,j,k,l}^N
    \\
    &\Big| \frac{1}{(N-1)} \sum_{q \neq i} D_a D_{\mu}^a H\big( x^i,p^i,m_{\bx,\ba^N}^{N,-i}, x^q, a^{N,q} \big) \Big( D_{p^l} a^{N,q} D_{p^jp^k} a^{N,q} + D_{p^l p^k} a^{N,q} D_{p^j} a^{N,q} + D_{p^k} a^{N,q} D_{p^l p^j}a^{N,q} \Big) \Big|
    \\
    &\quad \leq \frac{C}{N} \sum_{q = 1}^N \Big( |D_{p^l} a^{N,q}| | D_{p^jp^k} a^{N,q}| + |D_{p^l p^k} a^{N,q}| |D_{p^j} a^{N,q}| + |D_{p^k} a^{N,q}| |D_{p^lp^j} a^{N,q}| \Big)
    \\
    &\quad \leq \frac{C}{N} \omega_{j,k,l}^N \leq C \omega_{i,j,k,l}^N, 
    \\
    &\Big| \frac{1}{(N-1)} D_{\mu}^a H\big( x^i,p^i,m_{\bx,\ba^N}^{N,-i}, x^q, a^{N,q} \big) D_{p^lp^jp^k} a^{N,l} \Big|
    \\
    &\quad \leq \frac{C}{N} \sum_{q = 1}^N |D_{p^l p^j p^k} a^{N,q}| \leq \frac{C}{N} \omega_{i,j,k}^N \leq C \omega_{i,j,k,l}^N.
\end{align*}
Together, these estimates show that 
\begin{align*}
    \big| D_{p^jp^k p^l} \hat{H}^{N,i} \big| \leq C \omega_{i,j,k,l}^N.
\end{align*}
We have now shown that the derivatives $D_{p^j} \hat{H}^{N,i}$, $D_{p^j p^k} \hat{H}^{N,i}$, and $D_{p^j p^k p^l} \hat{H}^{N,i}$ scale as claimed. The other derivatives up to order three (involving derivatives in $\bx$) are almost identical, and so are omitted.

We next turn to the estimates on $\hat{H}^{i,j}$ and its derivatives. We compute 
\begin{align*}
    D_{p^k} &\hat{H}^{i,j}(\bx,\bp) = \frac{1}{N-1} \sum_{q \neq i} D_{p} D_{\mu}^a H \big( x^i,p^i,m_{\bx,\ba^N}^{N,-i},x^q,a^{N,q} \big) D_{p^j} a^q 1_{i = k}
    \\
    & \quad + \frac{1}{(N-1)^2} \sum_{q \neq i} \sum_{r \neq i} D_{\mu}^a D_{\mu}^a H \big( x^i, p^i, m_{\bx,\ba^N}^{N,-i}, x^q, a^{N,q}, x^r, a^{N,r} \big) D_{p^j} a^{N,q} D_{p^k} a^{N,r}
    \\
    &\quad + \frac{1}{(N-1)} \sum_{q \neq i} D_a D_{\mu}^a H\big( x^i, p^i,m_{\bx,\ba^{N}}^{N,-i}, x^q, a^{N,q}\big) D_{p^k} a^{N,q} D_{p^j} a^{N,q}
    \\
    &\quad + \frac{1}{N-1} \sum_{q \neq i} D_{\mu}^a H\big( x^i,p^i,m_{\bx,\ba^N}^{N,-i}, x^q, a^{N,q} \big) D_{p^kp^j} a^{N,q}, 
    \\
    D_{p^l p^k} &\hat{H}^{i,j}(\bx,\bp) = \frac{1}{N-1} \sum_{q \neq i} D_{pp} D_{\mu}^a H\big(x^i,p^i,m_{\bx,\ba^N}^{N,-i}, x^q, a^{N,q} \big) D_{p^j} a^{N,q} 1_{i = k = k} 
    \\
    & + \frac{1}{(N-1)^2} \sum_{q \neq i} \sum_{r \neq i} D_p D_{\mu}^a D_{\mu}^a H\big(x^i,p^i,m_{\bx,\ba^N}^{N,-i},x^q, a^{N,q} x^r, a^{N,r}\big)  \Big(  D_{p^l} a^{N,r}D_{p^j} a^{N,q} 1_{i = k} + D_{p^j} a^{N,q} D_{p^k} a^{N,r} 1_{i = l}\Big)
    \\
    & + \frac{1}{N-1} \sum_{q \neq i} D_{a} D_p D_{\mu}^a H\big(x^i,p^i,m_{\bx,\ba^N}^{N,-i}, x^q, a^{N,q} \big) \Big( D_{p^l}a^{N,q} D_{p^j} a^{N,q} 1_{i = k} + D_{p^j} a^{N,q} D_{p^k} a^{N,q} 1_{i = l} \Big)
    \\
    &+ \frac{1}{N-1} \sum_{q \neq i} D_p D_{\mu}^a H\big( x^i,p^i,m_{\bx,\ba^N}^{N,-i}, x^q, a^{N,q} \big) \Big( D_{p^l p^j} a^{N,q} 1_{i = k} + D_{p^k p^j} a^{N,q} 1_{i = l} \Big)
    \\
    &+ \frac{1}{(N-1)^3} \sum_{q \neq i} \sum_{r \neq i} \sum_{s \neq i} D_{\mu}^a D_{\mu}^a D_{\mu}^a H\big( x^i,p^i,m_{\bx,\ba^{N}}^{N,-i},x^q, a^{N,q}, x^r, a^{N,r}, x^s, a^{N,s} \big) D_{p^j} a^{N,q} D_{p^k} a^{N,r} D_{p^l} a^{N,s}
    \\
    &+ \frac{1}{(N-1)^2} \sum_{q \neq i} \sum_{r \neq i} D_a D_{\mu}^a D_{\mu}^a H\big( x^i,p^i,m_{\bx,\ba^N}^{N,-i},x^q, a^{N,q} \big) \Big( D_{p^l} a^{N,q} D_{p^j}a^{N,q} D_{p^k} a^{N,r}  + D_{p^l} a^{N,r} D_{p^j} a^{N,q} D_{p^k} a^{N,q} \Big)
    \\
    &+ \frac{1}{(N-1)^2} \sum_{q \neq i} \sum_{r \neq i} D_{a'} D_{\mu}^a D_{\mu}^a H\big(x^i,p^i,m_{\bx,\ba^N}^{N,-i},x^q,a^{N,q},x^r,a^{N,r} \big) D_{p^l} a^{N,r} D_{p^j} a^{N,q} D_{p^k} a^{N,r}
    \\
    &+ \frac{1}{(N-1)^2} \sum_{q \neq i} \sum_{r \neq i} D_{\mu}^a D_{\mu}^a H\big( x^i,p^i,m_{\bx,\ba^N}^{N,-i}, x^q, a^{N,q}, x^r, a^{N,r} \big) \Big( D_{p^lp^j} a^{N,q} D_{p^k} a^{N,r} 
    \\
    &\qquad \qquad \qquad \qquad \qquad \qquad \qquad \qquad \qquad \qquad + D_{p^j} a^{N,q} D_{p^lp^k} a^{N,r} + D_{p^k p^j} a^{N,q} D_{p^l} a^{N,r} \Big)
    \\
    & + \frac{1}{N-1} \sum_{q \neq i} D_a D_a D_{\mu}^a H\big(x^i,p^i,m_{\bx,\ba^{N}}^{N,i}, x^q, a^{N,q}\big) D_{p^l} a^{N,q} D_{p^k} a^{N,q} D_{p^j} a^{N,q} 
    \\
    &+ \frac{1}{N-1} \sum_{q \neq i} D_a D_{\mu}^a H\big( x^i,p^i, m_{\bx,\ba^N}^{N,-i}, x^i, a^{N,q} \big) \Big(D_{p^l p^k} a^{N,q} D_{p^j} a^{N,q} 
    \\
    & \qquad \qquad \qquad \qquad \qquad \qquad \qquad \qquad \qquad \qquad  + D_{p^k} a^{N,q} D_{p^l p^j} a^{N,q} + D_{p^l} a^{N,q} D_{p^k p^j} a^{N,q} \Big) 
    \\
    & + \frac{1}{N-1} \sum_{q \neq i} D_{\mu}^a H\big( x^i, p^i,m_{\bx,\ba^N}^{N,-i}, x^q, a^{N,q} \big) D_{p^l p^k p^j} a^{N,q}.
\end{align*}
First, we note that
\begin{align*}
    |D_{p^k} \hat{H}^{i,j}| \leq \frac{C}{N} \sum_{q \neq i}  |D_{p^j} a^{N,q}| \leq \frac{C}{N}.
\end{align*}
For notational simplicity, we now introduce 
\begin{align*}
    \wt{\omega}_{i,j,k}^N = \begin{cases}
        \omega_{i,j,k}^N & i \neq j, 
        \\
        \frac{1}{N} \omega_{i,k}^N & i = j, 
    \end{cases} \qquad
    \wt{\omega}_{i,j,k,l}^N = \begin{cases}
        \omega_{i,j,k,l}^N & i \neq j, 
        \\
        \frac{1}{N} \omega_{i,k,l}^N & i = j.
    \end{cases}
\end{align*}
We now use the boundedness of the second and third third derivatives of $H$ and Proposition \ref{prop.aderivscaling} to estimate 
\begin{align*}
   &\Big| \frac{1}{N-1} \sum_{q \neq i} D_{p} D_{\mu}^a H \big( x^i,p^i,m_{\bx,\ba^N}^{N,-i},x^q,a^{N,q} \big) D_{p^j} a^q 1_{i = k} \Big|
   \\&\quad \leq \frac{C}{N} \sum_{q =1}^N |D_{p^j} a^{N,q}| 1_{i = k} \leq \frac{C}{N} 1_{i = k} \leq C \wt{\omega}_{i,j,k}^N.
    \\
    & \Big|\frac{1}{(N-1)^2} \sum_{q \neq i} \sum_{r \neq i} D_{\mu}^a D_{\mu}^a H \big( x^i, p^i, m_{\bx,\ba^N}^{N,-i}, x^q, a^{N,q}, x^r, a^{N,r} \big) D_{p^j} a^{N,q} D_{p^k} a^{N,r} \Big|
    \\
    &\quad \leq \frac{C}{N^2} \sum_{q,r = 1}^N |D_{p^j} a^{N,q}| |D_{p^k} a^{N,r}|  \leq \frac{C}{N^2} \leq C \wt{\omega}_{i,j,k}^N, 
    \\
    &\Big|\frac{1}{(N-1)} \sum_{q \neq i} D_a D_{\mu}^a H\big( x^i, p^i,m_{\bx,\ba^{N}}^{N,-i}, x^q, a^{N,q}\big) D_{p^k} a^{N,q} D_{p^j} a^{N,q} \Big|
    \\
    &\quad \leq \frac{C}{N} \sum_{q = 1}^N |D_{p^k} a^{N,q}| |D_{p^j} a^{N,q} | \leq \frac{C}{N} \Big( \frac{1}{N} + 1_{j = k} \Big) \leq C \wt{\omega}_{i,j,k}^N, 
    \\
    &\Big|\frac{1}{N-1} \sum_{q \neq i} D_{\mu}^a H\big( x^i,p^i,m_{\bx,\ba^N}^{N,-i}, x^q, a^{N,q} \big) D_{p^kp^j} a^{N,q}\Big| 
    \\
    &\quad \leq \frac{C}{N} \sum_{q = 1}^N |D_{p^k p^j} a^{N,q}| \leq \frac{C}{N} \omega_{j,k}^N \leq C \wt{\omega}_{i,j,k}^N.
\end{align*}
We have thus verified that 
\begin{align*}
    |D_{p^k p^j} \hat{H}^{i,j}| \leq C \wt{\omega}_{i,j,k}^N, 
\end{align*}
as desired. Next, we estimate 
\begin{align*}
    &\Big|  \frac{1}{N-1} \sum_{q \neq i} D_{pp} D_{\mu}^a H\big(x^i,p^i,m_{\bx,\ba^N}^{N,-i}, x^q, a^{N,q} \big) D_{p^j} a^{N,q} 1_{i = k = l} \Big| 
    \\
    &\leq \frac{C}{N} \sum_{q = 1}^N |D_{p^j} a^{N,q}| 1_{i = k = l} \leq \frac{C}{N} 1_{i = k = l} \leq C \wt{\omega}_{i,j,k,l}^N, 
    \\
    &\Big|\frac{1}{(N-1)^2} \sum_{q \neq i} \sum_{r \neq i} D_p D_{\mu}^a D_{\mu}^a H\big(x^i,p^i,m_{\bx,\ba^N}^{N,-i},x^q, a^{N,q} x^r, a^{N,r}\big)  \Big(  D_{p^l} a^{N,r}D_{p^j} a^{N,q} 1_{i = k} + D_{p^j} a^{N,q} D_{p^k} a^{N,r} 1_{i = l}\Big)\Big|
    \\
    &\quad \leq \frac{C}{N^2} \sum_{q,r = 1}^N \Big( |D_{p^l} a^{N,r} | |D_{p^j} a^{N,q}| 1_{i = k} + |D_{p^j} a^{N,q}| | D_{p^k} a^{N,r}| 1_{i = l} \Big)
    \\
    &\quad \leq \frac{C}{N^2} \Big(1_{i = k} + 1_{i = l} \Big) \leq C \wt{\omega}_{i,j,k,l}^N,
    \\
    &\Big|\frac{1}{N-1} \sum_{q \neq i} D_{a} D_p D_{\mu}^a H\big(x^i,p^i,m_{\bx,\ba^N}^{N,-i}, x^q, a^{N,q} \big) \Big( D_{p^l}a^{N,q} D_{p^j} a^{N,q} 1_{i = k} + D_{p^j} a^{N,q} D_{p^k} a^{N,q} 1_{i = l} \Big) \Big|
    \\
    &\quad \leq \frac{C}{N} \sum_{q = 1}^N \Big( |D_{p^l} a^{N,q}| | D_{p^j} a^{N,q}| 1_{i = k} + |D_{p^j} a^{N,q}| |D_{p^k} a^{N,q}| 1_{i = l} \Big)
    \\
    &\quad \leq \frac{C}{N} \Big( 1_{j = l} 1_{i = k} + 1_{j = k} 1_{i = l} + \frac{1}{N} 1_{i = k} + \frac{1}{N} 1_{i = l} \Big) = \frac{C}{N} \Big(1_{j = l} 1_{i = k} + 1_{j = k} 1_{i = l} \Big) + \frac{C}{N^2} \Big(1_{i = k} + 1_{i = l} \Big)
    \\
    &\quad \leq C \wt{\omega}_{i,j,k,l}^N, 
    \\
    &\Big|\frac{1}{N-1} \sum_{q \neq i} D_p D_{\mu}^a H\big( x^i,p^i,m_{\bx,\ba^N}^{N,-i}, x^q, a^{N,q} \big) \Big( D_{p^l p^j} a^{N,q} 1_{i = k} + D_{p^k p^j} a^{N,q} 1_{i = l} \Big) \Big|
    \\
    &\quad \leq \frac{C}{N} \sum_{q = 1}^N \Big( |D_{p^l p^j} a^{N,q}| 1_{i = k} + |D_{p^k p^j} a^{N,q}| 1_{i = l} \Big) 
    \\
    &\quad \leq \frac{C}{N} \Big(\omega_{j,l}^N 1_{i = k} + \omega_{j,k}^N 1_{i = l} \Big) \leq C \wt{\omega}_{i,j,k,l}^N, 
    \\
    & \Big|\frac{1}{(N-1)^3} \sum_{q \neq i} \sum_{r \neq i} \sum_{s \neq i} D_{\mu}^a D_{\mu}^a D_{\mu}^a H\big( x^i,p^i,m_{\bx,\ba^{N}}^{N,-i},x^q, a^{N,q}, x^r, a^{N,r}, x^s, a^{N,s} \big) D_{p^j} a^{N,q} D_{p^k} a^{N,r} D_{p^l} a^{N,s}\Big| 
    \\
    &\quad \leq \frac{C}{N^3} \sum_{q,r,s = 1}^N |D_{p^j} a^{N,q}| |D_{p^k} a^{N,r}| |D_{p^l} a^{N,s}| \leq \frac{C}{N^3} \leq C \wt{\omega}_{i,j,k,l}^N, 
    \\
    &\Big|\frac{1}{(N-1)^2} \sum_{q \neq i} \sum_{r \neq i} D_a D_{\mu}^a D_{\mu}^a H\big( x^i,p^i,m_{\bx,\ba^N}^{N,-i},x^q, a^{N,q} \big) \Big( D_{p^l} a^{N,q} D_{p^j}a^{N,q} D_{p^k} a^{N,r}  + D_{p^l} a^{N,r} D_{p^j} a^{N,q} D_{p^k} a^{N,q} \Big) \Big|
    \\
    &\quad \leq \frac{C}{N^2} \sum_{q,r = 1}^N \Big( |D_{p^j} a^{N,q}| |D_{p^l} a^{N,q}| |D_{p^k} a^{N,r}| + |D_{p^l} a^{N,r}| | D_{p^j} a^{N,q}| | D_{p^k} a^{N,q} | \Big)
    \\
    &\quad \leq \frac{C}{N^2} \Big( \frac{1}{N} + 1_{j = l} + 1_{j = k} \Big) \leq C \wt{\omega}_{i,j,k,l}^N, 
    \\
    &\Big|\frac{1}{(N-1)^2} \sum_{q \neq i} \sum_{r \neq i} D_{a'} D_{\mu}^a D_{\mu}^a H\big(x^i,p^i,m_{\bx,\ba^N}^{N,-i},x^q,a^{N,q},x^r,a^{N,r} \big) D_{p^l} a^{N,r} D_{p^j} a^{N,q} D_{p^k} a^{N,r} \Big|
    \\
    &\quad \leq \frac{C}{N^2} \sum_{q,r = 1}^N |D_{p^l} a^{N,r}| | D_{p^j} a^{N,q}| | D_{p^k} a^{N,r} | \leq \frac{C}{N^2}\big( \frac{1}{N} + 1_{l = k} \big) \leq C \wt{\omega}_{i,j,k,l}^N,
    \\
    &\Big|\frac{1}{(N-1)^2} \sum_{q \neq i} \sum_{r \neq i} D_{\mu}^a D_{\mu}^a H\big( x^i,p^i,m_{\bx,\ba^N}^{N,-i}, x^q, a^{N,q}, x^r, a^{N,r} \big) \Big( D_{p^lp^j} a^{N,q} D_{p^k} a^{N,r} 
    \\
    &\qquad \qquad \qquad \qquad \qquad \qquad \qquad \qquad \qquad \qquad + D_{p^j} a^{N,q} D_{p^lp^k} a^{N,r} + D_{p^k p^j} a^{N,q} D_{p^l} a^{N,r} \Big)\Big| 
    \\
    &\quad \leq \frac{C}{N^2} \sum_{q,r = 1}^N \Big( |D_{p^lp^j} a^{N,q}| |D_{p^k} a^{N,r}| + |D_{p^j} a^{N,q}| D_{p^l p^k} a^{N,r}| + |D_{p^kp^j} a^{N,q}| |D_{p^l} a^{N,r}| \Big)
    \\
    &\quad \leq \frac{C}{N^2} \Big(\omega_{j,l}^N + \omega_{k,l}^N + \omega_{j,k}^N \Big) \leq C \wt{\omega}_{i,j,k,l}^N, 
    \\
    & + \frac{1}{N-1} \sum_{q \neq i} D_a D_a D_{\mu}^a H\big(x^i,p^i,m_{\bx,\ba^{N}}^{N,i}, x^q, a^{N,q}\big) D_{p^l} a^{N,q} D_{p^k} a^{N,q} D_{p^j} a^{N,q} 
    \\
    &\Big|\frac{1}{N-1} D_a D_{\mu}^a H\big( x^i,p^i, m_{\bx,\ba^N}^{N,-i}, x^i, a^{N,q} \big) \Big(D_{p^l p^k} a^{N,q} D_{p^j} a^{N,q} 
    \\
    & \qquad \qquad \qquad \qquad \qquad \qquad \qquad \qquad \qquad \qquad  + D_{p^k} a^{N,q} D_{p^l p^j} a^{N,q} + D_{p^l} a^{N,q} D_{p^k p^j} a^{N,q} \Big)\Big|
    \\
    &\quad \leq \frac{C}{N} \sum_{q = 1}^N \Big( |D_{p^l p^k} a^{N,q}| | D_{p^j} a^{N,q}| + |_D{p^k} a^{N,q}| | D_{p^lp^j} a^{N,q}| + |D_{p^l} a^{N,q}| |D_{p^kp^j} a^{N,q}| \Big)
    \\
    &\quad \leq \frac{C}{N}  \omega_{j,k,l}^N \leq C \wt{\omega}_{i,j,k,l}^N, 
    \\
    & \Big|\frac{1}{N-1} \sum_{q \neq i} D_{\mu}^a H\big( x^i, p^i,m_{\bx,\ba^N}^{N,-i}, x^q, a^{N,q} \big) D_{p^l p^k p^j} a^{N,q}\Big|
    \\
    &\quad \leq \frac{C}{N} \sum_{q = 1}^N |D_{p^l p^k p^j} a^{N,q}| \leq \frac{C}{N} \omega_{j,k,l}^N \leq C \wt{\omega}_{i,j,k,l}^N.
\end{align*}
Thus yields 
\begin{align*}
   | D_{p^l p^k} \hat{H}^{N,i,j} | \leq C \wt{\omega}_{i,j,k,l}^N.
\end{align*}
We have now shown that $\hat{H}^{N,i,j}$, $D_{p^k} \hat{H}^{N,i,j}$ and $D_{p^l p^k} \hat{H}^{N,i,j}$ scale as claimed; the other derivatives (involving derivatives in the $\bx$ variable) are similar and so are omitted. This completes the proof of Lemma \ref{lem.hatH}.
\end{proof}

We now turn to the bounds on the coefficients appearing in the equations for the third deriatives of $u^{N,i}$.

\begin{proof}[Proof of Lemma \ref{lem.third.order.coeff}]
We begin by using Propositions \ref{prop.aderivscaling}, \ref{prop.uiji} and \ref{prop.iii} to estimate 
\begin{align*}
    &|D_{x^l} a^{N,l}| \leq C, \qquad |D_{x^j} a^{N,j}| \leq C, 
    \\
    &\Big| \sum_{n = 1}^N D_{p^n} a^{N,l} u^{n,l,n} \Big| \leq |D_{p^l} a^{N,l}| |u^{l,l,l}| + \sum_{n \neq l} |D_{p^n} a^{N,l} u^{N,l,n}| \leq C, 
\end{align*}
and similarly 
\begin{align*}
    \Big| \sum_{n = 1}^N D_{p^n} a^{N,k} u^{N,k,n} \Big| \leq C, 
    \qquad 
    &\Big| \sum_{n = 1}^N D_{p^n} a^{N,j}\Big| \leq C.
\end{align*}
It follows that 
\begin{align*}
    \| \cA^{i,j,k,l} \|_{\infty} \leq C. 
\end{align*}
Next, for $r \neq l$, 
\begin{align*}
    &|D_{x^l} a^{N,r}| \leq C/N, 
    \\
    &\Big| \sum_{n = 1}^N D_{p^n} a^{N,r} u^{n,l,n} \Big| \leq |D_{p^r} a^{N,r}| |u^{r,l,r}| + |D_{p^l} a^{N,r}| |u^{l,l,l}| + \sum_{n \neq r,l} |D_{p^n} a^{N,r}| |u^{n,l,n}| \leq \frac{C}{N}, 
\end{align*}
so that 
\begin{align*}
    \| \cA_1^{i,j,k,l; r} \|_{\infty} \leq C/N. 
\end{align*}
Almost identical arguments show that for $r \neq k$, $\|\cA_{2}^{i,j,k,l; r}\|_{\infty} \leq C/N$, and for $r \neq j$, $\|\cA_3^{i,j,k,l; r}\|_{\infty} \leq C/N$. 

Next, by Lemma \ref{lem.hatH}, together with Proposition \ref{prop.aderivscaling} and Proposition \ref{prop.uij},
\begin{align*}
    |\hat{H}^{N,i,r}| \leq \frac{C}{N}, \quad \Big| \sum_{n \neq i} D_{p^r} a^{N,n} u^{i,n} \Big| \leq \frac{C}{N} \sum_{n = 1}^N |D_{p^j} a^{N,n}| \leq \frac{C}{N}, 
\end{align*}
and so we have 
\begin{align*}
    \| \cB^{i,j,k,l; r} \|_{\infty} \leq C/N.
\end{align*}
Finally, we turn to estimating $\cC^{i,j,k,l}$. We will use heavily the fact that for any $i,j,k,l \in \{1,...,N\}$, 
\begin{align*}
    &\sum_{q = 1}^N \omega_{i,j,q}^N \leq C \omega_{i,j}^N, 
   \qquad \sum_{q = 1}^N \omega_{i,j,k,q}^N \leq C \omega_{i,j,k}^N, 
    \qquad \sum_{q = 1}^N \omega_{i,q}^N \omega_{j,q}^N \leq C \omega_{i,j}^N, 
    \\
    & \sum_{q = 1}^N \omega_{i,j,q}^N \omega_{k,q}^N \leq C \omega_{i,j,k}^N, 
    \qquad \sum_{q = 1}^N \omega_{i,j,q}^N \omega_{k,l,q}^N \leq C \omega_{i,j,k,l}^N, 
   \qquad \sum_{q = 1}^N \omega_{i,j,k,q}^N \omega_{l,q}^N \leq C \omega_{i,j,k,l}^N,  
\end{align*}
and the fact that by Propositions \ref{prop.uiji}, \ref{prop.iii}, and \ref{prop.ijk}, we know that 
\begin{align*}
   \|u^{i,j,k,l}\|_{L^1(\cL^N)} \leq \| u^{i,j,k,l} \|_{L^2(\cL^N)} \leq C \omega^N,{i,j,k}, \quad \forall i,j,k,l \in \{1,...,N\}.
\end{align*}
Repeatedly applying these bounds together with the estimates $|u^{i,j}| \leq C\omega_{i,j}^N$, $|u^{i,j,k}| \leq C \omega_{i,j,k}^N$ and the bounds on the derivatives of $\hat{H}^{N,i}$ and $\hat{H}^{N,i,j}$ from Lemma \ref{lem.hatH}, allows us to bound each of the terms appearing in $\cC^{i,j,k,l}$:
\begin{align*}
 &\norm{  - \sum_{r \neq i} \sum_{n=1}^N D_{p^n} a^{N,r} u^{i,r,l} u^{n,j,n,k} \Big|}_{L^1(\cL^N)}  \leq C \sum_{r,n = 1}^N \omega_{r,n}^N \omega_{i,r,l}^N \norm{u^{n,j,n,k}}_{L^1(\cL^N)}
 \\
 &\qquad  \leq C \sum_{n = 1}^N \omega_{i,l,n}^N \omega_{j,k,n}^N \leq C \omega_{i,j,k,l}^N,
 \\
 &\norm{ \sum_{r \neq i} \sum_{n=1}^N \Big( D_{x^lp^n} a^{N,r} + \sum_{q=1}^N D_{p^q p^n} a^{N,r} u^{q,l,q} \Big) u^{i,r} u^{n,j,n,k}}_{L^1(\cL^N)} \leq \frac{C}{N} \sum_{r,n = 1}^N \omega_{l,n,r}^N \norm{u^{n,j,n,k}}_{L^1(\cL^N)} 
 \\
 &\qquad \leq \frac{C}{N} \sum_{n = 1}^N \omega_{l,n}^N \omega_{j,k,n}^N \leq \frac{C}{N} \omega_{j,k,l}^N \leq C \omega_{i,j,k,l}^N, 
    \\
    &\norm{ \sum_{r=1}^N \Big( D_{x^l} \hat{H}^{N,i,r} + \sum_{n=1}^N D_{p^n} \hat{H}^{N,i,r} u^{n,l,n} \Big) u^{r,j,r,k} }_{L^1(\cL^N)}
    \leq \frac{C}{N} \omega_{i,l}^N \norm{u^{i,j,i,k}}_{L^1(\cL^N)} + C \sum_{r = 1}^N \omega_{i,r,l}^N \norm{u^{r,j,r,k}}_{L^1(\cL^N)}
    \\
    &\qquad \leq \frac{C}{N} \omega_{i,l}^N \omega_{i,j,k}^N + C \sum_{r = 1}^N \omega_{i,r,l}^N \omega_{j,k,r}^N \leq C \omega_{i,j,k,l}^N, 
    \\
    &\norm{ \sum_{r,n=1}^N D_{p^n} a^{N,r} u^{i,j,r} u^{n,k,n,l} }_{L^1(\cL^N)}
    \leq C \sum_{r,n = 1}^N \omega_{r,n}^N \omega_{i,j,r}^N \norm{u^{n,k,n,l}}_{L^1(\cL^N)} \leq C \sum_{n = 1}^N \omega_{i,j,n}^N \norm{u^{n,k,n,l}}_{L^1(\cL^N)}
    \\
    &\qquad \leq C \sum_{n = 1}^N \omega_{i,j,n}^N \omega_{k,l,n}^N \leq C \omega_{i,j,k,l}^N, 
    \\
    &\norm{ \sum_{r,n=1}^N \Big( D_{x^l p^n} a^{N,r} + \sum_{q=1}^N D_{p^q p^n} a^{N,r} u^{q,l,q} \Big) u^{n,k,n} u^{i,j,r} }_{\infty} 
    \\
    &\quad \leq C \sum_{r,n = 1}^N \omega_{l,n,r}^N \omega_{k,n}^N \omega_{i,j,r} \leq C \sum_{r = 1}^N \omega_{l,r,k}^N \omega_{i,j,r}^N \leq C \omega_{i,j,k,l}^N, 
    \\
    &\norm{\sum_{r=1}^N \Big( D_{x^lx^k} a^{N,r} + \sum_{n=1}^N D_{p^n x^k}a^{N,r} u^{n,l,n} \Big) u^{i,j,r}}_{\infty} \leq C\sum_{r= 1}^N \omega_{k,l,r}^N \omega_{i,j,r}^N \leq C \omega_{i,j,k,l}^N
    \\
    &\norm{ \sum_{r \neq i} \sum_{n=1}^N D_{p^n} a^{N,r} u^{n,j,n,l} u^{i,r,k} }_{\infty} \leq C \sum_{r,n = 1}^N \omega_{r,n}^N \omega_{i,k,r}^N |u^{n,j,n,l}| \leq C \sum_{n = 1}^N \omega_{i,k,n}^N |u^{n,j,n,l}|, 
    \\
    & \norm{\sum_{r \neq i} \sum_{n=1}^N \Big( D_{x^l p^n} a^{N,r} + \sum_{q=1}^N D_{p^q p^n} a^{N,r} u^{q,l,q} \Big) u^{n,j,n} u^{i,r,k}}_{\infty} 
    \\
    &\quad \leq C \sum_{r,n = 1}^N \omega_{l,n,r}^N \omega_{k,n}^N \omega_{i,j,r}^N \leq C \sum_{r = 1}^N \omega_{k,l,r}^N \omega_{i,j,r}^N \leq c \omega_{i,j,k,l}^N, 
    \\
    &\norm{\sum_{r \neq i} \Big( D_{x^l x^j} a^{N,r} + \sum_{n=1}^N D_{p^n x^j} a^{N,r} u^{n,l,n} \Big) u^{i,k,r} }_{\infty} \leq \sum_{r = 1}^N \omega_{j,l,r}^N \omega_{i,k,r}^N \leq C \omega_{i,j,k,l}^N
    \\
    &\norm{ \sum_{r \neq i} \sum_{n,q=1}^N D_{p^n p^q} a^{N,r} u^{n,j,n,l} u^{q,k,q} u^{i,r} }_{L^1(\cL^N)}
    \\
    &\quad \leq \frac{C}{N} \sum_{r,n,q = 1}^N \omega_{r,n,q}^N \omega_{q,k}^N \norm{u^{n,j,n,l}}_{L^1(\cL^N)} \leq \frac{C}{N} \sum_{n,q = 1}^N \omega_{n,q}^N \omega_{k,q}^N \omega_{j,l,n}^N \leq \frac{C}{N} \sum_{n = 1}^N \omega_{k,n}^N \omega_{j,l,n}^N \leq \frac{C}{N} \omega_{j,k,l}^N \leq C \omega_{i,j,k,l}^N, 
    \\
    &\norm{ \sum_{r \neq i} \sum_{n,q=1}^N D_{p^np^q} a^{N,r} u^{n,j,n} u^{q,k,q,l} u^{i,r} }_{L^1(\cL^N)}
    \\
    &\quad \leq \frac{C}{N} \sum_{r,n,q = 1}^N \omega_{r,n,q}^N \omega_{j,n}^N \norm{u^{q,k,q,l}}_{L^1(\cL^N)} \leq \frac{C}{N} \sum_{n,q = 1}^n \omega_{n,q}^N \omega_{j,n}^N \leq \frac{C}{N} \sum_{q = 1}^N \omega_{j,q} \omega_{k,l,q}^N \leq \frac{C}{N} \omega_{j,k,l}^N \leq C \omega_{i,j,k,l}^N, 
    \\
    &\norm{\sum_{r,n=1}^N D_{p^n} \hat{H}^{N,i,r} u^{n,k,n} u^{r,j,r,l}}_{L^1(\cL^N)}
    \\
        &\quad \leq \frac{C}{N} \sum_{n = 1}^N \omega_{i,n}^N \omega_{n,k}^N \norm{u^{i,j,i,l}}_{L^1(\cL^N)} + \sum_{r,n = 1}^N \omega_{i,r,n}^N \omega_{k,n}^N \norm{u^{r,j,r,l}}_{L^1(\cL^N)} \leq \frac{C}{N} \omega_{i,k}^N \omega_{i,j,l}^N + \sum_{r = 1}^N \omega_{i,k,r}^N \omega_{j,l,r}^N \leq C \omega_{i,j,k,l}^N,  
   \\
    & \norm{\sum_{r,n=1}^N D_{p^n} \hat{H}^{N,i,r} u^{n,k,n,l} u^{r,j,r} }_{L^1(\cL^N)}
    \\
    &\qquad \leq \frac{C}{N} \omega_{i,j}^N \sum_{n = 1}^N \omega_{i,n}^N \norm{u^{n,k,n,l}}_{L^1(\cL^N)} + C \sum_{r,n = 1}^N \omega_{i,r,n}^N \omega_{j,r}^N \norm{u^{n,k,n,l}}_{L^1(\cL^N)}
    \\
    &\qquad \leq  \frac{C}{N} \omega_{i,j}^N \sum_{n = 1}^N \omega_{i,n}^N \omega_{k,l,n}^N + C \sum_{n = 1}^N \omega_{i,j,n}^N\omega_{k,l,n}^n \leq C \omega_{i,j,k,l}^N, 
    \\
    &\norm{ \sum_{r \neq i} \sum_{n=1}^N D_{x^k p^n} a^{N,r} u^{n,j,n,l} u^{i,r}}_{L^1(\cL^N)} \leq \frac{C}{N} \sum_{r,n = 1}^N \omega_{k,n,r}^N \norm{u^{n,j,n,l}}_{L^1(\cL^N)}
    \\
    &\qquad \leq \frac{C}{N} \sum_{n = 1}^N \omega_{k,n}^N \omega_{j,l,n}^N \leq C \omega_{i,j,k,l}^N,
    \\
    &\norm{ \sum_{r \neq i} \sum_{n=1} D_{p^n x^j} a^{N,r} u^{n,k,n,l} u^{i,r} }_{L^1(\cL^N)} \leq \frac{C}{N} \sum_{r,n = 1}^N \omega_{j,n,r}^N \norm{u^{n,k,n,l}}_{L^1(\cL^N)} 
    \\
    &\qquad \leq \frac{C}{N} \sum_{n = 1}^N \omega_{j,n}^N \omega_{k,l,n}^N \leq \frac{C}{N} \omega_{j,k,l}^N \leq C \omega_{i,j,k,l}^N,
    \\
    &\norm{\sum_{r=1}^N D_{x^k} \hat{H}^{N,i,r} u^{r,j,r,l}}_{L^1(\cL^N)} \leq \frac{C}{N} \omega_{i,k}^N \omega_{i,j,l}^N  + \sum_{r = 1}^N \omega_{i,k,r}^N \omega_{j,l,r}^N \leq C \omega_{i,j,k,l}^N, 
    \\
    &\norm{\sum_{r=1}^N D_{p^r x^j} \hat{H}^{N,i} u^{r,k,r,l} }_{L^1(\cL^N)} \leq C \sum_{r = 1}^N \omega_{i,j,r}^N \norm{u^{r,k,r,l}}_{L^1(\cL^N)}
    \\
    &\qquad \leq C \sum_{r = 1}^N \omega_{i,j,r}^N \omega_{k,l,r}^N \leq C \omega_{i,j,k,l}^N, 
    \\
    &\norm{ \sum_{r \neq i} \sum_{n,q=1}^N D_{p^np^q} a^{N,r} u^{n,j,n} u^{q,k,q} u^{i,r,l} }_{\infty}
    \\
    &\quad \leq C \sum_{r,n,q = 1}^N \omega_{r,n,q}^N \omega_{j,n}^N \omega_{k,q}^N \omega_{i,l,r}^N \leq C\sum_{r,n = 1}^N \omega_{r,n,k}^N \omega_{j,n}^N \omega_{i,l,r}^N \leq C \sum_{r = 1}^N \omega_{j,k,r}^N \omega_{i,l,r}^N \leq C \omega_{i,j,k,l}^N, 
    \\
    &\norm{\sum_{r \neq i} \sum_{n,q=1}^N \Big( D_{x^l p^np^q} a^{N,r} + \sum_{\hat r=1}^N D_{p^{\hat r} p^n p^q} a^{N,r} u^{\hat r,l,\hat r} \Big) u^{n,j,n} u^{q,k,q} u^{i,r} }_{\infty} 
    \\
    &\quad \leq \frac{C}{N} \sum_{r,n,q = 1}^N \omega_{l,n,q,r}^N \omega_{j,n}^N \omega_{k,q}^N \leq \frac{C}{N} \sum_{n,q = 1}^N \omega_{l,n,q}^N \omega_{j,n}^N \omega_{k,q}^N \leq \frac{C}{N} \sum_{n = 1}^N \omega_{l,k,n}^N \omega_{j,n}^N 
    \\
    &\quad \leq \frac{C}{N} \omega_{j,k,l}^N \leq C \omega_{i,j,k,l}^N, 
    \\
    & \norm{ \sum_{r,n=1}^N \Big( D_{x^l p^n} \hat{H}^{N,i,r} + \sum_{q=1}^N D_{p^qp^n} \hat{H}^{N,i,r} u^{q,l,q} \Big) u^{n,k,n} u^{r,j,r} }_{\infty}
    \\
    &\quad \leq \frac{C}{N} \sum_{n = 1}^N \omega_{i,l,n}^N \omega_{k,n}^N \omega_{i,j}^N + C \sum_{r,n = 1}^N \omega_{i,l,r,n}^N \omega_{k,n}^N \omega_{j,r}^N \leq \frac{C}{N} \omega_{i,k,l}^N \omega_{i,j}^N + C \sum_{r = 1}^N \omega_{i,k,l,r}^N \omega_{j,r}^N \leq C \omega_{i,j,k,l}^N, 
    \\
    &\norm{ \sum_{r \neq i} \sum_{n=1}^N D_{x^k p^n} a^{N,r} u^{n,j,n} u^{i,r,l} }_{\infty} \leq C \sum_{r,n = 1}^N \omega_{k,r,n}^N \omega_{j,n}^N \omega_{i,l,r}^N \leq C \sum_{r = 1}^N \omega_{k,j,r}^N \omega_{i,l,r}^N \leq C \omega_{i,j,k,l}^N, 
   \\
    &\norm{ \sum_{r \neq i} \sum_{n=1}^N \Big( D_{x^lx^k p^n} a^{N,r} + \sum_{q=1}^N D_{p^q x^k p^n} a^r u^{q,l,q} \Big) u^{n,j,n} u^{i,r} }_{\infty} 
    \\
    &\quad \leq \frac{C}{N} \sum_{r,n = 1}^N \omega_{k,l,n,r}^N \omega_{j,n}^N \leq \frac{C}{N} \sum_{n = 1}^N \omega_{k,l,n}^N \omega_{j,n}^N \leq \frac{C}{N} \omega_{j,k,l}^N \leq C \omega_{i,j,k,l}^N, 
    \\
    &\norm{\sum_{r \neq i} \sum_{n=1}^N D_{p^n x^j} a^{N,r} u^{n,k,n} u^{i,r,l} }_{\infty} \leq C \sum_{r,n = 1}^N \omega_{j,n,r}^N \omega_{k,n}^N \omega_{i,l,r}^N \leq C \sum_{r = 1}^N \omega_{j,k,r}^N \omega_{i,l,r}^N \leq C \omega_{i,j,k,l}^N
    \\
    &\norm{\sum_{r \neq i} \sum_{n=1}^N \Big( D_{x^l p^n x^j} a^{N,r} + \sum_{q=1}^N D_{p^q p^n x^j} a^{N,r} u^{q,l,q} \Big) u^{n,k,n} u^{i,r} }_{\infty}
    \\
    &\quad \leq \frac{C}{N} \sum_{r,n = 1}^N \omega_{j,l,n,r}^N \omega_{n,k}^N \leq \frac{C}{N} \sum_{n = 1}^N \omega_{j,l,n}^N \omega_{n,k}^N \leq \frac{C}{N} \omega_{j,l,k}^N \leq C \omega_{i,j,k,l}^N. 
    \\
    &\norm{\sum_{r=1}^N \Big( D_{x^lx^k} \hat{H}^{N,i,r} + \sum_{n=1}^N D_{p^nx^k} \hat{H}^{N,i,r} u^{n,l,n} \Big)u^{r,j,r}}_{\infty}
     \leq C \sum_{r = 1}^N \omega_{i,k,l,r}^N \omega_{j,r}^N \leq C \omega_{i,j,k,l}^N, 
    \\
    &\norm{\sum_{r=1}^N \Big( D_{x^l p^r x^j} \hat{H}^{N,i} + \sum_{n=1}^N D_{p^np^r x^j} \hat{H}^{N,i} u^{n,l,n} \Big) u^{r,k,r}}_{\infty}
    \leq  C \sum_{r = 1}^N \omega_{i,j,l,r}^N \omega_{k,r}^N \leq C \omega_{i,j,k,l}^N, 
    \\
    &\norm{ \sum_{r \neq i} D_{x^kx^j} a^{N,r} u^{i,r,l}}_{\infty}
     \leq C \sum_{r = 1}^N \omega_{j,k,r}^N \omega_{i,l,r}^N \leq C \omega_{i,j,k,l}^N, 
    \\
    &\norm{\sum_{r \neq i} \Big( D_{x^lx^kx^j} a^{N,r}+\sum_{n=1}^N D_{p^n x^kx^j} a^{N,r} u^{n,l,n} \Big) u^{i,r} } \leq \frac{C}{N} \sum_{r = 1}^N \omega_{j,k,l,r}^N
     \leq \frac{C}{N} \omega_{j,k,l}^N \leq C \omega_{i,j,k,l}^N, 
    \\
    &\norm{D_{x^lx^kx^j} \hat{H}^{N,i}}_{\infty} \leq C \omega_{i,jk,l}^N, 
    \\
    &\norm{\sum_{r=1}^N D_{p^r x^kx^j} \hat{H}^{N,i} u^{r,l,r}}_{\infty} \leq C \sum_{r = 1}^N \omega_{i,j,k,r}^N \omega_{l,r}^N \leq C \omega_{i,j,k,l}^N. 
\end{align*}
This gives the bound $\| \cC^{i,j,k,l}\|_{L^1(\cL^N)} \leq C \omega_{i,j,k,l}^N$. 
\end{proof}

\bibliographystyle{alpha}
\bibliography{joe_alpar}

\end{document}